%% file: ILRFPP.tex
\documentclass[letterpaper,11pt,reqno]{amsart}
\usepackage[margin=1in]{geometry}
\usepackage{graphicx}
\usepackage[T1]{fontenc}
\usepackage{tikz}
\usetikzlibrary{arrows.meta,calc,positioning,decorations.pathreplacing,decorations.pathmorphing}

\input{style}
\usepackage{booktabs,microtype}
\newcommand{\dist}{\mathrm{dist}}
\newcommand{\LLPS}{\mathrm{LLPS}}
\newcommand{\br}{\mathsf{br}}
\newcommand{\loc}{\mathsf{loc}}
\newcommand{\Hub}{\mathsf{Hub}}

\hypersetup{
  pdftitle={Inhomogeneous Long-Range First-Passage Percolation in a Random Vertex Environment},
  pdfauthor={Shirshendu Chatterjee, Partha S. Dey, and Daecheol Kim},
  pdfsubject={Long-range first-passage percolation in a random vertex environment},
  pdfkeywords={long-range first-passage percolation, scale-free percolation, phase transition, chemical distance, explosion}
}

\definecolor{trajectoryblue}{HTML}{0075FF}
\definecolor{networkgreen}{HTML}{007B60}
\definecolor{highlightpink}{HTML}{EE65DC}
\definecolor{warmdot}{HTML}{D58A00}
\definecolor{colIH}{rgb}{0.969,0.863,0.792}
\definecolor{colII}{rgb}{0.890,0.808,0.596}
\definecolor{colIVH}{rgb}{0.914,0.659,0.533}
\definecolor{colVH}{rgb}{0.780,0.404,0.325}
\definecolor{colIE}{rgb}{0.835,0.894,0.937}
\definecolor{colIII}{rgb}{0.749,0.831,0.737}
\definecolor{colIVE}{rgb}{0.616,0.769,0.859}
\definecolor{colVE}{rgb}{0.345,0.569,0.749}

\tikzset{
  disk/.style={draw=black, line width=0.42pt, fill=black!11},
  red path/.style={draw=red!90!black, line width=0.72pt,
                   line cap=round, line join=round},
  blue path/.style={draw=trajectoryblue, line width=0.85pt,
                    line cap=round, line join=round},
  network edge/.style={draw=networkgreen, line width=0.78pt,
                       -{Stealth[length=3.2pt,width=3.0pt]},
                       line cap=round, line join=round},
  network node/.style={draw=networkgreen, fill=networkgreen!18,
                       line width=0.75pt, circle, inner sep=1.25pt},
  blue node/.style={draw=trajectoryblue, fill=trajectoryblue!16,
                    line width=0.9pt, circle, inner sep=2.0pt},
}

\begin{document}
\title[ILRFPP]{Inhomogeneous Long-Range First-Passage Percolation in a Random Vertex Environment}
\author[Chatterjee]{Shirshendu Chatterjee$^\dagger$}
\author[Dey]{Partha S.~Dey$^\star$}
\author[Kim]{Daecheol Kim$^\ddagger$}

\address{$^\dagger$Department of Mathematics, The City College of New York, New York, New York 10031}
\address{$^{\star,\ddagger}$Department of Mathematics, University of Illinois Urbana-Champaign, Urbana, Illinois 61801}
\email{$^\dagger$shirshendu@ccny.cuny.edu, \{$^\star$psdey,$^\ddagger$dk43\}@illinois.edu}
\date{}
\subjclass[2020]{Primary: 60K35; Secondary: 60K37, 82B43, 05C80.}
\keywords{First-passage percolation, scale-free percolation, phase transition, multi-scale analysis.}
\begin{abstract}
	We study an inhomogeneous long-range first-passage percolation model on $\dZ^d$ in which every pair of vertices is joined by an edge whose passage time is
	$\norm{\mvx-\mvy}^{\ga}\go_{\mvx\mvy}/(V_{\mvx}V_{\mvy})$,
	where $(V_{\mvx})_{\mvx\in\dZ^d}$ are i.i.d.~positive vertex weights with polynomial upper-tail exponent $\gc$, $(\go_{\mvx\mvy})$ are i.i.d.~nonnegative edge noises with polynomial lower-tail exponent $\theta$ at zero, and $\alpha>0$ is a fixed constant. The model interpolates between long-range first-passage percolation and scale-free percolation. The two sources of randomness compete with each other. An atypically heavy vertex acts as a \emph{hub} that discounts \emph{all} of its incident edges simultaneously, whereas an atypically small edge noise produces an isolated \emph{bridge} between a specific pair of vertices.
	We conjecture that this competition produces a phase diagram in the $(\gc,\ga)$--plane with eight regimes, organized into five growth phases separated by the thresholds
	$q_{\rm hub}:=d/\gc$ and $q_{\rm edge}:=d/\theta$. Writing $T_n:=T(\mvzero,\lceil n\mvx\rceil)$ for fixed $\mvx\in\dR^d$, we prove upper bounds of the conjectured order in every regime, together with matching lower bounds in phases I and II. In the instantaneous phase, $T_n=0$ almost surely when $\ga<q_{\rm hub}\vee q_{\rm edge}$; in the tight phase, $T_n=\gTh_{\pr}(1)$ when $q_{\rm hub}\vee q_{\rm edge}<\ga<2q_{\rm hub}$. In the edge-dominated intermediate regime, $T_n=O_{\pr}((\log n)^{\Delta_{\rm III}+\eps})$ for every $\eps>0$, where $\Delta_{\rm III}=\log2/\log(2q_{\rm edge}/\ga)$. The two power-law regimes satisfy the sublinear upper bounds $T_n=O_{\pr}(n^{\ga-2q_{\rm hub}+\eps})$ and $T_n=O_{\pr}(n^{\ga-2q_{\rm edge}+\eps})$, respectively, and the conjectured linear regime satisfies $T_n=O_{\pr}(n)$.
	All upper bounds are constructive. Moreover, we isolate two explicit path architectures---a \emph{hub-chain ansatz}, which forms a non-branching chain by retaining several high-weight vertices at each scale and selecting connecting edges by their noises, and a \emph{binary edge-bridge multiscale ansatz}, which bridges a segment by a small-noise edge between two shells and then recurses on the two remaining gaps.
\end{abstract}

\maketitle

\setcounter{tocdepth}{1}\tableofcontents

\section{Introduction and Main Results}\label{sec:intro}
\subsection{The model}\label{ssec:model}
Let $G$ be a simple connected graph with vertex set $\cV$, and let $\dist(\mvu,\mvv)$ denote the graph distance. The \emph{Inhomogeneous Long-Range First-Passage Percolation} (ILRFPP) model is built from three ingredients.

\begin{enumeratei}
	\item \textbf{Vertex weights.} Every vertex $\mvu\in\cV$ carries a positive random weight $V_{\mvu}$, and the family of random variables $(V_{\mvu})_{\mvu\in\cV}$ is i.i.d.~with common law $V$. One should think of $V_{\mvu}$ as quantifying the \emph{importance}, activity, or degree-propensity of $\mvu$: an airport's traffic volume, a farm's livestock density, an individual's contact rate.

	\item \textbf{Edge noises.} For every unordered pair $\{\mvu,\mvv\}$ of distinct vertices there is a nonnegative random variable $\go_{\mvu\mvv}=\go_{\mvv\mvu}$, and the family of random variables $(\go_{\mvu\mvv})$ is i.i.d.~with common law $\go$, independent of $(V_{\mvu})$. One may view $\go_{\mvu\mvv}$ as the idiosyncratic resistance of the direct connection---for example, weather, congestion, or another pair-specific obstruction. An atypically small $\go_{\mvu\mvv}$ creates a \emph{shortcut}, or \emph{bridge}, which circumvents the spatial cost of the pair $\{\mvu,\mvv\}$ but of no other pair. Note that $\mvu$ and $\mvv$ need not be adjacent in $\cG$: all pairs are available.

	\item \textbf{Cost function.} A nonnegative function $\cost:\dN\to[0,\infty]$ encodes the penalty for a direct jump between vertices at graph distance $r$.
\end{enumeratei}

For $\mvu,\mvv\in\cV$ the cost, or passage time, of the direct jump from $\mvu$ to $\mvv$ is
\begin{align}\label{eq:def-W}
	W_{\mvu\mvv}
	:=\cost\big(\dist(\mvu,\mvv)\big)\cdot\frac{1}{V_{\mvu}V_{\mvv}}\cdot\go_{\mvu\mvv},
	\qquad \mvu\ne\mvv.
\end{align}
A finite \emph{path} $\mvpi=\la\mvv_0\mvv_1\cdots\mvv_m\ra$ is a sequence of distinct vertices, and its passage time is $W_{\mvpi}:=\sum_{e\in\mvpi}W_e$. The \emph{first-passage time} between $\mvx$ and $\mvy$ is
\begin{align}\label{eq:def-T}
	T(\mvx,\mvy):=\inf_{\mvpi\in\cP_{\mvx,\mvy}}W_{\mvpi},
\end{align}
where $\cP_{\mvx,\mvy}$ is the set of finite self-avoiding paths from $\mvx$ to $\mvy$. We write $\abs{\mvpi}$ for the number of edges of $\mvpi$, the \emph{hop count}. The \emph{growth set} and its diameter, respectively, are
\begin{align}\label{eq:def-ball}
	\vB_t(\mvx):=\{\mvy\in\cV:T(\mvx,\mvy)\le t\},
	\text{ and }
	\vD_t(\mvx):=\sup\{\dist(\mvy,\mvz):\mvy,\mvz\in\vB_t(\mvx)\}\in[0,\infty].
\end{align}

The multiplicative structure in~\eqref{eq:def-W} is what makes the model interesting. The factor $1/(V_{\mvu}V_{\mvv})$ is a \emph{shared} discount: a single exceptional vertex simultaneously cheapens every one of the infinitely many edges incident to it, so exceptional vertices act as \emph{routers} or \emph{influencers}. The factor $\go_{\mvu\mvv}$ is a \emph{private} discount attached to one pair only.
Taking $V\equiv1$ gives long-range first-passage percolation, whereas taking $\go\equiv1$ gives a purely vertex-weighted first-passage model.
For exponential edge noises and power-law costs, thresholding the passage times gives the exponential connection rule of scale-free percolation; see Remark~\ref{rem:LLPS-comparison}.
The interest of the general model lies in the competition between the two mechanisms, and in the fact that the resulting phase diagram is genuinely two-dimensional: neither mechanism dominates throughout.

A motivating example is a spatial-interaction or individual-level epidemic model whose pairwise transmission rates separate endpoint activity from spatial friction
\cite{Wilson1971SpatialInteraction,Deardon2010IndividualLevel}:
\begin{align}\label{eq:applied-rate}
	\lambda_{\mvu\mvv}
	=
	A_{\mvu}B_{\mvv}
	K\bigl(\dist(\mvu,\mvv)\bigr),
	\qquad
	K(r)\asymp r^{-\ga}.
\end{align}
Here, $A_{\mvu}$ captures source infectivity, $B_{\mvv}$ target susceptibility, and $K$ the spatial friction. If a transmission with rate $\lambda_{\mvu\mvv}$ is driven by an independent exponential clock $E_{\mvu\mvv}$, its waiting time is
\begin{align*}
	\frac{E_{\mvu\mvv}}{\lambda_{\mvu\mvv}}
	\asymp
	\dist(\mvu,\mvv)^{\ga}
	\frac{E_{\mvu\mvv}}{A_{\mvu}B_{\mvv}}.
\end{align*}

The ILRFPP model is a symmetric, random-metric abstraction of this dynamic. It sets $A_{\mvu}=B_{\mvu}=V_{\mvu}$ and replaces the exponential clock with a general pair-specific noise $\go_{\mvu\mvv}$.
This abstraction naturally captures settings where spatial separation, endpoint heterogeneity, and random link quality all compete. Farm-level epidemics and superspreading rely on heterogeneous activity marks~\cite{Keeling2001FMD,LloydSmith2005Superspreading}. Human-mediated dispersal distinguishes source and destination effects~\cite{TakahashiPark2020SourceDestination}. Similarly, transportation-driven cascades motivate long-range, random arrival-time metrics~\cite{HufnagelBrockmannGeisel2004GlobalEpidemics,Balcan2009MultiscaleMobility,BrockmannHelbing2013EffectiveDistance,GomezRodriguezBalduzziScholkopf2011NetRate}. We do not claim that these applied models share an identical phase diagram. Rather, ILRFPP isolates the fundamental structural question of whether fast transport is governed by reusable endpoint hubs or by rare pair-specific shortcuts.

\subsection{Assumptions}\label{ssec:assumptions}
We work under the following assumptions throughout this article.

\begin{enumerate}[label=\textup{(A\arabic*)},ref=A\arabic*,leftmargin=*,itemsep=1ex]
	\item \label{A1}\textbf{Base graph.} We take $\cV=\dZ^d$ with $d\ge1$, and we let
	      \begin{align*}
		      \sE_{\mathrm{com}}:=\big\{\la\mvx\mvy\ra:\mvx,\mvy\in\dZ^d, \mvx\ne\mvy\big\}
	      \end{align*}
	      be the complete edge set. The lattice supplies polynomial volume growth of exponent $d$: there is $c\ge1$ with
	      \begin{align*}
		      c^{-1}\le r^{-d} \big|\{\mvy\in\cV: \dist(\mvx,\mvy)\le r\}\big|\le c
		      \text{ for all }\mvx\in\cV, r\ge1.
	      \end{align*}
	      We use the $\ell_1$ distance
	      $\norm{\mvx-\mvy}:=\sum_{i\le d}\abs{x_i-y_i}$ in the constructions and the
	      $\ell_\infty$ distance $\norm{\cdot}_\infty$ when convenient. The two are
	      equivalent on $\dZ^d$ and every statement below is insensitive to this choice.
	      Several estimates use only polynomial volume growth, but the path constructions
	      also use lattice geometry. Extensions to other vertex-transitive graphs of
	      polynomial growth therefore require a suitable replacement for the Euclidean
	      placement of balls and corridors; see Section~\ref{sec:discussion}.

	\item \label{A2} \textbf{Cost function.} Fix $\ga>0$ and take $\cost(r)=r^{\ga}$, so that
	      \begin{align}\label{eq:W-explicit}
		      W_{\mvx\mvy}=\frac{\norm{\mvx-\mvy}^{\ga} \go_{\mvx\mvy}}{V_{\mvx}V_{\mvy}}.
	      \end{align}
	      The proofs below are written for the exact power law $\cost(r)=r^\ga$. Many exponent-level estimates remain valid for $\cost(r)=r^{\ga+o(1)}$ away from the phase boundaries, with a correspondingly small loss in the stated exponent. At a boundary, however, a slowly varying factor in $\cost(r)=r^\ga L(r)$ can affect the answer and must be retained.

	\item \label{A3}\textbf{Vertex weights.} Fix $\gc>0$. The law of $V$ satisfies
	      \begin{align}\label{def:V_x}
		      \pr(0<V<\infty)=1
		      \text{ and }
		      \lim_{x\to\infty}\frac{\log\pr(V\ge x)}{\log x}=-\gc.
	      \end{align}
	      Equivalently, $\pr(V\ge x)=L_V(x)x^{-\gc}$ for a function $L_V$ with $\log L_V(x)/\log x\to0$.

	\item \label{A4}\textbf{Edge noises.} Fix $\theta>0$ and $\gz>0$. The law of $\go$ satisfies
	      \begin{align}\label{def:go}
		      \lim_{x\downarrow0}\frac{\log\pr(\go\le x)}{\log x}=\theta
		      \text{ and }
		      \E\go^{\gz}<\infty.
	      \end{align}
	      Equivalently, $\pr(\go\le x)=L_\go(1/x)x^{\theta}$ near $0$ for a function $L_\go$ with $\log L_\go(y)/\log y\to0$ as $y\to\infty$, and $\go$ has a finite moment of some positive order. As in \textup{\eqref{A3}}, this is weaker than regular variation.
\end{enumerate}
The main theorems below are proved under \eqref{A1}--\eqref{A4}. See Section~\ref{ssec:disc-assumptions} for a discussion of these assumptions and Remark~\ref{rem:IVH-lossless} for a refinement under a stronger vertex-tail bound.

We will use the following conventions: $c,C,c_1,C_1,\dots$ denote finite positive constants that may change from line to line and depend only on $d,\ga,\gc,\theta,\gz$ and the laws of $V,\go$ unless indicated. Constants that depend on additional parameters carry them as arguments.

\subsection{Conjectured phase diagram and summary of the main results}\label{ssec:summary}

Set
\begin{align}\label{eq:def-q}
	q_{\rm hub}:=\frac{d}{\gc}
	\text{ and }
	q_{\rm edge}:=\frac{d}{\theta}.
\end{align}
These are the two \emph{competing exponents}: $q_{\rm hub}$ is the exponent governing how cheaply a path can reach a distant \emph{hub}, whereas $q_{\rm edge}$ governs how cheaply it can find a distant \emph{bridge}. Section~\ref{ssec:heuristics} derives both. The five conjectured growth phases are
\begin{align}\label{eq:regime-list}
	\begin{aligned}
		\textbf{I}   & :  \ga<q_{\rm hub}\vee q_{\rm edge},
		             & \qquad
		\textbf{II}  & :  q_{\rm hub}\vee q_{\rm edge}<\ga<2q_{\rm hub},                              \\
		\textbf{III} & :  2q_{\rm hub}\vee q_{\rm edge}<\ga<2q_{\rm edge},
		             & \qquad
		\textbf{IV}  & :  2(q_{\rm hub}\vee q_{\rm edge})<\ga<1+2(q_{\rm hub}\vee q_{\rm edge}),      \\
		\textbf{V}   & :  \ga>1+2(q_{\rm hub}\vee q_{\rm edge}).                                 &  &
	\end{aligned}
\end{align}
Phases I, IV and V split further according to which mechanism is responsible, and we append a subscript ${\rm H}$ (hub-dominated, $\gc<\theta$) or ${\rm E}$ (edge-dominated, $\gc>\theta$). This gives the eight regimes
\begin{align*}
	\rm I_H, I_E, II, III, IV_H, IV_E, V_H, V_E,
\end{align*}
drawn in Figure~\ref{fig:phase}. Throughout, we fix a direction $\mvx\in B(\mvzero,1)\setminus\{\mvzero\}$ and write
\begin{align}\label{eq:def-Tn}
	T_n:=T\big(\mvzero,\lceil n\mvx\rceil\big).
\end{align}

\begin{figure}[htbp]
	\centering
	\begin{tikzpicture}[
			x=108bp, y=45bp,scale=.75,
			line join=round, line cap=round,
			region/.style={draw=black, line width=0.8bp},
			curve/.style={samples=120, smooth},
			axis/.style={draw=black, line width=1.2bp,
						-{Triangle[length=7bp,width=4.7bp]}},
			tick/.style={draw=black, line width=0.4bp},
		]

		\def\xmax{3.45}   
		\def\ymax{4}      

		\filldraw[region, fill=colIH]
		(0,0) -- (1,0) -- (1,1)
		-- plot[curve, domain=1:0.25] (\x,{1/\x})
		-- (0,\ymax) -- cycle;

		\filldraw[region, fill=colII]
		(0.5,\ymax)
		-- plot[curve, domain=0.5:2] (\x,{2/\x})
		-- (1,1)
		-- plot[curve, domain=1:0.25] (\x,{1/\x})
		-- cycle;

		\filldraw[region, fill=colIVH]
		(0.5,\ymax)
		-- plot[curve, domain=0.5:1] (\x,{2/\x})
		-- (1,3)
		-- plot[curve, domain=1:0.6666667] (\x,{1+2/\x})
		-- cycle;

		\filldraw[region, fill=colVH]
		(0.6666667,\ymax)
		-- plot[curve, domain=0.6666667:1] (\x,{1+2/\x})
		-- (1,\ymax) -- cycle;

		\filldraw[region, fill=colIE] (1,0) rectangle (\xmax,1);

		\filldraw[region, fill=colIII]
		(1,2) -- (\xmax,2) -- (\xmax,1) -- (2,1)
		-- plot[curve, domain=2:1] (\x,{2/\x}) -- cycle;

		\filldraw[region, fill=colIVE] (1,2) rectangle (\xmax,3);

		\filldraw[region, fill=colVE] (1,3) rectangle (\xmax,\ymax);

		\draw[axis] (0,0) -- (3.5241,0);
		\draw[axis] (0,0) -- (0,4.1778);

		\foreach \tx in {1,2,3}{\draw[tick] ([yshift=-3bp]\tx,0) -- ([yshift=3bp]\tx,0);}
		\foreach \ty in {1,2,3}{\draw[tick] ([xshift=-3bp]0,\ty) -- ([xshift=3bp]0,\ty);}

		\node[font=\Large] at (0.55,0.70) {$\mathrm{I}_{\mathrm{H}}$};
		\node[font=\Large] at (2.40,0.55) {$\mathrm{I}_{\mathrm{E}}$};
		\node[font=\Large] at (0.72,2.05) {$\mathrm{II}$};
		\node[font=\Large] at (2.40,1.55) {$\mathrm{III}$};
		\node[font=\Large] at (0.86,2.93) {$\mathrm{IV}_{\mathrm{H}}$};
		\node[font=\Large] at (2.40,2.55) {$\mathrm{IV}_{\mathrm{E}}$};
		\node[font=\Large] at (0.88,3.74) {$\mathrm{V}_{\mathrm{H}}$};
		\node[font=\Large] at (2.40,3.55) {$\mathrm{V}_{\mathrm{E}}$};

		\node[font=\huge, anchor=east]  at (-0.1111,3.9111) {$\alpha$};
		\node[font=\huge, anchor=north] at (3.4685,-0.1333) {$\gamma$};

		\node[font=\large, anchor=north east] at (-0.0556,-0.1333) {$0$};
		\node[font=\large, anchor=north]      at (1,-0.1556) {$\theta$};
		\node[font=\large, anchor=north]      at (2,-0.1556) {$2\theta$};
		\node[font=\large, anchor=north]      at (3,-0.1556) {$3\theta$};

		\node[font=\large, anchor=east] at (-0.1296,1) {$\displaystyle\frac{d}{\theta}$};
		\node[font=\large, anchor=east] at (-0.1296,2) {$\displaystyle\frac{2d}{\theta}$};
		\node[font=\large, anchor=east] at (-0.1296,3) {$\displaystyle 1+\frac{2d}{\theta}$};
	\end{tikzpicture}
	\caption{The conjectured phase diagram in the $(\gc,\ga)$-plane. The horizontal lines $\ga=d/\theta, 2d/\theta, 1+2d/\theta$ are the edge-mechanism thresholds. The hyperbolas $\ga=d/\gc, 2d/\gc, 1+2d/\gc$ are the hub-mechanism thresholds. The vertical line $\gc=\theta$ separates the hub-dominated half plane ($\gc<\theta$, subscript ${\rm H}$) from the edge-dominated one ($\gc>\theta$, subscript ${\rm E}$). Note that regime II protrudes into $\{\gc>\theta\}$ as far as $\gc=2\theta$: for $\theta<\gc<2\theta$ and $d/\theta<\ga<2d/\gc$ the hub mechanism still wins, even though the vertex weights have a lighter tail than the edge noises.}
	\label{fig:phase}
\end{figure}

The results of this article are upper bounds in each of the eight regimes, of the order predicted by the heuristics of Section~\ref{ssec:heuristics}. Matching lower bounds outside phases I and II are deferred to a companion manuscript~\cite{CDK26} in preparation. Here we include the short lower-bound arguments for regimes $\rm I$ and $\rm II$. Table~\ref{table:summary} summarizes the results proved in this article. Precise statements are presented in Sections~\ref{sec:mainresv} and~\ref{sec:mainrese}.
Here
\begin{align}\label{eq:def-DeltaIII}
	\Delta_{\rm III}
	:=\frac{\log2}{\log\big(2d/(\ga\theta)\big)}
	=\frac{\log 2}{\log(2q_{\rm edge}/\ga)}
	\in(1,\infty)
\end{align}
in Regime III, $H_{\mvx}$ is the hop-count profile of~\eqref{eq:def-H}, and $\tau_\infty:=\inf\{t\ge0:\abs{\vB_t(\mvzero)}=\infty\}$ is the explosion time.

\begin{table}[htbp]
	\begin{center}
		\renewcommand{\arraystretch}{2}
		\begin{tabular}{@{}llll@{}}
			\toprule
			Region        & Parameter range                                                           & Result proved in this article                                        & Reference                            \\
			\midrule
			$\rm I_H$     & $\ga<\frac{d}{\gc}$                                                       & $T(\mvzero,\mvx)=0$ a.s.; $H_{\mvx}\equiv2$ on $(0,W_{\mvzero\mvx})$ & Thms.~\ref{thm:IH},~\ref{thm:IH-hop} \\
			$\rm I_E$     & $\ga<\frac{d}{\theta}$                                                    & $T(\mvzero,\mvx)=0$ a.s.                                             & Thm.~\ref{thm:IE}                    \\
			$\rm II$      & $\frac{d}{\gc}\vee\frac{d}{\theta}<\ga<\frac{2d}{\gc}$
			              & $T_n=\gTh_{\pr}(1)$; $\tau_\infty<\infty$; $\abs{\mvpi}\gtrsim\log\log n$
			              & Thms.~\ref{thm:II-ub},~\ref{thm:II-lb},~\ref{thm:II-hop}                                                                                                                                \\
			$\rm III$     & $\frac{2d}{\gc}\vee\frac{d}{\theta}<\ga<\frac{2d}{\theta}$
			              & $T_n=O_{\pr}\big((\log n)^{\Delta_{\rm III}+\eps}\big)$                   & Thm.~\ref{thm:III-ub}                                                                                       \\
			$\rm IV_H$    & $\frac{2d}{\gc}<\ga<1+\frac{2d}{\gc}$, $\gc<\theta$
			              & $T_n=O_{\pr}\big(n^{\ga-2d/\gc+\eps}\big)$                                & Thm.~\ref{thm:IVH-ub}                                                                                       \\
			$\rm IV_E$    & $\frac{2d}{\theta}<\ga<1+\frac{2d}{\theta}$, $\gc>\theta$
			              & $T_n=O_{\pr}\big(n^{\ga-2d/\theta+\eps}\big)$                             & Thm.~\ref{thm:IVE-ub}                                                                                       \\
			$\rm V_H,V_E$ & $\ga>1+\frac{2d}{\gc\wedge\theta}$
			              & $T_n=O_{\pr}(n)$                                                          & Thm.~\ref{thm:linear-ub}                                                                                    \\[2ex]
		\end{tabular}
		\caption{The rows labeled $\rm I_H$ and $\rm I_E$ refer to the corresponding sides
			$\gc<\theta$ and $\gc>\theta$ of the diagram. The theorems themselves extend
			into the overlap, as Remarks~\ref{rem:IH} and~\ref{rem:IE} explain.}
		\label{table:summary}
	\end{center}
\end{table}

\begin{rem}[Removing the exponent loss in $\rm IV_H$]\label{rem:IVH-lossless}
	Suppose in addition that
	\begin{align}\label{def:V-strong}
		\pr(V\ge x)\ge c_Vx^{-\gc}
		\text{ for all }x\ge x_V,
	\end{align}
	for some $c_V>0$ and $x_V\ge1$. Then $T_n=O_{\pr}(n^{\ga-2d/\gc})$ under the assumptions of Theorem~\ref{thm:IVH-ub}. The exponent loss comes from using $\tilde\gc>\gc$ in the hub normalization. This bound allows $\tilde\gc=\gc$, and the same summable-chain estimates apply. See the proof of Theorem~\ref{thm:IVH-ub} in Section~\ref{sec:proof-IVH}.
\end{rem}

Two features of the table deserve emphasis. The first is the \emph{asymmetry} between II and III. The two parameter windows $\ga<2q_{\rm hub}$ and $\ga<2q_{\rm edge}$ look symmetric, and the one-bridge heuristics of Section~\ref{ssec:heuristics} produce them by identical computations. Yet the hub construction yields a tight family $T_n=\gTh_{\pr}(1)$ in Regime II, while the edge construction yields the polylogarithmic upper bound of Theorem~\ref{thm:III-ub}. Section~\ref{ssec:dichotomy} explains the difference: a hub discounts all of its edges at once and therefore supports a non-branching \emph{chain}, whereas a bridge discounts a single pair and therefore forces the construction to \emph{bifurcate}. The entropy of the resulting binary tree is precisely the $\log2$ in~\eqref{eq:def-DeltaIII}.
The second is the \emph{protrusion} of regime II past $\gc=\theta$ visible in Figure~\ref{fig:phase}. The label ``hub-dominated'' does not simply mean $\gc<\theta$. For $\theta<\gc<2\theta$ and $d/\theta<\ga<2d/\gc$ the vertex weights have the lighter tail, and yet the hub-chain construction still beats the bridge construction, because a chain is cheaper than a tree even when its individual steps are more expensive. The correct dividing line between the two mechanisms is $\ga=2q_{\rm hub}$ inside $\{\ga>q_{\rm edge}\}$, not $\gc=\theta$.

\subsection{Heuristics behind the phase boundaries}\label{ssec:heuristics}

All six thresholds in~\eqref{eq:regime-list} can be located heuristically by an extreme-value computation performed separately for the two mechanisms. We now present this heuristic.

\subsubsection*{The edge mechanism ($\gc>\theta$; freeze $V\asymp1$)}
First, we present the edge computation by freezing $V_{\mvx}\asymp1$. The hub computation is the mirror image with $\theta$ replaced by $\gc$.

\begin{enumerate}[label=\textup{(E\arabic*)},ref=E\arabic*,leftmargin=*,itemsep=1ex]
	\item \label{E1} \textbf{The threshold $q_{\rm edge}=d/\theta$ and point-to-bulk escape.}
	      To jump from a fixed vertex to \emph{somewhere} in a target block of volume $\asymp\ell^d$ at distance $\ell$, there are $\asymp\ell^d$ candidate edges. The logarithmic tail assumption gives $\pr(\go\le t)=t^{\theta+o(1)}$ as $t\downarrow0$, so the smallest of $\ell^d$ i.i.d.~copies of $\go$ has logarithmic order $\ell^{-d/\theta+o(1)}$. Suppressing sub-polynomial factors in this heuristic, we obtain
	      \begin{align}\label{eq:heur-escape}
		      W_{\rm escape}(\ell)\asymp\ell^{\ga}\min_{1\le j\le\ell^d}\go_j\asymp\ell^{\ga-d/\theta}.
	      \end{align}
	      If $\ga<d/\theta$ this cost vanishes as $\ell\to\infty$: one can escape arbitrarily far at arbitrarily small cost. The second-moment argument in Section~\ref{sec:proof-IE} turns this heuristic into $T\equiv0$. This is Regime $\rm I_E$.

	\item \textbf{The threshold $2q_{\rm edge}=2d/\theta$ and bulk-to-bulk bridging.}
	      Suppose $\ga>d/\theta$, so that a single vertex can no longer escape for free, and consider instead bridging two disjoint blocks of volume $\asymp\ell^d$ at distance $\asymp\ell$. There are now $N(\ell)\asymp\ell^{2d}$ candidate edges, and if $M_\ell$ denotes the minimum of their noises,
	      \begin{align*}
		      \pr(M_\ell>t)\approx\big(1-c t^{\theta}\big)^{\ell^{2d}}
		      \approx\exp\big\{-c \ell^{2d}t^{\theta}\big\},
	      \end{align*}
	      so $M_\ell\asymp\ell^{-2d/\theta}$ and the cheapest available bridge at scale $\ell$ costs
	      \begin{align}\label{eq:heur-bridge}
		      W_{\rm bridge}(\ell)\asymp\ell^{\ga}M_\ell\asymp\ell^{\ga-2d/\theta}.
	      \end{align}
	      The exponent changes sign at $\ga=2d/\theta$. If $\ga<2d/\theta$, longer bridges are individually \emph{cheaper}. A naive chain of $n/\ell$ equal hops would then cost $n\ell^{\ga-2d/\theta-1}$, which decreases in $\ell$ and suggests using a single macroscopic bridge. But after taking that bridge one is left with the \emph{same} problem at two smaller scales, and the recursion -- not the chain -- is what actually governs the cost. This is the origin of the multi-scale binary tree of Section~\ref{ssec:multi-scale-ansatz} and of Regime III.

	\item \textbf{The threshold $1+2q_{\rm edge}$ and path shattering.}
	      Once $\ga>2d/\theta$, the heuristic bridge cost increases with length and suggests a chain of hops $e_1,e_2,\dots$ with $\sum_i\norm{e_i}\ge n$ and effective cost $\sum_i\norm{e_i}^{\Delta}$, where $\Delta:=\ga-2d/\theta>0$. The resulting optimization is governed by the convexity of $x\mapsto x^{\Delta}$:
	      \begin{itemize}
		      \item If $0<\Delta<1$, \ie\ $2d/\theta<\ga<1+2d/\theta$, the map is concave and subadditive, so $\sum_i\norm{e_i}^{\Delta}\ge(\sum_i\norm{e_i})^{\Delta}\ge n^{\Delta}$, with equality for a single macroscopic leap. Fragmentation is penalized, suggesting $T_n$ of order $n^{\ga-2d/\theta}$. This is Regime $\rm IV_E$.
		      \item If $\Delta>1$, \ie\ $\ga>1+2d/\theta$, the map is convex and
		            $\sum_i\norm{e_i}^{\Delta}=\sum_i\norm{e_i}\cdot\norm{e_i}^{\Delta-1}\ge n\min_i\norm{e_i}^{\Delta-1}$,
		            which is minimized by shattering every jump into microscopic steps, $\norm{e_i}\asymp1$. This \emph{path shattering} predicts classical linear growth for $T_n$ in Regime $\rm V_E$.
	      \end{itemize}
\end{enumerate}

\subsubsection*{The hub mechanism ($\gc<\theta$; freeze $\go\asymp1$)}
The same three computations with $V$ in place of $\go$ give the hyperbolic boundaries. Since $\pr(V\ge x)=x^{-\gc+o(1)}$, the largest of $\ell^d$ weights in a block has logarithmic order $\ell^{d/\gc+o(1)}$. Again suppressing sub-polynomial factors, the cost of jumping from a typical vertex to the best hub of a block at distance $\ell$ is $\ell^{\ga-d/\gc}$, giving the threshold $q_{\rm hub}=d/\gc$ and Regime $\rm I_H$. The cost of a hop between the best hubs of two blocks at distance $\ell$ is
\begin{align}\label{eq:heur-hub-hop}
	\frac{\ell^{\ga}}{\ell^{d/\gc}\cdot\ell^{d/\gc}}=\ell^{\ga-2d/\gc},
\end{align}
giving the thresholds $2q_{\rm hub}$ and, by the same convexity dichotomy, $1+2q_{\rm hub}$.

\medskip
The point of the phase diagram is that these two families of thresholds are \emph{not} interchangeable, even though the computations that produce them are. That is the content of the next subsection.

\subsection{The geometric dichotomy: chains versus binary trees}\label{ssec:dichotomy}

To contrast Regimes II and III, note that in Regime II we have $\ga<2q_{\rm hub}$, so a hop between the best hubs at scale $\ell$ costs $\ell^{\ga-2d/\gc}\to0$ by~\eqref{eq:heur-hub-hop}. In Regime III we have $\ga<2q_{\rm edge}$, so a bridge at scale $\ell$ costs $\ell^{\ga-2d/\theta}\to0$ by~\eqref{eq:heur-bridge}. Although individual step costs decay in both cases as the scale grows, the constructions lead to different proved upper bounds: $T_n=\gTh_{\pr}(1)$ in Regime II and $T_n=O_{\pr}((\log n)^{\Delta_{\rm III}+\eps})$ in Regime III. This difference stems from the distinct topological structures of the two constructed path families.

\begin{itemize}[leftmargin=*, itemsep=1ex]
	\item \textbf{Two-sided hub discounts and support chains.}
	      The discount $1/V_{\mvx}$ is attached to the \emph{vertex} $\mvx$, meaning an exceptionally heavy hub simultaneously discounts both its incoming and outgoing edges. The proof exploits this via a non-branching chain
	      \begin{align*}
		      \mvzero\to\mvx_{-k}\to\cdots\to\mvx_{-1}\to\mvx_0\to\mvx_1\to\cdots\to\mvx_k\to\lceil n\mvx\rceil,
	      \end{align*}
	      visiting one selected hub per scale $n_i$.
	      To control large edge noises, we do not simply route through the heaviest vertex in each ball. Instead, we retain the $\ell$ heaviest candidates and select the hub minimizing the connecting edge noise. Since $\E\go^{\gz}<\infty$, fixing an integer $\ell>2/\gz$ ensures this minimum $\go_{[\ell]}$ has a finite second moment (Lemma~\ref{lem:min-moments}), while the selected hub weight remains asymptotically as large as the maximum. With $O(1)$ hops per scale, the scale map and hub truncation can be chosen so that the deterministic chain-cost coefficients are summable uniformly in $n$ whenever $\ga<2d/\gc$. This is the Hub-Chain Ansatz in Section~\ref{ssec:Ansatz_II_IVH}. See Figure~\ref{fig:hub-ansatz}.

	\item \textbf{Pair-specific bridge discounts and binary recursion.}
	      The noise $\go_{\mvu\mvv}$ is attached to a \emph{pair} $\{\mvu,\mvv\}$. To utilize a cheap macroscopic bridge, the path must resolve two remaining routing problems: reaching $\mvu$, and continuing from $\mvv$. Thus, bridging one gap creates two smaller gaps, requiring $2^i$ distinct bridges at depth $i$.

	      This binary structure governs the multiscale search, while the actual path remains a single non-branching sequence. Balancing the geometric decay of individual bridge costs against this $2^i$ entropy penalty truncates the recursion at depth $k\asymp\log\log n$. The total cost is dominated by the $2^k\asymp(\log n)^{\log 2/\log(1/\phi)}$ terminal segments. Optimizing the multiscale ratio $\phi$ over the admissible range gives the exponent $\Delta_{\rm III}$ in~\eqref{eq:def-DeltaIII}, with the factor $\log 2$ originating directly from the binary recursion. See Figure~\ref{fig:binary-multiscale-ansatz}.
\end{itemize}

This structural dichotomy explains the extension of Regime II into the $\{\gc>\theta\}$ regime noted after the summary table. A non-branching chain of slightly more expensive edges can yield a tighter bound than a binary tree of cheaper ones by avoiding the exponential entropy penalty. Thus, the hub construction remains optimal in the parameter wedge $\theta<\gc<2\theta$ and $d/\theta<\ga<2d/\gc$.

\subsection{Related work}\label{ssec:related}

The model sits at the confluence of three models of percolation, namely, long-range percolation, long-range first-passage percolation, and scale-free percolation. Next, we briefly discuss relevant results in these three topics.

\medskip
\noindent\textbf{Long-range percolation.}
Setting $V\equiv1$ and thresholding the passage times recovers long-range percolation on $\dZ^d$, where an edge $\la\mvx\mvy\ra$ is present with probability $\asymp\norm{\mvx-\mvy}^{-s}$ for $s=\ga\theta$.
Benjamini and Berger~\cite{BenjaminiBerger2001Cycle} and Coppersmith, Gamarnik, and Sviridenko~\cite{CoppersmithGamarnikSviridenko2002Diameter} established diameter bounds on finite cycles and boxes.
For $d<s<2d$, Biskup~\cite{Biskup2004Chemical} identified the exponent $\Delta=\log2/\log(2d/s)$ for typical chemical distances on the infinite cluster, subsequently establishing it for box diameters~\cite{Biskup2011Diameter}. Biskup and Lin~\cite{BiskupLin2017Sharp} obtained two-sided constant-factor bounds on the lattice and sharper asymptotics in the continuum. Biskup and Krieger~\cite{BiskupKrieger2021Oscillations} identified a log-log-periodic normalization of typical distances outside a countable exceptional set of intensities, and proved that the periodic factor is nonconstant for sufficiently large intensities.
For $s>2d$, Berger~\cite{Berger2004LowerBound} proved a linear lower bound.

At the metric threshold $s=2d$, polynomial distance growth was established by Ding and Sly~\cite{DingSly2013Critical} in $d=1$ and B\"aumler~\cite{Baeumler2022CriticalAllD} in all dimensions. Ding, Fan, and Huang proved convergence to a unique scaling limit~\cite{DingFanHuang2023Uniqueness}, subsequently establishing geodesic uniqueness and determining their Euclidean Hausdorff dimension~\cite{DingFanHuang2025Geodesic}. Hutchcroft~\cite{Hutchcroft2025CriticalLRPI,Hutchcroft2025CriticalLRPII,Hutchcroft2025CriticalLRPIII} studied critical cluster geometry in high, low, and upper critical effective dimensions; here, criticality refers to the percolation phase transition, not the distance-growth threshold $s=2d$.

The exponent $\Delta_{\rm III}$ in~\eqref{eq:def-DeltaIII} precisely matches Biskup's exponent at $s=\ga\theta$. For fixed positive endpoint weights, an edge of length $\ell$ is usable at cost $O(1)$ with probability $\ell^{-\ga\theta+o(1)}$, and both upper-bound arguments rely on a binary multiscale construction. We do not address the boundary cases $\ga\theta\in\{d,2d\}$ or $\ga\gc\in\{d,2d\}$ here.

\medskip
\noindent\textbf{Long-range first-passage percolation.}
Setting $V\equiv 1$ reduces the model to homogeneous long-range FPP, initially studied by Mollison~\cite{Mol72} in the context of spatial epidemics. Under exponential edge noises, Chatterjee and Dey~\cite{ChatterjeeDey2013LRFPP} established growth transitions at $\ga\in\{d,2d,2d+1\}$ and a linear-regime shape theorem, with extensions to polynomial powers of exponential noises discussed therein.
For $0\le\ga<d$, van der Hofstad and Lodewijks~\cite{vdHL24} analyzed typical distances, flooding times, and diameters on finite tori.
Crucially, these prior works rely on homogeneous vertex weights; the random vertex environment in our model introduces an additional source of spatial dependence.

Spread-out FPP restricts such homogeneous models to a finite range by setting $V\equiv1$, $\cost(r)=1$ for $1\le r\le\ell$, and $\cost(r)=\infty$ for $r>\ell$.
On fixed-range line graphs, Dey and Kim~\cite{DK26a} identified Gaussian and stable fluctuation regimes.
For growing-range cycle graphs with Weibull edge noises, they established a transition between spatial and mean-field asymptotics~\cite{DK26b}.

\medskip
\noindent\textbf{Scale-free percolation and related models.}
Scale-free percolation, introduced in~\cite{DeijfenHofstadHooghiemstra2011SFP}, is a spatial random graph whose edges are independent conditionally on the vertex weights, with connection probabilities determined by the endpoint weights and distance. For $\cost(r)=r^\ga$ and $\go\sim\mathrm{Exp}(1)$, thresholding the ILRFPP passage times recovers its exponential connection rule. Its supercritical structure and weighted explosion behavior are analyzed in~\cite{HHJ17,HofstadKomjathy2017ExplosionSFP}.

Related spatial models include geometric inhomogeneous random graphs (GIRGs)~\cite{BringmannKeuschLengler2015SamplingGIRG,BringmannKeuschLengler2016AverageDistance} and hyperbolic random graphs (HRGs)~\cite{KrioukovEtAl2010Hyperbolic}. Diameter bounds for HRGs are established in~\cite{KiwiMitsche2014HRGdiameter,FriedrichEtAl2015HRGdiameter}, and limiting weighted distances in the explosive regimes of these models are obtained in~\cite{KomjathyLodewijks2018ExplosionGIRGHRG}. Jacob et al.~\cite{Jacob2025Crossing} study annulus-crossing probabilities in continuum GIRGs. The present hub-chain construction serves as a first-passage analogue of the greedy hub-routing arguments developed in these literatures.

Degree-dependent FPP on sparse spatial graphs is studied in~\cite{KomjathyLapinskasLengler2021Penalising,KomjathyEtAl2024Polynomial,KomjathyLapinskasLenglerSchaller2023}.
A present edge has passage time $L_{\mvx\mvy}(w_{\mvx}w_{\mvy})^\mu$, where $\mu\ge0$ and the variables $(L_{\mvx\mvy})$ are i.i.d.\ and independent of the graph and vertex weights. These models exhibit explosive, polylogarithmic, sublinear, and linear regimes. For $\mu>0$, larger vertex weights increase passage times on present edges. In contrast, our model considers all complete-graph edges and discounts passage times by $V_{\mvx}V_{\mvy}$, requiring a structurally different analysis.

Non-spatial FPP has been studied on complete graphs~\cite{Janson1999OneTwoThree,BhamidiHofstad2012WeakDisorder,EckhoffEtAl2012ShortPaths,EckhoffEtAl2015LongPathsI,EckhoffEtAl2015LongPathsII}, configuration models, and inhomogeneous random graphs~\cite{BhamidiHofstadHooghiemstra2010FiniteMean,BhamidiHofstadHooghiemstra2012Universality,KolossvaryKomjathy2012IRGFPP,BaroniHofstadKomjathy2015InfiniteVar,AdriaansKomjathy2017WeightedCM}. These works establish passage-time and hop-count asymptotics under various structural and distributional assumptions; however, they do not directly treat the distance-dependent passage times considered here. The monographs~\cite{vanDerHofstadRGCN1,vanDerHofstadRGCN2} survey random graphs and their metrics.

\begin{rem}[Comparison with SFP and CFFP]
	\label{rem:LLPS-comparison}
	Lakis, Lengler, Petrova, and Schiller~\cite{LLPS2024} analyze graph distances and first-passage percolation on sparse scale-free percolation (SFP) networks. To establish lower bounds, they introduce Complete Scale Free First Passage Percolation (CFFP) as an auxiliary comparison model, where edge costs are independent exponentials with rate $w_{\mvx}^{\ga_{\LLPS}} w_{\mvy}^{\ga_{\LLPS}} \norm{\mvx-\mvy}^{-d\ga_{\LLPS}}$. This is exactly the exponential--Pareto specialization of our model with $\theta=1$. Setting $\go\sim\mathrm{Exp}(1)$ and $\pr(V>v)=v^{-\gc}$ for $v\ge1$, the parameter correspondence is
	\begin{align}\label{eq:LLPS-param}
		\ga_{\LLPS}={\ga\theta}/{d}, \qquad w_{\mvx}=V_{\mvx}^{d/\ga}, \qquad \tau-1={\ga\gc}/{d}.
	\end{align}

	A monotone coupling in~\cite{LLPS2024} bounds sparse FPP distances below by CFFP cost-distances. Their argument analyzes CFFP under the moment condition $2\ga_{\LLPS}<\tau-1$ (equivalently $\gc>2$) for its role in those sparse-graph lower bounds and leaves the full phase diagram for the complete-graph metric undeveloped.
	This article started as an undergraduate project in the Fall of 2023 (see the acknowledgment section). Consequently, the exponential--Pareto specialization with $\theta=1$ independently and simultaneously appeared in the literature as CFFP. The contribution of the present paper is instead to take the complete-graph metric as the primary object, allow general logarithmic tails and arbitrary $\theta>0$, and develop constructive upper bounds across the conjectured phase diagram. The present assumptions also allow vertex weights that are not bounded away from zero and do not impose inverse-moment conditions.
\end{rem}

\subsection{Our contributions}
\label{ssec:contributions}

\begin{enumeratei}
	\item We formulate a full two-parameter phase diagram for the ILRFPP metric, see Figure~\ref{fig:phase}, and prove upper bounds of the conjectured order in each of its eight regimes. In particular, the construction locates the proposed boundary between the hub and edge mechanisms at $\ga=2q_{\rm hub}$ inside $\{\ga>q_{\rm edge}\}$, rather than at the naive line $\gc=\theta$.
	\item We isolate two reusable path architectures -- the \emph{Hub-Chain Ansatz} (Section~\ref{ssec:Ansatz_II_IVH}) and the \emph{Binary Edge-Bridge multi-scale Ansatz} (Section~\ref{ssec:multi-scale-ansatz}). For each architecture, we prove a master estimate (Lemma~\ref{lem:hub-ub} and Proposition~\ref{prop:Ansatz-bound}) that reduces the regime-specific upper bounds to an optimization over the scale map $f_\phi$, the depth $k(n)$, and a sequence of truncation thresholds.
	\item We demonstrate that the two mechanisms yield structurally distinct optimal paths, and we quantify the difference: the hub-chain cost forms a convergent series and yields tightness, whereas the binary tree structure introduces an entropy factor of $2^k$, leading to divergence. Thus, the exponent $\gD_{\rm III}$ emerges as a branching number.
	\item We provide matching lower bounds for Regime II (Theorems~\ref{thm:II-lb} and~\ref{thm:II-hop}), showing that $T_n=\gTh_{\pr}(1)$ there and that, despite tightness, the hop count of any bounded-cost path diverges at rate at least $\log\log n$: the metric collapses but the combinatorial distance does not.
	\item In Regime $\rm I_H$ we determine the hop-count profile exactly (Theorem~\ref{thm:IH-hop}): the infimum $T(\mvzero,\mvx)=0$ is attained by no finite path, but every positive budget below $W_{\mvzero\mvx}$ is met by infinitely many $2$-hop paths. This is a sharp form of ``instantaneous percolation through hubs''.
\end{enumeratei}

\smallskip\noindent\textbf{Universality and tail assumptions.}
The exponents in the conjectured phase diagram and in the upper bounds proved here depend only on $d,\ga,\gc$, and $\theta$. The implicit constants may depend on the specific weight laws. The proofs require only the logarithmic upper tail of $V$, the lower tail of $\go$, and a finite positive moment for $\go$; no density, exact tail matching, or regular variation is assumed. As detailed in Section~\ref{ssec:disc-assumptions}, the upper-bound arguments use one-sided lower estimates on the relevant tails. In Regimes III--V, universality of the actual asymptotic growth exponent remains contingent on matching lower bounds.

\subsection{Simulations}\label{ssec:simulations}
Figures~\ref{fig:sim-II-III}--\ref{fig:sim-V} show finite-volume simulations of the growth model on an $800\times800$ box in $d=2$. The vertex weights are sampled as i.i.d.~variables with $\pr(V>v)=v^{-\gc}$ for $v\geq 1$, and the edge noises are i.i.d.~$\mathrm{Exp}(1)$, giving $\theta=1$. To avoid artificial edge truncation, we simulated the exact continuous-time Markov growth process, stopping when $15\%$ of the vertices ($96{,}000$) were reached. The plots are therefore qualitative illustrations of the geometric dichotomy in Section~\ref{ssec:dichotomy}, not numerical tests of the asymptotic theorems. The color scales vary between panels and should be compared only after reading their labels.
\begin{figure}[thbp]
	\centering
	\begin{minipage}[t]{0.45\textwidth}
		\includegraphics[height=.85\linewidth]{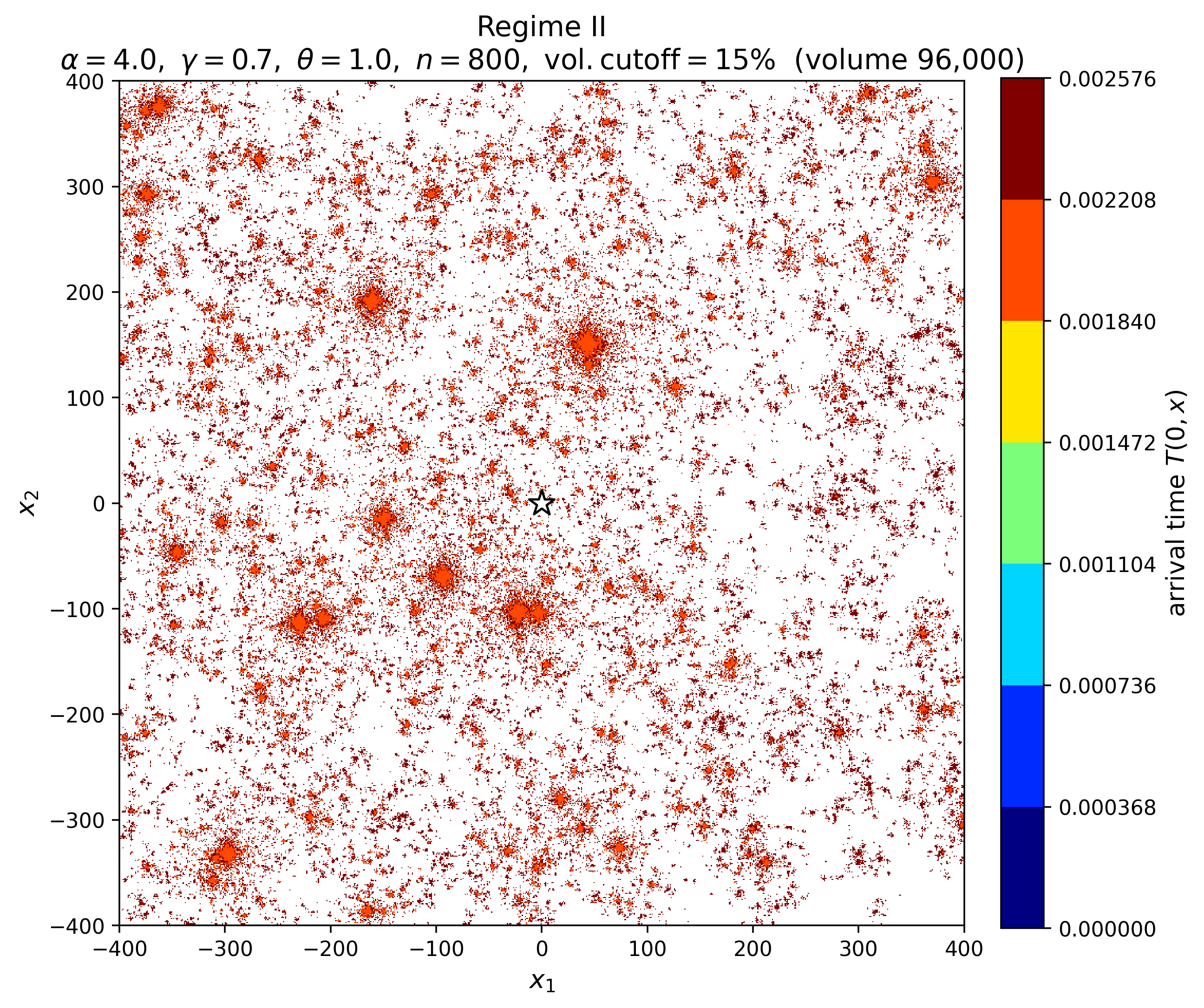}
	\end{minipage}
	\begin{minipage}[t]{0.45\textwidth}
		\includegraphics[height=.85\linewidth]{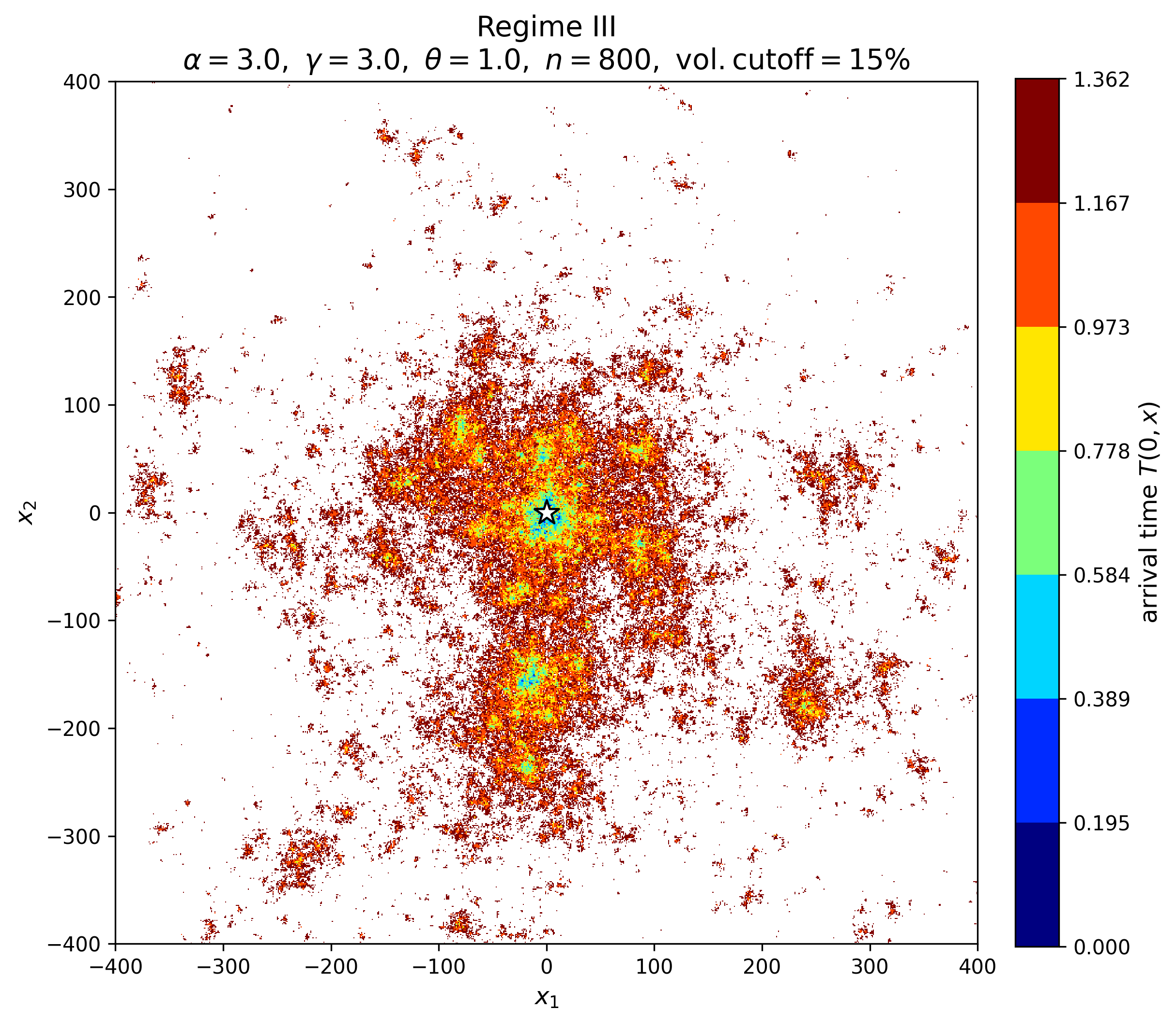}
	\end{minipage}
	\caption{Left: Regime II ($\ga=4$, $\gc=0.7$). The box is reached by time approximately $2.6\times10^{-3}$, with no visible radial structure. Instead, one sees scattered hub neighborhoods. This is qualitatively consistent with tight passage times and finite-time explosion. Right: Regime III ($\ga=3$, $\gc=3$). A radial gradient is visible, while the reached set remains irregular. Passage times are several hundred times larger in this finite simulation. Because the color scales differ, this comparison is only qualitative.}
	\label{fig:sim-II-III}
\end{figure}

\begin{figure}[thbp]
	\centering
	\begin{minipage}[t]{0.45\textwidth}
		\includegraphics[height=\linewidth]{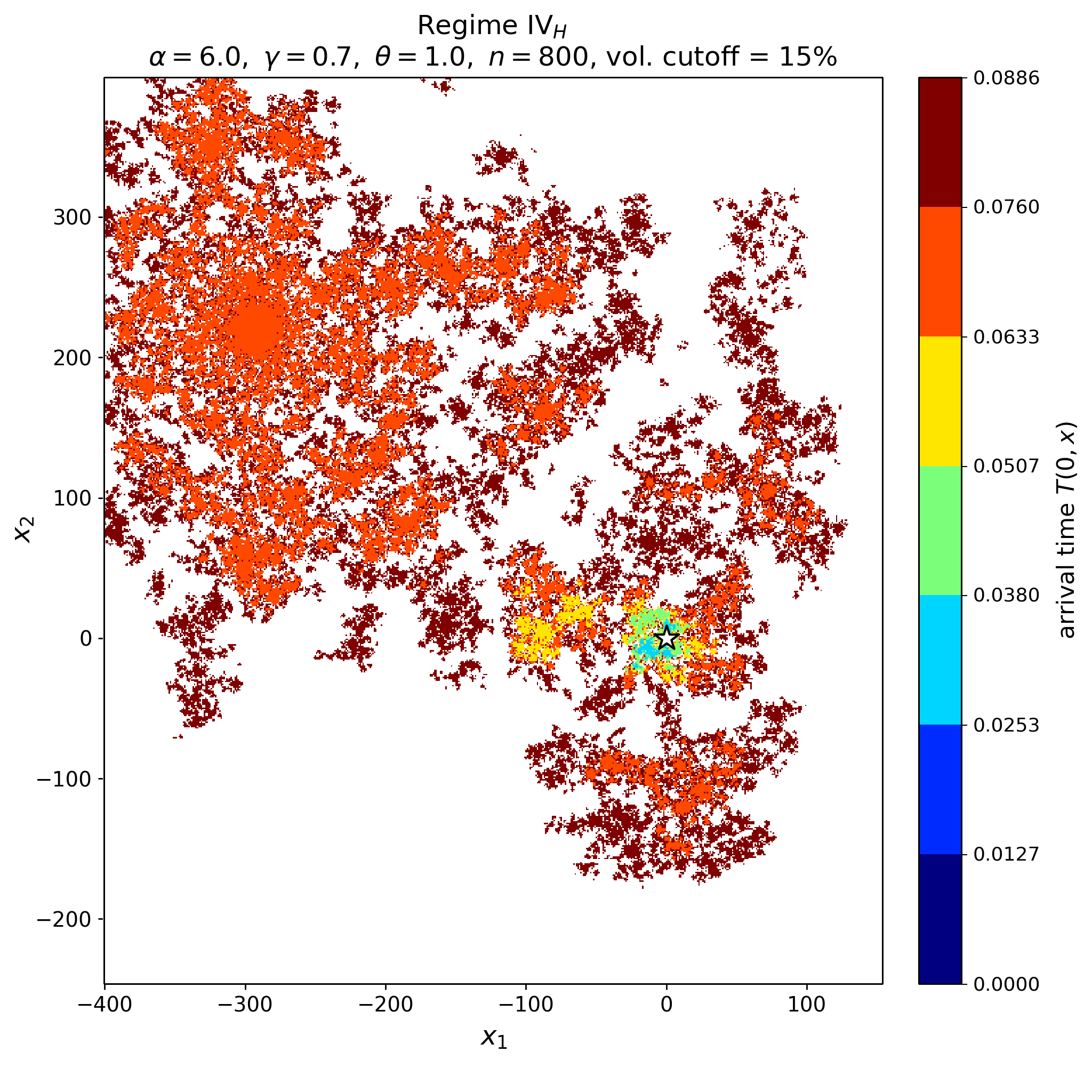}
	\end{minipage}
	\begin{minipage}[t]{0.45\textwidth}
		\includegraphics[height=\linewidth]{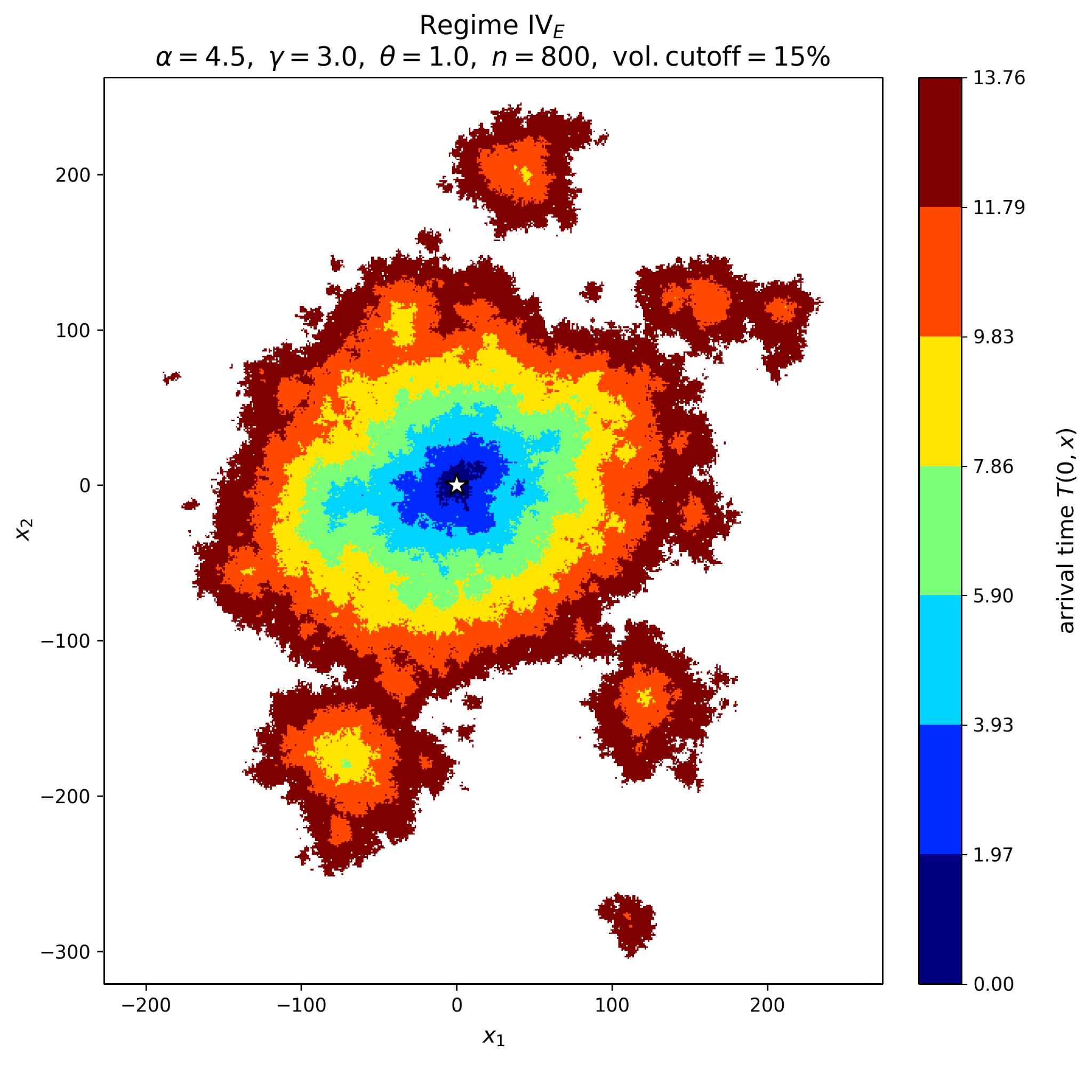}
	\end{minipage}
	\caption{The two power-law regimes, which have sublinear upper bounds on passage times and hence conjecturally superlinear spatial growth. Left: $\rm IV_H$ ($\ga=6$, $\gc=0.7$). A few large hubs dominate the reached set, which is strongly anisotropic and not centered at the origin. Right: $\rm IV_E$ ($\ga=4.5$, $\gc=3$). A roughly radial cluster coexists with detached islands, consistent with isolated cheap long bridges.}
	\label{fig:sim-IV}
\end{figure}

\begin{figure}[thbp]
	\centering
	\begin{minipage}[t]{0.45\textwidth}
		\includegraphics[height=.85\linewidth]{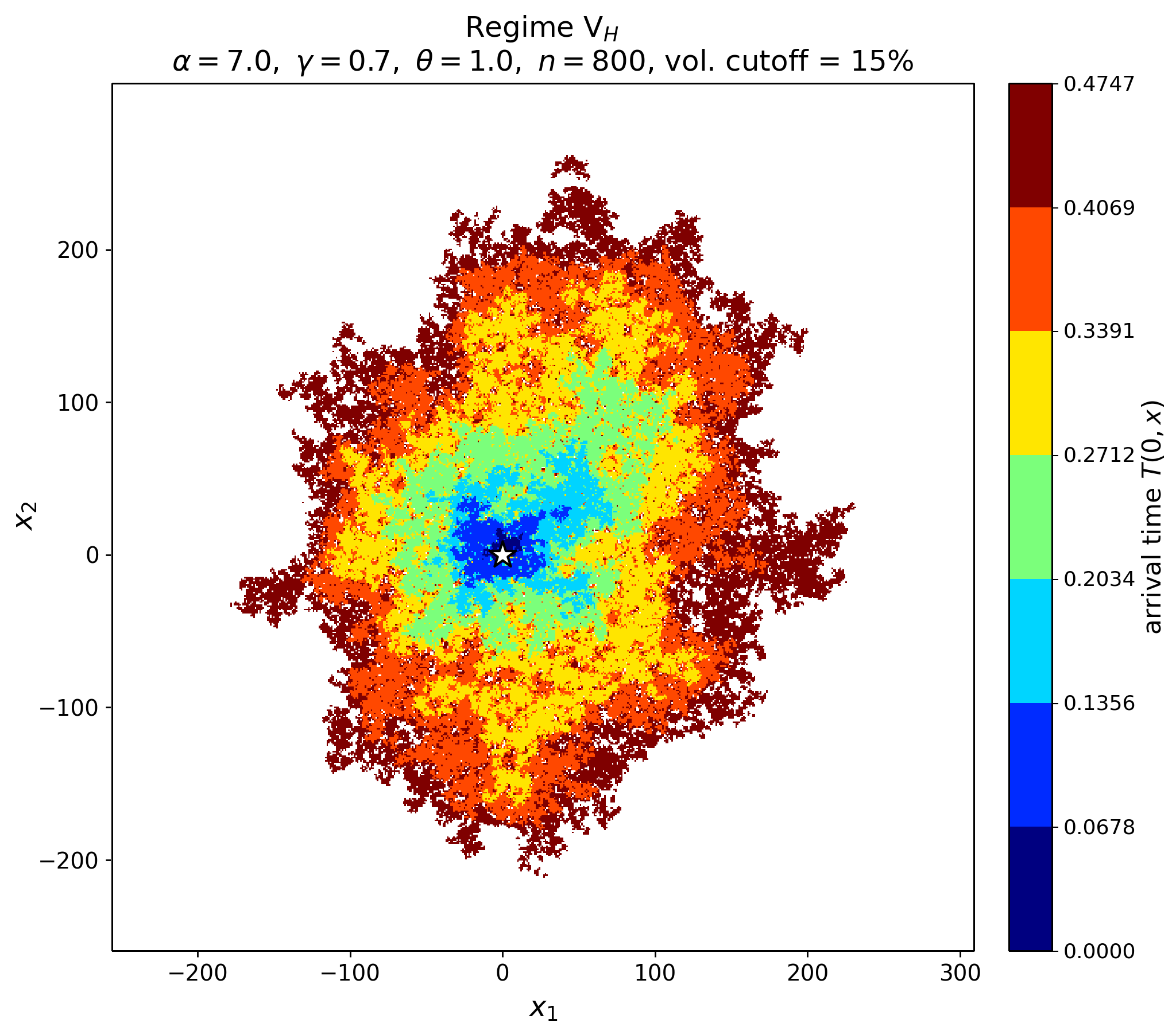}
	\end{minipage}
	\begin{minipage}[t]{0.45\textwidth}
		\includegraphics[height=.85\linewidth]{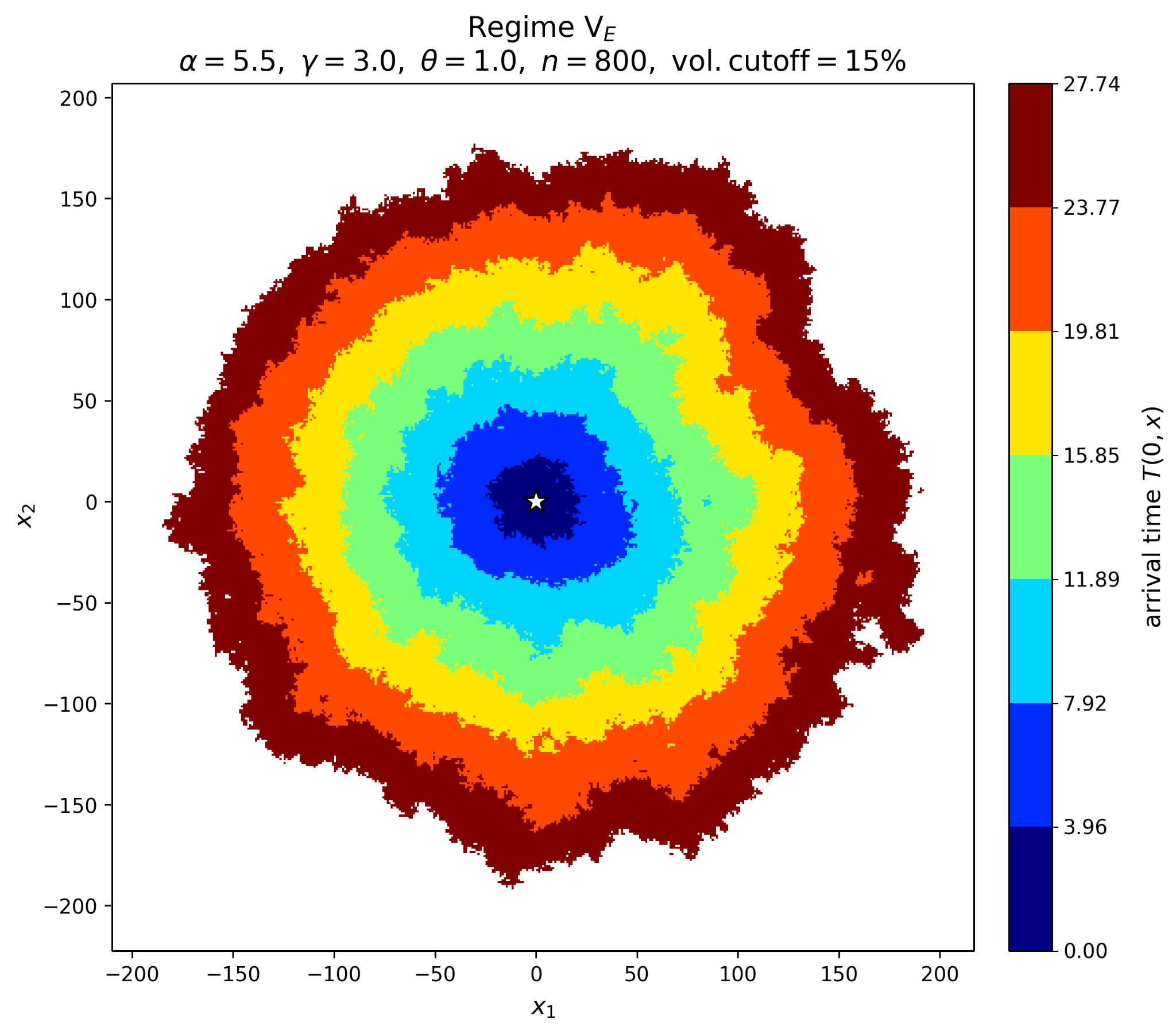}
	\end{minipage}
	\caption{The conjectured linear regimes. Left: $\rm V_H$ ($\ga=7$, $\gc=0.7$). Heavy vertex weights produce visibly rougher, anisotropic level sets. Right: $\rm V_E$ ($\ga=5.5$, $\gc=3$). The finite-volume level sets are approximately round, consistent with the predominance of short hops. A shape theorem is not proved here, so these plots should not be interpreted as evidence for a particular limiting shape.}
	\label{fig:sim-V}
\end{figure}

\subsection{Discussion on the assumptions}\label{ssec:disc-assumptions}
Here we briefly discuss some of our standing assumptions and their possible relaxations.

\begin{enumeratea}
	\item \textbf{Logarithmic tails and one-sided estimates.}\label{rem:tail-loss}
	The limits in~\eqref{def:V_x} and~\eqref{def:go} permit sub-polynomial corrections without assuming regular variation. Thus, for any $\eta>0$, the tails can be bounded by power laws with perturbed exponents $\gc\pm\eta$ and $\theta\pm\eta$. Crucially, our proofs decouple these requirements and rely strictly on one-sided tail estimates:
	\begin{itemize}
		\item \emph{Upper bounds:} Passage-time upper bounds require only the lower estimates $\pr(V\ge v)\ge c_{V,\eta}v^{-\gc-\eta}$ and $\pr(\go\le t)\ge c_{\go,\eta}t^{\theta+\eta}$ to guarantee a sufficient supply of heavy hubs and cheap edges. Away from phase boundaries, these arbitrarily small $\eta$-perturbations are absorbed into the stated $\eps$-losses of the theorems. (Remark~\ref{rem:IE-one-sided} bypasses the upper-tail requirement in Regime $\rm I_E$ via a monotone coupling.)
		\item \emph{Lower bounds:} Conversely, passage-time lower bounds in the companion article~\cite{CDK26} require only the upper estimates $\pr(V\ge v)\le C_{V,\eta}v^{-\gc+\eta}$ and $\pr(\go\le t)\le C_{\go,\eta}t^{\theta-\eta}$ to limit the availability of exceptionally fast paths.
	\end{itemize}

	\item \textbf{On the moment assumption $\E\go^{\gz}<\infty$.}\label{rem:zeta}
	Only a \emph{positive} moment is required, and $\gz$ may be arbitrarily small.
	Specifically, for the $\ell$-fold minimum $\go_{[\ell]}$, we have $\pr(\go_{[\ell]}>t) \le (\E\go^\gz)^\ell t^{-\ell\gz}$.
	Choosing $\ell$ sufficiently large guarantees the necessary integrability. This assumption also governs the endpoint attachments, which are optimized over multiple candidate edges. Therefore, the exponent $\gz(1-\gb)$ in Proposition~\ref{prop:lin-ub-input} is a sufficient operational bound derived from our construction, rather than a fundamental restriction. Determining the weakest upper-tail conditions that preserve these passage-time bounds remains an open question.

	\item \textbf{Independence and the choice of $\cost$.}\label{rem:indep}
	The independence of $(V_{\mvx})$ from $(\go_e)$ is used throughout: our estimates condition on $\cF_V:=\gs(V_{\mvz}:\mvz\in\dZ^d)$ and then treat the edge noises as fresh. Extensions to finite-range dependent vertex weights should be possible when suitable concentration estimates replace the binomial bounds, but the random-center arguments would also have to be revisited. Likewise, a cost of logarithmic order $\cost(r)=r^{\ga+o(1)}$ can be handled away from the boundaries when an arbitrarily small loss in the exponent is allowed. The lossless bound in Regime $\rm IV_H$ and all boundary cases require separate control of the sub-polynomial correction.
\end{enumeratea}

\subsection{Notation}\label{ssec:notation}
We collect the notation used throughout.
\begin{itemize}[leftmargin=2em]
	\item $\dZ^d$ is the lattice, $\sE_{\mathrm{com}}$ the complete edge set, $\la\mvx\mvy\ra$ an edge, and $\mvpi=\la\mvx_0\mvx_1\cdots\mvx_m\ra$ a finite self-avoiding path with hop count $\abs{\mvpi}=m$.
	\item $\norm{\cdot}$ is the $\ell_1$ norm, $\norm{\cdot}_\infty$ the sup norm, and $B(\mvy,r)$ the $\ell_\infty$ ball of radius $r$ about $\mvy$. We write $\wt B(\mvy,r):=B(\mvy,r)\setminus B(\mvy,r/2)$ for the corresponding shell.
	\item $W_e$, $W_{\mvpi}$, $T(\mvx,\mvy)$, $T_n$, $\vB_t$, $\vD_t$ are as in~\eqref{eq:def-W}--\eqref{eq:def-ball} and~\eqref{eq:def-Tn}.
	\item $\ga$ is the distance exponent, $\gc$ the vertex-tail exponent, $\theta$ the edge lower-tail exponent, and $\gz$ the moment exponent of $\go$; $q_{\rm hub}=d/\gc$ and $q_{\rm edge}=d/\theta$.
	\item $\cF_V:=\gs(V_{\mvz}:\mvz\in\dZ^d)$.
	\item For $a>0$, $\cV^a:=\{\mvz\in\dZ^d:V_{\mvz}>a\}$ is the favorable vertex set and $p_a:=\pr(V>a)$.
	\item $\go_{[\ell]}:=\min\{\go_1,\dots,\go_\ell\}$ denotes the minimum of $\ell$ i.i.d.~copies of $\go$.
	\item $f_\phi$ is the regime-dependent scale map, with the globally normalized scale sequence
	      \begin{align}\label{eq:scale-seq}
		      n_0:=n,\qquad n_i:=f_\phi^{(i)}(n)=f_\phi\big(n_{i-1}\big),\qquad i\ge1,
	      \end{align}
	      and $k=k(n)$ the terminal depth. We use $\cA_k,\cB_k,\cE_k$ and
	      $\cG_n,\cL_k$ for good events, plain $A$ for annuli, and $L$ for slowly varying
	      functions.
	\item $a\vee b:=\max\{a,b\}$, $a\wedge b:=\min\{a,b\}$, $[k]:=\{1,\dots,k\}$, and $\ind_A$ is an indicator.
	\item $\asymp$ means equality up to positive constants, $\approx$ heuristic order, $\preceq$ stochastic domination, and $O_{\pr},\gO_{\pr},\gTh_{\pr}$ are the probabilistic order symbols.
\end{itemize}

\subsection{Roadmap}\label{ssec:roadmap}
Sections~\ref{sec:mainresv} and~\ref{sec:mainrese} state the main results, split by mechanism: hub-dominated regimes ($\rm I_H$, II, $\rm IV_H$, $\rm V_H$) and edge-dominated ones ($\rm I_E$, III, $\rm IV_E$, $\rm V_E$), respectively. Section~\ref{sec:aux} collects the three probabilistic estimates used everywhere: a two-sided small-ball bound for weighted sums of edge noises, a moment bound for $1$-dependent sums, and a moment bound for $\ell$-fold minima. Section~\ref{sec:two-path-architectures} constructs the two path architectures and proves the corresponding master estimates; this is the technical heart of the article. Sections~\ref{sec:proof-IE}--\ref{sec:proof-V} then carry out the regime-specific scale optimizations. Section~\ref{sec:discussion} discusses open problems.

\section{Main Results I: Hub-Dominated Regimes}\label{sec:mainresv}

Throughout this section the operative mechanism is the vertex environment. We recall that $q_{\rm hub}=d/\gc$.

\subsection{Regime $\rm I_H$: the instantaneous regime}\label{ssec:res-IH}

\begin{thm}[Instantaneous regime $\rm I_H$]\label{thm:IH}
	Assume \textup{\eqref{A1}--\eqref{A4}} and $\ga<d/\gc$. Fix $\mvx\in\dZ^d\setminus\{\mvzero\}$ and set $n:=\norm{\mvx}$. Define the dyadic annuli
	\begin{align}\label{eq:def-Ahub}
		A_m^{\rm hub}:=\{\mvz\in\dZ^d:m\le\norm{\mvz}<2m\},\qquad m\ge1.
	\end{align}
	Then there exists a random sequence of vertices $\mvy_k\in A_{2^kn}^{\rm hub}$, $k\ge1$, such that
	\begin{align*}
		W_{\mvzero\mvy_k}+W_{\mvy_k\mvx}\longrightarrow0 \text{ a.s.}
	\end{align*}
	In particular $T(\mvzero,\mvx)=0$ almost surely.
\end{thm}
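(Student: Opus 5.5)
In each annulus $A_{2^kn}^{\rm hub}$ we take $\mvy_k$ to be the vertex of maximal weight, and show that its weight eventually beats the spatial cost $\asymp(2^kn)^{\ga}$ of both edges $\mvzero\mvy_k$ and $\mvy_k\mvx$. Set $m_k:=2^kn$. Because $\norm{\mvx}=n$, every $\mvz\in A_{m_k}^{\rm hub}$ with $k\ge1$ satisfies $\norm{\mvz}\le 2m_k$ and $\norm{\mvz-\mvx}\le 3m_k$. Hence
\begin{align*}
	W_{\mvzero\mvy_k}+W_{\mvy_k\mvx}\le \frac{(3m_k)^{\ga}}{V_{\mvy_k}}\Big(\frac{\go_{\mvzero\mvy_k}}{V_{\mvzero}}+\frac{\go_{\mvy_k\mvx}}{V_{\mvx}}\Big).
\end{align*}
The factors $V_{\mvzero},V_{\mvx}$ are a.s.\ positive and fixed, so they only contribute a random constant. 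It therefore suffices to make $m_k^{\ga}/V_{\mvy_k}$ decay geometrically in $k$ and to keep $\go_{\mvzero\mvy_k}+\go_{\mvy_k\mvx}$ from growing too fast.

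\textbf{Step 1: the hub weight.} Fix $\eta>0$ with $\ga<d/(\gc+\eta)-2\eta'$ for some small $\eta'>0$, and let $\gb:=d/(\gc+\eta)-\eta'$, so that $\gb>\ga+\eta'$. The annulus has $|A_{m_k}^{\rm hub}|\ge c\,m_k^d$ vertices. By the one-sided tail bound $\pr(V\ge v)\ge c\,v^{-\gc-\eta}$ from \eqref{A3}, we get
\begin{align*}
	\pr\Big(\max_{\mvz\in A_{m_k}^{\rm hub}}V_{\mvz}<m_k^{\gb}\Big)\le\big(1-c\,m_k^{-\gb(\gc+\eta)}\big)^{c m_k^d}\le\exp\big(-c\,m_k^{\,d-\gb(\gc+\eta)}\big).
\end{align*}
The exponent $d-\gb(\gc+\eta)$ is positive, so this bound is summable in $k$. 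By Borel--Cantelli, a.s.\ for all large $k$ the maximiser $\mvy_k$ satisfies $V_{\mvy_k}\ge m_k^{\gb}$. Ties are broken by a deterministic rule.

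\textbf{Step 2: the edge noises.} The choice of $\mvy_k$ is $\cF_V$-measurable, and the noises are independent of $\cF_V$. So conditionally on $\cF_V$, the variables $\go_{\mvzero\mvy_k}$ and $\go_{\mvy_k\mvx}$ are fresh copies of $\go$. The maximiser lies off $\{\mvzero,\mvx\}$ because $\norm{\mvy_k}\ge 2n$. By Markov's inequality with the moment $\E\go^{\gz}<\infty$,
\begin{align*}
	\pr\big(\go_{\mvzero\mvy_k}>m_k^{\eta'/2}\big)\le \E\go^{\gz}\,m_k^{-\gz\eta'/2}=C\,2^{-k\gz\eta'/2}n^{-\gz\eta'/2}.
\end{align*}
This is summable in $k$, and the same bound holds for $\go_{\mvy_k\mvx}$. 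Hence a.s.\ both noises are eventually at most $m_k^{\eta'/2}$.

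\textbf{Step 3: conclusion.} Combining Steps 1 and 2, for all large $k$
\begin{align*}
	W_{\mvzero\mvy_k}+W_{\mvy_k\mvx}\le C(V_{\mvzero}^{-1}+V_{\mvx}^{-1})\,m_k^{\ga-\gb+\eta'/2}\to0.
\end{align*}
Each two-hop path $\mvzero\to\mvy_k\to\mvx$ is self-avoiding, so $T(\mvzero,\mvx)\le\inf_k(W_{\mvzero\mvy_k}+W_{\mvy_k\mvx})=0$.

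The main obstacle is Step 2, specifically the measurability issue: the selection must use only the vertex weights. Otherwise the noises on the chosen edges would not be fresh, and the crude moment bound would fail. Choosing $\mvy_k$ as the weight maximiser sidesteps this. The rest is a routine extreme-value computation, with the small losses $\eta,\eta'$ absorbed by the strict inequality $\ga<d/\gc$.
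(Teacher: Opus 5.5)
Your proof is correct and follows essentially the paper's argument: take the $\cF_V$-measurable weight maximiser in each dyadic annulus, use an extreme-value bound plus Borel--Cantelli to show its weight eventually exceeds $m_k^{d/\gc-o(1)}$, and control the two edge noises, which are fresh copies of $\omega$ given $\cF_V$, by Markov's inequality on the $\gz$-th moment. The only cosmetic difference is that you apply ordinary Borel--Cantelli to the events $\{\omega_{\mvzero\mvy_k}>m_k^{\eta'/2}\}$, whereas the paper applies conditional Borel--Cantelli to $\{W_{\mvzero\mvy_k}+W_{\mvy_k\boldsymbol{x}}>\gd_0\}$ and then intersects over rational $\gd_0>0$.
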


For $t\ge0$, we define the \emph{hop-count profile}
\begin{align}\label{eq:def-H}
	H_{\mvx}(t)
	:=\inf\Big\{\abs{\mvpi}:\mvpi\in\cP_{\mvzero,\mvx}, W_{\mvpi}\le t\Big\},
	\qquad\inf\varnothing:=\infty.
\end{align}

\begin{thm}[Hop-count profile in $\rm I_H$]\label{thm:IH-hop}
	Assume \textup{\eqref{A1}--\eqref{A4}} and $\ga<d/\gc$. Then almost surely, for every $\mvx\in\dZ^d\setminus\{\mvzero\}$,
	\begin{align*}
		H_{\mvx}(0)=\infty,
		\qquad
		H_{\mvx}(t)=2  \text{ for }0<t<W_{\mvzero\mvx},
		\qquad
		H_{\mvx}(t)=1  \text{ for }t\ge W_{\mvzero\mvx}.
	\end{align*}
	In particular,
	\begin{enumeratei}
		\item the infimum $T(\mvzero,\mvx)=0$ is not attained by any finite path,
		\item for every $t\in(0,W_{\mvzero\mvx})$ there are infinitely many distinct $2$-hop paths from $\mvzero$ to $\mvx$ of total weight at most $t$.
	\end{enumeratei}
\end{thm}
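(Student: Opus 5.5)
The plan is to derive all three parts of the profile from Theorem~\ref{thm:IH} together with two elementary facts: every edge weight is almost surely strictly positive, and $\dZ^d$ is countable. Since $\dZ^d\setminus\{\mvzero\}$ is countable, it suffices to prove the statement almost surely for each fixed $\mvx$ and then intersect the countably many full-probability events. So fix $\mvx\ne\mvzero$ and set $n:=\norm{\mvx}$.

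\emph{Positivity of weights, giving $H_{\mvx}(0)=\infty$.} By \eqref{def:go} with $\theta>0$ we have $\pr(\go\le x)\to0$ as $x\downarrow0$, hence $\pr(\go=0)=0$. By \eqref{def:V_x}, $V<\infty$ almost surely. Thus each $W_{\mvu\mvv}=\norm{\mvu-\mvv}^{\ga}\go_{\mvu\mvv}/(V_{\mvu}V_{\mvv})$ is strictly positive almost surely. Since there are countably many edges, almost surely every edge of $\sE_{\mathrm{com}}$ has positive weight simultaneously. On that event every finite path $\mvpi\in\cP_{\mvzero,\mvx}$ has $W_{\mvpi}>0$, so no path meets the budget $t=0$ and $H_{\mvx}(0)=\infty$. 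Combined with $T(\mvzero,\mvx)=0$ from Theorem~\ref{thm:IH}, this yields (i): the infimum is not attained.

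\emph{The ranges $t\ge W_{\mvzero\mvx}$ and $0<t<W_{\mvzero\mvx}$.} The only path with one hop is $\la\mvzero\mvx\ra$. Hence $H_{\mvx}(t)=1$ exactly when $t\ge W_{\mvzero\mvx}$, and $H_{\mvx}(t)\ge2$ when $t<W_{\mvzero\mvx}$. For $0<t<W_{\mvzero\mvx}$ (a nonempty interval, since $W_{\mvzero\mvx}>0$), take the random sequence $\mvy_k\in A^{\rm hub}_{2^kn}$ from Theorem~\ref{thm:IH}, which satisfies $W_{\mvzero\mvy_k}+W_{\mvy_k\mvx}\to0$ almost surely. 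The dyadic annuli $A^{\rm hub}_{2^kn}=\{2^kn\le\norm{\mvz}<2^{k+1}n\}$ are pairwise disjoint, so the $\mvy_k$ are pairwise distinct. Moreover $\norm{\mvy_k}\ge2n>n=\norm{\mvx}>0$, so $\mvy_k\notin\{\mvzero,\mvx\}$ and each $\la\mvzero\mvy_k\mvx\ra$ is a genuine self-avoiding $2$-hop path. Because a single sequence works for all $t$ at once, on the almost-sure event of Theorem~\ref{thm:IH} we get, for every $t>0$, that all but finitely many $k$ satisfy $W_{\mvzero\mvy_k}+W_{\mvy_k\mvx}\le t$. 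This gives infinitely many distinct $2$-hop paths of weight at most $t$, which is (ii), and it shows $H_{\mvx}(t)=2$ on $(0,W_{\mvzero\mvx})$.

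\emph{Where care is needed.} All of the substance lies in Theorem~\ref{thm:IH}, which I take as given. The remaining points to check are the following. Positivity of $\go$ must be extracted from the tail assumption; the paper assumes only that $\go$ is nonnegative, so this step is necessary. The quantifier ``for every $t$'' must be placed inside a single almost-sure event; this is automatic because the sequence $(\mvy_k)$ does not depend on $t$. The $2$-hop paths must be self-avoiding and pairwise distinct; this follows from the disjointness of the annuli and the fact that they lie at distance at least $2n$ from the origin. Finally, the intersection over $\mvx$ uses only countability.
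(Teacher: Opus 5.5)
Your proposal is correct and follows essentially the same route as the paper's proof. Positivity of every edge weight (from the lower-tail assumption on $\go$ and $V<\infty$), together with countability, gives $H_{\mvx}(0)=\infty$ and non-attainment of $T(\mvzero,\mvx)=0$. The distinct vertices $\mvy_k$ in the disjoint annuli of Theorem~\ref{thm:IH} give infinitely many self-avoiding $2$-hop paths of weight at most $t$ for every $t\in(0,W_{\mvzero\mvx})$, and the single edge $\la\mvzero\mvx\ra$ handles $t\ge W_{\mvzero\mvx}$.
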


\begin{rem}\label{rem:IH}
	\begin{enumeratei}
		\item The ``large hub in an annulus'' step of the proof uses only the lower half of the tail assumption~\eqref{def:V_x} on $V$. No assumption on $\ga$ or $\theta$ enters there.
		\item The essential condition is $\ga<d/\gc$, which makes the exponent $\ga-d/\gc+\eps$ negative for small $\eps$ and forces the two-hop passage times to vanish.
		\item The inequality $\gc<\theta$ is never used. It serves only to locate $\rm I_H$ as the hub-dominated part of the instantaneous phase, and it explains the overlap with $\rm I_E$, where instantaneity is also available through the edge mechanism.
		\item The proof uses the moment assumption $\E\go^\gz<\infty$ from~\eqref{def:go}, but only to run a conditional Borel--Cantelli argument, which is what upgrades convergence in probability to almost sure convergence.
	\end{enumeratei}
\end{rem}

\subsection{Regime II: the tight regime}\label{ssec:res-II}
\begin{thm}[Uniform tightness in Regime II]\label{thm:II-ub}
	Assume \textup{\eqref{A1}--\eqref{A4}}, and suppose that
	\begin{align}\label{eq:II-range}
		\frac{d}{\gc}<\ga<\frac{2d}{\gc}.
	\end{align}
	Then for every $\eps\in(0,1)$ there is a constant $C(\eps)<\infty$, depending only on $\eps$ and on $d,\ga,\gc,\theta,\gz$ and the laws of $V,\go$, such that
	\begin{align}\label{eq:II-uniform}
		\sup_{\mvu\in\dZ^d}\pr\big(T(\mvzero,\mvu)>C(\eps)\big)<\eps.
	\end{align}
	In particular, for every direction $\mvx$ the family $\{T_n\}_{n\ge1}$ is tight, and $T_n=O_{\pr}(1)$.
\end{thm}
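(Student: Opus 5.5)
The plan is to build, for each target $\mvu$, an explicit hub chain $\mvzero\to\mvx_{-k}\to\cdots\to\mvx_{-1}\to\mvx_0\to\mvx_1\to\cdots\to\mvx_k\to\mvu$, whose expected cost, or cost on a high-probability event, is bounded uniformly in $\mvu$. By translation invariance and the symmetric role of the two endpoints, it suffices to control a one-sided chain from $\mvzero$ out to a macroscopic hub $\mvx_0$ at scale $n:=\norm{\mvu}$. The same estimate then applies to the chain from $\mvu$ to $\mvx_0$, which is built in the mirror image with balls centred at $\mvu$. Bounded $\mvu$ can be absorbed into the constant, since $W_{\mvzero\mvu}$ is tight for $\norm{\mvu}\le R$. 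I therefore assume $n$ is large.

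\textbf{Scale construction.} I take geometrically decreasing scales $n_i:=n\,2^{-i}$ (or the paper's $f_\phi$ with a fixed ratio), down to $n_k\asymp1$. At scale $n_i$ I place the ball $B_i:=B(\mvzero,n_i)\setminus B(\mvzero,n_i/2)$. Inside $B_i$ I retain the $\ell$ heaviest vertices, with $\ell>2/\gz$ fixed. By the lower tail estimate $\pr(V\ge v)\ge c v^{-\tilde\gc}$ with $\tilde\gc$ slightly larger than $\gc$, and a binomial count over $\asymp n_i^d$ vertices, all $\ell$ retained weights exceed $h_i:=n_i^{d/\tilde\gc}$ except with probability $\exp(-c n_i^{\eta})$. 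Among these $\ell$ candidates I choose the hub $\mvx_{-i}$ that minimises the edge noise to the previously chosen hub $\mvx_{-(i+1)}$. Conditionally on $\cF_V$ and on earlier choices, those $\ell$ noises are fresh, so the selected noise is distributed as $\go_{[\ell]}$. By Lemma~\ref{lem:min-moments} it has a finite second moment. Distinctness of the vertices is automatic because the shells are disjoint.

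\textbf{Cost bound.} On the good event, the hop between scales $n_{i+1}$ and $n_i$ has distance at most $2n_i$. Its cost is therefore at most $C n_i^{\ga-2d/\tilde\gc}\go^{(i)}$ with $\go^{(i)}\preceq\go_{[\ell]}$. Since $\ga<2d/\gc$, I choose $\tilde\gc$ close enough to $\gc$ that $\ga-2d/\tilde\gc<0$. The coefficients $n_i^{\ga-2d/\tilde\gc}$ then form a geometric series in $i$ summing to $O(1)$, uniformly in $n$; the sum is dominated by the smallest scales. The first hop $\mvzero\to\mvx_{-k}$ has length $O(1)$, so its cost is $O(1)\cdot\go/V_{\mvzero}$, which is tight. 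The macroscopic connection from the top hub near $\mvzero$ to the top hub near $\mvu$ costs $n^{\ga}\go_{[\ell]}/h_0^2\to0$, again by $\ga<2d/\tilde\gc$. Markov's inequality on the sum of the first moments, conditioned on the good event, then gives \eqref{eq:II-uniform}.

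\textbf{Main obstacles.} First, the failure probability of the good events must be summable over scales uniformly in $n$. At small scales the bound $\exp(-cn_i^\eta)$ is not small, so I start the chain at a fixed large scale $R(\eps)$ and handle the initial segment $\mvzero\to$ (first hub at scale $R$) as one hop of bounded length, with a tight cost. Second, the conditioning must be handled correctly: the choice of $\mvx_{-i}$ depends on noises incident to $\mvx_{-(i+1)}$. I resolve this by making the selections sequentially from small scales upward. Each new selection uses only edges from the current hub to fresh candidates, and those edges are disjoint from all previously revealed edges, so the noises really are fresh.
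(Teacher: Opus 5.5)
Your argument is correct in outline, but it follows a different route from the paper's.

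\textbf{The paper's route.} The paper uses doubly exponential scales $n_i=n^{\phi^i}$ with $\phi$ close to $1$, so $k\asymp\log\log n$. It places the scale-$i$ hub ball, of radius $n_{i+1}$, at distance $\asymp n_i$ from the endpoint, and adds a central ball $B_0$ at the midpoint. Its thresholds are $v_{k-\abs{i}}\,n_{\abs{i}+1}^{d/\tilde\gc}$, with the \emph{same} exponent $\tilde\gc$ in the threshold and in the tail lower bound. That makes the per-ball failure probability $C_1\exp(-c_2v^{-\tilde\gc})$ independent of the scale. A polynomially decaying truncation sequence is therefore needed, applied in reversed order so it is strongest at the bounded terminal balls. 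The resulting loss $1/(v_jv_{j-1})$ is beaten by the doubly exponential decay of $n_i^{-s}$, where $s=d(\phi+\phi^2)/\tilde\gc-\ga$. The chain sum is then controlled in $L^2$ via Lemma~\ref{lem:p-conS_k}.

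\textbf{Your route and the trade-off.} You instead take geometric shells with fixed thresholds $n_i^{d/\tilde\gc}$, and let the scale itself drive the failure probability to zero by stopping at a large fixed scale $R(\eps)$. The coefficients then decay like $(R2^{j})^{\ga-2d/\tilde\gc}$ in the distance $j$ from the bottom scale, which is already summable. A first-moment Markov bound suffices, so even $\ell>1/\gz$ would do. Your route is shorter: no truncation sequence, no reversal, and the clean exponent $\ga-2d/\tilde\gc$. It yields paths with $\asymp\log n$ hops. The paper's route produces bounded-cost paths with only $O(\log\log n)$ hops, matching Theorem~\ref{thm:II-hop}, and it shares Lemma~\ref{lem:hub-ub} with Regime $\rm IV_H$.

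\textbf{Two details to repair.}
\begin{enumerate}
\item The bound $\exp(-cn_i^{\eta})$ does not follow from $\pr(V\ge v)\ge cv^{-\tilde\gc}$ when the same $\tilde\gc$ appears in $h_i=n_i^{d/\tilde\gc}$. That estimate only gives an expected count of order one per shell. Use an intermediate exponent $\gc<\gc'<\tilde\gc$ with $\pr(V\ge v)\ge v^{-\gc'}$ for large $v$, which (A3) provides. This gives an expected count $\gtrsim n_i^{d(1-\gc'/\tilde\gc)}$.
\item As written, the top shell $B(\mvzero,n)\setminus B(\mvzero,n/2)$ with $n=\norm{\mvu}$ overlaps the mirrored shells around $\mvu$, and may even contain $\mvu$. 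This breaks self-avoidance and the freshness of noises across the two halves. Cap the top scale at $c\norm{\mvu}_\infty$ with $c$ small; this is the paper's root separation in~\eqref{eq:hub-scale-separation}. Then the edge joining the two top hubs is unexamined, and its noise is a fresh copy of $\go$ rather than $\go_{[\ell]}$. Its coefficient is $\lesssim n^{\ga-2d/\tilde\gc}$, so it is negligible.
\end{enumerate}
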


The uniformity over the target $\mvu$ in~\eqref{eq:II-uniform}, rather than tightness along a fixed ray, is a genuinely stronger marginal statement. With probability at least $1-\eps$ the origin is within passage time $C(\eps)$ of any prescribed vertex, however far away. It comes for free from the construction, whose geometry depends on $\mvu$ only through $\norm{\mvu}$.

\begin{prop}[Finite explosion time in Regime II]\label{prop:II-ball}
	Under the hypotheses of Theorem~\ref{thm:II-ub}:
	\begin{enumeratei}
		\item for every $\eps\in(0,1)$ there is $t_\eps<\infty$ with
		\begin{align*}
			\pr\big(\abs{\vB_{t_\eps}(\mvzero)}=\infty\big)
			=\pr\big(\vD_{t_\eps}(\mvzero)=\infty\big)\ge1-\eps,
		\end{align*}
		\item consequently the explosion time $\tau_\infty:=\inf\{t\ge0:\abs{\vB_t(\mvzero)}=\infty\}$ is finite almost surely.
	\end{enumeratei}
\end{prop}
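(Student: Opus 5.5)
We derive Proposition~\ref{prop:II-ball} from the uniform estimate~\eqref{eq:II-uniform} of Theorem~\ref{thm:II-ub}, combined with a reverse Fatou/pigeonhole argument over infinitely many targets. Fix $\eps\in(0,1)$ and set $t_\eps:=C(\eps)$. For a sequence of distinct vertices $\mvu_1,\mvu_2,\dots$ with $\norm{\mvu_j}\to\infty$, define the events $E_j:=\{T(\mvzero,\mvu_j)\le t_\eps\}$. By~\eqref{eq:II-uniform}, $\pr(E_j)\ge1-\eps$ for every $j$.

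\textbf{Step 1: infinitely many targets are reached.} Reverse Fatou gives
\[
\pr\big(\limsup_j E_j\big)\ge\limsup_j\pr(E_j)\ge1-\eps .
\]
On $\limsup_jE_j$, infinitely many distinct $\mvu_j$ satisfy $T(\mvzero,\mvu_j)\le t_\eps$, so they all lie in $\vB_{t_\eps}(\mvzero)$. Hence $\abs{\vB_{t_\eps}(\mvzero)}=\infty$ with probability at least $1-\eps$.

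\textbf{Step 2: the events $\{\abs{\vB_t}=\infty\}$ and $\{\vD_t=\infty\}$ coincide.} A ball is infinite if and only if it contains points arbitrarily far apart, because $\dZ^d$ is locally finite: a subset of bounded diameter is finite, and an infinite subset of $\dZ^d$ is unbounded. This is the equality of the two probabilities in (i). Strictly, the construction gives $T\le t_\eps$ rather than $<$; this is harmless because $\vB_t$ is defined with $\le$. We also note that $\vB_{t}$ is defined through an infimum that may not be attained. This does not matter either, since membership requires only $T(\mvzero,\mvy)\le t$.

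\textbf{Step 3: part (ii).} Since $t\mapsto\abs{\vB_t(\mvzero)}$ is nondecreasing, we have $\{\tau_\infty\le t_\eps\}\supseteq\{\abs{\vB_{t_\eps}(\mvzero)}=\infty\}$. Therefore $\pr(\tau_\infty<\infty)\ge1-\eps$ for every $\eps$, so $\tau_\infty<\infty$ almost surely.

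The only potentially delicate point is Step 1, namely getting an almost-sure statement on a single event from marginal bounds. The reverse Fatou inequality handles this directly, and no independence or ergodicity is needed. The real work is carried entirely by the uniformity in Theorem~\ref{thm:II-ub}.
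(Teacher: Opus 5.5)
Your proof is correct and follows the paper's argument: apply reverse Fatou to the events $\{T(\mvzero,\mvu_j)\le C(\eps)\}$ along targets escaping to infinity, then use that an infinite subset of $\dZ^d$ is unbounded. The paper uses the ray $\lceil n\mvx\rceil$ where you use arbitrary distinct targets via the uniform bound, and for (ii) your direct deduction $\pr(\tau_\infty<\infty)\ge1-\eps$ for every $\eps$ is a slightly leaner version of the paper's nested-events argument.
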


The matching lower bound is elementary and holds throughout the non-instantaneous phase.

\begin{thm}[$\gO_{\pr}(1)$ lower bound]\label{thm:II-lb}
	Assume \textup{\eqref{A1}--\eqref{A4}} and
	\begin{align*}
		\ga>\max\Big\{\frac{d}{\theta},\frac{d}{\gc}\Big\}.
	\end{align*}
	Then there exist $\eta>0$ and $C<\infty$ such that
	\begin{align*}
		\sup_{\mvu\in\dZ^d\setminus\{\mvzero\}}\pr\big(T(\mvzero,\mvu)\le t\big)\le C t^{\eta}
		\text{ for all }t\in[0,1].
	\end{align*}
	In particular $T_n=\gO_{\pr}(1)$, and combined with Theorem~\ref{thm:II-ub}, $T_n=\gTh_{\pr}(1)$ in Regime II.
\end{thm}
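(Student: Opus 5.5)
The plan is to bound $T(\mvzero,\mvu)$ below by the cheapest edge leaving the origin, and then control that minimum with a union bound over all lattice sites, conditioning on a moderate value of the shared weight $V_{\mvzero}$. Every path in $\cP_{\mvzero,\mvu}$ with $\mvu\ne\mvzero$ has a first edge $\la\mvzero\mvy\ra$ with $\mvy\ne\mvzero$. Since all passage times are nonnegative,
\begin{align*}
	T(\mvzero,\mvu)\ \ge\ M_{\mvzero}:=\inf_{\mvy\ne\mvzero}W_{\mvzero\mvy}.
\end{align*}
The right side does not depend on $\mvu$, so it suffices to show $\pr(M_{\mvzero}\le t)\le Ct^{\eta}$ for $t\in(0,1]$; the case $t=0$ follows by monotonicity.

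\textbf{Step 1 (tail of a single discounted noise).} Write $\kappa_0:=\theta\wedge\gc$. From the upper halves of \eqref{def:V_x} and \eqref{def:go}, for any small $\eta'>0$ we have $\pr(\go\le x)\le C(x^{\theta-\eta'}\wedge1)$ for all $x\ge0$ and $\pr(V\ge v)\le Cv^{-\gc+\eta'}$ for $v\ge 1$. Conditioning on $V$ gives
\begin{align*}
	\pr(\go/V\le s)=\E\big[\pr(\go\le sV\mid V)\big]\le C\,\E\big[(sV)^{\theta-\eta'}\wedge1\big].
\end{align*}
Computing the last expectation through the tail of $V$ yields $\pr(\go/V\le s)\le C s^{\kappa}$ for all $s>0$, with $\kappa:=\kappa_0-2\eta'$.
\begin{itemize}
	\item If $\theta<\gc$, the moment $\E V^{\theta-\eta'}$ is finite.
	\item If $\theta\ge\gc$, the truncated moment satisfies $\E[V^{\theta-\eta'}\wedge s^{-(\theta-\eta')}]\lesssim s^{-(\theta-\gc)-O(\eta')}$.
\end{itemize}
For $s\ge1$ the bound $Cs^{\kappa}$ is trivial once $C\ge1$. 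Because $\ga>d/\kappa_0$, we can choose $\eta'$ small enough that $\ga\kappa>d$.

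\textbf{Step 2 (truncating $V_{\mvzero}$ and the union bound).} Fix $\gd\in(0,1)$ and split on the event $\{V_{\mvzero}>t^{-\gd}\}$, which has probability at most $Ct^{\gd(\gc-\eta')}$. On the complement, $W_{\mvzero\mvy}\le t$ forces $\go_{\mvzero\mvy}/V_{\mvy}\le t^{1-\gd}\norm{\mvy}^{-\ga}$. The pairs $(\go_{\mvzero\mvy},V_{\mvy})$ are independent of $V_{\mvzero}$, so Step~1 and a union bound give
\begin{align*}
	\pr\big(M_{\mvzero}\le t,\ V_{\mvzero}\le t^{-\gd}\big)
	\le \sum_{\mvy\ne\mvzero}C\,t^{(1-\gd)\kappa}\norm{\mvy}^{-\ga\kappa}
	\le C\,t^{(1-\gd)\kappa}\sum_{r\ge1}r^{d-1-\ga\kappa}.
\end{align*}
The last sum is finite precisely because $\ga\kappa>d$, and the sphere sizes $\asymp r^{d-1}$ come from \eqref{A1}. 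Combining the two pieces gives the claim with $\eta:=\min\{(1-\gd)\kappa,\ \gd(\gc-\eta')\}>0$. Then $T_n=\gO_{\pr}(1)$ follows immediately, and together with Theorem~\ref{thm:II-ub} this gives $T_n=\gTh_{\pr}(1)$.

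\textbf{Main obstacle.} The only delicate point is the shared factor $V_{\mvzero}$. It multiplies every edge at the origin simultaneously, so it cannot be absorbed into the per-edge union bound; it must be truncated first, which costs only a power of $t$. A secondary point is that the per-edge exponent is $\theta\wedge\gc$, not $\theta$. This is exactly why the hypothesis requires $\ga$ to exceed both $d/\theta$ and $d/\gc$.
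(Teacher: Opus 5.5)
Your proof is correct. It follows the same skeleton as the paper: the reduction $T(\mvzero,\mvu)\ge\inf_{\mvy\ne\mvzero}W_{\mvzero\mvy}$, then a union bound over $\mvy$ closed by summability of $\norm{\mvy}^{-\ga\cdot(\text{exponent})}$.

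You differ in how you handle the vertex weights. The paper fixes one exponent $\eta\in(d/\ga,\theta\wedge\gc)$ and uses $\pr(\go\le x)\le C_1x^{\eta}$ for all $x\ge0$. It conditions on both endpoint weights and integrates, which gives $\pr(W_{\mvzero\mvy}\le t)\le C_1(\E V^{\eta})^2t^{\eta}\norm{\mvy}^{-\ga\eta}$; this is finite since $\eta<\gc$.

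So the shared factor $V_{\mvzero}$ is not the obstacle you describe. A union bound needs no independence between its events, so $V_{\mvzero}$ can be integrated out inside each summand using $\E V_{\mvzero}^{\eta}<\infty$. Your truncation of $V_{\mvzero}$ at $t^{-\gd}$, and the truncated-moment case analysis in Step~1, are valid but unnecessary.

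They also cost you in the exponent. You get $\min\{(1-\gd)\kappa,\gd(\gc-\eta')\}$, which is at best about $\kappa\gc/(\kappa+\gc)$. The paper's argument gives any $\eta<\theta\wedge\gc$ with $\ga\eta>d$, in one line. Since the theorem only asks for some $\eta>0$, both routes prove the statement.

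One small point to tidy. You use the inclusion $\{M_{\mvzero}\le t\}\subseteq\bigcup_{\mvy}\{W_{\mvzero\mvy}\le t\}$, which assumes the infimum over infinitely many edges is attained. The paper avoids this by bounding with $t+h$ in place of $t$ and letting $h\downarrow0$; the same device works inside your truncated event. Alternatively, your summable per-edge bounds and Borel--Cantelli show that only finitely many edges at $\mvzero$ have weight below any fixed level, so the infimum is attained almost surely.
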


Tightness of the metric does not mean that bounded-cost connecting paths are short. The following shows that their hop count must diverge, albeit very slowly, matching the $\log\log n$ depth of the hub-chain construction.

\begin{thm}[Hop-count lower bound in Regime II]\label{thm:II-hop}
	Assume \textup{\eqref{A1}--\eqref{A4}}, $\ga\gc>d$ and $\ga\theta>d$. Fix $K>0$ and let $\cP_n(K)$ be the set of finite self-avoiding paths from $\mvzero$ to $\lceil n\mvx\rceil$ of total weight at most $K$. Then there is a constant $a_0>1$ such that for every $\eps>0$ there is $C_{K,\eps}<\infty$ with
	\begin{align*}
		\limsup_{n\to\infty}\pr\Big(\exists \mvpi\in\cP_n(K)\text{ with }\abs{\mvpi}\le\frac{\log\log n}{\log a_0}-C_{K,\eps}\Big)\le\eps.
	\end{align*}
	In particular, for every $c_K<1/\log a_0$,
	$\lim_{n\to\infty}\pr\big(\exists \mvpi\in\cP_n(K)\text{ with }\abs{\mvpi}\le c_K\log\log n\big)=0$.
\end{thm}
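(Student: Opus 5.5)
The plan is a ``bounded-cost hops cannot multiply distance by more than a fixed power'' argument. I will find a deterministic scale sequence $R_j:=R_0^{a_0^j}$ and show that, with probability at least $1-\eps$, no edge of weight at most $K$ joins $B(\mvzero,R_j)$ to the complement of $B(\mvzero,R_{j+1})$, for every $j\ge0$. On this event the claim follows by induction along the path. If $\mvpi=\la\mvx_0\mvx_1\cdots\mvx_m\ra\in\cP_n(K)$, then every edge of $\mvpi$ has weight at most $W_{\mvpi}\le K$. Since $\mvx_0=\mvzero\in B(\mvzero,R_0)$, we get $\mvx_i\in B(\mvzero,R_i)$ for all $i$. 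Reaching $\lceil n\mvx\rceil$, which lies at distance $\asymp n$, therefore forces $R_0^{a_0^m}\gtrsim n$. This gives $m\ge \log\log n/\log a_0-\log\log R_0/\log a_0-O(1)$, so we may take $C_{K,\eps}:=\log\log R_0/\log a_0+O(1)$.

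\textbf{Tail of the product.} Set $\kappa:=\gc\wedge\theta$; by hypothesis $\ga\kappa>d$. For distinct $\mvu,\mvv$ the variables $V_{\mvu},V_{\mvv},\go_{\mvu\mvv}$ are independent. I will show that for every $\eta>0$ there is $C_\eta$ with
\[
\pr\big(V_{\mvu}V_{\mvv}/\go_{\mvu\mvv}\ge x\big)\le C_\eta x^{-\kappa+\eta}\quad\text{for } x\ge1.
\]
This uses only the upper estimates $\pr(V\ge v)\le C v^{-\gc+\eta}$ and $\pr(\go\le t)\le Ct^{\theta-\eta}$ from \eqref{A3}--\eqref{A4}. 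The argument is a standard one for products of independent variables with (logarithmically) regularly varying tails: condition on one factor and integrate the tail of the other. Equal exponents only produce an extra logarithmic factor, which is absorbed into $\eta$. An edge $\la\mvu\mvv\ra$ with $\norm{\mvu-\mvv}=r$ has $W_{\mvu\mvv}\le K$ if and only if $V_{\mvu}V_{\mvv}/\go_{\mvu\mvv}\ge r^\ga/K$. The bound above then gives
\[
\pr(W_{\mvu\mvv}\le K)\le C_\eta K^{\kappa}r^{-\ga(\kappa-\eta)}.
\]

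\textbf{Union bound.} Fix $\eta$ small enough that $\ga(\kappa-\eta)>d$. The expected number of pairs $\mvu\in B(\mvzero,R)$, $\mvv\notin B(\mvzero,R^{a})$ with $W_{\mvu\mvv}\le K$ is at most
\[
C K^\kappa R^d\sum_{r\ge R^a-R}r^{d-1}r^{-\ga(\kappa-\eta)}\le C K^{\kappa}R^{\,d-a(\ga(\kappa-\eta)-d)},
\]
using the volume growth of \eqref{A1}. Now choose $a_0>1$ so large that $\delta:=a_0(\ga(\kappa-\eta)-d)-d>0$; note that $a_0$ depends only on $d,\ga,\gc,\theta$, not on $K$ or $\eps$. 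Summing over $R=R_j$, the failure probability is at most $CK^\kappa\sum_{j\ge0}R_0^{-\delta a_0^j}$. This is smaller than $\eps$ once $R_0=R_0(K,\eps)$ is large enough. Hence the good event has probability at least $1-\eps$ uniformly in $n$, and the $\limsup$ statement follows. The ``in particular'' statement follows because $c_K\log\log n\le \log\log n/\log a_0-C_{K,\eps}$ for large $n$ when $c_K<1/\log a_0$, and $\eps$ is arbitrary.

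\textbf{Main obstacle.} I expect the main point needing care to be the product-tail estimate when $\gc=\theta$ or when the slowly varying corrections interact. I will handle this by working throughout with the perturbed exponents $\gc-\eta,\theta-\eta$, and treating separately the case of two factors with the same exponent, where the convolution integral yields $x^{-\kappa}\log x$. A second point to watch is that the same vertex weight $V_{\mvu}$ appears in many pairs. This causes no difficulty, since only first moments of counts are used and no independence across pairs is needed.
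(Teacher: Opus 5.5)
Your proposal is correct and is essentially the paper's proof: it uses the same radii $R_j=R_0^{a_0^j}$ with $a_0(\ga\eta-d)-d>0$ for some $d/\ga<\eta<\gc\wedge\theta$, the same first-moment union bound over edges of weight at most $K$ leaving $B(\mvzero,R_j)$ beyond $B(\mvzero,R_{j+1})$, and the same induction along the path. The only difference is the single-edge tail, which the paper gets in one line by conditioning on $V_{\mvu},V_{\mvv}$, applying $\pr(\go\le t)\le C_1t^{\eta}$, and integrating with $\E V^{\eta}<\infty$; this removes the convolution and equal-exponent case analysis you flagged as the main obstacle.
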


\begin{rem}\label{rem:II-hop-range}
	The theorem is stated for Regime II, but its hypotheses only ask that $\ga$ lie above both escape exponents $d/\gc$ and $d/\theta$. The upper constraint $\ga<2d/\gc$ of Regime II is not used. Above Regime II the bound is of course far from sharp, since the constructions of Sections~\ref{sec:proof-IVH} and~\ref{sec:proof-V} use $\asymp\log n$ and $\asymp n$ hops respectively.
\end{rem}

\subsection{Regime $\rm IV_H$: the power-law regime}\label{ssec:res-IVH}

\begin{thm}[Power-law upper bound in $\rm IV_H$]\label{thm:IVH-ub}
	Assume \textup{\eqref{A1}--\eqref{A4}}, and suppose that
	\begin{align}\label{eq:IVH-range}
		\frac{2d}{\gc}<\ga<\frac{2d}{\gc}+1,
	\end{align}
	and set $\Delta_{\rm IV_H}:=\ga-2d/\gc\in(0,1)$. Then, for every $\eps>0$, there exists $C<\infty$ such that
	\begin{align*}
		\pr\Big(T_n>Cn^{\Delta_{\rm IV_H}+\eps}\Big)
		\xrightarrow[n\to\infty]{}0.
	\end{align*}
	In particular, $T_n=O_{\pr}\big(n^{\ga-2d/\gc+\eps}\big)$ for every $\eps>0$.
\end{thm}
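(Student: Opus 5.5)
The plan is to build an explicit hub-chain path from $\mvzero$ to $\mvu:=\lceil n\mvx\rceil$ and bound its cost. The chain has two symmetric halves that climb through scales to a top-level macroscopic hop. Fix a small $\eta>0$ and work with $\tilde\gc:=\gc+\eta$, so that the lower tail estimate $\pr(V\ge v)\ge c_\eta v^{-\tilde\gc}$ from assumption \eqref{A3} guarantees a supply of hubs. The underlying picture is that a box of side $r$ contains, with overwhelming probability, at least $\ell$ vertices of weight $\ge r^{d/\tilde\gc}(\log r)^{-1}$. This follows from a binomial/Chernoff bound, which is summable over dyadic scales. We use geometric scales $n_i:=2^{-i}n$ going down from $n_0=n$ to a fixed terminal scale $n_k\asymp 1$, so $k\asymp\log n$. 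Around each endpoint we place boxes of side $n_i$ at distance $\asymp n_i$, and inside each box we retain the $\ell$ heaviest vertices, with $\ell>2/\gz$ fixed.

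At each level the chain hops from a chosen hub in the level-$i$ box to a chosen hub in the level-$(i-1)$ box. The choice is made by picking, among the $\ell\times\ell$ candidate pairs (or $\ell$ disjoint pairs, to keep independence), the one minimizing the edge noise. Conditionally on $\cF_V$ these noises are fresh, so Lemma~\ref{lem:min-moments} gives $\go_{[\ell]}$ with finite second moment. The level-$i$ hop then costs
\[
  W^{(i)}\le C\,\frac{n_i^{\ga}}{n_i^{2d/\tilde\gc}}(\log n_i)^2\,\go^{(i)}_{[\ell]}.
\]
The single top hop between the two level-$0$ boxes costs $\lesssim n^{\ga-2d/\tilde\gc}(\log n)^2\go_{[\ell]}$. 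The terminal attachments from $\mvzero$ and $\mvu$ to the first hubs at scale $O(1)$ cost $O_{\pr}(1)$: they use finitely many edges of bounded length, each with at least one endpoint weight bounded below with positive probability. These attachments can be handled by choosing the best of $\ell$ candidates, exactly as in the master estimate Lemma~\ref{lem:hub-ub}, which I would invoke directly. Summing the per-level costs gives the geometric series $\sum_i (2^{-i}n)^{\Delta'}(\log n)^2\go^{(i)}_{[\ell]}$, where $\Delta':=\ga-2d/\tilde\gc$ is positive for small $\eta$. Its expectation, conditional on the good hub event, is $O(n^{\Delta'}(\log n)^2)$. Markov's inequality then gives $T_n\le Cn^{\Delta_{\rm IV_H}+\eps}$ with probability $\to1$ once $\eta$ is chosen with $2d\eta/(\gc\tilde\gc)<\eps/2$.

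The geometry must avoid repeated vertices, since the path has to be self-avoiding. To ensure this, the boxes at different levels and on the two sides are placed disjointly, for instance in shells $\wt B(\mvzero,n_i)$ and $\wt B(\mvu,n_i)$ for $n_i\le n/4$. Distances between consecutive boxes are then $\asymp n_i$, so $\norm{\cdot}^\ga\le Cn_i^\ga$.

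I expect the main obstacle to be the hub-supply good event. It must hold simultaneously for all $\asymp\log n$ levels at both ends, including the small scales near the endpoints, where the failure probability is not small. The fix I would try is to start the chain at a fixed large scale $r_0=r_0(\eps)$ rather than at $1$. Below $r_0$, we connect $\mvzero$ to its level-$r_0$ hub by a bounded number of bounded-cost hops, which is tight by the argument for the endpoint attachments. The union bound over levels $r\ge r_0$ is then $\sum_{r=2^j\ge r_0}\exp(-c\,r^{c'})<\eps$.
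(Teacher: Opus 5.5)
Your proposal is correct in substance and uses the paper's architecture: a two-sided hub chain with geometric scales, the $\ell$ heaviest vertices per ball, noise-minimizing selection, a perturbed exponent $\tilde\gc>\gc$, and a cost series dominated by the macroscopic hop. Where you differ is in how the hub truncation is organized.

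You impose a scale-indexed loss, namely the threshold $r^{d/\tilde\gc}/\log r$, and you make the union bound over scales small by stopping the chain at a fixed large scale $r_0=r_0(\delta)$. The endpoint edges are then single unexamined edges of bounded length, hence $O_{\pr}(1)$, exactly as in Step~3 of the Regime~II proof. The price is a factor $(\log n)^2$ on the top hop, which the $\eps$-loss absorbs.

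The paper instead indexes the truncation by the level counted from the top, $v_i=v_\ast(i+2)^{-r}$. The macroscopic hop then carries no loss at all, and the polynomial factors $1/(v_iv_{i+1})$ at deep levels are absorbed by $\phi^{-i\Delta}$. The failure probability is made small by taking $v_\ast$ small. The terminal scale must then be $(\log n)^A$ so that $v_kn_{k+1}^{d/\tilde\gc}\ge x_V$, which makes the endpoint edges polylogarithmic rather than tight.

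What the paper's bookkeeping buys is the log-free bound $O_{\pr}(n^{\ga-2d/\tilde\gc})$. This is what yields the lossless statement of Remark~\ref{rem:IVH-lossless} when $\tilde\gc=\gc$ is allowed; your version cannot reach it because of the $(\log n)^2$ on the top hop.

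Two slips need fixing.

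\textbf{Selection rule.} Choosing at each level the best of the $\ell\times\ell$ (or $\ell$ disjoint) pairs determines the hub of ball $i-1$ twice, once by each adjacent hop, so the chain does not close. You must select sequentially: fix the current hub and minimize the noise over the $\ell$ candidates of the next ball. This is also what makes the selected noises i.i.d.\ copies of $\go_{[\ell]}$ (Lemma~\ref{lem:hub-indep}), which your Markov step needs.

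\textbf{Top scale.} The top scale must be a small multiple of $\norm{\lceil n\mvx\rceil}_\infty$, not $n/4$. Otherwise the shells around $\boldsymbol{0}$ and around $\lceil n\mvx\rceil$ can overlap, since $\norm{\mvx}_\infty$ may be small.

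A minor point: with $\tilde\gc$ used in both the threshold and the tail bound, the per-scale failure probability is $\exp\{-c(\log r)^{\tilde\gc}\}$ rather than $\exp\{-cr^{c'}\}$. This is still summable over dyadic $r\ge r_0$, so the argument is unaffected.
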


For the corresponding bound without an exponent loss under a stronger vertex-tail assumption, see Remark~\ref{rem:IVH-lossless}.

\subsection{Regime $\rm V_H$: the linear regime}\label{ssec:res-VH}

The linear upper bound is proved once and used in both linear sectors; see Section~\ref{ssec:res-V} below.

\section{Main Results II: Edge-Dominated Regimes}\label{sec:mainrese}
Here the operative mechanism is the edge noise, and $q_{\rm edge}=d/\theta$.

\subsection{Regime $\rm I_E$: the instantaneous regime}\label{ssec:res-IE}

\begin{thm}[Instantaneous regime $\rm I_E$]\label{thm:IE}
	Assume \textup{\eqref{A1}--\eqref{A4}} and $\ga<d/\theta$. Then $T(\mvzero,\mvx)=0$ almost surely for every $\mvx\in\dZ^d$.
\end{thm}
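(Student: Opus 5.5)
We prove Theorem~\ref{thm:IE} with an explicit two-hop construction, parallel to Theorem~\ref{thm:IH}. The roles of the vertex weights and edge noises are swapped, and a second-moment bound replaces the extreme-value argument for hubs. Fix $\mvx\ne\mvzero$ and $n=\norm{\mvx}$, and use the dyadic annuli $A_m:=A^{\rm hub}_m$ of~\eqref{eq:def-Ahub} with $m=2^kn$. For a vertex $\mvy\in A_{2^kn}$ the two-hop path $\mvzero\to\mvy\to\mvx$ costs
\begin{align*}
	W_{\mvzero\mvy}+W_{\mvy\mvx}\le \frac{C(2^kn)^{\ga}}{V_{\mvy}}\Big(\frac{\go_{\mvzero\mvy}}{V_{\mvzero}}+\frac{\go_{\mvy\mvx}}{V_{\mvx}}\Big).
\end{align*}
It suffices to show that, almost surely, for all large $k$ there is some $\mvy\in A_{2^kn}$ with $V_{\mvy}\ge1$ and both noises $\go_{\mvzero\mvy},\go_{\mvy\mvx}\le t_k:=(2^kn)^{-\ga-\delta}$. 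Here $\delta>0$ is small. Since $V_{\mvzero},V_{\mvx}\in(0,\infty)$ are a.s.\ fixed positive numbers, the cost is then $O((2^kn)^{-\delta})\to0$, so $T(\mvzero,\mvx)=0$.

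The main step is a counting estimate. Set $N_k:=\#\{\mvy\in A_{2^kn}\setminus\{\mvzero,\mvx\}: V_{\mvy}\ge1,\ \go_{\mvzero\mvy}\le t_k,\ \go_{\mvy\mvx}\le t_k\}$. For distinct $\mvy$ the indicators involve disjoint sets of edges and distinct vertex weights, so they are independent. Hence $N_k$ is a binomial count with $\asymp(2^kn)^d$ trials and success probability $p:=\pr(V\ge1)\,\pr(\go\le t_k)^2$, where $\pr(V\ge1)>0$ after possibly rescaling the threshold. Assumption~\eqref{A4} gives $\pr(\go\le t)\ge t^{\theta+\eta}$ for small $t$. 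Therefore
\begin{align*}
	\E N_k\ge c(2^kn)^{d-2(\theta+\eta)(\ga+\delta)}.
\end{align*}
This is where the argument needs care. The exponent is positive only if $\ga<d/(2\theta)$, not under the full hypothesis $\ga<d/\theta$. So two fresh edges are too costly, and the plan must be modified. This is the main obstacle.

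The fix is to exploit the instantaneous escape \eqref{E1} from each endpoint separately and then join the two escapes. First, for each fixed vertex $\mvu$, the number of $\mvy\in A_m$ with $\go_{\mvu\mvy}\le m^{-\ga-\delta}$ has mean $\ge m^{d-(\theta+\eta)(\ga+\delta)}\to\infty$. This mean is polynomially large, so a Chernoff bound plus Borel--Cantelli over dyadic $m$ shows that a.s.\ such cheap escapes exist at every large scale. Second, we iterate to build a path of zero cost. From $\mvzero$ escape to a vertex $\mvy_1$ at scale $m_1$ with tiny cost. Then from $\mvy_1$ escape back to a vertex near $\mvx$, or more robustly run the escape from both $\mvzero$ and $\mvx$ to large sets $S_0,S_{\mvx}$ of cheaply reached vertices in the same annulus. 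The sets $S_0,S_{\mvx}$ have size $\asymp m^{d-\theta\ga-o(1)}$. Connecting them needs one edge between two sets of size $m^{\beta}$ each, with $\beta=d-\theta(\ga+\delta)-o(1)$. Among the $m^{2\beta}$ candidate edges, fresh and conditionally independent given the first-stage edges, the minimal noise is $\approx m^{-2\beta/\theta}$. The bridge cost is then $m^{\ga-2\beta/\theta}$, which still fails when $\ga$ is close to $d/\theta$.

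Hence I would instead use a multi-scale tree with a zero limit. From $\mvzero$ take a cheap escape at scale $m$ to $\mvy$, which costs $\le m^{-\delta'}$. Now $T(\mvy,\mvx)$ is again the same problem. Using a sequence of scales $m_j\uparrow\infty$, build the path $\mvzero\to\mvy_1\to\mvy_2\to\cdots$ whose total cost is $\sum_j m_j^{-\delta'}$, small. The remaining difficulty is to terminate at $\mvx$. Here I would use the exact symmetric structure: a.s.\ for all large $k$, the escape sets from $\mvzero$ and from $\mvx$ at scale $2^k n$, restricted to fixed cost $\le \epsilon$, each have density bounded below in a sub-box. A single edge between two such positive-density sets at distance $m$ then has noise minimum $\approx m^{-2d/\theta}$. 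For $\ga<d/\theta<2d/\theta$ this gives bridge cost $m^{\ga-2d/\theta}\to0$. To make the density claim work, I would show via a second-moment (Paley--Zygmund) argument that $|\{\mvy\in A_m: W_{\mvzero\mvy}\le\epsilon\}|\ge c_\epsilon m^d$ with probability bounded below. This holds because $\pr(m^{\ga}\go\le\epsilon V_{\mvzero}V_{\mvy})$ decays only like $m^{-\ga\theta}$, not like a constant, which is again insufficient directly. The fallback is to iterate escapes two levels deep, so that the reachable set grows to polynomial density with exponent approaching $d$. I regard this density bootstrap as the genuine crux, and it is where I expect the second-moment argument mentioned in~\eqref{E1} to be used.
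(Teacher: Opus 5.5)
Your proposal does not reach a proof. Each construction you analyze covers only part of the range $\ga\theta<d$, and the step you call the crux, the ``density bootstrap'', is left unproved. The two-hop path needs $\ga\theta<d/2$. Your escape-sets-plus-bridge construction is really a three-hop path and, as you compute, needs $\ga\theta<2d/3$. Iterating escapes any fixed number of levels before closing up has the same defect. In general, a $k$-hop path whose $k-1$ interior vertices range over annuli of volume $\asymp m^d$ has $\asymp m^{d(k-1)}$ realizations. Each has cost $O(1)$ with probability $m^{-k\ga\theta+o(1)}$, so a fixed $k$ only handles $\ga\theta<d(k-1)/k$. The number of hops must therefore be chosen depending on $d-\ga\theta$, and it is unbounded as $\ga\theta\uparrow d$. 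The infinite chain $\mvzero\to\mvy_1\to\mvy_2\to\cdots$ never ends at $\mvx$. The positive-density statement for the reachable set is stronger than needed and is not established; you note yourself that its one-step version is false. Finally, nothing in the proposal controls the dependence between overlapping multi-hop paths, which any version of the bootstrap would require.

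The missing idea is to count \emph{paths}, with the number of hops tuned to the gap $d-\ga\theta$.
\begin{itemize}
\item Fix $k\ge2$ and small $\eps>0$ with $d(k-1)-k\ga(\theta+\eps)>0$, which $\ga\theta<d$ permits.
\item Place the interior vertices in $k-1$ pairwise disjoint, well-separated annuli at scale $\ell_j$, restricted to vertices with $V\in[a_0,A_0]$. Then all edge weights along a path are comparable, and $V_{\mvzero},V_{\mvx}$ enter only as a.s.\ finite positive factors.
\item For a \emph{fixed} $t>0$, let $N_j(t)$ count these $k$-hop paths from $\mvzero$ to $\mvx$ of weight at most $t$. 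By Lemma~\ref{lem:go-tail-bound}, $\E[N_j(t)\mid\cF_V]\gtrsim\ell_j^{d(k-1)-k\ga(\theta+\eps)}\to\infty$.
\item For the second moment, shared edges of two such paths occupy matching positions, and $m\in\{1,\dots,k-1\}$ shared edges pin at least $m$ interior vertices. So there are at most $C\ell_j^{d(k-1)}\ell_j^{d(k-1-m)}$ such pairs, each jointly cheap with probability at most $C\ell_j^{-(2k-m)\ga(\theta-\eps)}$. The ratio to $\E[N_j(t)\mid\cF_V]^2$ is $\ell_j^{-m(d-\ga\theta)+O(k\eps)}$, which is bounded for small $\eps$.
\item Paley--Zygmund gives $\pr(N_j(t)\ge1\mid\cF_V)\ge A(t,V)>0$ uniformly in large $j$.
\item Geometrically separated scales use disjoint edge sets, so these events are conditionally independent. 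Hence $T(\mvzero,\mvx)\le t$ almost surely, and letting $t\downarrow0$ finishes.
\end{itemize}
This argument needs a cheap path only at infinitely many scales and at a fixed cost level. It does not need one at all large scales with cost $m^{-\delta}\to0$, so no summable failure bounds are required.
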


\begin{rem}\label{rem:IE}
	\begin{enumeratei}
		\item The multi-scale path family and the ``good vertex'' event in the proof use only that $V>0$ almost surely and that there is an interval $[a_0,A_0]$ with $\pr(a_0\le V\le A_0)>0$. No assumption on the tail of $V$ at infinity is needed. In particular, Theorem~\ref{thm:IE} holds for bounded $V$.
		\item The condition $\ga<d/\theta$ permits a choice of $k$ and $\eps>0$ for which $d(k-1)-k\ga(\theta+\eps)>0$. The proof combines this first-moment growth with an overlap estimate to obtain the second-moment bound needed for the existence of cheap paths.
		\item No comparison between $\gc$ and $\theta$ is used. As in Remark~\ref{rem:IH}, that inequality only locates $\rm I_E$ in the diagram and explains the overlap with $\rm I_H$.
	\end{enumeratei}
\end{rem}

\subsection{Regime III: the polylogarithmic regime}\label{ssec:res-III}

\begin{thm}[Polylogarithmic upper bound in Regime III]\label{thm:III-ub}
	Assume \textup{\eqref{A1}--\eqref{A4}} and
	\begin{align}\label{eq:III-range}
		\frac{d}{\theta}<\ga<\frac{2d}{\theta}
		\text{ and }
		\ga>\frac{2d}{\gc},
	\end{align}
	and recall $\Delta_{\rm III}=\log2/\log(2d/(\ga\theta))\in(1,\infty)$ from~\eqref{eq:def-DeltaIII}. Then for every $\eps>0$ there is $C<\infty$ with
	\begin{align*}
		\pr\Big(T_n>C(\log n)^{\Delta_{\rm III}+\eps}\Big)\xrightarrow[n\to\infty]{}0.
	\end{align*}
\end{thm}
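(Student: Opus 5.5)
We would prove Theorem~\ref{thm:III-ub} by a Biskup-type binary multiscale construction, carried out conditionally on $\cF_V$ and restricted to a sparse family of \emph{good} vertices. Fix an interval $[a_0,A_0]$ with $\pr(a_0\le V\le A_0)>0$. Call a vertex good if its weight lies in this interval. Then every edge between good vertices has passage time at most $a_0^{-2}\norm{\mvx-\mvy}^{\ga}\go_{\mvx\mvy}$, and, conditionally on $\cF_V$, the noises are fresh and i.i.d. Consequently the hub mechanism never needs to be used; the hypothesis $\ga>2d/\gc$ only locates the regime. The only inputs are the lower tail $\pr(\go\le t)\ge c_\eta t^{\theta+\eta}$ and the moment bound for $\ell$-fold minima from Lemma~\ref{lem:min-moments}.

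\textbf{The construction.} Fix $\phi\in(\ga\theta/(2d),1)$, slightly larger than $\ga\theta/2d$, and let $f_\phi(r)=r^{\phi}$, so that $n_i=n^{\phi^i}$. At level $i$ there are at most $2^i$ gaps. Each gap joins two points at distance at most $n_i$ (up to constants). For each gap we place two $\ell_\infty$-boxes of side $n_{i+1}$ around its endpoints and look for the edge of minimal noise between good vertices of the two boxes. There are $\asymp n_{i+1}^{2d}$ candidate pairs, so with $\eta$ small the minimal noise is at most $n_{i+1}^{-2d/(\theta+\eta)}$ times a factor of order one. Conditional on the good vertices having positive density in each box, which is a binomial bound, the failure probability is at most $\exp(-c\,n_{i+1}^{\delta})$ with $\delta=2d\phi-\ga(\theta+\eta)>0$. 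The bridge therefore costs at most $C n_i^{\ga} n_i^{-2d\phi/(\theta+\eta)}=C n_i^{-\kappa}$ with $\kappa>0$. Bridging replaces each gap by two gaps of size $n_{i+1}$, connecting the old endpoints to the bridge endpoints.

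\textbf{Depth and terminal segments.} We stop at depth $k$ such that $n_k$ is of order a large constant $M$, namely $k\approx\log\log n/\log(1/\phi)$. Each of the $2^k$ terminal gaps is closed by one direct edge of length at most $CM$. Its cost is at most $a_0^{-2}(CM)^{\ga}\go$. To avoid a huge $\go$, we instead use the best of $\ell$ disjoint two-hop routes through good vertices, which gives a $\go_{[\ell]}$ with finite mean. The total cost is then bounded by
\[
\sum_{i<k}2^i C n_i^{-\kappa}+C M^{\ga}\sum_{j\le 2^k}\xi_j ,
\]
where the $\xi_j$ have uniformly bounded mean. The first sum is dominated by its last terms, where $n_i$ is of constant order, and is therefore $O(2^k)$. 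The second sum has mean $O(2^k)$. Since $2^k=(\log n)^{\log2/\log(1/\phi)}$ and $\log 2/\log(1/\phi)\downarrow\Delta_{\rm III}$ as $\phi\downarrow\ga\theta/2d$, Markov's inequality yields the bound $C(\log n)^{\Delta_{\rm III}+\eps}$.

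\textbf{The main obstacle.} The main obstacle is controlling the failure probability uniformly over all $2^i$ bridges at the small scales. At level $i$ there are $2^i$ bridges, each failing with probability at most $\exp(-c n_{i+1}^{\delta})$. This sum is small only while $n_{i+1}^{\delta}\gg i$, which fails once $n_i=O(1)$. We therefore stop the recursion at a scale $n_k$ that is a large constant $M=M(\eps)$, so that the total failure probability is at most $\sum_i 2^i e^{-cn_{i+1}^{\delta}}$. This sum is dominated by small $n_i$; it is made at most $\eps$ by choosing $M$ large, since $2^i$ is only polylogarithmic while the decay is stretched-exponential in $n_i$. The remaining work is bookkeeping:
\begin{itemize}
\item self-avoidance and disjointness of the boxes, which hold because the nested boxes have side $n_{i+1}\ll n_i$;
\item independence of the noises used across distinct pairs, which holds because the pairs are distinct;
\item the fact that the endpoints $\mvzero$ and $\lceil n\mvx\rceil$ need not be good, which is handled by a single attachment via $\ell$-fold minima, as in the terminal step.
\end{itemize}
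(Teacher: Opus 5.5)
Your architecture matches the paper's: a binary bridge recursion with $n_i=n^{\phi^i}$ and $\phi\downarrow\ga\theta/(2d)$, bridge endpoints restricted to favorable vertices so that $\gc$ plays no role, and the entropy factor $2^k$ producing $\Delta_{\rm III}$. However, your resolution of the ``main obstacle'' is wrong. If you stop at a fixed scale $n_k=M$, then $2^k\asymp(\log n/\log M)^{\log2/\log(1/\phi)}\to\infty$. So the last term $2^{k-1}e^{-cM^{\gd}}$ of your failure bound $\sum_i2^ie^{-cn_{i+1}^{\gd}}$ diverges as $n\to\infty$, for every fixed $M$. Polylogarithmic entropy loses to stretched-exponential decay only when that decay is evaluated at a scale that grows with $n$. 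The density requirement fails the same way. Each of the $\asymp2^k$ terminal boxes of side $M$ lacks $\ell$ favorable vertices with probability of order one (depending on $M$ but not on $n$). Deficient boxes, where both your last bridges and your $\ell$-fold terminal closures break down, are therefore typical. Your own remark that one needs $n_{i+1}^{\gd}\gg i$ at $i\approx k\asymp\log\log n$ already forces $n_k\to\infty$. The paper accordingly stops at $\phi^k\log n\asymp(\eps/4)\log\log n$, i.e.\ $n_k\approx(\log n)^{\eps/4}$. This makes $C^ke^{-cn_k^d}$ and $2^kn_k^{-q}$ vanish, while $2^kn_k\le C(\log n)^{\log2/\log(1/\phi)+\eps/4}$. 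The terminal segments are now long, and the paper crosses them at linear cost with the fixed-scale chaining of Proposition~\ref{prop:lin-ub-input}(i). It selects corridor-good endpoints at the last level and applies the conditional estimate given the bridge history. Your two-hop closure would in fact survive this repair if you take $n_k=(\log n)^{s}$ with $s\ga<\eps/4$. It costs $n_k^{\ga}$ times an integrable $\ell$-fold minimum per interior segment, and it uses only unexamined edges of length $O(n_k)\ll n_{k-1}$. The total is then $2^kn_k^{\ga}\le C(\log n)^{\log2/\log(1/\phi)+\eps/4}$, a genuine simplification of the paper's local step. As written, though, your argument does not close.

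The second gap is your claim that positive density of favorable vertices in each box is ``a binomial bound''. That is true for deterministic boxes, but yours are centered at bridge endpoints selected through $\cF_V$ and the noises. A naive union bound over all $n^{O(1)}$ possible centers needs $e^{-cn_k^d}$ to beat a polynomial in $n$, which forces $n_k\gtrsim(\log n)^{1/d}$. At that scale your terminal cost $2^kn_k^{\ga}$ already exceeds $(\log n)^{\Delta_{\rm III}+\eps}$ once $\eps<\ga/d$. The paper singles this out as the point requiring care. With independent continuous tie-breaking marks, each selected endpoint is, conditionally on the past, uniform on its favorable candidate shell. Iterating this along the chain of parent centers (Lemmas~\ref{lem:uniform-center} and~\ref{lem:Ak-prob}) gives $\pr(\cA_k^{\rm c})\le C^ke^{-cn_k^d}$ with $C^k=(\log n)^{O(1)}$. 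You need the same argument for the intermediate vertices of your terminal two-hop routes. A smaller point: full boxes $B(\cdot,n_{i+1})$, unlike the paper's shells $B(\cdot,n_{i+1})\setminus B(\cdot,n_{i+1}/2)$, give no lower bound on sub-gap lengths. Your disjointness claim for same-level boxes and for candidate edge sets across levels implicitly relies on such a bound.
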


\begin{rem}\label{rem:III-gc}
	The second condition in~\eqref{eq:III-range} is not used anywhere in the proof: the binary edge-bridge construction only requires that $\pr(V>a)>0$ for some $a>0$, which holds by~\eqref{def:V_x}. Thus the polylogarithmic bound of Theorem~\ref{thm:III-ub} is valid whenever $d/\theta<\ga<2d/\theta$, for every $\gc>0$. The condition $\ga>2d/\gc$ only records that this is the regime where no better bound is available from the hub mechanism: for $\ga<2d/\gc$ Theorem~\ref{thm:II-ub} supersedes it. The same remark applies verbatim to the hypothesis $\gc>\theta$ in Theorem~\ref{thm:IVE-ub}.
\end{rem}

\subsection{Regime $\rm IV_E$: the power-law regime}\label{ssec:res-IVE}

\begin{thm}[Power-law upper bound in $\rm IV_E$]\label{thm:IVE-ub}
	Assume \textup{\eqref{A1}--\eqref{A4}}, $\gc>\theta$ and
	\begin{align}\label{eq:IVE-range}
		\frac{2d}{\theta}<\ga<\frac{2d}{\theta}+1,
	\end{align}
	and set $\Delta_{\rm IV_E}:=\ga-2d/\theta\in(0,1)$. Then for every $\eps>0$ there is $C<\infty$ with
	\begin{align*}
		\pr\Big(T_n>C n^{\Delta_{\rm IV_E}+\eps}\Big)\xrightarrow[n\to\infty]{}0.
	\end{align*}
\end{thm}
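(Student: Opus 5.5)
The bound will come from an explicit chain of $O(\log n)$-free but multiscale single-bridge hops, with $V$ frozen and the edge noises doing the work. Condition on $\cF_V$ and restrict attention to the favorable set $\cV^a$ for a fixed $a$ with $p_a>0$. On $\cV^a$ every edge satisfies $W_{\mvu\mvv}\le a^{-2}\norm{\mvu-\mvv}^{\ga}\go_{\mvu\mvv}$, so the vertex weights only cost a constant factor and a density $p_a$. This is the same reduction as in Remark~\ref{rem:III-gc}, and the hypothesis $\gc>\theta$ is never used.

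The heuristic (E3) with $\Delta:=\Delta_{\rm IV_E}\in(0,1)$ says that a single macroscopic bridge costs $n^{\Delta}$. That bridge leaves two gaps, and since $\Delta>0$ these should be handled by the same bridge construction recursively at geometrically smaller scales. So I would run the Binary Edge-Bridge multiscale Ansatz of Section~\ref{ssec:multi-scale-ansatz}, using its master estimate Proposition~\ref{prop:Ansatz-bound}, with the scale map $f_\phi(m)=m^{\phi}$ replaced by a linear map $f(m)=\lambda m$ for a small fixed $\lambda\in(0,1)$.

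The construction goes as follows. At depth $i$ we have $2^i$ gaps, each of length at most $n_i\asymp\lambda^i n$. In each gap, take two $\ell_\infty$ balls of radius $\lambda n_i$ around the endpoints, intersected with $\cV^a$. These contain $\gtrsim n_i^{d}$ favorable vertices, so there are $\gtrsim n_i^{2d}$ candidate edges of length $\asymp n_i$. The minimum noise among them is at most $n_i^{-2d/(\theta+\eta)}$ with probability $1-\exp(-c\,n_i^{\eta'})$, using only the lower tail estimate $\pr(\go\le t)\ge c_\eta t^{\theta+\eta}$ from Section~\ref{ssec:disc-assumptions}. This yields a bridge of cost $\le C n_i^{\Delta+\eps}$. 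Summing over the tree gives
\begin{align*}
\sum_{i\le k}2^i (\lambda^i n)^{\Delta+\eps} = n^{\Delta+\eps}\sum_{i\le k}\big(2\lambda^{\Delta+\eps}\big)^i .
\end{align*}
This sum is not summable unless $2\lambda^{\Delta}<1$, which is false for $\lambda$ near $1$ but is arranged by taking $\lambda<2^{-1/\Delta}$. That choice is admissible, since the gap shrinks from $n_i$ to at most $2\lambda n_i$ on each side, so the factor $\lambda$ can be made as small as needed with the geometry adjusted. With this choice the tree cost is $O(n^{\Delta+\eps})$.

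The recursion stops at depth $k$ with $n_k=K$, a large constant. The terminal gaps then have length $O(K)$ and are crossed by direct edges, giving total terminal cost about $2^k K^{\ga}$. Since $2^k=(n/K)^{\log 2/\log(1/\lambda)}$ and $\lambda<2^{-1/\Delta}$, we have $\log2/\log(1/\lambda)<\Delta$, so this is $o(n^{\Delta})$. The terminal edges cost $\norm{e}^{\ga}\go_e/(V V)$ with $V>a$ at the endpoints. Their sum is a $1$-dependent sum of variables with a $\gz$-moment, controlled by the aux moment bound for $1$-dependent sums together with Markov's inequality. Here one should use $\ell$-fold minima over $\ell$ candidate endpoint attachments, via Lemma~\ref{lem:min-moments}, to get enough integrability. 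The origin and $\lceil n\mvx\rceil$ themselves are attached to nearby favorable vertices at cost $O_{\pr}(1)$ in the same way.

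The main obstacle is the probabilistic bookkeeping across the tree. Failure probabilities sum to $\sum_i 2^i e^{-c n_i^{\eta'}}$, which is small only when the smallest scales are $\ge K$ with $K$ large; this is why the recursion is truncated at a large constant. The bridges at different depths must also use disjoint balls so that the noises involved are distinct and fresh, and the resulting path must be self-avoiding, which holds because the balls are nested inside disjoint sub-gaps. If Proposition~\ref{prop:Ansatz-bound} is stated only for polynomial scale maps, I would redo its proof with the linear map, since only the entropy count $2^i$ and the union bound change.
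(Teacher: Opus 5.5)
\textbf{The gap is at the bottom of the tree: stopping at a constant scale $n_k=K$ breaks your bookkeeping, and this is not a detail.} With $n_i=\lambda^i n$ and $n_k=K$ you have $k\asymp\log n$ and $2^k=(n/K)^{\log 2/\log(1/\lambda)}$, a positive power of $n$.

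\begin{itemize}
\item \emph{Your failure sum diverges.} The sum $\sum_i 2^i e^{-c n_i^{\eta'}}$ contains the deepest-level term $2^{k-1}e^{-cK^{\eta'}}$. This tends to $\infty$ for every fixed $K$, and $K$ cannot depend on $n$.
\item \emph{The good event itself has vanishing probability.} The $2^{k-1}$ deepest bridge minima use pairwise disjoint, fresh candidate edge sets of bounded size $\lesssim K^{2d}$. So, conditionally on the earlier levels, they exceed your truncation threshold independently, each with probability at least some $p_K>0$. Hence the event that every bridge obeys your cost bound has probability at most $(1-p_K)^{2^{k-1}}\to0$.
\item \emph{The construction can become undefined.} If $p_a<1$, then with high probability some deepest shells contain no vertex of $\mathcal V^a$ at all. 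There is no cheap fallback, because the assumptions give no inverse moments of $V$.
\item \emph{Random centers are not handled.} Your claim that the balls ``contain $\gtrsim n_i^d$ favorable vertices'' overlooks that the centers are bridge endpoints chosen by the construction. The parent-chain argument of Lemma~\ref{lem:Ak-prob} handles this at a cost of a factor $C^k=n^{O(1)}$. That factor is affordable only if the shell-failure probability decays faster than every power of $n$, which again requires $n_k\to\infty$.
\end{itemize}

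\textbf{The paper's fix is a polynomially growing terminal scale, which changes the terminal problem.} It takes $n_i=n/\phi^i$ with $\phi$ large, playing the role of your $1/\lambda$, and $k=\lfloor(1-s_\star)\log_\phi n\rfloor$, so that $n_k=n^{s_\star+o(1)}$. Then $C^ke^{-cn_k^d}$, $2^kn_k^{-2dR}$ and $2^kn_k^{-q_0}$ all vanish.

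The price is that the terminal gaps now have length $n^{s_\star}$, not $O(1)$, so your estimate $2^kK^{\alpha}$ no longer applies. The paper crosses each terminal segment by the fixed-scale chaining of Proposition~\ref{prop:lin-ub-input}(i), at linear cost $c_1n_k$. This is why the last bridge level is restricted to corridor-good sub-shells (Lemmas~\ref{lem:Agd-prob} and~\ref{lem:Sloc-control}). The result is $2^kc_1n_k\le n^{s_\star+(1-s_\star)\log_\phi 2+o(1)}\le n^{\Delta_+/2}$, where $\Delta_+=\Delta_{\mathrm{IV_E}}+\eps'$.

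Your direct-edge idea could also survive at this scale, but it needs an analysis you have not given. Each terminal edge costs about $n_k^{\alpha}$ times a fresh noise, so you would need $s_\star\alpha+(1-s_\star)\log 2/\log(1/\lambda)<\Delta_{\mathrm{IV_E}}$. You would also need a moment argument for the sum of $2^k$ noises that have only a $\zeta$-moment, for example two-hop attachments through $\ell$ candidates as in Lemma~\ref{lem:min-moments}.

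Separately, use annular shells $B(\cdot,r)\setminus B(\cdot,r/2)$ rather than full balls around the endpoints. Without a lower bound on child-segment lengths, the length separation that makes candidate edge sets at different levels disjoint, and their noises fresh, fails (compare Lemma~\ref{lem:edge-disjoint}).
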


\subsection{Regimes $\rm V_E$ and $\rm V_H$: the linear regime}\label{ssec:res-V}

Both linear sectors are covered by a single fixed-scale chaining estimate, which does not compare $\gc$ with $\theta$ and in fact does not restrict $\ga$ at all.

\begin{thm}[Linear upper bound]\label{thm:linear-ub}
	Assume \textup{\eqref{A1}--\eqref{A4}}. Then for every direction $\mvx\in B(\mvzero,1)$ there is a constant $C=C(\mvx)<\infty$ with
	\begin{align*}
		\pr\big(T_n>Cn\big)\xrightarrow[n\to\infty]{}0,
	\end{align*}
	that is, $T_n=O_{\pr}(n)$. In particular this holds in Regimes $\rm V_E$ and $\rm V_H$, where it is of the correct order.
\end{thm}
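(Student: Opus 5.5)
We prove the linear upper bound with a fixed-scale chaining construction along the segment from $\mvzero$ to $\lceil n\mvx\rceil$. No hubs or bridges are needed. The aim is a path with $O(n)$ hops, each of bounded expected cost. The one difficulty is that $1/V$ need not be integrable, since $V$ may be close to zero with no inverse-moment condition. So we cannot use nearest-neighbour steps through every vertex. Instead we route through favorable vertices $\cV^a$, with $a>0$ chosen so that $p_a=\pr(V>a)>0$ (possible because $V>0$ almost surely).

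\textbf{Construction.} Fix a large block side $L$ and partition a lattice corridor along the ray from $\mvzero$ to $\lceil n\mvx\rceil$ into $m\asymp n/L$ consecutive disjoint cubes $Q_1,\dots,Q_m$ of side $L$, with consecutive cubes adjacent. In each cube $Q_j$ let $S_j:=Q_j\cap\cV^a$. Conditionally on $\cF_V$, the cardinalities $|S_j|$ are i.i.d.\ Binomial$(L^d,p_a)$. Call $Q_j$ good if $|S_j|\ge \ell$, where $\ell$ is a fixed integer with $\ell\gz>2$ (as in Lemma~\ref{lem:min-moments}). Choosing $L$ large makes $\pr(Q_j\text{ bad})$ as small as we like.

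\textbf{Chain.} Between consecutive good cubes $Q_j,Q_{j'}$ (with $j<j'$), pick $\mvu\in S_j$ and choose $\mvv$ among $\ell$ candidates in $S_{j'}$ to minimise the edge noise. The resulting edge has
\[W\le (2dL(j'-j+1))^{\ga}a^{-2}\go_{[\ell]}.\]
To keep the noises fresh and the path self-avoiding, alternate which designated vertices are used as entry and exit points in each cube. Use disjoint candidate sets for the entry and exit of each cube, so $|S_j|\ge 2\ell$ is required. The noises attached to distinct cube pairs are then independent given $\cF_V$, and the resulting sum is at most $1$-dependent.

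\textbf{Endpoints.} The endpoint $\mvzero$ may have tiny weight $V_{\mvzero}$. We connect $\mvzero$ to the first good cube by a single edge to the best of $\ell$ favorable candidates. Its cost is $O_\pr(1)$, because $V_{\mvzero}>0$ is a fixed positive random variable and $\go_{[\ell]}$ is finite. The same is done at $\lceil n\mvx\rceil$. These two terms contribute $O_\pr(1)$, which is $o(n)$.

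\textbf{Main estimate.} The total cost is bounded by
\[C L^{\ga}a^{-2}\sum_{i} G_i^{\ga}\,\go^{(i)}_{[\ell]},\]
where the $G_i$ are the gaps between consecutive good cubes. The gaps are geometric with parameter close to $1$, hence have all moments, so $\E G_i^{\ga}<\infty$ for every $\ga$. This is exactly why the linear bound holds for all $\ga$. The noises $\go^{(i)}_{[\ell]}$ have finite second moment by Lemma~\ref{lem:min-moments}. Markov's inequality, or Chebyshev's inequality via the $1$-dependent moment bound of Section~\ref{sec:aux}, gives that the sum is at most $Cn$ with probability tending to $1$.

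The main obstacle is bookkeeping rather than estimation: the gap lengths $G_i$ depend on $\cF_V$, while the noise choice depends on the candidate sets. The fix is to condition on $\cF_V$, which fixes the good cubes and gaps, and then treat the noises as fresh. Conditionally, the expected cost is $C\sum_i G_i^{\ga}$. Its unconditional expectation is $O(n/L)$ because
\[\E\sum_i G_i^{\ga}\le \sum_{j\le m}\E\big[(\text{distance from }j\text{ to the next good cube})^{\ga}\big]<\infty\cdot m.\]
This yields $T_n=O_\pr(n)$.
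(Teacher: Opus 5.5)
Your construction is essentially the paper's. Theorem~\ref{thm:linear-ub} is deduced from Proposition~\ref{prop:lin-ub-input}\textup{(ii)}. That proof chains through boxes containing at least $\ell$ favorable vertices along a monotone lattice ray, uses i.i.d.\ geometric gaps $Y_j$, selects each next vertex by an $\ell$-fold noise minimum with $\ell\gz>p$, and controls the interior sum by Rosenthal's inequality.

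Your endpoint treatment is a legitimate simplification. Only $o_{\pr}(n)$ is needed there, so tightness of $(dLG_1)^{\ga}\go_{[\ell]}/(aV_{\mvzero})$ suffices, together with tightness of its analogue at $\lceil n\mvx\rceil$ (whose weight has the law of $V$ for every $n$). The paper instead works on $\cE_{\mvu\mvv}=\{V_{\mvu}\wedge V_{\mvv}\ge c_\ast\norm{\mvu-\mvv}^{-\gb}\}$ with $\gz$-moment bounds, which buys the polynomial rate in~\eqref{eq:lin-bdry}. At the far end you still have to join the chain's vertex in the last good cube to the endpoint's chosen neighbour. You can do this either by the two-sided minimization~\eqref{eq:last-step} or by one extra edge, which is $O_{\pr}(1)$.

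Two points need repair.

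\emph{Markov only gives tightness.} The theorem asks for a fixed $C$ with $\pr(T_n>Cn)\to0$. Your closing paragraph only shows $\E\sum_iG_i^{\ga}\go^{(i)}_{[\ell]}\le C'n$. Markov's inequality then gives $\pr(T_n>Kn)\le C'/K+o(1)$, which is tightness of $T_n/n$, not convergence to $0$ for a fixed constant. You need the Chebyshev route you mention but do not carry out:
\begin{enumeratei}
	\item Conditionally on $\cF_V$, the selected minima are i.i.d.\ with the law of $\go_{[\ell]}$ (sequential revealing, as in Lemma~\ref{lem:hub-indep}).
	\item Hence the conditional mean is $\mu_\ell\sum_iG_i^{\ga}$ with $\mu_\ell:=\E\go_{[\ell]}$, and the conditional variance is at most $\E\go_{[\ell]}^2\sum_iG_i^{2\ga}$.
	\item Let $m$ be the number of cubes. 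The gaps form an initial segment, of at most $m$ terms, of an i.i.d.\ geometric sequence. The law of large numbers therefore gives $\sum_iG_i^{\ga}\le2m\E G_1^{\ga}$ and $\sum_iG_i^{2\ga}\le2m\E G_1^{2\ga}$ with probability tending to one.
	\item Conditional Chebyshev then bounds by $O(1/m)$ the probability that $\sum_iG_i^{\ga}\go^{(i)}_{[\ell]}$ exceeds $(2\mu_\ell\E G_1^{\ga}+1)m$.
\end{enumeratei}

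\emph{Drop the alternating entry and exit vertices.} Keeping a single vertex per good cube already gives fresh noises and a self-avoiding path, because the edges examined at different steps join different pairs of disjoint cubes. If entry and exit vertices differed, every good cube would contribute an intra-cube edge whose noise is not minimized. These edges are missing from your main estimate. Only $\E\go^{\gz}<\infty$ is assumed, possibly with $\gz<1$, so a sum of $\asymp n/L$ such unminimized noises need not be $O(n)$.
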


That Theorem~\ref{thm:linear-ub} needs no restriction on $\ga$ is not an oversight. A chain of $\asymp n$ short hops between favorable boxes always costs $O_{\pr}(n)$, whatever $\ga$ is, since the hop lengths have geometric tails and hence all moments. The condition $\ga>1+2(q_{\rm hub}\vee q_{\rm edge})$ defines the regime in which the conjectured phase diagram predicts that this baseline upper bound is sharp. The proof rests on the following estimate, which is also the input for the local segments of the multi-scale ansatz and is therefore stated in the form needed there.

\begin{figure}[htbp]
	\centering
	\begin{tikzpicture}[x=1cm,y=1cm,scale=.75,line cap=round,line join=round,font=\small]

		\def\W{1.95}
		\def\HalfH{0.80}
		\def\Gap{0.30}
		\def\Left{0.85}
		\def\AxisExtra{0.82}
		\def\Tilt{6.5}

		\pgfmathsetmacro{\Step}{\W+\Gap}
		\pgfmathsetmacro{\RightEdge}{\Left + 5*\W + 4*\Gap}

		\pgfmathsetmacro{\xU}{\Left-\AxisExtra}
		\pgfmathsetmacro{\xV}{\RightEdge+\AxisExtra}

		\pgfmathsetmacro{\xZOne}{\Left + 0.5*\W}
		\pgfmathsetmacro{\xZTwo}{\Left + 1*\Step + 0.5*\W}
		\pgfmathsetmacro{\xZThree}{\Left + 2*\Step + 0.5*\W}
		\pgfmathsetmacro{\xZFour}{\Left + 3*\Step + 0.5*\W}
		\pgfmathsetmacro{\xZFive}{\Left + 4*\Step + 0.5*\W}

		\pgfmathsetmacro{\xYOne}{\xZTwo + 0.34}
		\pgfmathsetmacro{\yYOne}{-0.43}
		\pgfmathsetmacro{\xYTwo}{\xZFive - 0.23}
		\pgfmathsetmacro{\yYTwo}{ 0.47}

		\begin{scope}[rotate around={\Tilt:(0,0)}]

			\coordinate (u)  at (\xU,0);
			\coordinate (v)  at (\xV,0);
			\coordinate (z1) at (\xZOne,0);
			\coordinate (z2) at (\xZTwo,0);
			\coordinate (z3) at (\xZThree,0);
			\coordinate (z4) at (\xZFour,0);
			\coordinate (z5) at (\xZFive,0);

			\foreach \i/\fav/\xc in {
					1/0/\xZOne,
					2/1/\xZTwo,
					3/0/\xZThree,
					4/0/\xZFour,
					5/1/\xZFive
				}{%
					\begin{scope}[shift={(\xc,0)}, rotate=-\Tilt]
						\ifnum\fav=1
							\draw[line width=0.95pt, draw=black, fill=trajectoryblue!12]
							(-0.5*\W,-\HalfH) rectangle (0.5*\W,\HalfH);
						\else
							\draw[line width=0.85pt, draw=black!75]
							(-0.5*\W,-\HalfH) rectangle (0.5*\W,\HalfH);
						\fi
					\end{scope}
					\node at (\xc,-1.16) {$D_{\i}$};
				}

			\draw[black!75, line width=0.8pt] (u) -- (v);
			\node[anchor=east] at ($(u)+(-0.10,0.01)$) {$\mvu$};
			\node[anchor=west] at ($(v)+(0.10,0.01)$) {$\mvv$};

			\foreach \P in {z1,z2,z3,z4,z5}{\fill[black!65] (\P) circle (1.15pt);}
			\node[font=\scriptsize, text=black!70] at ($(z1)+( 0.00, 0.23)$) {$\mvz_1$};
			\node[font=\scriptsize, text=black!70] at ($(z2)+(-0.02, 0.28)$) {$\mvz_2$};
			\node[font=\scriptsize, text=black!70] at ($(z3)+( 0.00, 0.24)$) {$\mvz_3$};
			\node[font=\scriptsize, text=black!70] at ($(z4)+( 0.03, 0.23)$) {$\mvz_4$};
			\node[font=\scriptsize, text=black!70] at ($(z5)+( 0.10,-0.28)$) {$\mvz_5$};

			\coordinate (d21) at ($(z2)+(-0.56, 0.22)$);
			\coordinate (d22) at ($(z2)+(-0.17, 0.03)$);
			\coordinate (d23) at ($(z2)+( 0.14, 0.17)$);
			\coordinate (y1)  at (\xYOne,\yYOne);

			\coordinate (d51) at ($(z5)+(-0.53, 0.19)$);
			\coordinate (d52) at ($(z5)+(-0.08,-0.07)$);
			\coordinate (d53) at ($(z5)+( 0.25, 0.11)$);
			\coordinate (y2)  at (\xYTwo,\yYTwo);

			\foreach \P in {d21,d22,d23,d51,d52,d53}{%
					\fill[black!55] (\P) circle (1.12pt);
				}

			\draw[trajectoryblue, line width=1.80pt]
			(u)
			.. controls ($(u)!0.333!(y1)+(0,0.95)$)
			and ($(u)!0.667!(y1)+(0,0.95)$).. (y1);

			\draw[trajectoryblue, line width=1.80pt]
			(y1)
			.. controls ($(y1)!0.333!(y2)+(0,1.45)$)
			and ($(y1)!0.667!(y2)+(0,1.45)$).. (y2);

			\draw[trajectoryblue, line width=1.80pt]
			(y2)
			.. controls ($(y2)!0.333!(v)+(0,0.80)$)
			and ($(y2)!0.667!(v)+(0,0.80)$).. (v);

			\fill[white] (y1) circle (3.05pt);
			\fill[white] (y2) circle (3.05pt);
			\fill[trajectoryblue] (y1) circle (2.0pt);
			\fill[trajectoryblue] (y2) circle (2.0pt);
			\draw[trajectoryblue, line width=0.72pt] (y1) circle (0.11);
			\draw[trajectoryblue, line width=0.72pt] (y2) circle (0.11);

			\node[fill opacity=.92, text opacity=1, inner sep=0.9pt]
			at ($(y1)+(-0.18,-0.25)$) {$\mvy_1$};
			\node[fill opacity=.92, text opacity=1, inner sep=0.9pt]
			at ($(y2)+(0.00,-0.27)$) {$\mvy_2$};

		\end{scope}
	\end{tikzpicture}
	\caption{The shaded boxes are favorable boxes. The selected points $\mvy_1$ and $\mvy_2$ are chosen from the candidate vertices inside the favorable boxes, and the segment $\mvu\mvv$ passes through the deterministic centers $\mvz_1,\dots,\mvz_5$.}
	\label{fig:linear-ansatz}
\end{figure}

\begin{prop}[Fixed-scale chaining]\label{prop:lin-ub-input}
	Assume \textup{\eqref{A1}--\eqref{A4}} and fix $a>0$ with $p_a=\pr(V>a)>0$.
	\begin{enumeratei}
		\item \textup{(Interior version.)} Fix $\eps_1\in(0,1)$ and a desired tail exponent $q_0>0$. There are constants $\Lambda_0<\infty$, $c_{\rm int}<\infty$ and $R_{\rm int}<\infty$, which may depend on $q_0$, with the following properties. For $\mvu,\mvv\in\dZ^d$ put $m:=\norm{\mvu-\mvv}$ and let $\cC(\mvu,\mvv)$ be the $\cF_V$-measurable \emph{corridor event}
		\begin{align}\label{eq:corridor-event}
			\begin{split}
				\cC(\mvu,\mvv) & :=\{V_{\mvu}\wedge V_{\mvv}>a\}\cap\{N\le\Lambda_0m\}         \\
				               & \qquad\cap\Big\{\sum_{j=1}^{N+1}Y_j^{2\ga}\le\Lambda_0m\Big\}
				\cap\Big\{\max_{1\le j\le N+1}Y_j\le\eps_1m/C_0\Big\},
			\end{split}
		\end{align}
		where $N$ and $(Y_j)$ are the number of good boxes and the gaps between consecutive good boxes along a lattice ray from $\mvu$ through $\mvv$, and $C_0<\infty$ is a geometric constant, all defined in the proof. For every $q>0$ there is $C_q<\infty$ such that, for all $m\ge R_{\rm int}$,
		\begin{align}\label{eq:lin-int-a}
			\pr\Big(\cC(\mvu,\mvv)^{\rm c}, V_{\mvu}\wedge V_{\mvv}>a\Big)\le C_q m^{-q},
		\end{align}
		and, almost surely on $\cC(\mvu,\mvv)$, for every $\gs$-field $\cH$ with
		$\cF_V\subseteq\cH\subseteq\gs\big(\cF_V,\{\go_e:\norm{e}_\infty>\eps_1m\}\big)$,
		\begin{align}\label{eq:lin-int}
			\pr\Big(T^{\eps_1}(\mvu,\mvv)\ge c_{\rm int} m\ \big|\ \cH\Big)\le C_{q_0} m^{-q_0},
		\end{align}
		where $T^{\eps_1}(\mvu,\mvv)$ is the first-passage time computed over paths all of whose edges have $\ell_\infty$ length at most $\eps_1m$.
		The same conditional estimate remains valid if an auxiliary $\gs$-field independent of the entire edge-noise family is adjoined to both endpoint $\gs$-fields. This observation will be used for the tie-breaking marks in the binary construction.
		\item \textup{(Boundary version.)} Fix $p\ge2$ and $\gb\in(0,1)$ and set $\gd:=\min\{p/2,\gz(1-\gb)\}$. There are constants $b_0,c_\ast,C<\infty$ such that, with
		\begin{align*}
			\cE_{\mvu\mvv}:=\Big\{V_{\mvu}\wedge V_{\mvv}\ge c_\ast\norm{\mvu-\mvv}^{-\gb}\Big\},
		\end{align*}
		we have
		\begin{align}\label{eq:lin-bdry}
			\pr\Big(T(\mvu,\mvv)\ge b_0\norm{\mvu-\mvv}, \cE_{\mvu\mvv}\Big)
			\le C\norm{\mvu-\mvv}^{-\gd}
		\end{align}
		for all $\norm{\mvu-\mvv}$ sufficiently large.
	\end{enumeratei}
\end{prop}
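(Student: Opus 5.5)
We build a chain through ``good boxes'' placed along the segment from $\mvu$ to $\mvv$. Tile a tube of bounded width around the lattice ray from $\mvu$ through $\mvv$ by disjoint boxes $D_1,D_2,\dots$ of a fixed side length $L$, with centers $\mvz_1,\mvz_2,\dots$ in order along the ray, as in Figure~\ref{fig:linear-ansatz}.

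\textbf{Good boxes and the corridor event.} Call a box \emph{good} if it contains at least $\ell$ vertices with $V>a$; here $\ell>2/\gz$ is a fixed integer chosen as in Lemma~\ref{lem:min-moments}. By choosing $L$ large, each box is good with probability at least $1-\eps_0$, independently across boxes (this is $\cF_V$-measurable). The number $N$ of good boxes is then binomial, and the gaps $Y_j$ between consecutive good boxes are i.i.d.\ geometric. Chernoff bounds give $\pr(N>\Lambda_0 m)\le e^{-cm}$. The bound $\pr(\max_j Y_j>\eps_1 m/C_0)\le m e^{-c\eps_1 m}$ is immediate. For $\sum_j Y_j^{2\ga}$ the summands have all moments, so a Rosenthal/Markov bound of high order gives $\pr(\sum_j Y_j^{2\ga}>\Lambda_0 m)\le C_q m^{-q}$ once $\Lambda_0>2\E Y^{2\ga}$. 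Together these give \eqref{eq:lin-int-a}.

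\textbf{Conditional cost bound on $\cC$.} Build the path $\mvu\to\mvy_1\to\cdots\to\mvy_N\to\mvv$, where $\mvy_j$ is chosen among the $\ell$ heavy candidates of the $j$-th good box. Selecting candidates by minimizing a noise creates dependence, so we make the choice locally. Choose $\mvy_j$ to minimize the noise of the incoming edge given $\mvy_{j-1}$; this yields a dependence structure of range one. Alternatively, bound each hop by the minimum over all $\ell^2$ candidate pairs, which produces a $1$-dependent family. In either case, each hop has length at most $C_0(Y_j+1)$, so its cost is at most $a^{-2}C_0^{\ga}(Y_j+1)^{\ga}\go_{[\ell]}^{(j)}$. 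All hops have $\ell_\infty$-length at most $\eps_1 m$ on $\cC$. Hence the noises used are independent of any admissible $\cH$, and the estimate survives adjoining independent tie-breaking marks. The conditional mean of the total cost is at most $C\sum_j (Y_j+1)^{\ga}\le C'm$ on $\cC$, using Cauchy--Schwarz and $\sum_j Y_j^{2\ga}\le\Lambda_0 m$. For the deviation, apply the moment bound for $1$-dependent sums from Section~\ref{sec:aux} at order $p$, using the finite $p$-th moments of $\go_{[\ell]}$ obtained by enlarging $\ell$. This gives a $p$-th moment of the centered sum of order $(\sum_j (Y_j+1)^{2\ga})^{p/2}\le (Cm)^{p/2}$. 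Markov's inequality then gives $\pr(\cdot\ge c_{\rm int}m\mid\cH)\le Cm^{-p/2}$, and we take $p=2q_0$.

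\textbf{Boundary version.} Here the endpoints may have small weight, $V_{\mvu}\ge c_\ast m^{-\gb}$. We connect $\mvu$ to the heavy candidates of the first good box and $\mvv$ to those of the last good box, minimizing over the $\ell$ candidates. The attachment cost is at most $a^{-1}c_\ast^{-1}m^{\gb}(Y_0+1)^{\ga}\go_{[\ell]}$. This exceeds $b_0 m/4$ only if $\go_{[\ell]}\gtrsim m^{1-\gb}$ or $Y_0\gtrsim m^{\gb'}$. The first has probability at most $(\E\go^\gz)^{\ell}m^{-\ell\gz(1-\gb)}\le m^{-\gz(1-\gb)}$, and the second is exponentially small. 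The interior piece is handled by part (i) with $q_0=p/2$, which produces $\gd=\min\{p/2,\gz(1-\gb)\}$.

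\textbf{Main obstacle.} The hardest step is making the candidate selection compatible with the conditioning: choices must depend only on noises of short edges, the resulting cost summands must stay $1$-dependent, and the moment bound must be uniform on $\cC$. We would try the pairwise-minimum (over all $\ell^2$ candidate pairs) bound first, since it gives exact $1$-dependence.
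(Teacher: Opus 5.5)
Your architecture matches the paper's: good boxes along a ray, i.i.d.\ geometric gaps, Rosenthal for the corridor event, sequentially chosen $\ell$-fold minima, Lemma~\ref{lem:p-conS_k} applied conditionally on $\cH$, and a $\gz$-th moment Markov bound for the small-weight endpoints in (ii). However, the interior bound has a genuine gap at the \emph{last} hop.

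Under your rule, $\mvy_j$ minimizes the incoming noise $\go(\mvy_{j-1},\cdot)$ over the $\ell$ heavy candidates of the $j$-th good box. The hops into $\mvy_1,\dots,\mvy_N$ are then indeed $\ell$-fold minima; in fact they are exactly i.i.d., as in Lemma~\ref{lem:hub-indep}, not merely $1$-dependent. But the final edge $\la\mvy_N\mvv\ra$ into the \emph{prescribed} endpoint is never optimized. Conditionally on $\cH$ its noise is a single fresh copy of $\go$, so your claim that every hop costs at most $a^{-2}C_0^{\ga}(Y_j+1)^{\ga}\go^{(j)}_{[\ell]}$ is false for it. Only $\E\go^{\gz}<\infty$ is assumed, with $\gz$ possibly tiny, so this one term limits what your argument can deliver. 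Pointwise on $\cC$, where $Y_{N+1}^{\ga}$ may be as large as $(\Lambda_0m)^{1/2}$, you get roughly $m^{-\gz/2}$, and in no case better than about $m^{-\gz}$, however large you take $\ell$. Yet \eqref{eq:lin-int} must hold for an arbitrary $q_0$; that is exactly what permits the union bound over the $2^k$ terminal segments of the binary ansatz.

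The missing idea is to optimize the last two edges jointly. Take $\mvy_N:=\argmin_{\mvz}\{\go(\mvy_{N-1},\mvz)+\go(\mvz,\mvv)\}$ over the heavy candidates $\mvz$ of the last good box. This minimum is dominated by an $\ell$-fold minimum of sums of two independent copies of $\go$, which has moments of every order below $\ell\gz$ by the second part of Lemma~\ref{lem:min-moments}. Both final hops are then bounded by this one shared factor. That shared factor, not the sequential selection, is where $1$-dependence genuinely enters.

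\textbf{The $\ell^2$-pair alternative.} The route you say you would try first bounds each hop by the minimum over all $\ell^2$ candidate pairs between consecutive good boxes. This does not produce a path: the minimizing pairs for hops $j$ and $j+1$ need not share their vertex in the common box. Moreover, the $\ell^2$-minimum is a lower bound, not an upper bound, for the noise of any connected choice. Keep the sequential rule.

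\textbf{Part (ii).} You invoke (i) with $q_0=p/2$, so you inherit the gap. You also cannot quote (i) as a black box there. Your $\mvy_1$ and $\mvy_N$ are selected through short edges at $\mvu$ and $\mvv$, and these lie outside the admissible $\cH$ of (i). You must therefore rerun the conditional argument, using that the interior chain avoids every edge incident to $\mvu$ or $\mvv$; the paper does this by reusing the construction directly rather than reducing to (i). For (ii) alone, an unoptimized edge into a favorable vertex would actually be tolerable, since its tail $m^{-\gz(1-o(1))}$ beats $m^{-\gz(1-\gb)}$. It is not tolerable for (i).
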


The two versions differ in what they assume about the endpoints and in what they deliver. In (i) both endpoints are favorable, and the construction may optimize the edge noise at \emph{both} ends over $\ell$ candidates. Choosing $\ell$ after $q_0$ yields the requested power $q_0$, which is what permits a union bound over the $2^k$ terminal segments of the binary ansatz. The constants and the corridor construction therefore depend on $q_0$. In (ii) the endpoints are prescribed and their weights may be small. The endpoint estimate used below contributes the term $\gz(1-\gb)$; no optimality of this exponent is claimed.

Two features of (i) are dictated by the way it is used inside the multi-scale ansatz, where the endpoints $\mvu,\mvv$ are not deterministic but are produced by the construction itself. First, the estimate is split into an $\cF_V$-measurable \emph{corridor} condition~\eqref{eq:lin-int-a}, which concerns the vertex environment only, and a conditional bound~\eqref{eq:lin-int} which holds \emph{pointwise} on that condition and involves only the short edge noises. Second, the restriction to short edges keeps the chaining edges disjoint from every candidate bridge edge, so that~\eqref{eq:lin-int} may be applied conditionally on the whole bridge history. Together these let us verify the corridor condition once, as part of the good event of the construction (Lemma~\ref{lem:Agd-prob}), and then apply~\eqref{eq:lin-int} conditionally, which is what makes the argument insensitive to the fact that the endpoints are random.

\section{Auxiliary Lemmas}\label{sec:aux}

We isolate three estimates used repeatedly. The first is a two-sided small-ball bound for a weighted sum of edge noises, with explicit dependence on the weights and on the number of summands. The polynomial-in-$t$ form with an arbitrarily small loss $\eps$ in the exponent is what makes the second-moment computation of Section~\ref{sec:proof-IE} possible.

\begin{lem}[Small-ball bounds for weighted noise sums]\label{lem:go-tail-bound}
	Assume~\eqref{def:go}. Let $\go_1,\go_2,\dots$ be i.i.d.~copies of $\go$ and let $\gl_1,\gl_2,\dots>0$. Fix $\eps\in(0,\theta)$. There exist $a=a(\eps)>0$ and constants $0<c_\eps<C_\eps<\infty$ such that for every $k\ge1$ and every $t>0$ with $t\gl_i/k\le a$ for all $1\le i\le k$,
	\begin{align}\label{eq:go-tail-two-sided}
		c_\eps^k t^{k(\theta+\eps)}k^{-k(\theta+\eps)}\prod_{i=1}^k\gl_i^{\theta+\eps}
		\le \pr\Big(\sum_{i=1}^k\frac{\go_i}{\gl_i}\le t\Big)
		\le C_\eps^k t^{k(\theta-\eps)}k^{-k(\theta-\eps)}\prod_{i=1}^k\gl_i^{\theta-\eps}.
	\end{align}
	The upper bound holds for every $t>0$, without the constraint $t\gl_i/k\le a$.
\end{lem}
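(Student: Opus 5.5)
The plan is to reduce both bounds to one-variable estimates on $F(x):=\pr(\go\le x)$, using the product structure of the event $\{\sum_i\go_i/\gl_i\le t\}$.

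From~\eqref{def:go} and the fact that $\go$ is nonnegative, fix $\eps\in(0,\theta)$ and choose $a=a(\eps)\in(0,1]$ such that
\begin{align*}
	x^{\theta+\eps}\le F(x)\le x^{\theta-\eps}
	\qquad\text{for } 0<x\le a .
\end{align*}
For $x>a$ we still have $F(x)\le1\le a^{-(\theta-\eps)}x^{\theta-\eps}$. Hence, with $C_0:=a^{-(\theta-\eps)}\ge1$,
\begin{align*}
	F(x)\le C_0\,x^{\theta-\eps}\qquad\text{for all } x>0 .
\end{align*}

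\textbf{Lower bound.} The sum is at most $t$ on the event $\bigcap_i\{\go_i\le t\gl_i/k\}$. By independence,
\begin{align*}
	\pr\Big(\sum_{i=1}^k\frac{\go_i}{\gl_i}\le t\Big)\ge\prod_{i=1}^k F(t\gl_i/k).
\end{align*}
The hypothesis $t\gl_i/k\le a$ places every argument in the range where $F(x)\ge x^{\theta+\eps}$. So the lower bound in~\eqref{eq:go-tail-two-sided} holds with $c_\eps=1$. If the tail statement only gives $F(x)\ge c\,x^{\theta+\eps}$ near $0$, the constant $c$ enters as $c^k$.

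\textbf{Upper bound.} This is the only real step, because the event is a simplex and not a box; the difficulty is keeping a clean $C^k$ constant. The plan is to integrate the one-dimensional bound over the simplex. Each individual constraint $\go_i/\gl_i\le t$ only gives $t^{k(\theta-\eps)}$ without the $k^{-k(\theta-\eps)}$ factor, so we instead set $s:=\theta-\eps>0$ and $X_i:=\go_i/\gl_i$. The bound $F\le C_0x^{s}$ says that $\pr(X_i\le y)\le C_0(\gl_iy)^{s}$ for all $y>0$, i.e.\ $X_i$ is stochastically larger than a variable $Z_i$ with law function $\min\{1,C_0(\gl_iy)^s\}$. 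Since the event $\{\sum_i X_i\le t\}$ is decreasing in each coordinate, replacing the independent $X_i$ by independent $Z_i$ can only increase its probability.

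For the $Z_i$, drop the truncation at $1$ and bound the density by $C_0\gl_i^s\,s\,y^{s-1}$. Then
\begin{align*}
	\pr\Big(\sum_{i=1}^k Z_i\le t\Big)\le C_0^k\prod_i\gl_i^s\,s^k\int_{\{\sum y_i\le t\}}\prod_i y_i^{s-1}\,dy
	= C_0^k\prod_i\gl_i^s\,\frac{\Gamma(s+1)^k}{\Gamma(ks+1)}\,t^{ks}.
\end{align*}
The identity $s\,\Gamma(s)=\Gamma(s+1)$ and the Dirichlet integral give the last equality. Stirling's formula gives $\Gamma(ks+1)\ge (ks/e)^{ks}$. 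Therefore the right side is at most
\begin{align*}
	\big(C_0\Gamma(s+1)(e/s)^s\big)^k\,t^{ks}k^{-ks}\prod_i\gl_i^s ,
\end{align*}
which is the claimed upper bound with $C_\eps:=C_0\Gamma(s+1)(e/s)^s$. No constraint on $t$ was used, so the bound holds for every $t>0$.

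The step to check carefully is the domination argument when $F$ has atoms or is not continuous. One can avoid the coupling by conditioning one coordinate at a time: integrate $\pr(X_k\le t-\sum_{i<k}X_i)\le C_0\gl_k^s(t-\sum_{i<k}X_i)_+^s$ and induct. This yields the same Beta-function recursion $\int_0^t (t-u)^s\,d\mu(u)$, bounded via the inductive density bound, and gives the same constant. Finally, we may enlarge $C_\eps$ and shrink $c_\eps$ so that $0<c_\eps<C_\eps$.
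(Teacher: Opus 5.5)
Your proof is correct. The lower bound is exactly the paper's; the upper bound takes a different route.

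The paper stays on the Laplace-transform side throughout:
\begin{itemize}
\item it writes $\pr(\sum_i\go_i/\gl_i\le t)\le e^{yt}\prod_i\E e^{-y\go/\gl_i}$;
\item it bounds $\E e^{-\sigma\go}\le D_\eps\sigma^{-(\theta-\eps)}$ by integrating the small-ball bound against $e^{-u}$;
\item it takes $y=k(\theta-\eps)/t$, so that $y^{-k(\theta-\eps)}$ produces $k^{-k(\theta-\eps)}$ and $e^{yt}=e^{k(\theta-\eps)}$ is absorbed into $C_\eps^k$.
\end{itemize}

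You instead couple each $\go_i/\gl_i$ from below, via quantile transforms, to a variable with an explicit density. Quantile transforms handle atoms with no further care, so your worry there is unnecessary. You then compute the simplex integral exactly by the Dirichlet formula and invoke $\Gamma(ks+1)\ge(ks/e)^{ks}$.

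The two routes have the same strength at the level of the lemma. Fed the same one-variable input $F(x)\le C_0x^{s}$, the Chernoff estimate is exactly your expression $C_0^k\prod_i\gl_i^s\,\Gamma(s+1)^k t^{ks}/\Gamma(ks+1)$ with $1/\Gamma(ks+1)$ replaced by $(e/(ks))^{ks}$. Both give constants of the form $\big(C\,\Gamma(s+1)(e/s)^s\big)^k$. The paper's route is shorter and avoids coupling and densities; yours identifies the sharp simplex constant $\Gamma(s+1)^k/\Gamma(ks+1)$, which is not needed here.

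Two cosmetic remarks:
\begin{itemize}
\item The inequality $\Gamma(x+1)\ge(x/e)^x$ holds for every $x>0$: substitute $u=xv$ and use $v-1-\log v\le v-1$ for $v\ge1$. This is slightly more than Stirling's asymptotics, which you cite.
\item In your alternative coordinate-by-coordinate induction, the law $\mu$ of $\sum_{i<k}\go_i/\gl_i$ need not have a density. The recursion should therefore be run on distribution functions, via $\int_{[0,t]}(t-u)^s\,d\mu(u)=\int_0^t s(t-v)^{s-1}\mu([0,v])\,dv$. This gives the same Beta-function constant.
\end{itemize}
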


\begin{proof}
	The logarithmic limit in~\eqref{def:go} implies that, for the given $\eps\in(0,\theta)$, there are $a=a(\eps)>0$ and $0<\tilde c_\eps<\tilde C_\eps<\infty$ with
	\begin{align}\label{eq:go-eps-bd}
		\tilde c_\eps u^{\theta+\eps}\le\pr(\go\le u)\le\tilde C_\eps u^{\theta-\eps}
		\text{ for all }u\in(0,a].
	\end{align}

	\smallskip
	\noindent\emph{Lower bound.} The event $\{\go_i\le t\gl_i/k\text{ for all }i\}$ is contained in $\{\sum_i\go_i/\gl_i\le t\}$. Since $t\gl_i/k\le a$ for each $i$, independence and~\eqref{eq:go-eps-bd} give
	\begin{align*}
		\pr\Big(\sum_{i=1}^k\frac{\go_i}{\gl_i}\le t\Big)
		\ge\prod_{i=1}^k\pr\Big(\go\le \frac{t\gl_i}{k}\Big)
		\ge\tilde c_\eps^{ k} t^{k(\theta+\eps)}k^{-k(\theta+\eps)}\prod_{i=1}^k\gl_i^{\theta+\eps}.
	\end{align*}

	\smallskip
	\noindent\emph{Upper bound.} For any $y>0$, Markov's inequality applied to $e^{-y\sum_i\go_i/\gl_i}$ gives
	\begin{align}\label{eq:expmarkov}
		\pr\Big(\sum_{i=1}^k\frac{\go_i}{\gl_i}\le t\Big)
		\le e^{yt}\prod_{i=1}^k\E e^{-y\go/\gl_i}.
	\end{align}
	Write $F_\go(u):=\pr(\go\le u)$. Integrating by parts, $\E e^{-s\go}=\int_0^\infty e^{-u}F_\go(u/s) du$ for every $s>0$. Splitting at $u=as$ and using~\eqref{eq:go-eps-bd} on $(0,as]$ and $F_\go\le1$ beyond,
	\begin{align*}
		\E e^{-s\go}
		 & \le\tilde C_\eps\int_0^{as}e^{-u}\Big(\frac{u}{s}\Big)^{\theta-\eps}du+e^{-as} \\
		 & \le\Big(\tilde C_\eps\gC(\theta-\eps+1)
		+a^{-(\theta-\eps)}\sup_{r>0}r^{\theta-\eps}e^{-r}\Big)s^{-(\theta-\eps)}
		=:D_\eps s^{-(\theta-\eps)},
	\end{align*}
	valid for all $s>0$. Substituting into~\eqref{eq:expmarkov} with $s=y/\gl_i$,
	\begin{align*}
		\pr\Big(\sum_{i=1}^k\frac{\go_i}{\gl_i}\le t\Big)
		\le e^{yt}D_\eps^{ k} y^{-k(\theta-\eps)}\prod_{i=1}^k\gl_i^{\theta-\eps}.
	\end{align*}
	Choosing $y:=k(\theta-\eps)/t$ and absorbing $e^{k(\theta-\eps)}(\theta-\eps)^{-k(\theta-\eps)}$ into $C_\eps$ gives the upper bound.
\end{proof}

\begin{lem}[Finite balls and geodesics]\label{lem:finite-geodesics}
	Assume \textup{\eqref{A1}--\eqref{A4}}, $\ga\theta>d$, and
	$\ga\gc>2d$. For $\mvx\in\dZ^d$ and $t\ge0$, let
	\begin{align*}
		N_t(\mvx):=\#\{\mvpi:\mvpi\text{ is a finite self-avoiding path
			                          starting at }\mvx,\ \abs{\mvpi}\ge1,\ W_{\mvpi}\le t\}.
	\end{align*}
	Then $\E N_t(\mvx)<\infty$. Almost surely, simultaneously for all $\mvx,\mvy\in\dZ^d$ and all finite $t\ge0$, the set $\vB_t(\mvx)$ is finite and $T(\mvx,\mvy)$ is attained by a finite self-avoiding path. If the law of $\go$ is atomless, this path is almost surely unique for every pair of distinct vertices.
\end{lem}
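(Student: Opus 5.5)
The plan is a first-moment computation for $N_t(\mvx)$, carried out conditionally on $\cF_V$. After that, the almost-sure statements about balls, attainment and uniqueness follow by soft arguments.

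\emph{Step 1 (conditional bound for one path).} Fix a self-avoiding path $\mvpi=\la\mvx_0\mvx_1\cdots\mvx_m\ra$ with $\mvx_0=\mvx$, and write $e_i=\la\mvx_{i-1}\mvx_i\ra$. Conditionally on $\cF_V$, the variables $X_i:=W_{e_i}=\go_{e_i}/\gl_i$ with $\gl_i:=V_{\mvx_{i-1}}V_{\mvx_i}\norm{e_i}^{-\ga}$ are independent, because the edges of $\mvpi$ are distinct.

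Here I will not use the exponent $\theta-\eps$ of Lemma~\ref{lem:go-tail-bound} directly. The reason is that every interior vertex appears in two edges, so that exponent would produce moments $\E V^{2(\theta-\eps)}$, which are infinite when $\gc<2\theta$. Instead I fix
\begin{align*}
s:=\min\{\theta-\eps,\ \gc/2-\eps\}>0,
\end{align*}
choosing $\eps$ small enough that $\ga s>d$. This is possible precisely because $\ga\theta>d$ and $\ga\gc>2d$.

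From~\eqref{eq:go-eps-bd} and $\pr(\go\le u)\le1$, one gets $\pr(\go\le u)\le C u^{s}$ for all $u>0$. Rerunning the Laplace-transform argument in the proof of Lemma~\ref{lem:go-tail-bound} with $\theta-\eps$ replaced by $s$ then gives
\begin{align*}
\pr\big(W_{\mvpi}\le t\,\big|\,\cF_V\big)\le C^m t^{ms}m^{-ms}\prod_{i=1}^m \gl_i^{s}.
\end{align*}

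\emph{Step 2 (averaging over $V$ and summing).} The vertices of $\mvpi$ are distinct, so their weights are independent. In $\prod_i\gl_i^s$ the endpoints appear with power $V^{s}$ and the interior vertices with power $V^{2s}$. Since $2s<\gc$, the tail assumption~\eqref{def:V_x} gives $M:=\E V^{2s}\vee\E V^{s}<\infty$. Hence
\begin{align*}
\E N_t(\mvx)\le\sum_{m\ge1}C^m t^{ms}m^{-ms}M^{m+1}\sum_{\mvpi:\abs{\mvpi}=m}\prod_{i}\norm{e_i}^{-\ga s}
\le M\sum_{m\ge1}\big(CMt^sK\big)^m m^{-ms},
\end{align*}
where $K:=\sum_{\mvz\ne\mvzero}\norm{\mvz}^{-\ga s}<\infty$ because $\ga s>d$. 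In the middle sum I drop self-avoidance and sum each increment $\mvx_i-\mvx_{i-1}$ freely. The factor $m^{-ms}$ is superexponential, so the series converges for every $t$. This gives $\E N_t(\mvx)<\infty$.

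I expect this step to be the main obstacle. The danger is the double appearance of interior weights; it is handled by the choice $s<\gc/2$, and keeping the $m^{-ms}$ gain from the joint small-ball bound is what avoids a restriction to small $t$.

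\emph{Step 3 (almost-sure consequences).} Since $\E N_t(\mvx)<\infty$, almost surely $N_t(\mvx)<\infty$ simultaneously for all $\mvx\in\dZ^d$ and all integers $t$. By monotonicity in $t$ this extends to all finite $t$.

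\begin{itemize}
\item \emph{Finite balls.} If $\mvy\in\vB_t(\mvx)$ and $\mvy\ne\mvx$, then some path from $\mvx$ to $\mvy$ has weight at most $t+1$. Each such $\mvy$ is therefore the endpoint of one of the finitely many paths counted by $N_{t+1}(\mvx)$, so $\vB_t(\mvx)$ is finite.
\item \emph{Attainment.} The infimum in $T(\mvx,\mvy)$ may be restricted to paths with $W_{\mvpi}\le T(\mvx,\mvy)+1$. This is a finite nonempty set, so the minimum is attained.
\item \emph{Uniqueness.} Suppose the law of $\go$ is atomless, and take two distinct self-avoiding paths $\mvpi\ne\mvpi'$ between the same endpoints. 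Their edge sets differ, so some edge $e$ lies in exactly one of them. Conditionally on $\cF_V$ and on all other noises, the event $W_{\mvpi}=W_{\mvpi'}$ forces $\go_e$ to equal a determined value; this uses that the coefficient $\norm{e}^\ga/(V V)$ is finite and positive. That event therefore has probability zero. There are countably many such pairs of paths, so almost surely no two distinct paths have equal weight, and the geodesic is unique.
\end{itemize}
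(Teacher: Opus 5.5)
Your proposal is correct and follows essentially the same route as the paper. Both proofs take a conditional small-ball bound for each path with an exponent $s\in(d/\alpha,\theta\wedge\gamma/2)$, integrate the distinct vertex weights (endpoints to power $s$, interior vertices to power $2s$), sum freely over increments using the superexponential factor $m^{-ms}$, and then use the same soft arguments for finite balls, attainment and uniqueness. The only difference is cosmetic: you do not need to rerun the Laplace-transform argument, because Lemma~\ref{lem:go-tail-bound} holds for every $\eps\in(0,\theta)$, and applying it with $\eps:=\theta-s$ gives exactly your bound (the paper does this with $\eps=\theta-\eta$).
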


\begin{proof}
	Choose $\eta$ such that $d/\ga<\eta<\min\{\theta,\gc/2\}$. Then $\E V^{2\eta}<\infty$ and $\sum_{\mvz\ne\mvzero}\norm{\mvz}^{-\ga\eta}<\infty$.

	For a fixed self-avoiding path $\mvpi=\la\mvx_0\cdots\mvx_k\ra$, condition on its vertex weights and apply the upper bound of Lemma~\ref{lem:go-tail-bound} with $\eps=\theta-\eta$ and $\gl_i=V_{\mvx_{i-1}}V_{\mvx_i}\norm{\mvx_i-\mvx_{i-1}}^{-\ga}$. Integrating the distinct vertex weights gives, for $t>0$,
	\begin{align*}
		\pr(W_{\mvpi}\le t)
		\le C^k(t/k)^{k\eta}(\E V^\eta)^2 (\E V^{2\eta})^{k-1}
		\prod_{i=1}^k\norm{\mvx_i-\mvx_{i-1}}^{-\ga\eta}.
	\end{align*}
	Since $(\E V^\eta)^2\le \E V^{2\eta}$, relaxing the self-avoidance constraint and bounding the sum over paths from $\mvx$ by independent sums over nonzero increments gives
	\begin{align*}
		\E N_t(\mvx)
		\le\sum_{k\ge1}\left(\frac{At^\eta}{k^\eta}\right)^k<\infty,
		\text{ where } A:=C \cdot(\E V^{2\eta})\sum_{\mvz\in\dZ^d\setminus\{\mvzero\}}
		\norm{\mvz}^{-\ga\eta}.
	\end{align*}

	Intersecting over $\mvx\in\dZ^d$ and $t\in\dN$, $N_t(\mvx)<\infty$ globally almost surely. Thus, $\vB_t(\mvx)$ is finite since every $\mvy\in\vB_t(\mvx)$ is reached by a path of weight $<t+1$. For $\mvx\ne\mvy$, the infimum $T(\mvx,\mvy)$ is attained over the finite, nonempty set of paths from $\mvx$ weighing at most $W_{\mvx\mvy}$. Uniqueness for atomless $\go$ follows since distinct finite paths have equal weights with probability zero.
\end{proof}

\begin{lem}[$p$-th moment bound for $1$-dependent sums]\label{lem:p-conS_k}
	Fix $p\ge2$ and let $\{Y_i\}_{i=1}^k$ be a $1$-dependent family: whenever $I,J\subset[k]$ satisfy $\min\{\abs{i-j}:i\in I,j\in J\}>1$, the $\gs$-fields generated by $\{Y_i:i\in I\}$ and $\{Y_j:j\in J\}$ are independent. Write $\overline Y_i:=Y_i-\E Y_i$ and assume
	\begin{align*}
		M_p:=\sup_{1\le i\le k}\norm{\overline Y_i}_p<\infty.
	\end{align*}
	Then there is a constant $C_p<\infty$ depending only on $p$ such that for all deterministic $(a_i)_{i=1}^k$,
	\begin{align*}
		\Big\|\sum_{i=1}^k a_i\overline Y_i\Big\|_p\le C_p M_p\Big(\sum_{i=1}^k a_i^2\Big)^{1/2},
	\end{align*}
	and consequently, for every $t>0$,
	\begin{align}\label{eq:1dep-tail}
		\pr\Big(\Big|\sum_{i=1}^k a_i\overline Y_i\Big|>t\Big)
		\le C_p^{ p}\left(\frac{M_p\big(\sum_{i=1}^ka_i^2\big)^{1/2}}{t}\right)^{\!p}.
	\end{align}
\end{lem}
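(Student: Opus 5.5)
The plan is to split the sum according to the parity of the index, so that the 1-dependent family becomes two genuinely independent families. Write
\[
\sum_{i=1}^k a_i\overline Y_i=S_{\rm odd}+S_{\rm even},\qquad S_{\rm odd}:=\sum_{i\text{ odd}}a_i\overline Y_i,\quad S_{\rm even}:=\sum_{i\text{ even}}a_i\overline Y_i.
\]
Any two distinct odd indices differ by at least $2>1$. Applying the $1$-dependence hypothesis repeatedly to the sets $I=\{i\}$ and $J=\{j\text{ odd}:j>i\}$ shows that the odd-indexed variables $(\overline Y_i)_{i\text{ odd}}$ are mutually independent. The same argument applies to the even-indexed ones. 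Each $\overline Y_i$ is centered and has $p$-th norm at most $M_p$.

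For a sum of independent centered variables we use Rosenthal's inequality, or more simply the Marcinkiewicz--Zygmund/Burkholder inequality $\|\sum \xi_i\|_p\le C_p'\|(\sum\xi_i^2)^{1/2}\|_p$ with $\xi_i=a_i\overline Y_i$. Since $p\ge2$, Minkowski's inequality in $L^{p/2}$ gives
\[
\Big\|\sum_i\xi_i^2\Big\|_{p/2}\le\sum_i\|\xi_i\|_p^2\le M_p^2\sum_i a_i^2 .
\]
Hence $\|S_{\rm odd}\|_p\le C_p'M_p(\sum_{i\text{ odd}}a_i^2)^{1/2}$, and the analogous bound holds for $S_{\rm even}$. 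The triangle inequality then gives the stated $L^p$ bound with $C_p=2C_p'$; one may even use $\sqrt2C_p'$, via $x+y\le\sqrt{2(x^2+y^2)}$. The tail estimate \eqref{eq:1dep-tail} follows immediately from Markov's inequality applied to $|\sum a_i\overline Y_i|^p$.

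The only point needing care is the first step: that pairwise separation in the $1$-dependence definition yields full mutual independence of the odd (or even) subfamily. This is the standard induction, peeling off the smallest index against the set of all larger same-parity indices, whose distance from it exceeds $1$. Everything else is a direct citation of the Burkholder/Marcinkiewicz--Zygmund inequality, whose constant depends only on $p$.
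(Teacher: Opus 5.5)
Your proof is correct and follows essentially the same route as the paper. Both split by parity into two independent subfamilies, apply a classical moment inequality for independent centered sums, then use the triangle inequality and Markov's inequality. The only difference is cosmetic: the paper invokes Rosenthal's inequality together with $(\sum|a_i|^p)^{1/p}\le(\sum a_i^2)^{1/2}$, whereas you use Marcinkiewicz--Zygmund/Burkholder with Minkowski's inequality in $L^{p/2}$, which gives the same bound.
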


\begin{proof}
	By $1$-dependence, each of the subfamilies $\{\overline Y_i:i\text{ even}\}$ and $\{\overline Y_i:i\text{ odd}\}$ consists of independent random variables. Put
	$S_{\rm ev}:=\sum_{i\text{ even}}a_i\overline Y_i$ and $S_{\rm od}:=\sum_{i\text{ odd}}a_i\overline Y_i$, so that by the triangle inequality it suffices to bound each. Let $I$ be the set of even indices and $Z_i:=a_i\overline Y_i$. Rosenthal's inequality provides $A_p<\infty$ with
	\begin{align*}
		\E\Big|\sum_{i\in I}Z_i\Big|^p
		\le A_p\left(\sum_{i\in I}\E\abs{Z_i}^p+\Big(\sum_{i\in I}\E Z_i^2\Big)^{p/2}\right).
	\end{align*}
	Since $\E\abs{Z_i}^p\le\abs{a_i}^pM_p^p$ and $\E Z_i^2\le a_i^2M_p^2$, and since $\big(\sum_i\abs{a_i}^p\big)^{1/p}\le\big(\sum_ia_i^2\big)^{1/2}$ for $p\ge2$, we get
	\begin{align*}
		\E\Big|\sum_{i\in I}Z_i\Big|^p
		\le 2A_p M_p^p\Big(\sum_{i\in I}a_i^2\Big)^{p/2},
	\end{align*}
	whence $\norm{S_{\rm ev}}_p\le(2A_p)^{1/p}M_p(\sum_ia_i^2)^{1/2}$, and the same for $S_{\rm od}$. Taking $C_p:=2(2A_p)^{1/p}$ gives the moment bound, and~\eqref{eq:1dep-tail} follows from Markov's inequality applied to the $p$-th power.
\end{proof}

\begin{lem}[Moments of $\ell$-fold minima]\label{lem:min-moments}
	Assume~\eqref{def:go} and let $\go_{[\ell]}=\min\{\go_1,\dots,\go_\ell\}$ with $\go_1,\dots,\go_\ell$ i.i.d.~copies of $\go$. Then
	\begin{align*}
		\pr(\go_{[\ell]}>t)\le\big(\E\go^{\gz}\big)^{\ell}t^{-\ell\gz}
		\text{ for all }t>0,
	\end{align*}
	and hence $\E\go_{[\ell]}^{ m}<\infty$ for every $m<\ell\gz$. In particular, for any prescribed $m$ one may choose $\ell>m/\gz$ and obtain a finite $m$-th moment. The same holds for $\min\{\go_1+\go_1',\dots,\go_\ell+\go_\ell'\}$ where $(\go_j,\go_j')_{j\le\ell}$ are $2\ell$ i.i.d.~copies of $\go$, with $\E\go^\gz$ replaced by $2^{1+\gz}\E\go^{\gz}$.
\end{lem}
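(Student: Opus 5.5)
The tail bound comes from a union bound and independence, and the moment statements then follow from the layer-cake formula.

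\emph{Tail bound.} For $t>0$, the event $\{\go_{[\ell]}>t\}$ is exactly the event that $\go_j>t$ for all $j\le\ell$. By independence,
\begin{align*}
	\pr(\go_{[\ell]}>t)=\pr(\go>t)^{\ell}\le\big(\E\go^{\gz}\,t^{-\gz}\big)^{\ell}=\big(\E\go^{\gz}\big)^{\ell}t^{-\ell\gz}.
\end{align*}
The inequality is Markov's inequality applied to $\go^{\gz}$, which is legitimate because $\E\go^\gz<\infty$ by~\eqref{def:go}.

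\emph{Moments.} For $m>0$ use $\E\go_{[\ell]}^m=\int_0^\infty m t^{m-1}\pr(\go_{[\ell]}>t)\,dt$. On $[0,1]$ bound the probability by $1$, which gives a contribution of at most $1$. On $[1,\infty)$ the tail bound gives an integrand of order $t^{m-1-\ell\gz}$. This is integrable exactly when $m<\ell\gz$. Consequently, for any prescribed $m$, choosing an integer $\ell>m/\gz$ makes $\E\go_{[\ell]}^m$ finite.

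\emph{Sums.} Let $\xi:=\go+\go'$ with $\go,\go'$ independent copies of $\go$. The only thing to check is that $\E\xi^{\gz}\le 2^{1+\gz}\E\go^\gz$; the argument above then applies verbatim, since the $\ell$ sums are i.i.d. Use $(x+y)^{\gz}\le 2^{\gz}(x^{\gz}+y^{\gz})$, which holds for all $\gz>0$ and $x,y\ge0$ because $x+y\le 2\max(x,y)$. Taking expectations gives $\E\xi^\gz\le 2^{\gz}\cdot 2\,\E\go^\gz=2^{1+\gz}\E\go^\gz$.

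There is no real obstacle here. The only point needing care is this elementary inequality for $\gz>0$ of either size, together with finiteness of $\E\go^\gz$, which is part of the standing assumption~\eqref{A4}.
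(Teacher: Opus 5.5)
Your proposal is correct and matches the paper's proof: independence plus Markov's inequality for the tail bound, then the layer-cake integral for the moments. The only cosmetic difference is in the sum case: you first bound $\E(\go+\go')^{\gz}\le 2^{1+\gz}\E\go^{\gz}$ and then apply Markov, whereas the paper uses $\pr(\go+\go'>t)\le 2\pr(\go>t/2)$ before Markov, and both give the same constant $2^{1+\gz}\E\go^{\gz}$.
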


\begin{proof}
	By independence and Markov's inequality, $\pr(\go_{[\ell]}>t)=\pr(\go>t)^{\ell}\le(\E\go^\gz t^{-\gz})^{\ell}$. Integrating the tail, $\E\go_{[\ell]}^m=\int_0^\infty mt^{m-1}\pr(\go_{[\ell]}>t) dt$ converges at infinity when $m<\ell\gz$ and at $0$ trivially. For the second statement, $\pr(\go+\go'>t)\le2\pr(\go>t/2)\le2^{1+\gz}\E\go^{\gz}t^{-\gz}$, and the same computation applies.
\end{proof}

\section{Two Path Architectures}\label{sec:two-path-architectures}

The intermediate-regime upper bounds use the two path constructions developed in this section. The instantaneous and linear regimes are treated separately. In the hub-dominated regimes we select a sequence of exceptionally heavy vertices, one per scale, and join them by a non-branching chain. In the edge-dominated regimes we recursively split the segment from $\mvzero$ to $\lceil n\mvx\rceil$ into a bridge and two smaller segments. In both cases the regime-specific input consists only of
\begin{enumeratei}
	\item the scale map $f_\phi$ generating $n_i$ as in~\eqref{eq:scale-seq},
	\item the terminal depth $k=k(n)$, and
	\item a truncation sequence,
\end{enumeratei}
and the geometry is identical across regimes. Fix throughout a direction $\mvx\in B(\mvzero,1)\setminus\{\mvzero\}$, write $\mvm:=\lceil n\mvx\rceil$ and $T_n=T(\mvzero,\mvm)$, and recall $B(\mvy,r)$ is the $\ell_\infty$ ball.

\subsection{The Hub-Chain Ansatz for the hub-dominated regimes}\label{ssec:Ansatz_II_IVH}

This is the construction used in Regimes II and $\rm IV_H$. Throughout this subsection we assume that the scale sequence~\eqref{eq:scale-seq} satisfies
\begin{align}\label{eq:hub-scale-separation}
	n_{i+1}\le\frac{n_i}{8},\qquad 0\le i\le k,
	\text{ and }
	n_1\le\frac{1}{16}\norm{\mvm}_\infty ,
\end{align}
which holds for both of the scale maps used below once $n$ is large. (The second condition is a \emph{root separation}: it keeps the two halves of the chain apart. For the scale map $n_i=n^{\phi^i}$ it is automatic for large $n$. For the geometric map $n_i=n/\phi^i$ it is a lower bound on $\phi$ in terms of $\norm{\mvx}_\infty$.) Throughout this subsection distances between centers are measured in $\ell_\infty$, in which the balls $B(\cdot,r)$ are defined. The constant $c_0$ below converts to the $\ell_1$ lengths entering $W_e$.

\subsubsection*{Geometry}

Let $\hat{\mvz}$ be a lattice point nearest to $\mvm/2$, so that $\norm{\hat{\mvz}}_\infty\asymp\norm{\mvm-\hat{\mvz}}_\infty\asymp\norm{\mvm}_\infty\asymp n$. For $1\le i\le k$ choose lattice points $\mvz_i^+$ and $\mvz_i^-$ with
\begin{align}\label{eq:hub-centers}
	n_i\le\norm{\mvz_i^+-\mvm}_\infty\le2n_i,
	\qquad
	n_i\le\norm{\mvz_i^--\mvzero}_\infty\le2n_i.
\end{align}
Such points exist whenever $n_i\ge1$. For example, move $\lceil n_i\rceil$
steps from the endpoint $\mvm$ or $\mvzero$ in a fixed coordinate direction.
No collinearity with $\hat{\mvz}$ is required: all estimates below use only the
radial bounds in~\eqref{eq:hub-centers}. Define the balls
\begin{align}\label{eq:hub-balls}
	B_0:=B\big(\hat{\mvz},n_1\big),
	\qquad
	B_{\pm i}:=B\big(\mvz_i^{\pm},n_{i+1}\big),\qquad 1\le i\le k.
\end{align}
Thus the ball of index $i$ always has radius $n_{\abs{i}+1}$ and sits at distance of order $n_{\abs{i}}$ from the endpoint it is approaching. We record the two geometric facts we shall need.

\begin{lem}[Geometry of the hub chain]\label{lem:hub-geom}
	Assume~\eqref{eq:hub-scale-separation}. Then the $2k+1$ balls in~\eqref{eq:hub-balls} are pairwise disjoint, and there is a constant $c_0=c_0(d)<\infty$ such that for all choices of $\mvy_i\in B_i$,
	\begin{align}\label{eq:hub-hoplength}
		\norm{\mvy_i-\mvy_{i+1}}\le c_0 n_{\abs{i}}\quad(0\le i\le k-1),
		\qquad
		\norm{\mvy_i-\mvy_{i+1}}\ge\tfrac12 n_{\abs{i}}\quad(1\le i\le k-1),
	\end{align}
	and the same on the left half, while the terminal distances satisfy
	\begin{align}\label{eq:hub-terminal}
		\norm{\mvm-\mvy_k}\le c_0n_k,
		\qquad
		\norm{\mvzero-\mvy_{-k}}\le c_0n_k.
	\end{align}
\end{lem}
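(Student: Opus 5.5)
The argument is elementary $\ell_\infty$ geometry using only the radial bounds \eqref{eq:hub-centers}, the ball radii in \eqref{eq:hub-balls}, and the scale separation \eqref{eq:hub-scale-separation}. Convert to $\ell_1$ only at the end, via $\norm{\cdot}\le d\norm{\cdot}_\infty$; this costs a factor of $d$ that is absorbed into $c_0$.

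\emph{Localizing each ball.} Write $r_i:=n_{\abs{i}+1}$ for the radius of $B_i$ and fix $i\ge1$. For $\mvy\in B_i^+:=B(\mvz_i^+,n_{i+1})$ we have
\begin{align*}
n_i-n_{i+1}\le\norm{\mvy-\mvm}_\infty\le 2n_i+n_{i+1},
\qquad\text{so}\qquad
\tfrac78 n_i\le\norm{\mvy-\mvm}_\infty\le\tfrac{17}8 n_i ,
\end{align*}
since $n_{i+1}\le n_i/8$. Thus $B_i^+$ lies in the annulus $\{\tfrac78 n_i\le\norm{\cdot-\mvm}_\infty\le\tfrac{17}8n_i\}$. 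The analogous statement holds for $B_{-i}$ about $\mvzero$.

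For the central ball, $\hat{\mvz}$ is nearest to $\mvm/2$, so $\norm{\hat{\mvz}}_\infty$ and $\norm{\mvm-\hat{\mvz}}_\infty$ both lie in $[\tfrac12\norm{\mvm}_\infty-1,\tfrac12\norm{\mvm}_\infty+1]$. Combined with $n_1\le\norm{\mvm}_\infty/16$, this puts $B_0$ at $\ell_\infty$ distance at least roughly $\tfrac{7}{16}\norm{\mvm}_\infty\ge 7n_1$ from both endpoints. The $O(1)$ rounding is harmless once $n$ is large.

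\emph{Disjointness.} For $i<j$ on the right half, the annuli for indices $i$ and $j$ are separated: $\tfrac78 n_i>\tfrac{17}8 n_j$ because $n_j\le n_i/8$. Hence $B_i^+\cap B_j^+=\emptyset$. Every right ball lies within $\tfrac{17}{8}n_1<\tfrac{7}{16}\norm{\mvm}_\infty-n_1$ of $\mvm$, so it is disjoint from $B_0$. The same holds on the left.

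To separate the two halves, note that any right ball lies within $\tfrac{17}{8}n_1\le\tfrac{17}{128}\norm{\mvm}_\infty$ of $\mvm$. Any left ball lies within the same distance of $\mvzero$. Since $2\cdot\tfrac{17}{128}<1$, these regions are disjoint. This step is the reason for the root separation $n_1\le\norm{\mvm}_\infty/16$.

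\emph{Hop lengths.} The upper bounds follow from the triangle inequality through $\mvm$. For $1\le i\le k-1$,
\begin{align*}
\norm{\mvy_i-\mvy_{i+1}}_\infty\le\tfrac{17}8(n_i+n_{i+1})\le 3n_i .
\end{align*}
For $i=0$, we bound $\norm{\mvy_0-\mvy_1}_\infty\le\norm{\mvy_0-\mvm}_\infty+\tfrac{17}8n_1$. Here $\norm{\mvy_0-\mvm}_\infty\le\tfrac12\norm{\mvm}_\infty+1+n_1$, which is of order $n=n_0$ because $\norm{\mvm}_\infty\asymp n$ for a fixed direction $\mvx$.

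The lower bound uses the reverse triangle inequality:
\begin{align*}
\norm{\mvy_i-\mvy_{i+1}}\ge\norm{\mvy_i-\mvy_{i+1}}_\infty\ge\tfrac78 n_i-\tfrac{17}8 n_{i+1}\ge\tfrac78n_i-\tfrac{17}{64}n_i\ge\tfrac12 n_i .
\end{align*}

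\emph{Terminal hops.} We have $\norm{\mvm-\mvy_k}_\infty\le\tfrac{17}8n_k$, and the same bound holds for $\mvzero$ and $\mvy_{-k}$.

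Multiplying all upper bounds by $d$ gives \eqref{eq:hub-hoplength}–\eqref{eq:hub-terminal} with a constant $c_0$. The $i=0$ hop forces $c_0$ to depend on $\norm{\mvx}_\infty^{-1}$ via $n\lesssim\norm{\mvm}_\infty/\norm{\mvx}_\infty$, or else requires reading $n_0$ as $\norm{\mvm}_\infty$. I expect this bookkeeping of the central hop, together with the rounding in $\hat{\mvz}$ and in the $\mvz_i^\pm$, to be the only delicate point. The rest is routine.
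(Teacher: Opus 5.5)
Your proof is correct and follows essentially the paper's route: triangle inequalities in $\ell_\infty$ from the radial bounds~\eqref{eq:hub-centers}, the scale and root separation~\eqref{eq:hub-scale-separation}, and a final factor $d$. Your annulus localization of each $B_i$ is just a tidier packaging of the paper's comparisons of center distances against radius sums.

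The central hop, which you flag as delicate, is not. Only the upper bound $\norm{\mvm}_\infty\le n\norm{\mvx}_\infty+1\le n+1$ is needed, since $\mvx\in B(\mvzero,1)$. This gives $\norm{\mvy_0-\mvy_1}_\infty\le\tfrac34\norm{\mvm}_\infty+1\le n+2\le 3n$, so $c_0=3d$ works with no dependence on $\norm{\mvx}_\infty^{-1}$. The rounding in $\hat{\mvz}$ is absorbed by using $n_1\ge1$, rather than by taking ``$n$ large''.
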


\begin{proof}
	All statements are elementary consequences of~\eqref{eq:hub-scale-separation} and~\eqref{eq:hub-centers}. We do the right half, the left being identical. We work in $\ell_\infty$ and convert at the end. For $1\le i<j\le k$, the centers satisfy
	$\norm{\mvz_i^+-\mvz_j^+}_\infty\ge\norm{\mvz_i^+-\mvm}_\infty-\norm{\mvz_j^+-\mvm}_\infty\ge n_i-2n_j\ge n_i-2n_{i+1}\ge\tfrac34n_i$,
	whereas the sum of the radii is
	$n_{i+1}+n_{j+1}\le2n_{i+1}\le\tfrac14 n_i$. Hence
	$B_i\cap B_j=\varnothing$. Likewise,
	\begin{align*}
		\norm{\hat{\mvz}-\mvz_i^+}_\infty
		 & \ge\norm{\hat{\mvz}-\mvm}_\infty-2n_i
		\ge\tfrac12\norm{\mvm}_\infty-1-2n_1\ge5n_1,
	\end{align*}
	for $n_1\ge1$, while the radii sum to at most $2n_1$. Thus $B_0$ is
	disjoint from every $B_i^+$. Finally, every $B_i^+$ lies within $\ell_\infty$
	distance $3n_1$ of $\mvm$ and every $B_j^-$ within $3n_1$ of $\mvzero$.
	Since $\norm{\mvm}_\infty\ge16n_1$, it follows that
	$B_i^+\cap B_j^-=\varnothing$.

	For~\eqref{eq:hub-hoplength} with $1\le i\le k-1$, and any $\mvy_i\in B_i$, $\mvy_{i+1}\in B_{i+1}$,
	\begin{align*}
		\norm{\mvy_i-\mvy_{i+1}}_\infty
		 & \le\norm{\mvz_i^+-\mvm}_\infty+\norm{\mvm-\mvz_{i+1}^+}_\infty+n_{i+1}+n_{i+2} \\
		 & \le2n_i+2n_{i+1}+n_{i+1}+n_{i+2}\le3n_i,                                       \\
		\text{ and }
		\norm{\mvy_i-\mvy_{i+1}}_\infty
		 & \ge\norm{\mvz_i^+-\mvm}_\infty-\norm{\mvz_{i+1}^+-\mvm}_\infty-n_{i+1}-n_{i+2} \\
		 & \ge n_i-2n_{i+1}-n_{i+1}-n_{i+2}
		\ge n_i-4n_{i+1}\ge\tfrac12 n_i.
	\end{align*}
	For $i=0$, $\norm{\mvy_0-\mvy_1}_\infty\le\norm{\hat{\mvz}-\mvm}_\infty+2n_1+n_1+n_2\le\tfrac12\norm{\mvm}_\infty+1+4n_1\le\norm{\mvm}_\infty+1\le n+2$, using $4n_1\le\tfrac14\norm{\mvm}_\infty$ and $\norm{\mvm}_\infty\le n\norm{\mvx}_\infty+1\le n+1$. Finally $\norm{\mvm-\mvy_k}_\infty\le\norm{\mvm-\mvz_k^+}_\infty+n_{k+1}\le2n_k+n_{k+1}\le3n_k$. Since $\norm{\cdot}_\infty\le\norm{\cdot}\le d\norm{\cdot}_\infty$, the $\ell_1$ statements~\eqref{eq:hub-hoplength}--\eqref{eq:hub-terminal} follow with $c_0:=3d$.
\end{proof}

\subsubsection*{Selection of the hubs}

Fix $\ell\in\dN$. For $-k\le i\le k$ and $1\le j\le\ell$ let
\begin{align}\label{eq:V_0x_0}
	V_i^{(1)}\ge V_i^{(2)}\ge\cdots\ge V_i^{(\ell)}
	\text{ and }
	\mvx_i^{(1)},\mvx_i^{(2)},\dots,\mvx_i^{(\ell)}\in B_i
\end{align}
denote the $\ell$ largest vertex weights in $B_i$, in decreasing order, and the locations attaining them, so that $V_i^{(j)}=V_{\mvx_i^{(j)}}$. Ties are broken by a fixed deterministic rule (this requires $\abs{B_i}\ge\ell$, which the regime-specific choices below guarantee). All of these are $\cF_V$-measurable. Set
\begin{align*}
	\mvx_0:=\mvx_0^{(1)},
\end{align*}
the heaviest vertex of the central ball, and define recursively, for $0\le i\le k-1$,
\begin{align}\label{eq:hub-selection}
	\mvx_{i+1}:=\mvx_{i+1}^{(J_{i+1})},
	\qquad
	J_{i+1}:=\argmin_{1\le j\le\ell}\go\big(\mvx_i,\mvx_{i+1}^{(j)}\big)
\end{align}
(smallest index in case of ties), and symmetrically on the left half, producing $\mvx_{-1},\dots,\mvx_{-k}$. The resulting path is
\begin{align}\label{eq:hub-path}
	\mvpi_k^{\mathsf{Hub}}
	:=\big\la\mvzero \mvx_{-k} \mvx_{-(k-1)} \cdots \mvx_0 \cdots \mvx_{k-1} \mvx_k \mvm\big\ra,
\end{align}
which is self-avoiding because the balls are disjoint. Consequently
\begin{align}\label{eq:Tn-Sk}
	T_n\le W_{\mvzero\mvx_{-k}}+S_k^-+S_k^++W_{\mvx_k\mvm},
	\qquad
	S_k^+:=\sum_{i=0}^{k-1}W_{\mvx_i\mvx_{i+1}},
	\quad
	S_k^-:=\sum_{i=0}^{k-1}W_{\mvx_{-i}\mvx_{-(i+1)}}.
\end{align}
We write $S_k:=S_k^+$. The same estimates will be applied to $S_k^-$ using the left-half construction.

The key structural observation is that the selection~\eqref{eq:hub-selection} produces an exact i.i.d.~sequence of $\ell$-fold minima.

\begin{lem}[Independence of the selected noises]\label{lem:hub-indep}
	Let $\cF_i^{\Hub}:=\gs\big(\cF_V, \go(\mvx_j,\mvx_{j+1}^{(l)}):0\le j<i, 1\le l\le\ell\big)$. Then $\mvx_i$ is $\cF_i^{\Hub}$-measurable and, conditionally on $\cF_i^{\Hub}$, the family $\{\go(\mvx_i,\mvx_{i+1}^{(l)})\}_{l\le\ell}$ is i.i.d.~with the law of $\go$. Consequently the sequence
	\begin{align*}
		\go_{i,[\ell]}:=\go(\mvx_i,\mvx_{i+1})=\min_{1\le l\le\ell}\go\big(\mvx_i,\mvx_{i+1}^{(l)}\big),
		\qquad 0\le i\le k-1,
	\end{align*}
	consists of i.i.d.~random variables with the law of $\go_{[\ell]}$, independent of $\cF_V$. The same holds on the left half, and the two families together are i.i.d.
\end{lem}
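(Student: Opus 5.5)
The plan is to prove the lemma by induction on $i$, for the right half, tracking which edge noises have been revealed. The base case is immediate: $\mvx_0=\mvx_0^{(1)}$ is $\cF_V$-measurable and $\cF_0^{\Hub}=\cF_V$. For the induction step, suppose $\mvx_i$ is $\cF_i^{\Hub}$-measurable. Since $J_{i+1}$ is a deterministic function of $\go(\mvx_i,\mvx_{i+1}^{(l)})$, $l\le\ell$, and the candidates $\mvx_{i+1}^{(l)}$ are $\cF_V$-measurable, $\mvx_{i+1}$ is $\cF_{i+1}^{\Hub}$-measurable.

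The main step is the conditional law. I would decompose on the countably many values of the $\cF_i^{\Hub}$-measurable vertex $\mvx_i$. On the event $\{\mvx_i=\mvy\}$, the edges $e_l:=\{\mvy,\mvx_{i+1}^{(l)}\}$, $l\le\ell$, are $\ell$ distinct edges, determined by $\cF_V$. Every noise revealed in $\cF_i^{\Hub}$ has the form $\go(\mvx_j,\mvx_{j+1}^{(l')})$ with $j<i$. Such an edge joins a vertex of $B_j$ to a vertex of $B_{j+1}$, with $j+1\le i$. Each $e_l$ joins $B_i$ to $B_{i+1}$. The balls are pairwise disjoint by Lemma~\ref{lem:hub-geom}, so the unordered pair $\{B_j,B_{j+1}\}$ differs from $\{B_i,B_{i+1}\}$ when $j<i$, and the edges are therefore distinct.

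Next I would condition on $\cF_V$, under which all noises are still i.i.d.\ by independence. Conditionally on $\cF_V$, the event $\{\mvx_i=\mvy\}$ together with the revealed noises is measurable with respect to the noises on a set of edges disjoint from $\{e_l\}$. This holds because the revealed edges themselves are selected adaptively but always lie between consecutive balls of index $<i+1$. To make this rigorous, I would write $\cF_i^{\Hub}$ as generated by $\cF_V$ and the full families $\{\go(\mvu,\mvv):\mvu\in B_j,\mvv\in B_{j+1}, j<i\}$ restricted through measurable selection. Even more simply, I would show $\cF_i^{\Hub}\subseteq\cG_i:=\gs(\cF_V,\go_e: e\in E_{<i})$, where $E_{<i}$ is the set of edges between $B_j$ and $B_{j+1}$ for $j<i$. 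Then the noises on edges from $B_i$ to $B_{i+1}$ are independent of $\cG_i$ and i.i.d., and on $\{\mvx_i=\mvy\}\in\cG_i$ they are exactly the edges $e_l$. Summing over $\mvy$ yields that $\{\go(\mvx_i,\mvx_{i+1}^{(l)})\}_l$ is i.i.d.\ with the law of $\go$, conditionally on $\cF_i^{\Hub}$. The tower property applied to $\cG_i$ versus $\cF_i^{\Hub}$ is routine.

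Consequently $\go_{i,[\ell]}$ has the law of $\go_{[\ell]}$ conditionally on $\cF_i^{\Hub}$, and it is $\cF_{i+1}^{\Hub}$-measurable. Iterating gives independence of the whole sequence from $\cF_V$ and mutual i.i.d.-ness. For the two halves jointly, the left-half edges join consecutive left balls, and these are disjoint from all right-half edge sets. The only exception is the shared ball $B_0$, but an edge from $B_0$ to $B_{-1}$ differs from one from $B_0$ to $B_1$. Running the same filtration argument with right and left steps interleaved therefore gives joint i.i.d.-ness. The main care point is the measurability claim $\cF_i^{\Hub}\subseteq\cG_i$, which is the adaptive-selection issue, and that is where I would be most explicit.
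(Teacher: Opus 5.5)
Your proposal is correct and follows the paper's argument: measurability of $\mvx_i$ by induction, then pairwise disjointness of the edge sets $B_j\times B_{j+1}$ (from Lemma~\ref{lem:hub-geom}) to show that the $\ell$ candidate edges at step $i$ carry fresh noise, iterated over $i$ and across the two halves. Your auxiliary $\sigma$-field $\cG_i\supseteq\cF_i^{\Hub}$ and the decomposition on $\{\mvx_i=\mvy\}$ only make explicit the adaptive-revealing step that the paper compresses into ``none of the $\ell$ edges in question has been examined.''
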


\begin{proof}
	Measurability of $\mvx_i$ is immediate by induction from~\eqref{eq:hub-selection}, since the points $\mvx_j^{(l)}$ are $\cF_V$-measurable. For the conditional law, note that the edges $\{\la\mvx_i\mvx_{i+1}^{(l)}\ra\}_{l\le\ell}$ join $B_i$ to $B_{i+1}$, whereas the edges generating $\cF_i^{\Hub}$ join $B_j$ to $B_{j+1}$ for $j<i$. Since the balls $B_0,\dots,B_k$ are pairwise disjoint by Lemma~\ref{lem:hub-geom}, the edge sets $B_j\times B_{j+1}$, $0\le j\le k-1$, are pairwise disjoint, so none of the $\ell$ edges in question has been examined. As $(\go_e)_{e\in\sE_{\mathrm{com}}}$ is i.i.d.~and independent of $\cF_V$, the conditional law of $\{\go(\mvx_i,\mvx_{i+1}^{(l)})\}_{l}$ given $\cF_i^{\Hub}$ is that of $\ell$ i.i.d.~copies of $\go$. Hence $\go_{i,[\ell]}$ has the law of $\go_{[\ell]}$ and is independent of $\cF_i^{\Hub}$. Iterating over $i$ gives the independence of the sequence. The two halves use edge sets on opposite sides, which are disjoint since $B_i^+\cap B_j^-=\varnothing$.
\end{proof}

\subsubsection*{The master estimate}
Fix $\tilde\gc>\gc$. By~\eqref{def:V_x}, there are constants $c_V>0$ and $x_V\ge1$, depending on $\tilde\gc$ and the law of $V$, such that
\begin{align}\label{eq:V-tilde-lower}
	\pr(V\ge x)\ge c_Vx^{-\tilde\gc}
	\text{ for all }x\ge x_V.
\end{align}
The exponent $\tilde\gc$ is fixed independently of $n$ and $k$; its dependence in the notation below is suppressed.
By Lemma~\ref{lem:hub-geom} the hop from $\mvx_i$ to $\mvx_{i+1}$ has length at most $c_0n_i$, so with the \emph{normalized hub weights}
\begin{align}\label{eq:hub-normalized}
	\hV_i:=\frac{V_{\mvx_i}}{n_{\abs{i}+1}^{d/\tilde\gc}},
	\qquad -k\le i\le k,
\end{align}
we obtain the deterministic bound
\begin{align}\label{eq:sum-gl}
	S_k
	=\sum_{i=0}^{k-1}\frac{\norm{\mvx_i-\mvx_{i+1}}^{\ga}\go(\mvx_i,\mvx_{i+1})}{V_{\mvx_i}V_{\mvx_{i+1}}}
	\le\sum_{i=0}^{k-1}\gl_i(n) \frac{\go(\mvx_i,\mvx_{i+1})}{\hV_i \hV_{i+1}}
\end{align}
with $\gl_i(n):=c_0^{\ga}n_i^{\ga}n_{i+1}^{-d/\tilde\gc}n_{i+2}^{-d/\tilde\gc}$.
The three exponents in $\gl_i(n)$ record the geometry exactly: the hop has length $n_i$, and it is discounted by the two hubs of the balls of radii $n_{i+1}$ and $n_{i+2}$.

\begin{lem}[Hub-chain master estimate]\label{lem:hub-ub}
	Assume \textup{\eqref{A3}} and fix $\tilde\gc>\gc$, with $c_V,x_V$ as in~\eqref{eq:V-tilde-lower}. Let $\vec v=\{v_i\}_{i=0}^k\subset(0,1)$ be a sequence of truncation thresholds and define the \emph{hub-good event}
	\begin{align}\label{eq:hub-good}
		\cG_n^{\Hub}(\vec v):=\bigcap_{i=-k}^{k}\Big\{V_i^{(\ell)}\ge v_{\abs{i}} n_{\abs{i}+1}^{d/\tilde\gc}\Big\}.
	\end{align}
	We have the following.
	\begin{enumeratei}
		\item \textbf{\textup{(Failure probability.)}} There are constants $C_1,c_2>0$, depending only on $d,\tilde\gc,\ell,c_V,x_V$, such that if
		\begin{align}\label{eq:hub-xV-cond}
			v_i n_{i+1}^{d/\tilde\gc}\ge x_V\text{ for all }0\le i\le k,
		\end{align}
		then
		\begin{align}\label{eq:hub-fail}
			\pr\Big(\big(\cG_n^{\Hub}(\vec v)\big)^{\rm c}\Big)
			\le2\sum_{i=0}^{k}C_1\exp\big(-c_2v_i^{-\tilde\gc}\big).
		\end{align}
		\item \textbf{\textup{(Deterministic cost bound.)}} With $\{\go_{i,[\ell]}\}_{i=0}^{k-1}$ the i.i.d.~sequence of Lemma~\ref{lem:hub-indep},
		\begin{align}\label{eq:Sk-det-cost}
			S_k^{+} \ind_{\cG_n^{\Hub}(\vec v)}
			\le\sum_{i=0}^{k-1}\frac{\gl_i(n)}{v_iv_{i+1}} \go_{i,[\ell]},
		\end{align}
		and the same bound holds for $S_k^-$ with an independent copy of the sequence.
	\end{enumeratei}
\end{lem}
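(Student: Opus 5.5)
Part (i) reduces to a binomial tail estimate for each ball, followed by a union bound over the $2k+1$ balls. Part (ii) follows directly from the deterministic inequality~\eqref{sum-gl} once the selected hubs are shown to have large weight on the good event.

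\emph{Part (i).} Fix $i$ with $0\le i\le k$ and one of the balls $B_{\pm i}$, which has radius $r:=n_{i+1}$ and so contains at least $c_d r^d$ lattice points, by~\eqref{A1}. The event $\{V_i^{(\ell)}<v_i r^{d/\tilde\gc}\}$ means that fewer than $\ell$ vertices of the ball have weight at least $x:=v_i r^{d/\tilde\gc}$. Under~\eqref{eq:hub-xV-cond} we have $x\ge x_V$, so~\eqref{eq:V-tilde-lower} gives
\begin{align*}
	p:=\pr(V\ge x)\ge c_Vv_i^{-\tilde\gc}r^{-d}.
\end{align*}
The count $N\sim\mathrm{Bin}(|B_{\pm i}|,p)$ therefore has mean $\mu\ge c_dc_Vv_i^{-\tilde\gc}=:\mu_0$. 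For a binomial, $\pr(N<\ell)\le\sum_{j<\ell}\binom{M}{j}p^j(1-p)^{M-j}\le\sum_{j<\ell}\mu^j e^{-\mu}e^{jp}/j!$. Since $\mu^je^{-\mu/2}$ is bounded by a constant depending on $\ell$, this gives $\pr(N<\ell)\le C(\ell)e^{-\mu/2}\le C_1\exp(-c_2v_i^{-\tilde\gc})$. If $p$ is close to $1$, the same bound holds trivially by adjusting $C_1$. We then take a union bound over $i=0,\dots,k$ and the two signs, which produces the factor $2$ in~\eqref{eq:hub-fail}; the index $i=0$ involves a single ball and is bounded by the same term. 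The constants depend only on $d,\tilde\gc,\ell,c_V,x_V$.

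\emph{Part (ii).} On $\cG_n^{\Hub}(\vec v)$, each selected hub $\mvx_i$ is one of the $\ell$ heaviest vertices of $B_i$ (with $\mvx_0$ the heaviest). Hence $V_{\mvx_i}\ge V_i^{(\ell)}\ge v_{|i|}n_{|i|+1}^{d/\tilde\gc}$, that is, $\hV_i\ge v_{|i|}$. Substituting this into~\eqref{eq:sum-gl}, whose validity rests on the hop-length bound $\norm{\mvx_i-\mvx_{i+1}}\le c_0n_i$ from Lemma~\ref{lem:hub-geom}, gives
\begin{align*}
	S_k^+\ind_{\cG}\le\sum_{i}\gl_i(n)\go(\mvx_i,\mvx_{i+1})/(v_iv_{i+1}).
\end{align*}
By the selection rule~\eqref{eq:hub-selection}, $\go(\mvx_i,\mvx_{i+1})=\go_{i,[\ell]}$, so this is exactly~\eqref{eq:Sk-det-cost}. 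The left half is handled identically, and the independence of the two halves, including their joint i.i.d.\ structure, comes from Lemma~\ref{lem:hub-indep}.

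The only delicate point is making the binomial lower-tail constants uniform. The constants must not depend on $n$, $i$ or the ball size, which is why the estimate has to pass through the mean $\mu_0$. The rest is bookkeeping.
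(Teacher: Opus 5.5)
Your proposal is correct and follows the paper's proof. Using the same exponent $\tilde\gc$ in the threshold and in the tail lower bound makes the binomial mean at least $c\,c_V v_i^{-\tilde\gc}$ uniformly in the scale, a union bound over the $2k+1$ balls then gives (i), and (ii) is the same substitution $\hV_i\ge v_{\abs{i}}$ into~\eqref{eq:sum-gl}. The only cosmetic difference is your binomial lower-tail bound, the elementary estimate $\binom{M}{j}p^j(1-p)^{M-j}\le \mu^je^{-\mu}e^{jp}/j!$, which avoids the paper's case split between $\E N_i\le 2\ell$ and the Chernoff bound $\exp(-\E N_i/8)$ (your extra remark about $p$ close to $1$ is therefore unnecessary, since $e^{jp}\le e^{\ell}$ always).
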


\begin{proof}
	(i) Fix $0\le i\le k$ and let
	$N_i:=\big|\{\mvy\in B_i:V_{\mvy}\ge v_i n_{i+1}^{d/\tilde\gc}\}\big|$,
	so that $\{V_i^{(\ell)}<v_i n_{i+1}^{d/\tilde\gc}\}=\{N_i\le\ell-1\}$. Since the weights are i.i.d.~and $\abs{B_i}\asymp n_{i+1}^d$, we have $N_i\sim\mathrm{Bin}\big(\abs{B_i},\pr(V\ge v_i n_{i+1}^{d/\tilde\gc})\big)$, and by~\eqref{eq:hub-xV-cond} the lower bound~\eqref{eq:V-tilde-lower} applies, giving
	\begin{align}\label{eq:hub-ENi}
		\E N_i=\abs{B_i} \pr\big(V\ge v_i n_{i+1}^{d/\tilde\gc}\big)
		\ge c n_{i+1}^{d} c_V\big(v_i n_{i+1}^{d/\tilde\gc}\big)^{-\tilde\gc}
		=cc_V v_i^{-\tilde\gc}.
	\end{align}
	The crucial point is that the same fixed exponent $\tilde\gc$ is used in the threshold and in the tail lower bound. The right-hand side is therefore independent of the scale $n_{i+1}$, so a single sequence $\vec v$ can work simultaneously at all $k+1$ scales under~\eqref{def:V_x} alone. Set $c_2:=cc_V/8$. If $\E N_i\le2\ell$ then $v_i^{-\tilde\gc}\le2\ell/(cc_V)$, so $\exp(-c_2v_i^{-\tilde\gc})\ge\exp(-\ell/4)$ is bounded below and~\eqref{eq:hub-fail} is trivial after enlarging $C_1$. Otherwise $\ell-1\le\tfrac12\E N_i$ and the standard Chernoff bound for the binomial lower tail gives
	\begin{align*}
		\pr\big(V_i^{(\ell)}<v_i n_{i+1}^{d/\tilde\gc}\big)=\pr\big(N_i\le\ell-1\big)
		\le\exp\big(-\tfrac18\E N_i\big)
		\le C_1\exp\big(-c_2v_i^{-\tilde\gc}\big).
	\end{align*}
	The same estimate holds for $B_{-i}$, and a union bound over $i\in\{-k,\dots,k\}$ gives~\eqref{eq:hub-fail}.

	(ii) On $\cG_n^{\Hub}(\vec v)$ we have $V_{\mvx_i}\ge V_i^{(\ell)}\ge v_{\abs{i}}n_{\abs{i}+1}^{d/\tilde\gc}$, since $\mvx_i$ is one of the top $\ell$ vertices of $B_i$. That is, $\hV_i\ge v_{\abs{i}}$. Substituting into~\eqref{eq:sum-gl} and using $\go(\mvx_i,\mvx_{i+1})=\go_{i,[\ell]}$ yields~\eqref{eq:Sk-det-cost}.
\end{proof}

In the regime-specific proofs we first apply Lemma~\ref{lem:hub-ub}(i) to fix a truncation sequence with small total failure probability, then use~\eqref{eq:Sk-det-cost} to replace the random inverse hub weights by deterministic factors, and finally apply Lemma~\ref{lem:p-conS_k} to the resulting sum of i.i.d.~variables $\go_{i,[\ell]}$. The geometry is identical in Regimes II and $\rm IV_H$. Only $f_\phi$, $k(n)$ and $\vec v$ change. Figure~\ref{fig:hub-ansatz} shows the construction.

\begin{figure}[htbp]
	\centering
	\resizebox{\linewidth}{!}{%
		\begin{tikzpicture}[
				>=Latex,
				line cap=round,
				line join=round,
				font=\small,
				hubball/.style={draw=black, fill=black!0, line width=0.25pt},
				candvtx/.style={circle, fill=black!68, draw=black!68, inner sep=1pt},
				chosenvtx/.style={circle, fill=trajectoryblue, draw=trajectoryblue, inner sep=1.5pt},
				chosenring/.style={draw=trajectoryblue, line width=0.5pt},
				candedge/.style={draw=networkgreen, line width=.75pt},
				hubpath/.style={draw=trajectoryblue, line width=1.5pt},
				annot/.style={align=left, fill=white, fill opacity=.96, text opacity=1,inner sep=2.0pt, rounded corners=1pt}
			]

			\def\ang{8}
			\def\gap{0.17}
			\def\rO{2.85}
			\def\rA{1.33}
			\def\rB{0.84}
			\def\rC{0.52}

			\pgfmathsetmacro{\dA}{\rO+\rA+\gap}
			\pgfmathsetmacro{\dB}{\rA+\rB+\gap}
			\pgfmathsetmacro{\dC}{\rB+\rC+\gap}

			\pgfmathsetmacro{\laboffO}{\rO+0.28}
			\pgfmathsetmacro{\laboffA}{\rA+0.24}
			\pgfmathsetmacro{\laboffB}{\rB+0.21}
			\pgfmathsetmacro{\laboffC}{\rC+0.19}

			\coordinate (z0)  at (0,0);
			\coordinate (z1)  at ($(z0)+(\ang:\dA)$);
			\coordinate (z2)  at ($(z1)+(\ang:\dB)$);
			\coordinate (z3)  at ($(z2)+(\ang:\dC)$);
			\coordinate (zm1) at ($(z0)+(180+\ang:\dA)$);
			\coordinate (zm2) at ($(zm1)+(180+\ang:\dB)$);
			\coordinate (zm3) at ($(zm2)+(180+\ang:\dC)$);

			\coordinate (L) at ($(zm3)+(180+\ang:1.12)$);
			\coordinate (R) at ($(z3)+(\ang:1.90)$);

			\node[below left=1pt and 1pt] at (L) {$\mvzero$};
			\node[below right=1pt and 1pt] at (R) {$\lceil n\mvx\rceil$};

			\draw[hubball] (zm3) circle (\rC);
			\draw[hubball] (zm2) circle (\rB);
			\draw[hubball] (zm1) circle (\rA);
			\draw[hubball] (z0)  circle (\rO);
			\draw[hubball] (z1)  circle (\rA);
			\draw[hubball] (z2)  circle (\rB);
			\draw[hubball] (z3)  circle (\rC);

			\node at ($(zm3)+(90+\ang:\laboffC)$) {$B_{-3}$};
			\node at ($(zm2)+(90+\ang:\laboffB)$) {$B_{-2}$};
			\node at ($(zm1)+(90+\ang:\laboffA)$) {$B_{-1}$};
			\node at ($(z0)+(90+\ang:\laboffO)$) {$B_0$};
			\node at ($(z1)+(90+\ang:\laboffA)$) {$B_1$};
			\node at ($(z2)+(90+\ang:\laboffB)$) {$B_2$};
			\node at ($(z3)+(90+\ang:\laboffC)$) {$B_3$};

			\coordinate (xm3) at ($(zm3)+(58:0.14)$);
			\coordinate (xm2) at ($(zm2)+(104:0.22)$);
			\coordinate (xm1) at ($(zm1)+(92:0.44)$);
			\coordinate (x0)  at ($(z0)+(99:0.34)$);
			\coordinate (x1)  at ($(z1)+(111:0.38)$);
			\coordinate (x2)  at ($(z2)+(108:0.20)$);
			\coordinate (x3)  at ($(z3)+(126:0.14)$);

			\node[candvtx] (m31a) at ($(zm3)+(148:0.14)$) {};
			\node[candvtx] (m31b) at ($(zm3)+(232:0.15)$) {};

			\node[candvtx] (m21a) at ($(zm2)+(150:0.27)$) {};
			\node[candvtx] (m21b) at ($(zm2)+(246:0.27)$) {};
			\node[candvtx] (m21c) at ($(zm2)+(18:0.20)$) {};

			\node[candvtx] (m11a) at ($(zm1)+(154:0.61)$) {};
			\node[candvtx] (m11b) at ($(zm1)+(238:0.63)$) {};
			\node[candvtx] (m11c) at ($(zm1)+(10:0.56)$) {};

			\node[candvtx] at ($(z0)+(154:0.98)$) {};
			\node[candvtx] at ($(z0)+(240:0.84)$) {};
			\node[candvtx] at ($(z0)+(334:0.92)$) {};

			\node[candvtx] (p11a) at ($(z1)+(150:0.52)$) {};
			\node[candvtx] (p11b) at ($(z1)+(338:0.46)$) {};
			\node[candvtx] (p11c) at ($(z1)+(28:0.40)$) {};

			\node[candvtx] (p21a) at ($(z2)+(152:0.25)$) {};
			\node[candvtx] (p21b) at ($(z2)+(15:0.18)$) {};

			\node[candvtx] (p31a) at ($(z3)+(154:0.12)$) {};
			\node[candvtx] (p31b) at ($(z3)+(238:0.12)$) {};

			\node at ($(L)+(\ang:0.45)$) {$\cdots$};
			\node at ($(R)+(180+\ang:0.70)$) {$\cdots$};

			\draw[candedge] (x0) .. controls ($(x0)+(-0.55,1.05)$) and ($(m11a)+(0.35,0.25)$).. (m11a);
			\draw[candedge] (x0) .. controls ($(x0)+(-0.92,0.78)$) and ($(m11c)+(0.42,-0.02)$).. (m11c);
			\draw[candedge] (x0) .. controls ($(x0)+(-1.02,-0.10)$) and ($(m11b)+(0.42,0.10)$).. (m11b);

			\draw[candedge] (xm1).. controls ($(xm1)+(-0.65,0.55)$) and ($(m21a)+(0.28,0.12)$).. (m21a);
			\draw[candedge] (xm1).. controls ($(xm1)+(-0.82,0.12)$) and ($(m21c)+(0.28,0.02)$).. (m21c);
			\draw[candedge] (xm1).. controls ($(xm1)+(-0.82,-0.40)$) and ($(m21b)+(0.28,0.10)$).. (m21b);

			\draw[candedge] (xm2).. controls ($(xm2)+(-0.40,0.24)$) and ($(m31a)+(0.18,0.05)$).. (m31a);
			\draw[candedge] (xm2).. controls ($(xm2)+(-0.46,-0.18)$) and ($(m31b)+(0.18,0.04)$).. (m31b);

			\draw[candedge] (x0) .. controls ($(x0)+(0.60,1.00)$) and ($(p11c)+(-0.35,0.22)$).. (p11c);
			\draw[candedge] (x0) .. controls ($(x0)+(0.92,0.52)$) and ($(p11a)+(-0.35,0.08)$).. (p11a);
			\draw[candedge] (x0) .. controls ($(x0)+(0.95,-0.10)$) and ($(p11b)+(-0.35,0.06)$).. (p11b);

			\draw[candedge] (x1) .. controls ($(x1)+(0.48,0.28)$) and ($(p21b)+(-0.28,0.08)$).. (p21b);
			\draw[candedge] (x1) .. controls ($(x1)+(0.62,-0.12)$) and ($(p21a)+(-0.28,0.05)$).. (p21a);

			\draw[candedge] (x2) .. controls ($(x2)+(0.34,0.15)$) and ($(p31a)+(-0.18,0.04)$).. (p31a);
			\draw[candedge] (x2) .. controls ($(x2)+(0.34,-0.12)$) and ($(p31b)+(-0.18,0.04)$).. (p31b);

			\foreach \hubfrom/\hubto/\hubrise in {
					L/xm3/0.28,
					xm3/xm2/0.52,
					xm2/xm1/0.78,
					xm1/x0/1.35,
					x0/x1/1.35,
					x1/x2/0.78,
					x2/x3/0.52,
					x3/R/0.38
				}{%
					\draw[hubpath]
					(\hubfrom)
					.. controls ($(\hubfrom)!0.333!(\hubto)+(0,\hubrise)$)
					and ($(\hubfrom)!0.667!(\hubto)+(0,\hubrise)$)
					.. (\hubto);
				}

			\tikzset{
				hubxlabel/.style={
						text=trajectoryblue,
						fill=white,
						fill opacity=0.94,
						text opacity=1,
						inner sep=0.8pt
					}
			}

			\foreach \hubpoint/\hubindex/\hubradius/\hubdrop in {
					xm3/-3/0.075/0.82,
					xm2/-2/0.085/0.66,
					xm1/-1/0.095/0.34,
					x0/0/0.10/0.34,
					x1/1/0.095/0.34,
					x2/2/0.085/0.43,
					x3/3/0.075/0.42
				}{%
					\node[chosenvtx] at (\hubpoint) {};
					\draw[chosenring] (\hubpoint) circle (\hubradius);
					\node[hubxlabel] at ($(\hubpoint)+(0,-\hubdrop)$)
					{$\mvx_{\hubindex}$};
				}

			\node[annot, anchor=south west] (n0) at ($(zm1)+(-1.70,2.55)$)
			{$\mvx_0=\arg\max_{\mvy\in B_0}V_{\mvy}$};
			\draw[-{Latex[length=2mm]}] (n0.south east).. controls ($(n0.south east)+(0.55,-0.10)$) and ($(x0)+(-0.18,0.32)$).. (x0);

			\node[annot, anchor=west] (n1) at ($(z1)+(0.55,2.10)$)
			{$\mvx_i=\arg\min_{1\le j\le \ell}\go\bigl(\mvx_{i-1},\mvx_i^{(j)}\bigr)$};
			\draw[-{Latex[length=2mm]}] (n1.south west).. controls ($(n1.south west)+(-0.30,-0.45)$) and ($(p11c)+(-0.05,0.28)$).. ($(x0)!0.54!(p11c)$);

			\node[annot, anchor=north west] (n2) at ($(z1)+(-0.18,-1.55)$)
			{top $\ell$ vertices in $B_i$\\
				$\{\mvx_i^{(1)},\ldots,\mvx_i^{(\ell)}\}$};
			\draw[-{Latex[length=2mm]}] (n2.north east).. controls ($(n2.north east)+(0.22,0.58)$) and ($(p11b)+(0.00,-0.28)$).. (p11b);

			\node[annot, anchor=west] at ($(zm1)+(-0.30,-3.10)$)
			{$n_i=f^{(i)}(n), \qquad B_i=B(\mvz_i,n_{i+1}), \qquad P_n=\{\mvx_{-k},\ldots,\mvx_{-1},\mvx_0,\mvx_1,\ldots,\mvx_k\}$};

		\end{tikzpicture}%
	}
	\caption{The Hub-Chain Ansatz for Regimes II and $\rm IV_H$. One first chooses $\mvx_0$ as the maximizer of $V$ inside the central ball $B_0$. Then, in each subsequent ball $B_i$, one keeps the top $\ell$ vertices (ranked by $V$) and selects the next point by minimizing the edge weight $\go$ among those candidates. The thick blue curves indicate the selected direct-hop Ansatz path, while the thin green curves indicate the candidate edges compared at each step.}
	\label{fig:hub-ansatz}
\end{figure}

\subsection{The Binary Edge-Bridge multi-scale Ansatz for the edge-dominated regimes}\label{ssec:multi-scale-ansatz}

This is the construction used in Regimes III and $\rm IV_E$, where the upper bound comes from repeatedly minimizing edge noises over many candidate pairs. We build a recursive binary path from $\mvzero$ to $\mvm=\lceil n\mvx\rceil$.

Throughout this subsection we work with a scale map $f_\phi$ and a depth $k=k(n)$ such that
\begin{align}\label{eq:edge-scale-separation}
	n_0=n,\qquad n_i=f_\phi^{(i)}(n),\qquad n_{i+1}\le\tfrac{1}{16}n_i\quad(0\le i\le k-1),
\end{align}
and, if $\mvx\ne\mvzero$,
\begin{align}\label{eq:root-separation}
	n_1\le\tfrac{1}{16}\norm{\mvm}_\infty
\end{align}
for all large $n$. Both hold for the choices made in Sections~\ref{sec:proof-III} and~\ref{sec:proof-IVE}. The case $\mvx=\mvzero$ is trivial. Fix $a>0$ with $p_a=\pr(V>a)>0$, recall $\cV^a=\{\mvz:V_{\mvz}>a\}$, and put
\begin{align*}
	B^a(\mvy,r):=B(\mvy,r)\cap\cV^a,
	\qquad
	\wt B^a(\mvy,r):=\wt B(\mvy,r)\cap\cV^a,
	\qquad \wt B(\mvy,r)=B(\mvy,r)\setminus B(\mvy,r/2).
\end{align*}
All bridge endpoints are chosen from $\cV^a$, which guarantees a uniform lower bound $a$ on the vertex weights at the ends of every bridge and thereby removes the vertex environment from the analysis. This is why the construction is insensitive to $\gc$.

\subsubsection*{Percolated shells}

Fix $\eps_0\in(0,1)$. Since $\bigl|\wt B^a(\mvy,r)\bigr|\sim\mathrm{Bin}\big(\bigl|\wt B(\mvy,r)\bigr|,p_a\big)$ for deterministic $\mvy$, Hoeffding's inequality gives constants $c_1,c_2>0$ depending only on $a,\eps_0,d$ such that for every deterministic $\mvy$ and every $r\ge1$,
\begin{align}\label{eq:hoeffding-Va-ball}
	\pr\Big(\bigl|\wt B^a(\mvy,r)\bigr|\le(1-\eps_0)p_a\bigl|\wt B(\mvy,r)\bigr|\Big)\le c_1\exp\{-c_2r^d\}.
\end{align}
The centers arising in the construction are random, so we require a corresponding estimate for random centers.

\begin{lem}[Hoeffding for a uniformly sampled favorable center]\label{lem:uniform-center}
	Let $S\subset\dZ^d$ be finite and deterministic and put $S^a:=S\cap\cV^a$. On $\{\bigl|S^a\bigr|\ge m\}$ let $U$ be a random vertex which, conditionally on the vertex weights, is uniform on $S^a$. Then for every $r\ge1$,
	\begin{align*}
		\pr\Big(\bigl|\wt B^a(U,r)\bigr|\le(1-\eps_0)p_a\bigl|\wt B(\mvzero,r)\bigr|, \bigl|S^a\bigr|\ge m\Big)
		\le\frac{\bigl|S\bigr|}{m} c_1\exp\{-c_2r^d\}.
	\end{align*}
\end{lem}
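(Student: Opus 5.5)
The plan is to condition on the vertex weights and replace the random center by a sum over all deterministic candidate centers. Define the bad set
\begin{align*}
	\mathrm{Bad}:=\Big\{\mvy\in S:\ \bigl|\wt B^a(\mvy,r)\bigr|\le(1-\eps_0)p_a\bigl|\wt B(\mvy,r)\bigr|\Big\}.
\end{align*}
It is $\cF_V$-measurable. By translation invariance of the lattice, $\bigl|\wt B(\mvy,r)\bigr|=\bigl|\wt B(\mvzero,r)\bigr|$, so the event in the statement is exactly $\{U\in \mathrm{Bad}\}$.

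Since $U$ is conditionally uniform on $S^a$ given $\cF_V$, on $\{\bigl|S^a\bigr|\ge m\}$ we have
\begin{align*}
	\pr\big(U\in\mathrm{Bad}\,\big|\,\cF_V\big)
	=\frac{\bigl|\mathrm{Bad}\cap S^a\bigr|}{\bigl|S^a\bigr|}
	\le\frac{1}{m}\sum_{\mvy\in S}\ind\{\mvy\in\mathrm{Bad}\}.
\end{align*}
Multiplying by the $\cF_V$-measurable indicator of $\{\bigl|S^a\bigr|\ge m\}$, bounding that indicator by $1$ on the right, and taking expectations gives
\begin{align*}
	\pr\big(U\in\mathrm{Bad},\ \bigl|S^a\bigr|\ge m\big)\le\frac1m\sum_{\mvy\in S}\pr(\mvy\in\mathrm{Bad}).
\end{align*}

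For each deterministic $\mvy$, the Hoeffding bound \eqref{eq:hoeffding-Va-ball} gives $\pr(\mvy\in\mathrm{Bad})\le c_1\exp\{-c_2r^d\}$. Summing over the $\bigl|S\bigr|$ points of $S$ yields the claim.

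There is no real obstacle here. The only points needing care are that the selection of $U$ must use no information beyond $\cF_V$ and possibly independent auxiliary randomness, so that the conditional-uniform formula holds. If $U$ is built with extra independent randomness, one conditions on $\cF_V$ together with that auxiliary $\gs$-field, and the same computation goes through. Note also that the event $\{\mvy\in\mathrm{Bad}\}$ may depend on whether $\mvy$ itself lies in $\cV^a$, but this is harmless because we drop the restriction $\mvy\in S^a$ and sum over all of $S$.
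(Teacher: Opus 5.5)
Your proposal is correct and follows the paper's proof essentially verbatim: condition on the vertex weights, bound the conditional probability that $U$ is bad by $m^{-1}\sum_{\mvu\in S}\ind\{\mvu\text{ bad}\}$ on $\{\bigl|S^a\bigr|\ge m\}$, take expectations, and apply \eqref{eq:hoeffding-Va-ball} at each of the $\bigl|S\bigr|$ deterministic centers. The only cosmetic difference is that the paper first writes the sum with the factor $\ind_{\{V_{\mvu}>a\}}$ and then drops it, whereas you drop it at once. Your caveat about dependence on $V_{\mvy}$ is unnecessary, since the shell $\wt B(\mvy,r)$ excludes $\mvy$ itself, but it does no harm.
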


\begin{proof}
	Write $X_{\mvz}:=\ind_{\{V_{\mvz}>a\}}$ and call $\mvu$ \emph{bad} if $\bigl|\wt B^a(\mvu,r)\bigr|\le(1-\eps_0)p_a\bigl|\wt B(\mvzero,r)\bigr|$. On $\{\bigl|S^a\bigr|\ge m\}$,
	\begin{align*}
		\pr\big(U\text{ is bad}\mid(V_{\mvz})_{\mvz}\big)
		=\frac{1}{\bigl|S^a\bigr|}\sum_{\mvu\in S}X_{\mvu}\ind_{\{\mvu\text{ bad}\}}
		\le\frac1m\sum_{\mvu\in S}X_{\mvu}\ind_{\{\mvu\text{ bad}\}}.
	\end{align*}
	Taking expectations and applying~\eqref{eq:hoeffding-Va-ball} to each of the $\bigl|S\bigr|$ deterministic centers gives the claim.
\end{proof}

\subsubsection*{The binary tree of segments and bridges}

For $i\ge1$ let $\Sigma_i:=\{0,1\}^i$ and $\Sigma_0:=\{\varnothing\}$. Suppressing the empty word at the root, set
\begin{align*}
	\mvu_0:=\mvzero,\qquad\mvu_1:=\mvm.
\end{align*}
Suppose that for some $0\le i\le k-1$ and every $\gs\in\Sigma_i$ the segment endpoints $\mvu_{\gs0},\mvu_{\gs1}$ have been defined. Put
\begin{align*}
	\wt B_{\gs j}:=\wt B\big(\mvu_{\gs j},n_{i+1}\big),
	\qquad
	\wt B_{\gs j}^a:=\wt B_{\gs j}\cap\cV^a,
	\qquad j\in\{0,1\}.
\end{align*}
To make the selection unambiguous even when the law of $\go$ has atoms, enlarge the probability space by i.i.d.~continuous tie-breaking marks $(U_e)_{e\in\sE_{\mathrm{com}}}$, independent of all model variables. If there is at least one edge joining $\wt B_{\gs0}^a$ to $\wt B_{\gs1}^a$, choose the edge that minimizes the pair $(\go_e,U_e)$ in lexicographic order and write its endpoints as $\mvu_{\gs01}\in\wt B_{\gs0}^a$ and $\mvu_{\gs10}\in\wt B_{\gs1}^a$, so that
\begin{align}\label{eq:bridge-min}
	\go_{\mvu_{\gs01}\mvu_{\gs10}}
	=\min\big\{\go_{\mvu\mvv}:\mvu\in\wt B_{\gs0}^a, \mvv\in\wt B_{\gs1}^a, \mvu\ne\mvv\big\}.
\end{align}
The continuous marks make the choice exchangeable and ensure that, conditionally on the candidate sets and on the past, each selected endpoint has the uniform marginal on its candidate shell. Otherwise define the two points arbitrarily. This fallback is not used on the good event below. The two child segments of $\gs$ are $(\mvu_{\gs00},\mvu_{\gs01})$ and $(\mvu_{\gs10},\mvu_{\gs11})$, where $\mvu_{\gs00}:=\mvu_{\gs0}$ and $\mvu_{\gs11}:=\mvu_{\gs1}$. Whenever the selected endpoints lie in their prescribed shells,
\begin{align}\label{eq:seg-scale}
	\tfrac12n_{i+1}\le\norm{\mvu_{\gs00}-\mvu_{\gs01}}_\infty\le n_{i+1},
	\qquad
	\tfrac12n_{i+1}\le\norm{\mvu_{\gs10}-\mvu_{\gs11}}_\infty\le n_{i+1},
\end{align}
so that every terminal segment at depth $k$ has length of order $n_k$, and inductively
\begin{align}\label{eq:seg-scale-i}
	\tfrac12 n_i\le\norm{\mvu_{\gs0}-\mvu_{\gs1}}_\infty\le n_i,
	\qquad \gs\in\Sigma_i, 1\le i\le k,
\end{align}
with $\norm{\mvu_0-\mvu_1}_\infty=\norm{\mvm}_\infty\ge16n_1$ at the root.

\subsubsection*{The percolation good event}

For $0\le i\le k-1$, $\gs\in\Sigma_i$ and $j\in\{0,1\}$ set
$\cA_{\gs j}:=\big\{\bigl|\wt B_{\gs j}^a\bigr|\ge(1-\eps_0)p_a\bigl|\wt B_{\gs j}\bigr|\big\}$, and
\begin{align}\label{eq:cA-k}
	\cA_k:=\bigcap_{i=0}^{k-1}\bigcap_{\gs\in\Sigma_i}\big(\cA_{\gs0}\cap\cA_{\gs1}\big).
\end{align}
There is $c_{\rm sh}>0$ depending only on $a,\eps_0,d$ such that on $\cA_k$
\begin{align}\label{eq:fav-shell-lower}
	\bigl|\wt B_{\gs j}^a\bigr|\ge c_{\rm sh}n_{i+1}^{d},
	\qquad 0\le i\le k-1, \gs\in\Sigma_i, j\in\{0,1\},
\end{align}
and, after decreasing $c_{\rm sh}$, the corresponding candidate edge sets contain at least $c_{\rm sh}n_{i+1}^{2d}$ edges.

\begin{lem}[Probability of the percolation good event]\label{lem:Ak-prob}
	There are constants $C\ge2$ and $c>0$ depending only on $a,\eps_0,d$ such that
	\begin{align}\label{eq:cA-k-prob}
		\pr(\cA_k^{\rm c})\le C^k\exp\{-cn_k^d\}.
	\end{align}
\end{lem}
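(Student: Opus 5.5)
We bound $\pr(\cA_k^{\rm c})$ by a union bound over all $2\sum_{i=0}^{k-1}2^i\le 2^{k+1}$ events $\cA_{\gs j}^{\rm c}$. Each one is controlled by Lemma~\ref{lem:uniform-center} or, at the root, by the deterministic estimate~\eqref{eq:hoeffding-Va-ball}. The difficulty is that for $i\ge1$ the center $\mvu_{\gs j}$ is random, because it was produced by the bridge selection at the previous level. So we first fix a convenient ordering. For each $i$, the events at depth $i$ are handled on the intersection of the good events at depths $<i$.

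At depth $0$ the centers $\mvu_0=\mvzero$ and $\mvu_1=\mvm$ are deterministic, so~\eqref{eq:hoeffding-Va-ball} gives $\pr(\cA_{\varnothing j}^{\rm c})\le c_1\exp\{-c_2n_1^d\}$. For $i\ge1$, write $\gs=\gs'b$ with $\gs'\in\Sigma_{i-1}$. If $\gs j\in\{\gs'00,\gs'11\}$, the center equals an ancestor center that was already controlled at an earlier step. If instead $\gs j\in\{\gs'01,\gs'10\}$, the center is a bridge endpoint. By the tie-breaking construction, conditionally on the vertex weights and the past, it is uniform on its candidate shell $S^a$, where $S=\wt B_{\gs'b}$ (a shell of radius $n_i$). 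The shell $S$ is itself centered at a random point, but the same device applies. Conditional on $\cF_V$ and the earlier selections, the endpoint is uniform on $S^a$. On the good event at depth $i-1$ we have $|S^a|\ge c_{\rm sh}n_i^d$, and $|S|\le Cn_i^d$.

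Lemma~\ref{lem:uniform-center} is stated for a deterministic $S$, so it must be re-run conditionally. We condition on the $\gs$-field generated by the ancestors' centers, which determines $S$. We then note that the Hoeffding bound for a deterministic center, $\pr(\mvu\text{ bad})\le c_1e^{-c_2n_{i+1}^d}$, must be used for vertex weights that are not independent of the conditioning. Since badness of $\mvu$ depends on weights in $\wt B(\mvu,n_{i+1})$, which overlap the information used to select ancestors, the cleanest route is to avoid conditioning altogether. We bound $\ind\{U\text{ bad}\}\le\frac{1}{|S^a|}\sum_{\mvu\in S}\ind\{\mvu\text{ bad}\}$. We then sum over all possible deterministic shells, that is, all possible centers of $S$. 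These lie in a deterministic region of volume at most $(Cn)^d$, but that costs a polynomial factor in $n$. To get instead a factor $C$ per level, we do something more economical. We take the union over $\mvu$ in the deterministic set $\widehat S_i$ of all lattice points within $\ell_\infty$-distance $2n_1$ of $\{\mvzero,\mvm\}$. On that set, the event ``some $\mvu\in\widehat S_i$ is bad at scale $n_{i+1}$'' has probability at most $|\widehat S_i|c_1e^{-c_2n_{i+1}^d}$.

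This crude global union gives $\pr(\cA_k^{\rm c})\le C n_1^d\sum_{i}2^ie^{-c_2n_{i+1}^d}$. That is dominated by the $i=k-1$ term, provided $n_1^d e^{-c_2 n_k^d/2}$ is absorbed. It is not absorbed in general, so the per-level argument is the intended one. The $|S|/m$ factor of Lemma~\ref{lem:uniform-center} is only $C/c_{\rm sh}$ when $S$ has radius comparable to $n_{i+1}$. To arrange this, we choose the ``bad'' event for the new center at the scale of its own shell $\wt B(\cdot,n_{i+1})$. The uniform law is on $S^a$ with $S$ of radius $n_i$, so we write $|S|/m\le C(n_i/n_{i+1})^d$ per level, and Lemma~\ref{lem:uniform-center} contributes $C(n_i/n_{i+1})^de^{-c_2n_{i+1}^d}$.

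Summing, $\pr(\cA_k^{\rm c})\le\sum_{i=0}^{k-1}2^{i+1}C(n_i/n_{i+1})^d e^{-c_2n_{i+1}^d}$. The main obstacle is absorbing the ratio $(n_i/n_{i+1})^d$. Since $n_{i+1}\ge n_k$ and $n_i/n_{i+1}$ is at most polynomial in $n_{i+1}$ for both scale maps ($n_{i}=n_{i+1}^{1/\phi}$ or $n_i=\phi n_{i+1}$), we have $(n_i/n_{i+1})^d e^{-c_2n_{i+1}^d/2}\le C$. Then $e^{-c_2n_{i+1}^d/2}\le e^{-c n_k^d}$, and the sum is at most $2^{k+1}Ce^{-cn_k^d}\le C^ke^{-cn_k^d}$. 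If the polynomial relation between consecutive scales is not to be assumed, the fallback is to use the quantitative form of the Hoeffding exponent at the shell scale, $c_2 n_i^d$ for the parent, to pay for it.
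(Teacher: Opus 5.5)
\textbf{Gap: the sampling shell has a random center.} Your skeleton is right: a union bound over the $2^{i+1}$ shells at level $i$, the root handled by \eqref{eq:hoeffding-Va-ball}, and the conditional uniform law of a bridge endpoint. But the step that actually carries the lemma is missing. The summand you finally write, one factor per event, is what Lemma~\ref{lem:uniform-center} gives when the sampling shell $S$ is deterministic. In general, however, $S=\wt B(\mvp_1,n_{i_1+1})$ is centered at a random earlier center $\mvp_1$. You notice this (``the same device applies'', ``must be re-run conditionally''). You then abandon conditioning, find the global union bound too lossy, and apply the lemma anyway.

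\textbf{The missing idea: iterate along the ancestral chain.} Badness is $\cF_V$-measurable, and the endpoint is uniform on $S^a$ conditionally on all of $\cF_V$ and the earlier noises. This is an identity for the conditional probability, not the pointwise bound on the indicator that you wrote. Writing $\cA_{\le r}$ for the good event restricted to levels $\le r$, it gives
$\pr(\mvu_{\gs j}\text{ bad},\cA_{\le i-1})\le (c_{\rm sh}n_{i_1+1}^d)^{-1}\E\bigl[\ind_{\cA_{\le i_1-1}}\sum_{\mvw\in\wt B(\mvp_1,n_{i_1+1})}\ind\{\mvw\text{ bad}\}\bigr]$.
Now $\mvp_1$ is itself uniform on the favorable part of its parent's shell. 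The same step therefore turns this into a double sum over that shell, and so on, at most $i$ times, until the outermost center is $\mvzero$ or $\mvm$. Only then are all summation sets deterministic, so \eqref{eq:hoeffding-Va-ball} can be applied termwise and unconditionally. Your worry about dependence between badness and earlier selections therefore never arises. Each step costs $\abs{\wt B(\cdot,r)}/(c_{\rm sh}r^d)\le C_0$, so each event carries a factor of at most $C_0^{i}$. This is why the lemma has $C^k$ rather than $2^k$, and your claimed bound $2^{k+1}Ce^{-cn_k^d}$ is not justified.

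\textbf{Centers $\mvu_{\gs'00}=\mvu_{\gs'0}$ and $\mvu_{\gs'11}=\mvu_{\gs'1}$.} These are not ``already controlled''. The event $\cA_{\gs'00}$ tests the same center at the new radius $n_{i+1}$, not at $n_i$. These centers need the same chain argument, started from a root or from the level at which they were created, which may be much earlier.

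\textbf{The factor $\abs{S}/m$ does not depend on the badness radius.} In Lemma~\ref{lem:uniform-center}, $m$ is the good-event lower bound on $\abs{S^a}$ for the very shell $S$. You state this yourself: $\abs{S^a}\ge c_{\rm sh}n_i^d$ and $\abs{S}\le Cn_i^d$, so $\abs{S}/m\le C_0$. Your factor $(n_i/n_{i+1})^d$ comes from tying $m$ to the radius of the tested shell instead. Besides making the constant depend on the scale map, when the statement allows only $a,\eps_0,d$, it would break the argument in exactly the case above. A center drawn in a shell of radius $n_{i_1+1}$ and tested at radius $n_{i+1}$ with $i\gg i_1$ would carry the factor $(n_{i_1+1}/n_{i+1})^d$, for example $(n_1/n_k)^d$. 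In Regime~III this is not absorbed by $e^{-cn_k^d}$, for the same reason your global union bound failed.
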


\begin{proof}
	Write $\cA_{\le i}$ for the intersection in~\eqref{eq:cA-k} restricted to levels $0,\dots,i$. We claim that for every $0\le i\le k-1$, $\gs\in\Sigma_i$ and $j\in\{0,1\}$,
	\begin{align}\label{eq:one-shell-bad}
		\pr\big(\cA_{\gs j}^{\rm c}\cap\cA_{\le i-1}\big)\le C_0^{ i} c_1\exp\{-c_2n_{i+1}^d\},
		\qquad C_0:=\frac{\sup_r \bigl|\wt B(\mvzero,r)\bigr|}{c_{\rm sh}r^d}<\infty ,
	\end{align}
	with $\cA_{\le-1}$ the sure event. A union bound over the $2^{i+1}$ pairs $(\gs,j)$ at level $i$ and over $i$ then gives~\eqref{eq:cA-k-prob} with $C:=4C_0$, using $n_{i+1}\ge n_k$.

	The point requiring care is that the center $\mvu_{\gs j}$ of the shell is random, and so is the center of the shell in which it was itself selected. Every center is created exactly once, as the endpoint of a minimizing edge~\eqref{eq:bridge-min} at some level $i_1\le i-1$, inside the favorable shell $\wt B^a(\mvp_1,n_{i_1+1})$ of its \emph{parent center} $\mvp_1$. The parent was in turn created at a level $i_2<i_1$ inside the shell of its parent $\mvp_2$, and so on, until one reaches one of the two deterministic root centers $\mvzero,\mvm$ after at most $i$ steps. (If $\mvu_{\gs j}$ is itself a root center the claim is~\eqref{eq:hoeffding-Va-ball}.) Call a vertex $\mvw$ \emph{bad} if $\bigl|\wt B^a(\mvw,n_{i+1})\bigr|\le(1-\eps_0)p_a\bigl|\wt B(\mvw,n_{i+1})\bigr|$. This is an $\cF_V$-measurable property of $\mvw$, and $\cA_{\gs j}^{\rm c}=\{\mvu_{\gs j}\text{ is bad}\}$.

	Condition on $\cF_{i_1,\tau}$, the past of the bridge that created $\mvu_{\gs j}$ (this $\gs$-field is defined in the next subsection). By the exchangeable choice of the minimizing edge and the conditional i.i.d.~property of the candidate noises (Lemma~\ref{lem:edge-disjoint} and the paragraph following it, neither uses the present lemma), $\mvu_{\gs j}$ is uniform on $\wt B^a(\mvp_1,n_{i_1+1})$, a set of cardinality $\ge c_{\rm sh}n_{i_1+1}^d$ on $\cA_{\le i_1}$. Exactly as in the proof of Lemma~\ref{lem:uniform-center},
	\begin{align*}
		\pr\big(\mvu_{\gs j}\text{ bad}, \cA_{\le i-1}\big)
		\le\frac{1}{c_{\rm sh}n_{i_1+1}^d}
		\E\Biggl[\ind_{\cA_{\le i_1-1}}\sum_{\mvw\in\wt B(\mvp_1,n_{i_1+1})}\ind_{\{\mvw\text{ bad}\}}\Biggr].
	\end{align*}
	The summand is now a function of the random center $\mvp_1$ and of $\cF_V$, and we repeat the argument for $\mvp_1$: conditionally on the past of the bridge that created it, $\mvp_1$ is uniform on $\wt B^a(\mvp_2,n_{i_2+1})$, whence
	\begin{align*}
		\E\Biggl[\ind_{\cA_{\le i_1-1}}\sum_{\mvw\in\wt B(\mvp_1,n_{i_1+1})}\ind_{\{\mvw\text{ bad}\}}\Biggr]
		\le\frac{1}{c_{\rm sh}n_{i_2+1}^d}
		\E\Biggl[\ind_{\cA_{\le i_2-1}}\sum_{\mvp\in\wt B(\mvp_2,n_{i_2+1})} \sum_{\mvw\in\wt B(\mvp,n_{i_1+1})}\ind_{\{\mvw\text{ bad}\}}\Biggr].
	\end{align*}
	After at most $i$ iterations the outermost center is deterministic, all summation sets are deterministic, and each of the at most $\prod_r\bigl|\wt B(\cdot,n_{i_r+1})\bigr|$ terms is bounded by~\eqref{eq:hoeffding-Va-ball} with $r=n_{i+1}$. Each iteration costs a factor $\bigl|\wt B(\cdot,n_{i_r+1})\bigr|/(c_{\rm sh}n_{i_r+1}^d)\le C_0$, which proves~\eqref{eq:one-shell-bad}.
\end{proof}

The factor $C^k$ in~\eqref{eq:cA-k-prob} is harmless in both applications: it is $(\log n)^{O(1)}$ in Regime III and $n^{O(1)}$ in Regime $\rm IV_E$, against a stretched-exponential, respectively exponential, decay in $n_k^d$.

\subsubsection*{Ordering of the bridges, and disjointness}

Let $\cI_k:=\{(i,\gs):0\le i\le k-1, \gs\in\Sigma_i\}$, ordered by level and then lexicographically. We write $\prec$ for this order. For $(i,\gs)\in\cI_k$ let
\begin{align*}
	\sE_{i,\gs}:=\big\{\{\mvu,\mvv\}:\mvu\in\wt B_{\gs0}^a, \mvv\in\wt B_{\gs1}^a, \mvu\ne\mvv\big\},
	\qquad
	N_{i,\gs}:=\abs{\sE_{i,\gs}},
\end{align*}
and define the past
\begin{align*}
	\cF_{i,\gs}:=\gs\Big(\big(V_{\mvz}\big)_{\mvz\in\dZ^d},
	\{(\go_e,U_e):(j,\tau)\prec(i,\gs),\ e\in\sE_{j,\tau}\}\Big),
\end{align*}
so that the construction of $\sE_{i,\gs}$ is $\cF_{i,\gs}$-measurable.

\begin{lem}[Disjointness of candidate edge sets]\label{lem:edge-disjoint}
	On $\cA_k$, the sets $\sE_{i,\gs}$, $(i,\gs)\in\cI_k$, are pairwise disjoint. Moreover every edge of $\sE_{i,\gs}$ has $\ell_\infty$ length in $\big[\tfrac38n_i, 2n_i\big]$ for $1\le i\le k-1$, and every edge of the root set $\sE_{0,\varnothing}$ has $\ell_\infty$ length in $\big[\norm{\mvm}_\infty-2n_1, \norm{\mvm}_\infty+2n_1\big]\subseteq\big[14n_1, 2n_0\big]$.
\end{lem}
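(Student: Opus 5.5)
The plan is to first prove the two length claims, which are purely geometric, and then derive disjointness from them together with a separate argument for sets at the same level. Throughout we work on $\cA_k$, so every selected endpoint lies in its prescribed favorable shell and \eqref{eq:seg-scale}--\eqref{eq:seg-scale-i} hold.

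\textbf{Length bounds.} Take an edge $\{\mvu,\mvv\}\in\sE_{i,\gs}$ with $\mvu\in\wt B_{\gs0}$ and $\mvv\in\wt B_{\gs1}$. By definition of the shells, $\norm{\mvu-\mvu_{\gs0}}_\infty\le n_{i+1}$ and $\norm{\mvv-\mvu_{\gs1}}_\infty\le n_{i+1}$. The triangle inequality therefore gives
\begin{align*}
\norm{\mvu_{\gs0}-\mvu_{\gs1}}_\infty-2n_{i+1}
\le\norm{\mvu-\mvv}_\infty
\le\norm{\mvu_{\gs0}-\mvu_{\gs1}}_\infty+2n_{i+1}.
\end{align*}
For $i\ge1$, insert \eqref{eq:seg-scale-i} and $n_{i+1}\le n_i/16$ from \eqref{eq:edge-scale-separation}. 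This yields the interval $[\tfrac12n_i-\tfrac18n_i,\;n_i+\tfrac18n_i]\subseteq[\tfrac38n_i,2n_i]$. At the root, $\norm{\mvu_0-\mvu_1}_\infty=\norm{\mvm}_\infty$. The root separation \eqref{eq:root-separation} gives $\norm{\mvm}_\infty-2n_1\ge14n_1$. For the upper end, $\norm{\mvm}_\infty+2n_1\le n+1+2n_1\le 2n_0$ for large $n$.

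\textbf{Disjointness across levels.} For $i<i'$ with $i'\ge1$, edges of $\sE_{i',\gs'}$ have length at most $2n_{i'}\le2n_{i+1}\le n_i/8$. Edges of $\sE_{i,\gs}$ have length at least $\tfrac38n_i$ (or at least $14n_1>2n_1$ when $i=0$). The two length intervals do not overlap, so the sets are disjoint.

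\textbf{Disjointness within a level.} Fix a level $i\ge1$ and two distinct words $\gs\ne\gs'$ in $\Sigma_i$. I would show that the shell unions $\wt B_{\gs0}\cup\wt B_{\gs1}$ and $\wt B_{\gs'0}\cup\wt B_{\gs'1}$ are disjoint; then no edge can lie in both sets.

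The plan is to localize by induction. The segment $\gs$ has both endpoints $\mvu_{\gs0},\mvu_{\gs1}$ within distance $O(n_i)$ of each other, and the whole subtree below it stays within distance $\sum_{j>i}2n_j\le 3n_{i}$ of those endpoints. So all shells at level $i$ under $\gs$ lie within roughly $n_i+2n_{i+1}$ of the segment's endpoints.

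Now let $\gs,\gs'$ first differ at depth $r<i$, where the common ancestor $\tau\in\Sigma_r$ splits its segment into two children. There are two cases.

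\emph{Case 1: the descendants lie on opposite sides of the bridge $\tau$.} They inherit endpoints near $\mvu_{\tau01}$ and $\mvu_{\tau10}$ respectively, or near the outer ends of $\tau$'s segment. These points are separated by about $\tfrac38 n_r$. Meanwhile all the relevant shells have radius at most $n_{r+1}$ around points within $3n_{r+1}$ of their respective anchors, so a gap remains.

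\emph{Case 2: the descendants are in the same child segment but on different sub-branches.} Here I would recurse on that child segment.

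\textbf{Main obstacle.} The delicate point is that adjacent segments share endpoints. For example, the segment $\gs0$ ends at $\mvu_{\gs01}$ and the segment $\gs1$ begins at $\mvu_{\gs10}$, and these are joined by a bridge of length $\asymp n_i$. The shells at level $i$ of the two adjacent segments are centered at those two distinct bridge endpoints, so they are separated by at least $\tfrac38n_i-2n_{i+1}>0$ and hence disjoint. An edge common to both $\sE$ sets would also need its endpoints in two shells of each pair, which this separation rules out.

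Writing this bookkeeping cleanly is the hard step. I expect to handle it by a single inductive claim: every level-$i$ center is at $\ell_\infty$ distance at least $\tfrac14n_{i}$ from every other distinct level-$i$ center, and shells have radius $n_{i+1}\le n_i/16$.
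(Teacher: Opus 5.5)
Your length bounds and your cross-level disjointness argument coincide with the paper's. For disjointness within a level, the paper proceeds in four steps. It takes the longest common prefix $\rho\in\Sigma_{i'}$ of $\gs\ne\tau$. It observes that every endpoint of a descendant of a level-$(i'+1)$ segment lies within $\sum_{j\ge i'+2}n_j\le\tfrac{16}{15}n_{i'+2}$ of one of that segment's two endpoints. It uses the bridge length to separate $\{\mvu_{\rho0},\mvu_{\rho01}\}$ from $\{\mvu_{\rho10},\mvu_{\rho1}\}$ by $\tfrac5{16}n_{i'}$. It concludes that endpoints of distinct level-$i$ segments are at least $4n_i$ apart.

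Your Case~1 is this argument, but with the constants you state it does not close. A drift of $3n_{r+1}$ plus a shell radius $n_{r+1}$ on each side totals $8n_{r+1}$, which can reach $\tfrac12n_r>\tfrac38n_r$. The inputs you need are sharper: descendant centers stay within $\tfrac{16}{15}n_{r+2}$ of the anchors, and level-$i$ shells have radius $n_{i+1}\le n_{r+2}$, not $n_{r+1}$. Your Case~2 is empty once $\tau$ is the longest common prefix. Also, adjacent segments do not share endpoints; they are separated by a bridge.

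The inductive claim you end with is a genuinely different route, and it is correct and shorter. On $\cA_k$ the level-$(i+1)$ centers split into pairs $\{\mvu_{\gs0},\mvu_{\gs01}\}$ and $\{\mvu_{\gs10},\mvu_{\gs1}\}$. Each pair consists of a level-$i$ center and a point of its shell, so the two points are at mutual distance in $[\tfrac12n_{i+1},n_{i+1}]$. Points from distinct pairs are each within $n_{i+1}$ of distinct level-$i$ centers. By the induction hypothesis and $n_{i+1}\le n_i/16$, they are at distance at least $\tfrac14n_i-2n_{i+1}\ge2n_{i+1}$. Since level-$i$ shells have radius $n_{i+1}\le n_i/16$, all $2^{i+1}$ of them are pairwise disjoint. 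Hence every edge of $\sE_{i,\gs}$ has both endpoints in $\wt B_{\gs0}\cup\wt B_{\gs1}$, a set disjoint from $\wt B_{\gs'0}\cup\wt B_{\gs'1}$.

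The only fix is the base case. The bound $\tfrac14n_0$ is not available, because $\norm{\mvm}_\infty\approx n\norm{\mvx}_\infty$ may be much smaller than $n/4$. Start instead from $\norm{\mvm}_\infty\ge16n_1$, which gives separation $14n_1\ge\tfrac14n_1$ at level~$1$.

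Your induction needs neither the common-prefix bookkeeping nor the bridge-length bound, only that child centers lie in the parent's annulus. The price is the weaker but sufficient separation $\tfrac14n_i$. The paper's argument yields the stronger $4n_i$ between distinct segments and reuses the length estimates proved just before.
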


\begin{proof}
	Let $\mvu\in\wt B_{\gs0}^a$ and $\mvv\in\wt B_{\gs1}^a$ with $\gs\in\Sigma_i$. At the root, $\norm{\mvu-\mvv}_\infty$ differs from $\norm{\mvm}_\infty$ by at most $2n_1$, and $14n_1\le\norm{\mvm}_\infty-2n_1$, $\norm{\mvm}_\infty+2n_1\le n+1+n/8\le2n_0$ by~\eqref{eq:root-separation}. For $1\le i\le k-1$, by~\eqref{eq:seg-scale-i} and $n_{i+1}\le n_i/16$,
	\begin{align*}
		\norm{\mvu-\mvv}_\infty
		 & \ge\norm{\mvu_{\gs0}-\mvu_{\gs1}}_\infty-2n_{i+1}\ge\tfrac12n_i-\tfrac18n_i=\tfrac38n_i, \\
		\norm{\mvu-\mvv}_\infty
		 & \le\norm{\mvu_{\gs0}-\mvu_{\gs1}}_\infty+2n_{i+1}\le n_i+\tfrac18n_i\le2n_i,
	\end{align*}
	which is the length statement. For two indices at the same level $i$ with $\gs\ne\tau$, we claim that every endpoint of the segment $\gs$ is at $\ell_\infty$ distance at least $4n_i$ from every endpoint of the segment $\tau$. Since the shells have radius $n_{i+1}\le n_i/16$, they are then disjoint and $\sE_{i,\gs}\cap\sE_{i,\tau}=\varnothing$. To see the claim, let $\rho\in\Sigma_{i'}$, $i'<i$, be the longest common prefix of $\gs$ and $\tau$, so that (say) $\gs$ descends from the child segment $(\mvu_{\rho00},\mvu_{\rho01})$ and $\tau$ from $(\mvu_{\rho10},\mvu_{\rho11})$. By~\eqref{eq:seg-scale}, every endpoint of a descendant of a segment at level $i'+1$ lies within $\sum_{j\ge i'+2}n_j\le\tfrac{16}{15}n_{i'+2}$ of one of that segment's two endpoints. The four points $\mvu_{\rho0},\mvu_{\rho01}$ and $\mvu_{\rho10},\mvu_{\rho1}$ satisfy $\norm{\mvu_{\rho01}-\mvu_{\rho10}}_\infty\ge\tfrac38n_{i'}$ (bridge length) and $\norm{\mvu_{\rho0}-\mvu_{\rho1}}_\infty\ge\tfrac12n_{i'}$, with $\mvu_{\rho01}$ within $n_{i'+1}$ of $\mvu_{\rho0}$ and $\mvu_{\rho10}$ within $n_{i'+1}$ of $\mvu_{\rho1}$, so any point of the first pair is at distance at least $\tfrac38n_{i'}-n_{i'+1}\ge\tfrac{5}{16}n_{i'}$ from any point of the second. Hence the endpoints of $\gs$ and $\tau$ are at distance at least $\tfrac5{16}n_{i'}-\tfrac{32}{15}n_{i'+2}\ge\tfrac14n_{i'}\ge4n_{i'+1}\ge4n_i$. For two different levels $i<j$, every edge of $\sE_{j,\tau}$ has length at most $2n_j\le2n_{i+1}\le\tfrac18n_i<\tfrac38n_i$, while every edge of $\sE_{i,\gs}$ has length at least $\tfrac38n_i$ (at least $14n_1>2n_1$ if $i=0$). So the two sets are disjoint.
\end{proof}

Since the edge noises are i.i.d.~and independent of the vertex weights, Lemma~\ref{lem:edge-disjoint} implies that on $\cA_k$, conditionally on $\cF_{i,\gs}$, the family $\{\go_e:e\in\sE_{i,\gs}\}$ is i.i.d.~with the law of $\go$ and independent of the past. In particular the bridge minima at a given level are conditionally independent given the earlier levels.

\subsubsection*{Ansatz decomposition}

For $\gs\in\Sigma_i$ with $0\le i\le k-1$ set $\cW_\gs:=W_{\mvu_{\gs01}\mvu_{\gs10}}$, the weight of the bridge. At depth $k$ we stop and keep the terminal segments $\{\mvu_{\gs0},\mvu_{\gs1}\}$, $\gs\in\Sigma_k$. For each such $\gs$ let $\mvpi_\gs^{\loc}$ be a finite path from $\mvu_{\gs0}$ to $\mvu_{\gs1}$, and define recursively
\begin{align*}
	\mvpi_\gs:=
	\begin{cases}
		\mvpi_\gs^{\loc},                                                  & \abs{\gs}=k,          \\[2pt]
		\mvpi_{\gs0}\circ\la\mvu_{\gs01} \mvu_{\gs10}\ra\circ\mvpi_{\gs1}, & 0\le\abs{\gs}\le k-1.
	\end{cases}
\end{align*}
If the concatenated walk repeats a vertex, erase loops. This only decreases the passage time and yields an admissible path from $\mvzero$ to $\mvm$. With
\begin{align}\label{eq:Sloc-Sbr-def}
	S_n^{\br}:=\sum_{i=0}^{k-1}\sum_{\gs\in\Sigma_i}\cW_\gs,
	\qquad
	S_n^{\loc}:=\sum_{\gs\in\Sigma_k}T(\mvu_{\gs0},\mvu_{\gs1}),
\end{align}
taking the infimum over the terminal local paths gives, on $\cA_k$,
\begin{align}\label{eq:Tn-Sn}
	T_n\le S_n^{\br}+S_n^{\loc}.
\end{align}

\subsubsection*{Bridge truncation}

For $0\le i\le k-1$ and $\gs\in\Sigma_i$ put $Y_{i,\gs}:=\min\{\go_e:e\in\sE_{i,\gs}\}$. On $\cA_k$ we have $N_{i,\gs}\ge c_{\rm sh}n_{i+1}^{2d}$, and since the bridge endpoints lie in $\cV^a$ and have separation at most $2n_i$ by Lemma~\ref{lem:edge-disjoint},
\begin{align}\label{eq:bridge-M}
	\cW_\gs\le Ca^{-2}n_i^{\ga} Y_{i,\gs}.
\end{align}

\begin{lem}[Simultaneous bridge truncation]\label{lem:bridge-trunc}
	Fix $\eps'>0$ and $R>0$, and for $0\le i\le k-1$ set $N_i:=\lfloor c_{\rm sh}n_{i+1}^{2d}\rfloor$. Then there are $C<\infty$ and $n_0<\infty$, depending on $a,\eps_0,d,\ga,\theta,\eps',R$ and the lower tail of $\go$, such that for all $n\ge n_0$ there is an event $\cB_k$ with
	\begin{align}\label{eq:bridge-trunc-prob}
		\pr\big(\cB_k^{\rm c}\cap\cA_k\big)\le\sum_{i=0}^{k-1}2^iN_i^{-R}\le C\sum_{i=0}^{k-1}2^in_{i+1}^{-2dR},
	\end{align}
	and, on $\cA_k\cap\cB_k$,
	\begin{align}\label{eq:bridge-trunc-bound}
		\cW_\gs\le\gl_i,\qquad 0\le i\le k-1, \gs\in\Sigma_i,
		\text{ where }
		\gl_i:=C n_i^{\ga} n_{i+1}^{-2d/\theta+\eps'}.
	\end{align}
\end{lem}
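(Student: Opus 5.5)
We define $\cB_k$ directly as a threshold event on the bridge minima. Let
\begin{align*}
	t_i:=N_i^{-1/(\theta+\eta)}(R+1)^{1/(\theta+\eta)}\bigl(\log N_i\bigr)^{1/(\theta+\eta)},
\end{align*}
with $\eta>0$ small, to be fixed in terms of $\eps'$. Set
\begin{align*}
	\cB_k:=\bigcap_{i=0}^{k-1}\bigcap_{\gs\in\Sigma_i}\{Y_{i,\gs}\le t_i\}.
\end{align*}

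The deterministic bound \eqref{eq:bridge-trunc-bound} then follows from \eqref{eq:bridge-M}. On $\cA_k\cap\cB_k$ we have $\cW_\gs\le Ca^{-2}n_i^{\ga}t_i$. Since $N_i\asymp n_{i+1}^{2d}$, we get $t_i\le C n_{i+1}^{-2d/(\theta+\eta)+o(1)}$. Choose $\eta$ so small that $2d/(\theta+\eta)>2d/\theta-\eps'/2$, and absorb the logarithmic factor into $n_{i+1}^{\eps'/2}$. This requires $n_{i+1}$ large, or else enlarging $C$, which is harmless because only finitely many small values of $N_i$ occur and for those we may take $t_i$ of order one. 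This yields $\gl_i=Cn_i^{\ga}n_{i+1}^{-2d/\theta+\eps'}$.

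For the probability bound, fix $(i,\gs)$. On $\cA_k$, the set $\sE_{i,\gs}$ contains at least $N_i$ edges by \eqref{eq:fav-shell-lower} and the remark following it. By Lemma~\ref{lem:edge-disjoint} and the paragraph after it, the noises $\{\go_e:e\in\sE_{i,\gs}\}$ are, conditionally on $\cF_{i,\gs}$, i.i.d.\ with law $\go$. The event $\cA_{\gs0}\cap\cA_{\gs1}$ is $\cF_{i,\gs}$-measurable, since the centres are determined by earlier bridges and by $\cF_V$. Hence
\begin{align*}
	\pr\bigl(Y_{i,\gs}>t_i,\ \cA_k\bigr)\le\E\Bigl[\ind_{\cA_{\gs0}\cap\cA_{\gs1}}\bigl(1-\pr(\go\le t_i)\bigr)^{N_i}\Bigr]\le\exp\{-N_i\tilde c_\eta t_i^{\theta+\eta}\},
\end{align*}
where the last step uses \eqref{eq:go-eps-bd} with $\eps=\eta$, valid once $t_i\le a(\eta)$. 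Substituting $t_i$, the exponent equals $\tilde c_\eta(R+1)\log N_i$. After rescaling $t_i$ by a constant factor to absorb $\tilde c_\eta$, this is at most $N_i^{-R}$.

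A union bound over the $2^i$ words at level $i$ and over $i<k$ gives the first inequality in \eqref{eq:bridge-trunc-prob}. The second inequality follows from $N_i\ge\tfrac12 c_{\rm sh}n_{i+1}^{2d}$ for large $n_{i+1}$.

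The main point requiring care is the conditioning. The minimum at $(i,\gs)$ is computed over a random, history-dependent edge set. It is legitimate to use the fresh i.i.d.\ law only because $\cA_{\gs0}\cap\cA_{\gs1}$ and $\sE_{i,\gs}$ are $\cF_{i,\gs}$-measurable and disjoint from all earlier candidate sets; this disjointness is exactly what Lemma~\ref{lem:edge-disjoint} provides on $\cA_k$. To avoid conditioning on the full event $\cA_k$, which involves the future, we bound by the past-measurable event $\cA_{\le i}$, which contains $\cA_k$.

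A secondary point concerns $n\ge n_0$. We need $t_i\le a(\eta)$ for every $i\le k-1$, which amounts to $n_k$ being large. The regime choices guarantee this; alternatively, the finitely many levels with small $n_{i+1}$ can be absorbed into $C$ by treating them with trivial thresholds.
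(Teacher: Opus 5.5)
Your proposal is correct and follows the paper's proof essentially step for step. The steps that match are: the threshold $t_i\asymp(\log N_i/N_i)^{1/(\theta+\eta)}$ with $\eta$ chosen so that $2d/(\theta+\eta)>2d/\theta-\eps'/2$; the same intersection event $\cB_k$; the bound $(1-p)^{N}\le e^{-Np}$ applied to conditionally fresh noises; a union bound over the $2^i$ bridges at each level; and \eqref{eq:bridge-M} for the deterministic bound. Your explicit conditioning on the past-measurable event $\cA_{\le i}$ is a welcome clarification of a step the paper states tersely; just make the displayed indicator $\ind_{\cA_{\le i}}$ rather than $\ind_{\cA_{\gs0}\cap\cA_{\gs1}}$, because disjointness of $\sE_{i,\gs}$ from earlier candidate sets also requires the earlier levels to be good.
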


\begin{proof}
	Choose $s\in(0,1)$ so small that $\tfrac{2d}{\theta}-\tfrac{2d}{\theta+s}<\tfrac{\eps'}{2}$. By~\eqref{def:go} there is $c_s>0$ with $\pr(\go\le t)\ge c_st^{\theta+s}$ for all small $t>0$. Set
	\begin{align*}
		t_i:=\Big(\frac{R\log N_i}{c_sN_i}\Big)^{1/(\theta+s)},
		\qquad
		\cB_k:=\bigcap_{i=0}^{k-1}\bigcap_{\gs\in\Sigma_i}\{Y_{i,\gs}\le t_i\}.
	\end{align*}
	Since $n_k\to\infty$, all $t_i$ eventually lie in the range where the lower-tail bound applies. Fix $(i,\gs)$. On $\cA_k$ we have $N_{i,\gs}\ge N_i$, and the set $\sE_{i,\gs}$ is $\cF_{i,\gs}$-measurable with conditionally i.i.d.~noises, so
	\begin{align*}
		\pr\big(Y_{i,\gs}>t_i, \cA_k\big)
		 & \le\E\Big[\ind_{\{N_{i,\gs}\ge N_i\}}\pr(\go>t_i)^{N_{i,\gs}}\Big] \\
		 & \le\exp\big\{-N_i\pr(\go\le t_i)\big\}
		\le\exp\{-R\log N_i\}=N_i^{-R},
	\end{align*}
	using $(1-p)^N\le e^{-Np}$. A union bound over the $2^i$ bridges at level $i$ gives~\eqref{eq:bridge-trunc-prob}. On $\cA_k\cap\cB_k$,~\eqref{eq:bridge-M} gives $\cW_\gs\le Cn_i^{\ga}t_i$, and since $N_i\asymp n_{i+1}^{2d}$,
	\begin{align*}
		t_i\le C\big(\log n_{i+1}\big)^{1/(\theta+s)}n_{i+1}^{-2d/(\theta+s)}
		\le Cn_{i+1}^{-2d/\theta+\eps'},
	\end{align*}
	uniformly in $i$ for all large $n$: the first factor is at most $n_{i+1}^{\eps'/2}$ and $-2d/(\theta+s)\le-2d/\theta+\eps'/2$ by the choice of $s$.
\end{proof}

Set
\begin{align}\label{eq:gL-fk-q-def}
	\Upsilon_{\phi,k}^{\rm br}:=\sum_{i=0}^{k-1}2^i\gl_i,
	\text{ so that }
	S_n^{\br}\le\Upsilon_{\phi,k}^{\rm br} \text{ on }\cA_k\cap\cB_k.
\end{align}
The sum in~\eqref{eq:gL-fk-q-def} contains $2^i$ bridge terms at level $i$. While deeper bridges are shorter, their drastically shrinking candidate pools can cause the cost coefficients $\gl_i$ to strictly increase with $i$, as is the case in Regime III. The regime-specific proofs balance these competing effects directly via the chosen scale map and terminal depth.

\subsubsection*{The local contribution}

For $i\ge0$ write $\gs_i^-:=0^i$ and $\gs_i^+:=1^i$ for the all-zero and all-one words in $\Sigma_i$, and put $\Sigma_k^{\rm int}:=\Sigma_k\setminus\{\gs_k^-,\gs_k^+\}$. On $\cA_k$ the two extreme terminal segments are the only ones containing $\mvzero$ or $\mvm$. Every interior terminal segment has both endpoints in $\cV^a$.

Fix a desired local tail exponent $q_0>0$. In Proposition~\ref{prop:lin-ub-input}\textup{(i)} choose the corresponding fixed-scale construction. The terminal $\ell_1$ lengths lie between $n_k/2$ and $dn_k$. Set, once and for all,
\begin{align}\label{eq:local-eps}
	c_{\rm seg}:=\frac38,\qquad C_{\rm seg}:=2d,\qquad
	\eps_1:=\frac{c_{\rm seg}}{8C_{\rm seg}}.
\end{align}
The slightly wider constants allow us to replace a prescribed endpoint by a nearby favorable anchor.

For $\mvu\in\{\mvzero,\mvm\}$ let $\mva(\mvu)$ be a nearest vertex of $\cV^a$ to $\mvu$ in $\ell_\infty$, with ties broken deterministically, and write
\begin{align}\label{eq:boundary-anchors}
	\mva_0:=\mva(\mvzero),\qquad \mva_1:=\mva(\mvm),\qquad
	R_\partial:=\norm{\mva_0}_\infty\vee\norm{\mva_1-\mvm}_\infty.
\end{align}
The anchors are $\cF_V$-measurable and finite almost surely. Indeed, for constants $c,C>0$,
\begin{align}\label{eq:anchor-tail}
	\sup_{\mvu\in\dZ^d}\pr\big(\norm{\mva(\mvu)-\mvu}_\infty>r\big)
	\le C\exp\{-cr^d\},\qquad r\ge1.
\end{align}
Using anchors is important: it lets every fixed-scale segment have favorable endpoints and avoids conditioning an unconditional endpoint estimate on the bridge history.

The terminal segments are crossed by the fixed-scale construction, whose efficiency depends on the vertex environment in a corridor around the segment. Since the bridge endpoints are selected recursively, we build the corridor requirement into the last bridge selection: only candidate endpoints with a favorable corridor are admitted.

\begin{defn}[Corridor-good pairs]\label{def:corridor}
	Let $\Lambda_0<\infty$ be the constant of Proposition~\ref{prop:lin-ub-input}\textup{(i)} for $(\eps_1,q_0)$. For $\mvu,\mvv\in\dZ^d$ with $m:=\norm{\mvu-\mvv}$, let $\cC(\mvu,\mvv)$ be the $\cF_V$-measurable event in~\eqref{eq:corridor-event}. We call the pair $(\mvu,\mvv)$ \emph{corridor-good} if $\cC(\mvu,\mvv)$ occurs.
\end{defn}

At the last level $i=k-1$, for $\gs\in\Sigma_{k-1}$ define the favorable reference endpoints
\begin{align*}
	\mvr_{\gs0}:=
	\begin{cases}\mva_0,&\gs=\gs_{k-1}^-,\\ \mvu_{\gs0},&\text{otherwise},\end{cases}
	\qquad
	\mvr_{\gs1}:=
	\begin{cases}\mva_1,&\gs=\gs_{k-1}^+,\\ \mvu_{\gs1},&\text{otherwise}.\end{cases}
\end{align*}
We replace the candidate shells by their corridor-good sub-shells
\begin{align}\label{eq:corridor-shell}
	\wt B_{\gs0}^{a,\rm gd}
	:=\big\{\mvu\in\wt B_{\gs0}^{a}:(\mvr_{\gs0},\mvu)\text{ is corridor-good}\big\},
	\qquad
	\wt B_{\gs1}^{a,\rm gd}
	:=\big\{\mvv\in\wt B_{\gs1}^{a}:(\mvv,\mvr_{\gs1})\text{ is corridor-good}\big\},
\end{align}
and take the minimizing edge in~\eqref{eq:bridge-min} over $\wt B_{\gs0}^{a,\rm gd}\times\wt B_{\gs1}^{a,\rm gd}$. All these sets are $\cF_V$-measurable once the parent centers are known.
For terminal words $\tau\in\Sigma_k$, use the notation
\begin{align*}
	\mvr_{\tau0}:=
	\begin{cases}\mva_0,&\tau=\gs_k^-,\\ \mvu_{\tau0},&\text{otherwise},\end{cases}
	\qquad
	\mvr_{\tau1}:=
	\begin{cases}\mva_1,&\tau=\gs_k^+,\\ \mvu_{\tau1},&\text{otherwise}.\end{cases}
\end{align*}
Accordingly, enlarge the good event to
\begin{align}\label{eq:cA-gd}
	\cA_k^{\rm gd}
	:=\cA_k\cap\bigcap_{\gs\in\Sigma_{k-1}}\Big\{
	\big|\wt B_{\gs0}^{a,\rm gd}\big|\ge\tfrac12\big|\wt B_{\gs0}^{a}\big|,\
	\big|\wt B_{\gs1}^{a,\rm gd}\big|\ge\tfrac12\big|\wt B_{\gs1}^{a}\big|
	\Big\}.
\end{align}
On $\cA_k^{\rm gd}$ the candidate edge sets at the last level still contain at least $\tfrac14c_{\rm sh}n_k^{2d}$ edges. Lemma~\ref{lem:edge-disjoint} therefore applies unchanged, and Lemma~\ref{lem:bridge-trunc} applies with $\cA_k$ replaced by $\cA_k^{\rm gd}$ after decreasing $c_{\rm sh}$ by a factor $4$. By construction, every interior terminal segment is corridor-good. When $R_\partial\le n_k/8$, the two extreme segments are corridor-good after replacing their prescribed endpoints by $\mva_0$ and $\mva_1$. From now on $\cB_k$ denotes the truncation event for this modified construction, so that~\eqref{eq:bridge-trunc-prob} reads $\pr(\cB_k^{\rm c}\cap\cA_k^{\rm gd})\le C\sum_{i<k}2^in_{i+1}^{-2dR}$. The uniform one-point marginal used in Lemma~\ref{lem:Ak-prob} is unaffected at levels $i\le k-2$, which is all that Lemma~\ref{lem:Agd-prob} below requires.

\begin{lem}[Probability of the corridor-good event]\label{lem:Agd-prob}
	For every $q>0$ there is $C_q<\infty$ such that, with $C$ the constant of Lemma~\ref{lem:Ak-prob},
	\begin{align}\label{eq:Agd-prob}
		\pr\big((\cA_k^{\rm gd})^{\rm c}\big)
		\le\pr(\cA_k^{\rm c})+C_q C^k n_k^{-q}
	\end{align}
	for all sufficiently large $n$.
\end{lem}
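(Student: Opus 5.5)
The event $(\cA_k^{\rm gd})^{\rm c}$ is contained in $\cA_k^{\rm c}$ together with the union, over $\gs\in\Sigma_{k-1}$ and $j\in\{0,1\}$, of the events
\[
\cA_k\cap\bigl\{|\wt B_{\gs j}^{a,\rm gd}|<\tfrac12|\wt B_{\gs j}^a|\bigr\}.
\]
So it suffices to show that each of these $2^k$ events has probability at most $C_q C_0^{k-1} n_k^{-q}$. Here $C_0$ is the constant from \eqref{eq:one-shell-bad}. Since $2C_0\le C$, a union bound then gives \eqref{eq:Agd-prob}.

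Fix $\gs\in\Sigma_{k-1}$ and $j=0$; the case $j=1$ is symmetric. Call a center $\mvw$ \emph{corridor-bad} if
\[
\#\bigl\{\mvu\in\wt B^a(\mvw,n_k):(\mvr(\mvw),\mvu)\text{ is not corridor-good}\bigr\}>\tfrac12\,\bigl|\wt B^a(\mvw,n_k)\bigr|,
\]
where $\mvr(\mvw)=\mvw$, except that $\mvr(\mvw)=\mva_0$ for the extreme word. On $\cA_k$ the shell contains at least $c_{\rm sh}n_k^d$ favorable vertices. Markov's inequality therefore gives
\[
\pr(\mvw\text{ corridor-bad},\ |\wt B^a(\mvw,n_k)|\ge c_{\rm sh}n_k^d)\le \frac{2}{c_{\rm sh}n_k^d}\sum_{\mvu\in\wt B(\mvw,n_k)}\pr\bigl(\cC(\mvr(\mvw),\mvu)^{\rm c},\,V_{\mvr(\mvw)}\wedge V_{\mvu}>a\bigr).
\]
For a deterministic favorable reference point, $m=\norm{\mvr-\mvu}\asymp n_k\ge R_{\rm int}$ for large $n$. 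Then \eqref{eq:lin-int-a} bounds each summand by $C_q m^{-q}$, and the right-hand side is at most $C_q' n_k^{-q}$. The event is $\cF_V$-measurable, so only the vertex environment enters.

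The main obstacle is that the center $\mvu_{\gs0}$ is random, being a bridge endpoint chosen at an earlier level. I would handle this by repeating the ancestral unwinding from the proof of Lemma~\ref{lem:Ak-prob} verbatim, with "bad" replaced by "corridor-bad". By exchangeability of the minimizing edge, each center is conditionally uniform on its parent's favorable shell; this uses only levels $\le k-2$, where the construction is unmodified. Each step of the unwinding costs a factor at most $C_0$ and turns the random center into a sum over a deterministic shell. After at most $k-1$ steps every center is deterministic, and the Markov estimate above applies termwise.

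The extreme words need one extra point: $\mvr=\mva_0$ is itself random, although $\cF_V$-measurable. I would split on $\{R_\partial\le n_k/8\}$, whose complement has probability $Ce^{-cn_k^d}$ by \eqref{eq:anchor-tail}. On $\{R_\partial\le n_k/8\}$, take a union bound over the $O(n_k^d)$ possible anchor locations, each treated as deterministic. This loses only a polynomial factor, which is absorbed by choosing $q$ larger in \eqref{eq:lin-int-a}. Also $m\asymp n_k$ still holds there, because the anchor displacement is at most $n_k/8$ while the shell radius is between $n_k/2$ and $n_k$. Collecting the terms gives $C_q C^k n_k^{-q}$.
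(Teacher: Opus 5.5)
Your proposal is correct and follows the paper's proof essentially step for step. You use Markov's inequality on the expected number of non-corridor-good candidates, so no union bound over candidates is needed. You run the ancestral unwinding from Lemma~\ref{lem:Ak-prob} at a cost of $C_0$ per generation, relying only on the unmodified levels $\le k-2$. You handle the anchor cases by discarding $\{R_\partial>n_k/8\}$ and union-bounding over $O(n_k^d)$ positions with $q$ enlarged by $d$. One bookkeeping point: your Markov display should be stated on the event $\{V_{\mvr(\mvw)}>a\}$, since an unfavorable center is trivially corridor-bad; this restriction is exactly what the uniform-on-the-favorable-shell step of the unwinding supplies, and the paper is equally implicit here.
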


\begin{proof}
	Fix $\gs\in\Sigma_{k-1}$ and consider the first constraint in~\eqref{eq:cA-gd}. The second is symmetric. Suppose first that the reference center $\mvw:=\mvr_{\gs0}$ is deterministic and favorable, and lies within $n_k/8$ of the center of $\wt B_{\gs0}$. For every favorable candidate $\mvu$ the distance $\norm{\mvw-\mvu}$ is between constant multiples of $n_k$. Hence~\eqref{eq:lin-int-a} gives, for every $q'>0$,
	\begin{align*}
		\E\Big[\#\big\{\mvu\in\wt B_{\gs0}^a:(\mvw,\mvu)\text{ not corridor-good}\big\}\Big]
		\le\big|\wt B_{\gs0}\big|\cdot C_{q'} n_k^{-q'}
		\le C n_k^{d-q'}.
	\end{align*}
	On $\cA_k$ we have $\bigl|\wt B_{\gs0}^a\bigr|\ge c_{\rm sh}n_k^d$, so by Markov's inequality
	\begin{align*}
		\pr\Big(\big|\wt B_{\gs0}^{a,\rm gd}\big|<\tfrac12\big|\wt B_{\gs0}^{a}\big|, \cA_k\Big)
		\le\frac{C n_k^{d-q'}}{\tfrac12c_{\rm sh}n_k^{d}}
		=C'n_k^{-q'} ,
	\end{align*}
	which is the required bound with $q'=q$ after renaming constants. We bound the expected \emph{number} of bad candidates rather than the probability that some candidate is bad. No union bound over candidate positions is needed.

	For a non-extreme random reference center, iterate along its chain of parents exactly as in the proof of Lemma~\ref{lem:Ak-prob}. Each step costs a factor at most $C_0$, so the preceding deterministic-center estimate becomes $C_0^{ k}C'n_k^{-q'}$.

	It remains to justify the anchor references on the two extreme sides. By~\eqref{eq:anchor-tail}, the probability that an anchor lies farther than $n_k/8$ from its prescribed endpoint is at most $Ce^{-cn_k^d}$. On the complementary event, sum the deterministic-center estimate over the $O(n_k^d)$ possible anchor positions. Choosing $q'=q+d+1$ shows that either extreme constraint fails with probability at most $Cn_k^{-q}$. Finally, summing over the $2^{k-1}$ words and both sides gives~\eqref{eq:Agd-prob}, after enlarging $C$ if necessary.
\end{proof}

\begin{lem}[Local contribution]\label{lem:Sloc-control}
	Assume $n_k\to\infty$ and use the corridor construction for the fixed exponent $q_0>0$. There are constants $c_1,C_1<\infty$ and a function $\rho(r)\downarrow0$ such that, with
	\begin{align*}
		Z_\partial:=\ind_{\{\mva_0\ne\mvzero\}}W_{\mvzero\mva_0}
		+\ind_{\{\mva_1\ne\mvm\}}W_{\mva_1\mvm},
	\end{align*}
	there is an event $\cL_k$ on which
	\begin{align}\label{eq:Sloc-scale-short}
		S_n^{\loc}\le2^kc_1n_k \text{ on }\cL_k,
	\end{align}
	and, for all large $n$,
	\begin{align}\label{eq:Lk-prob}
		\pr\big(\cL_k^{\rm c}\cap\cA_k^{\rm gd}\big)
		\le C_1 2^kn_k^{-q_0}+\rho(n_k).
	\end{align}
\end{lem}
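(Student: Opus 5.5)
The plan is to define $\cL_k$ as the intersection of three events. The first is the anchor event $\{R_\partial\le n_k/8\}$. The second is the event that every terminal segment is crossed by its short-edge fixed-scale path at cost at most $c_{\rm int}$ times its length. The third is the event that the boundary attachment cost $Z_\partial$ is at most $n_k$. On $\cL_k$ I bound $S_n^{\loc}$ termwise.

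For an interior terminal word $\tau\in\Sigma_k^{\rm int}$ both endpoints lie in $\cV^a$, and $(\mvr_{\tau0},\mvr_{\tau1})$ is corridor-good by the construction at level $k-1$. By~\eqref{eq:seg-scale-i} its length $m_\tau$ lies in $[n_k/2,dn_k]$, so $T(\mvu_{\tau0},\mvu_{\tau1})\le T^{\eps_1}(\mvu_{\tau0},\mvu_{\tau1})\le c_{\rm int}dn_k$. For the two extreme words I use the triangle inequality,
$T(\mvzero,\mvu_{\gs_k^-1})\le Z_\partial+T^{\eps_1}(\mva_0,\mvu_{\gs_k^-1})$,
and the analogous bound at $\mvm$. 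When $R_\partial\le n_k/8$, the anchored segment length stays within $[3n_k/8,2dn_k]$. This is why $c_{\rm seg},C_{\rm seg}$ were widened in~\eqref{eq:local-eps}. Summing over the $2^k$ terminal segments gives~\eqref{eq:Sloc-scale-short} with $c_1:=2dc_{\rm int}+1$.

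For the probability bound~\eqref{eq:Lk-prob}, I bound the three failure events in turn. The anchor failure has probability at most $Ce^{-cn_k^d}$ by~\eqref{eq:anchor-tail}. For $Z_\partial$, on $\{R_\partial\le n_k/8\}$ the anchors are favorable, so $W_{\mvzero\mva_0}\le (dR_\partial)^{\ga}\go_{\mvzero\mva_0}/(V_{\mvzero}a)$. The quantities $R_\partial$, $V_{\mvzero}$, $V_{\mvm}$ and the two noises are a.s.\ finite and have laws not depending on $n$, by translation invariance of the anchor law. Hence $\pr(Z_\partial>n_k)\le\rho(n_k)$, where $\rho(r):=2\sup_{\mvu}\pr(\text{attachment at }\mvu\text{ exceeds }r/2)\downarrow0$. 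This is where the non-quantitative $\rho$ enters; no moment of $1/V_{\mvzero}$ is available.

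The crossing event is the main obstacle. The endpoints $\mvu_{\tau j}$ are random and chosen through bridges, so~\eqref{eq:lin-int} has to be applied conditionally. I take $\cH$ to be generated by $\cF_V$, all bridge noises, and the tie-breaking marks $(U_e)$. By Lemma~\ref{lem:edge-disjoint}, every bridge candidate edge has $\ell_\infty$ length at least $\tfrac38n_{k-1}\ge 6n_k>\eps_1 m_\tau$, so $\cH$ is admissible, with the marks adjoined as an independent auxiliary $\gs$-field as permitted in Proposition~\ref{prop:lin-ub-input}(i). The terminal endpoints are $\cH$-measurable. On $\cA_k^{\rm gd}\cap\{R_\partial\le n_k/8\}$, each terminal pair lies in $\cC$, which is $\cF_V$-measurable. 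So pointwise on that event the conditional failure probability is at most $C_{q_0}m_\tau^{-q_0}\le C n_k^{-q_0}$. Taking expectations and a union bound over the $2^k$ words gives $C_12^kn_k^{-q_0}$. The exponential anchor term is absorbed into $\rho(n_k)$.

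The delicate point is whether the corridor event for a random pair can be evaluated at the realized endpoints. This is exactly what the corridor-good sub-shells~\eqref{eq:corridor-shell} provide. The event is a measurable function of $(\cF_V,\text{endpoints})$, so the bound~\eqref{eq:lin-int}, which holds almost surely on $\cC$, transfers via the standard substitution lemma for $\cH$-measurable endpoints. I will verify this substitution by summing over the countably many deterministic endpoint pairs.
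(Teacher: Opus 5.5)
Your proposal is correct and follows essentially the same route as the paper. You define $\cL_k$ as the intersection of $\{R_\partial\le n_k/8\}$, $\{Z_\partial\le n_k\}$ and the reference-pair crossing events, and you get the crossing bound by applying \eqref{eq:lin-int} conditionally on $\cH$ generated by $\cF_V$, the tie-breaking marks and the bridge noises, using the long-bridge/short-chain separation from Lemma~\ref{lem:edge-disjoint}; your $c_1=1+2dc_{\rm int}$ is the paper's constant, and your explicit substitution over countably many deterministic endpoint pairs spells out a step the paper leaves implicit.
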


\begin{proof}
	On $\{R_\partial\le n_k/8\}$, every reference pair created at the last level has $\ell_1$ distance between $c_{\rm seg}n_k$ and $C_{\rm seg}n_k$. Let $\cH$ be generated by $\cF_V$, all tie-breaking marks, and all bridge noises revealed by the binary construction. Lemma~\ref{lem:edge-disjoint} shows that every revealed bridge edge has length at least $\tfrac38n_{k-1}\ge6n_k$, whereas the fixed-scale construction uses only edges of length at most $\eps_1C_{\rm seg}n_k< n_k$. Thus the relevant short-edge noises are fresh conditionally on $\cH$. The conditional estimate~\eqref{eq:lin-int}, with the independent tie-breaking field adjoined, gives for each of the $2^k$ reference pairs
	\begin{align*}
		\pr\Big(T(\mvr_{\gs0},\mvr_{\gs1})>c_{\rm int}C_{\rm seg}n_k \,\Big|\, \cH\Big)
		\le C n_k^{-q_0}.
	\end{align*}
	Here, for the two extreme words, $\mvr_{\gs0}$ or $\mvr_{\gs1}$ denotes the corresponding anchor. For all other words it denotes the original terminal endpoint. A union bound therefore costs at most $C2^kn_k^{-q_0}$.

	The two anchor radii have the tail~\eqref{eq:anchor-tail}. Moreover $Z_\partial$ is tight uniformly in $n$: each of its two summands has, by translation invariance, the same a.s. finite marginal distribution as the cost of joining a fixed vertex to its nearest favorable anchor. Hence there is a deterministic function $\rho(r)\downarrow0$ such that
	\begin{align*}
		\pr(R_\partial>r/8)+\pr(Z_\partial>r)\le\rho(r).
	\end{align*}
	Let $\cL_k$ be the intersection of $\{R_\partial\le n_k/8\}$, $\{Z_\partial\le n_k\}$, and the events that all reference-pair passage times are at most $c_{\rm int}C_{\rm seg}n_k$. This proves~\eqref{eq:Lk-prob}. Finally, the triangle inequality for $T$ gives
	\begin{align*}
		S_n^{\loc}
		\le Z_\partial+\sum_{\gs\in\Sigma_k}T(\mvr_{\gs0},\mvr_{\gs1})
		\le \big(1+c_{\rm int}C_{\rm seg}\big)2^kn_k
	\end{align*}
	on $\cL_k$, proving~\eqref{eq:Sloc-scale-short} with $c_1:=1+c_{\rm int}C_{\rm seg}$.
\end{proof}

\begin{prop}[Multi-scale ansatz master estimate]\label{prop:Ansatz-bound}
	Let $\cA_k^{\rm gd}$, $\cB_k$, $\cL_k$ be as in~\eqref{eq:cA-gd}, Lemma~\ref{lem:bridge-trunc} and Lemma~\ref{lem:Sloc-control}. Then for all large $n$, and for every $q>0$,
	\begin{align}\label{eq:Tn-ub-tail}
		\pr\Big(T_n>\Upsilon_{\phi,k}^{\rm br}+2^kc_1n_k\Big)
		\le\pr(\cA_k^{\rm c})+C_qC^kn_k^{-q}
		+\pr\big(\cB_k^{\rm c}\cap\cA_k^{\rm gd}\big)+\pr\big(\cL_k^{\rm c}\cap\cA_k^{\rm gd}\big),
	\end{align}
	where $c_1$ is the constant of Lemma~\ref{lem:Sloc-control}, $C$ that of Lemma~\ref{lem:Ak-prob} and $C_q$ that of Lemma~\ref{lem:Agd-prob}.
\end{prop}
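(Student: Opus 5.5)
The proof will assemble the pieces already in place: on the intersection of the good events, the deterministic decomposition~\eqref{eq:Tn-Sn} gives the bound; the failure probability is then controlled by a union bound. Set $\cG:=\cA_k^{\rm gd}\cap\cB_k\cap\cL_k$. The proof has two steps: show that $T_n\le\Upsilon_{\phi,k}^{\rm br}+2^kc_1n_k$ on $\cG$, and then bound $\pr(\cG^{\rm c})$ term by term.

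For the first step, note that $\cA_k^{\rm gd}\subseteq\cA_k$, so~\eqref{eq:Tn-Sn} holds on $\cG$. By the modified version of Lemma~\ref{lem:bridge-trunc}, on $\cA_k^{\rm gd}\cap\cB_k$ every bridge satisfies $\cW_\gs\le\gl_i$. Summing over the $2^i$ words at level $i$ gives $S_n^{\br}\le\Upsilon_{\phi,k}^{\rm br}$, as in~\eqref{eq:gL-fk-q-def}. On $\cL_k$, Lemma~\ref{lem:Sloc-control} gives $S_n^{\loc}\le2^kc_1n_k$. One subtlety needs care. In the proof of Lemma~\ref{lem:Sloc-control}, $S_n^{\loc}$ is really bounded through the anchored reference pairs $(\mvr_{\gs0},\mvr_{\gs1})$ plus the anchor cost $Z_\partial$. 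This requires the concatenated path (with loops erased) to run $\mvzero\to\mva_0\to\cdots$ and $\cdots\to\mva_1\to\mvm$. The triangle inequality for $T$ handles this: $T(\mvu_{\gs0},\mvu_{\gs1})\le W_{\mvzero\mva_0}+T(\mva_0,\mvu_{\gs1})$ for the extreme word, and symmetrically at the other end. So the bound already established in that lemma is exactly what is needed, and the first step yields $\{T_n>\Upsilon^{\rm br}_{\phi,k}+2^kc_1n_k\}\subseteq\cG^{\rm c}$.

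For the second step, decompose
\begin{align*}
\cG^{\rm c}\subseteq(\cA_k^{\rm gd})^{\rm c}\cup\big(\cB_k^{\rm c}\cap\cA_k^{\rm gd}\big)\cup\big(\cL_k^{\rm c}\cap\cA_k^{\rm gd}\big).
\end{align*}
The first set is bounded by Lemma~\ref{lem:Agd-prob}, which gives $\pr(\cA_k^{\rm c})+C_qC^kn_k^{-q}$ for every $q>0$ and all large $n$. The other two terms appear verbatim on the right-hand side of~\eqref{eq:Tn-ub-tail}, so nothing more needs to be estimated; a union bound finishes the proof.

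The main point needing verification is that the truncation event $\cB_k$ is indeed the one for the modified, corridor-restricted last-level construction. In particular, the conditional i.i.d.\ structure of the last-level candidate noises must survive the restriction to $\wt B^{a,\rm gd}$. This holds because the corridor-good sub-shells are $\cF_V$-measurable given the parent centers, so they remain $\cF_{k-1,\gs}$-measurable, and Lemma~\ref{lem:edge-disjoint} is unaffected by shrinking the candidate sets. I would state this explicitly. I would also check that "for all large $n$" is uniform across the three lemmas, so that each of them applies simultaneously. This holds once $n_k\to\infty$, the standing requirement for the chosen scale map and depth.
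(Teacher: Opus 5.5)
Your proposal is correct and matches the paper's proof: on $\cA_k^{\rm gd}\cap\cB_k\cap\cL_k$ you combine \eqref{eq:Tn-Sn}, \eqref{eq:gL-fk-q-def} and \eqref{eq:Sloc-scale-short}, then take complements and bound $\pr((\cA_k^{\rm gd})^{\rm c})$ via Lemma~\ref{lem:Agd-prob}. Your extra remarks on the anchors (triangle inequality plus $Z_\partial$) and on the corridor-restricted truncation event agree with how those lemmas are set up, but they belong to the lemmas rather than to this proposition.
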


\begin{proof}
	On $\cA_k^{\rm gd}\cap\cB_k\cap\cL_k$ the decomposition~\eqref{eq:Tn-Sn}, the bridge bound~\eqref{eq:gL-fk-q-def} and the local bound~\eqref{eq:Sloc-scale-short} give $T_n\le\Upsilon_{\phi,k}^{\rm br}+2^kc_1n_k$. Taking complements and using~\eqref{eq:Agd-prob} to bound $\pr((\cA_k^{\rm gd})^{\rm c})$ yields~\eqref{eq:Tn-ub-tail}.
\end{proof}

\begin{figure}[htbp]
	\centering
	\resizebox{0.80\linewidth}{!}{%
		\begin{tikzpicture}[
				x=1.20cm,y=1.20cm,
				line cap=round,
				line join=round,
				font=\small,
				binarybridge/.style={
						draw=blue!80!black,
						line width=1.65pt
					},
				binarylocal/.style={
						draw=red!78!black,
						line width=1.05pt
					},
				binaryvertex/.style={
						circle,
						fill=black,
						inner sep=1.45pt
					},
				binarylabel/.style={
						fill=white,
						fill opacity=0.95,
						text opacity=1,
						inner sep=0.7pt
					},
				binaryshelllabel/.style={
						font=\scriptsize,
						text=black!68,
						inner sep=1pt
					},
				binarybridgelabel/.style={
						font=\small,
						text=blue!80!black,
						fill=white,
						fill opacity=0.95,
						text opacity=1,
						inner sep=0.8pt
					}
			]

			\colorlet{binarylevelone}{gray!10}
			\colorlet{binaryleveltwo}{blue!10}

			\def\RrootOuter{2.40}
			\def\RrootInner{1.20}
			\def\RchildOuter{0.50}
			\def\RchildInner{0.25}

			\coordinate (u000) at (0.00,0.00);
			\coordinate (u001) at (0.35,0.08);
			\coordinate (u010) at (1.46,0.73);
			\coordinate (u011) at (1.84,0.81);

			\coordinate (u100) at (4.67,0.00);
			\coordinate (u101) at (5.02,0.08);
			\coordinate (u110) at (6.13,0.73);
			\coordinate (u111) at (6.51,0.81);

			\path[
				draw=black!55,
				line width=0.42pt,
				fill=binarylevelone,
				fill opacity=0.95,
				even odd rule
			]
			($(u000)+(-\RrootOuter,-\RrootOuter)$)
			rectangle
			($(u000)+(\RrootOuter,\RrootOuter)$)
			($(u000)+(-\RrootInner,-\RrootInner)$)
			rectangle
			($(u000)+(\RrootInner,\RrootInner)$);

			\path[
				draw=black!55,
				line width=0.42pt,
				fill=binarylevelone,
				fill opacity=0.95,
				even odd rule
			]
			($(u111)+(-\RrootOuter,-\RrootOuter)$)
			rectangle
			($(u111)+(\RrootOuter,\RrootOuter)$)
			($(u111)+(-\RrootInner,-\RrootInner)$)
			rectangle
			($(u111)+(\RrootInner,\RrootInner)$);

			\path[
				draw=black!55,
				line width=0.42pt,
				fill=binaryleveltwo,
				fill opacity=0.95,
				even odd rule
			]
			($(u000)+(-\RchildOuter,-\RchildOuter)$)
			rectangle
			($(u000)+(\RchildOuter,\RchildOuter)$)
			($(u000)+(-\RchildInner,-\RchildInner)$)
			rectangle
			($(u000)+(\RchildInner,\RchildInner)$);

			\path[
				draw=black!55,
				line width=0.42pt,
				fill=binaryleveltwo,
				fill opacity=0.95,
				even odd rule
			]
			($(u011)+(-\RchildOuter,-\RchildOuter)$)
			rectangle
			($(u011)+(\RchildOuter,\RchildOuter)$)
			($(u011)+(-\RchildInner,-\RchildInner)$)
			rectangle
			($(u011)+(\RchildInner,\RchildInner)$);

			\path[
				draw=black!55,
				line width=0.42pt,
				fill=binaryleveltwo,
				fill opacity=0.95,
				even odd rule
			]
			($(u100)+(-\RchildOuter,-\RchildOuter)$)
			rectangle
			($(u100)+(\RchildOuter,\RchildOuter)$)
			($(u100)+(-\RchildInner,-\RchildInner)$)
			rectangle
			($(u100)+(\RchildInner,\RchildInner)$);

			\path[
				draw=black!55,
				line width=0.42pt,
				fill=binaryleveltwo,
				fill opacity=0.95,
				even odd rule
			]
			($(u111)+(-\RchildOuter,-\RchildOuter)$)
			rectangle
			($(u111)+(\RchildOuter,\RchildOuter)$)
			($(u111)+(-\RchildInner,-\RchildInner)$)
			rectangle
			($(u111)+(\RchildInner,\RchildInner)$);

			\node[binaryshelllabel,anchor=north west]
			at ($(u000)+(-2.12,2.08)$) {$\wt B_0$};

			\node[binaryshelllabel,anchor=north west]
			at ($(u111)+(-2.12,2.08)$) {$\wt B_1$};


			\draw[binarybridge]
			(u001)
			.. controls ($(u001)!0.33!(u010)+(0,0.62)$)
			and ($(u001)!0.67!(u010)+(0,0.62)$)
			.. (u010);

			\draw[binarybridge]
			(u011)
			.. controls ($(u011)!0.33!(u100)+(0,1.35)$)
			and ($(u011)!0.67!(u100)+(0,1.35)$)
			.. (u100);

			\draw[binarybridge]
			(u101)
			.. controls ($(u101)!0.33!(u110)+(0,0.62)$)
			and ($(u101)!0.67!(u110)+(0,0.62)$)
			.. (u110);

			\node[binarybridgelabel]
			at ($(u001)!0.52!(u010)+(0,0.92)$) {$\cW_0$};

			\node[binarybridgelabel]
			at ($(u011)!0.50!(u100)+(0,1.58)$) {$\cW_{\varnothing}$};

			\node[binarybridgelabel]
			at ($(u101)!0.48!(u110)+(0,0.92)$) {$\cW_1$};


			\draw[binarylocal]
			(u000)
			.. controls ($(u000)!0.18!(u001)+(0,0.05)$)
			and ($(u000)!0.34!(u001)+(0,-0.05)$)
			.. ($(u000)!0.52!(u001)$)
			.. controls ($(u000)!0.68!(u001)+(0,0.05)$)
			and ($(u000)!0.84!(u001)+(0,-0.04)$)
			.. (u001);

			\draw[binarylocal]
			(u010)
			.. controls ($(u010)!0.18!(u011)+(0,0.05)$)
			and ($(u010)!0.34!(u011)+(0,-0.05)$)
			.. ($(u010)!0.52!(u011)$)
			.. controls ($(u010)!0.68!(u011)+(0,0.05)$)
			and ($(u010)!0.84!(u011)+(0,-0.04)$)
			.. (u011);

			\draw[binarylocal]
			(u100)
			.. controls ($(u100)!0.18!(u101)+(0,0.05)$)
			and ($(u100)!0.34!(u101)+(0,-0.05)$)
			.. ($(u100)!0.52!(u101)$)
			.. controls ($(u100)!0.68!(u101)+(0,0.05)$)
			and ($(u100)!0.84!(u101)+(0,-0.04)$)
			.. (u101);

			\draw[binarylocal]
			(u110)
			.. controls ($(u110)!0.18!(u111)+(0,0.05)$)
			and ($(u110)!0.34!(u111)+(0,-0.05)$)
			.. ($(u110)!0.52!(u111)$)
			.. controls ($(u110)!0.68!(u111)+(0,0.05)$)
			and ($(u110)!0.84!(u111)+(0,-0.04)$)
			.. (u111);

			\node[binaryvertex] at (u000) {};
			\node[binaryvertex] at (u001) {};
			\node[binaryvertex] at (u010) {};
			\node[binaryvertex] at (u011) {};
			\node[binaryvertex] at (u100) {};
			\node[binaryvertex] at (u101) {};
			\node[binaryvertex] at (u110) {};
			\node[binaryvertex] at (u111) {};


			\node[binarylabel,anchor=north east]
			at ($(u000)+(-0.02,-0.15)$) {$\mvu_{000}$};

			\node[binarylabel,anchor=south east]
			at ($(u001)+(-0.03,0.14)$) {$\mvu_{001}$};

			\node[binarylabel,anchor=north east]
			at ($(u010)+(-0.04,-0.11)$) {$\mvu_{010}$};

			\node[binarylabel,anchor=south west]
			at ($(u011)+(0.06,0.14)$) {$\mvu_{011}$};

			\node[binarylabel,anchor=north east]
			at ($(u100)+(-0.03,-0.15)$) {$\mvu_{100}$};

			\node[binarylabel,anchor=north west]
			at ($(u101)+(0.03,-0.15)$) {$\mvu_{101}$};

			\node[binarylabel,anchor=north west]
			at ($(u110)+(0.04,-0.11)$) {$\mvu_{110}$};

			\node[binarylabel,anchor=south west]
			at ($(u111)+(0.06,0.14)$) {$\mvu_{111}$};

		\end{tikzpicture}%
	}%
	\caption{Two refinements of the binary edge-bridge ansatz ($k=2$), drawn schematically and not to scale. The two gray annuli are the root shells and the four blue annuli are the child shells. In each annulus, the white square is the removed inner core. The blue curves represent the three bridge edges $\cW_0$, $\cW_{\varnothing}$, and $\cW_1$, while the red curves represent the four terminal local first-passage pieces contributing to $S_n^{\mathrm{loc}}$. The endpoints are $\mvu_{000}=\mvzero$ and $\mvu_{111}=\lceil n\mvx\rceil$.}
	\label{fig:binary-multiscale-ansatz}
\end{figure}

Proposition~\ref{prop:Ansatz-bound} isolates all of the geometric content. The regime-specific proofs of Sections~\ref{sec:proof-III} and~\ref{sec:proof-IVE} consist of choosing $f_\phi$, $k(n)$ and $\eps'$ so that the two deterministic terms have the required order while the three bad probabilities vanish.

\section{\texorpdfstring{Proofs for Regime $\rm I_E$}{Proofs for Regime I-E}}\label{sec:proof-IE}

The mechanism is a second-moment argument run at infinitely many well-separated scales. At each scale we produce a large family of $k$-hop paths from $\mvzero$ to $\mvx$ whose intermediate vertices are constrained to lie in $k-1$ concentric annuli. The number of such paths grows like $\ell_j^{d(k-1)}$, while the probability that a given one is cheap decays like $\ell_j^{-k\ga(\theta+\eps)}$. The condition $\ga<d/\theta$ is exactly what makes the first factor win, and the separation between scales makes the resulting successes independent.

We first record the combinatorial input.

\begin{lem}[Overlap counting]\label{lem:overlap}
	Let $k\ge2$, let $A_1,\dots,A_{k-1}\subset\dZ^d$ be pairwise disjoint sets not containing $\mvzero$ or $\mvx$, and let
	\begin{align*}
		\cP:=\big\{(\mvz_0,\dots,\mvz_k):\mvz_0=\mvzero, \mvz_k=\mvx, \mvz_i\in A_i (1\le i\le k-1)\big\}.
	\end{align*}
	For $\mvpi,\mvpi'\in\cP$ let $m(\mvpi,\mvpi')$ be the number of edges common to both. Then $m(\mvpi,\mvpi')=k$ forces $\mvpi'=\mvpi$, and for every $\mvpi\in\cP$ and every $1\le m\le k-1$,
	\begin{align*}
		\#\big\{\mvpi'\in\cP:m(\mvpi,\mvpi')=m\big\}
		\le\binom{k}{m} \max_{1\le i\le k-1}\abs{A_i}^{ k-1-m}.
	\end{align*}
\end{lem}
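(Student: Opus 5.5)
The plan is to exploit the rigid layered structure of $\cP$. Because the sets $A_1,\dots,A_{k-1}$ are pairwise disjoint and avoid $\mvzero$ and $\mvx$, each vertex $\mvz\in\{\mvzero,\mvx\}\cup\bigcup_iA_i$ has a well-defined \emph{layer} index $\iota(\mvz)\in\{0,\dots,k\}$. Every edge $\{\mvz_{i-1},\mvz_i\}$ of a path in $\cP$ joins layers $i-1$ and $i$. Hence an unordered edge $e$ shared by $\mvpi$ and $\mvpi'$ occupies the \emph{same position} $i$ in both paths; the only conceivable exception, $k=1$, is excluded by $k\ge2$. Two consecutive layers therefore determine the position, and a common edge at position $i$ forces $\mvz_{i-1}'=\mvz_{i-1}$ and $\mvz_i'=\mvz_i$. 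This immediately gives the first claim: if $m(\mvpi,\mvpi')=k$, then all positions agree, so $\mvpi'=\mvpi$.

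For the counting bound, fix $\mvpi$ and $1\le m\le k-1$. I would map each $\mvpi'$ with $m(\mvpi,\mvpi')=m$ to the set $I\subset[k]$ of positions at which the edges agree, so that $\abs{I}=m$. There are $\binom km$ choices of $I$. For a fixed $I$, the agreement forces $\mvz_j'=\mvz_j$ for every $j$ in $J(I):=\{j: j\in I\text{ or }j+1\in I\}$, and also for $j\in\{0,k\}$. The remaining free vertices are the $\mvz_j'$ with $1\le j\le k-1$ and $j\notin J(I)$, each ranging over $A_j$. The count is therefore at most $\max_i\abs{A_i}^{\,k-1-\abs{J(I)\cap[1,k-1]}}$, and it remains to show that $\abs{J(I)\cap[1,k-1]}\ge m$.

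The one delicate point is this vertex count: $m$ agreeing edges pin at least $m$ interior vertices. The plan is to count per edge. Each position $i\in I$ pins the two vertices $i-1$ and $i$, and among these at least one is interior unless $k=1$. Decompose $I$ into maximal runs of consecutive positions. A run of length $r$ pins $r+1$ consecutive vertices, of which at most two are the endpoints $0$ and $k$. Only a run covering all of $[k]$ contains both endpoints, but that run would have $r=k>m$, so it cannot occur. Hence each run pins at least $r$ interior vertices. Distinct runs are separated by at least one non-agreeing position, so their pinned vertex sets are disjoint. Summing over the runs gives $\abs{J(I)\cap[1,k-1]}\ge m$, which yields the bound $\binom km\max_i\abs{A_i}^{k-1-m}$ (using $\abs{A_i}\ge1$, or noting that the count is trivially zero otherwise).
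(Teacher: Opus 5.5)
Your proposal is correct and follows the paper's proof. The paper uses the same layer argument to put shared edges in matching positions, and your maximal-run decomposition simply unpacks its observation that $\abs{P(M)}\ge m+1$, with equality iff $M$ is an interval, before the same endpoint case analysis (your aside about $k=1$ is unnecessary; what the layer index actually requires is $\mvzero\ne\mvx$, which the paper also assumes implicitly).
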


\begin{proof}
	Write $\mvpi=(\mvz_0,\dots,\mvz_k)$ and $\mvpi'=(\mvz_0',\dots,\mvz_k')$. Suppose the edge $\la\mvz_{r-1}'\mvz_r'\ra$ of $\mvpi'$ equals the edge $\la\mvz_{s-1}\mvz_s\ra$ of $\mvpi$ as an unordered pair. Each vertex of either path lies in exactly one of the $k+1$ pairwise disjoint sets $\{\mvzero\},A_1,\dots,A_{k-1},\{\mvx\}$, and its position along the path is determined by which of these it lies in. Comparing the two endpoints therefore forces $r=s$: shared edges occupy matching positions. Let $M\subseteq[k]$ be the set of shared positions, $\abs{M}=m$. If $\mvz_{i-1}'=\mvz_{i-1}$ and $\mvz_i'=\mvz_i$ for every $i\in M$, then the indices in
	$P(M):=\bigcup_{i\in M}\{i-1,i\}$
	are pinned, and the remaining internal coordinates of $\mvpi'$ are free, each ranging over at most $\max_i\abs{A_i}$ values. The number of free internal coordinates is $(k-1)-\abs{P(M)\cap[1,k-1]}$, so it suffices to show
	\begin{align}\label{eq:pinned-count}
		\abs{P(M)\cap[1,k-1]}\ge m \text{ whenever }1\le m\le k-1.
	\end{align}
	Since the map $i\mapsto\{i-1,i\}$ has the property that $\abs{P(M)}\ge m+1$ for any nonempty $M$, with equality if and only if $M$ is an interval, we distinguish three cases. If $M$ contains neither $1$ nor $k$, then $P(M)\subseteq[1,k-1]$ and $\abs{P(M)\cap[1,k-1]}=\abs{P(M)}\ge m+1$. If $M$ contains exactly one of $1,k$, then exactly one element of $P(M)$ (namely $0$ or $k$) is discarded, so $\abs{P(M)\cap[1,k-1]}\ge m+1-1=m$. If $M$ contains both $1$ and $k$ and $m\le k-1$, then $M$ is not an interval, so $\abs{P(M)}\ge m+2$ and discarding $0$ and $k$ leaves at least $m$. This proves~\eqref{eq:pinned-count}. Finally, if $m=k$ then $M=[k]$ pins every coordinate and $\mvpi'=\mvpi$. Summing over the $\binom km$ choices of $M$ gives the bound.
\end{proof}

\begin{proof}[Proof of Theorem~\ref{thm:IE}]
	We may assume $\mvx\ne\mvzero$. Set $\ell_0:=\norm{\mvx}$. Since $\ga\theta<d$, we may fix an integer $k\ge2$ and $\eps\in(0,\theta)$ such that
	\begin{align}\label{eq:IE-k-eps}
		k\big(d-\ga(\theta+\eps)\big)>d
		\text{ and }
		\ga\eps\le\frac{d-\ga\theta}{4k-1}.
	\end{align}
	Indeed, first choose $k>d/(d-\ga\theta)$, which makes $k(d-\ga\theta)>d$, and then take $\eps$ small enough for both displays. Fix also $0<a_0<A_0<\infty$ with $p_0:=\pr(a_0\le V\le A_0)>0$, which is possible since $\pr(V>0)=1$.

	\medskip
	\noindent\textbf{Step 1: a family of paths at each scale.}
	Fix $M_{\rm sc}>4k+3$ and set $\ell_j:=M_{\rm sc}^{ j}\ell_0$ for $j\ge1$. Let $\mvz$ be a lattice point nearest to $\mvx/2$, so $\norm{\mvzero-\mvz}\vee\norm{\mvx-\mvz}\le\ell_0$. For $j\ge1$ and $1\le i\le k-1$ define the \emph{separated} annuli
	\begin{align}\label{eq:IE-annuli}
		A_i^{(j)}:=\big\{\mvy\in\dZ^d:(4i-1)\ell_j\le\norm{\mvy-\mvz}<(4i+1)\ell_j\big\},
	\end{align}
	and
	\begin{align*}
		\cP_j:=\big\{(\mvz_0,\dots,\mvz_k):\mvz_0=\mvzero, \mvz_k=\mvx, \mvz_i\in A_i^{(j)} (1\le i\le k-1)\big\}.
	\end{align*}
	The annuli $A_1^{(j)},\dots,A_{k-1}^{(j)}$ are pairwise disjoint, being separated by gaps of width $2\ell_j$, and they avoid $\mvzero$ and $\mvx$ since $\norm{\mvzero-\mvz}\vee\norm{\mvx-\mvz}\le\ell_0\le\ell_j<3\ell_j$. Moreover $\bigl|A_i^{(j)}\bigr|\asymp\ell_j^{ d}$, so
	\begin{align}\label{eq:IE-path-count}
		\bigl|\cP_j\bigr|
		=\prod_{i=1}^{k-1}\bigl|A_i^{(j)}\bigr|
		\asymp\ell_j^{ d(k-1)} ,
	\end{align}
	with constants depending only on $d$ and $k$. The separation of the annuli also gives two-sided control of the edge lengths: for every $\mvpi=(\mvz_0,\dots,\mvz_k)\in\cP_j$ and $1\le i\le k$,
	\begin{align}\label{eq:IE-edge-lengths}
		2\ell_j\le\norm{\mvz_i-\mvz_{i-1}}\le(8k+6)\ell_j.
	\end{align}
	For $2\le i\le k-1$ this is because $\norm{\mvz_i-\mvz}\ge(4i-1)\ell_j$ while $\norm{\mvz_{i-1}-\mvz}<(4i-3)\ell_j$. For $i=1$ and $i=k$ it follows from $\norm{\mvzero-\mvz}\vee\norm{\mvx-\mvz}\le\ell_0\le\ell_j$ together with $k\ge2$. Finally, since $M_{\rm sc}>4k+3$, every edge used at scale $j$ has length at most $(8k+6)\ell_j<2\ell_{j+1}$, whereas every edge used at scale $j+1$ has length at least $2\ell_{j+1}$. Hence
	\begin{align}\label{eq:IE-scale-disjoint}
		\text{the edge sets used by }\cP_j\text{ and }\cP_{j'}\text{ are disjoint for }j\ne j'.
	\end{align}

	\medskip
	\noindent\textbf{Step 2: good vertices and good paths.}
	For $j\ge1$ and $1\le i\le k-1$ set
	$A_{i,[a_0,A_0]}^{(j)}:=\{\mvy\in A_i^{(j)}:a_0\le V_{\mvy}\le A_0\}$.
	Since $\bigl|A_{i,[a_0,A_0]}^{(j)}\bigr|\sim\mathrm{Bin}\bigl(\bigl|A_i^{(j)}\bigr|,p_0\bigr)$ with $\bigl|A_i^{(j)}\bigr|\asymp\ell_j^d$, a Chernoff bound gives
	$\pr\bigl(\bigl|A_{i,[a_0,A_0]}^{(j)}\bigr|<\tfrac{p_0}{2}\bigl|A_i^{(j)}\bigr|\bigr)\le\exp(-c\ell_j^{ d})$,
	which is summable in $j$ because $\ell_j$ grows geometrically. By Borel--Cantelli there is an event $\gO_0$ of full probability and an a.s. finite random $J_0$ such that on $\gO_0$
	\begin{align}\label{eq:IE-good-vtx}
		\bigl|A_{i,[a_0,A_0]}^{(j)}\bigr|\ge\tfrac{p_0}{2}\bigl|A_i^{(j)}\bigr|\asymp\ell_j^{ d}
		\text{ for all }j\ge J_0, 1\le i\le k-1.
	\end{align}
	Note that $\gO_0$ and $J_0$ are $\cF_V$-measurable. Define the good path family
	$\cP_j^{a_0,A_0}:=\{\mvpi\in\cP_j:\mvz_i\in A_{i,[a_0,A_0]}^{(j)} \forall 1\le i\le k-1\}$,
	so that on $\gO_0$, for all $j\ge J_0$,
	\begin{align}\label{eq:IE-good-paths-count}
		\bigl|\cP_j^{a_0,A_0}\bigr|
		=\prod_{i=1}^{k-1}\bigl|A_{i,[a_0,A_0]}^{(j)}\bigr|
		\ge c_1\ell_j^{ d(k-1)}.
	\end{align}

	\medskip
	\noindent\textbf{Step 3: first and second moments.}
	For $t>0$ and $j\ge1$ put
	\begin{align*}
		N_j(t):=\sum_{\mvpi\in\cP_j^{a_0,A_0}}\ind\{W_{\mvpi}\le t\},
		\qquad
		T_j:=\inf\big\{W_{\mvpi}:\mvpi\in\cP_j^{a_0,A_0}\big\}.
	\end{align*}
	Fix $\mvpi=(\mvz_0,\dots,\mvz_k)\in\cP_j^{a_0,A_0}$. Conditionally on $\cF_V$ we may write
	\begin{align*}
		W_{\mvpi}=\sum_{i=1}^k\frac{\go_i}{\gl_i},
		\qquad
		\gl_i:=\frac{V_{\mvz_{i-1}}V_{\mvz_i}}{\norm{\mvz_i-\mvz_{i-1}}^{\ga}},
	\end{align*}
	where the $\go_i$ are i.i.d.~copies of $\go$ independent of $\cF_V$. By~\eqref{eq:IE-edge-lengths} and $a_0\le V\le A_0$ on internal vertices,
	\begin{align}\label{eq:IE-gl-bounds}
		c_2 \big(V_{\mvzero}\wedge1\big)\big(V_{\mvx}\wedge1\big)\ell_j^{-\ga}
		\le\gl_i\le
		C_2 \big(V_{\mvzero}\vee1\big)\big(V_{\mvx}\vee1\big)\ell_j^{-\ga},
		\qquad 1\le i\le k,
	\end{align}
	for constants $c_2,C_2$ depending on $a_0,A_0,k,\ga,d$. Only the two edges incident to $\mvzero$ and $\mvx$ involve the unbounded weights $V_{\mvzero},V_{\mvx}$. In particular $\gl_i\to0$ as $j\to\infty$, so for any fixed $t>0$ the hypothesis $t\gl_i/k\le a(\eps)$ of Lemma~\ref{lem:go-tail-bound} holds for all $j$ large enough, almost surely. Applying the lower bound of Lemma~\ref{lem:go-tail-bound} conditionally on $\cF_V$ and absorbing the $k$-dependent factors into constants, we obtain, for every $t\in(0,a]$ and all large $j$,
	\begin{align}\label{eq:IE-path-lb}
		\pr\big(W_{\mvpi}\le t\mid\cF_V\big)
		\ge c_\eps t^{k(\theta+\eps)}\prod_{i=1}^k\gl_i^{\theta+\eps}
		\ge c_3 \Psi_- t^{k(\theta+\eps)} \ell_j^{-k\ga(\theta+\eps)},
	\end{align}
	where $\Psi_-:=\big((V_{\mvzero}\wedge1)(V_{\mvx}\wedge1)\big)^{k(\theta+\eps)}>0$ is $\cF_V$-measurable and a.s. positive. Combining with~\eqref{eq:IE-good-paths-count}, on $\gO_0$ and for $j\ge J_0$ large,
	\begin{align}\label{eq:IE-ENj}
		\E\big[N_j(t)\mid\cF_V\big]
		\ge c_4 \Psi_- t^{k(\theta+\eps)} \ell_j^{ d(k-1)-k\ga(\theta+\eps)}.
	\end{align}
	By the first inequality in~\eqref{eq:IE-k-eps} the exponent $d(k-1)-k\ga(\theta+\eps)=k(d-\ga(\theta+\eps))-d$ is strictly positive, so
	\begin{align}\label{eq:IE-ENj-infty}
		\E\big[N_j(t)\mid\cF_V\big]\longrightarrow\infty \text{ on }\gO_0, \text{as }j\to\infty.
	\end{align}

	For the second moment write, conditionally on $\cF_V$,
	\begin{align*}
		\E\big[N_j(t)^2\mid\cF_V\big]
		=\sum_{\mvpi,\mvpi'}\pr\big(W_{\mvpi}\le t, W_{\mvpi'}\le t\mid\cF_V\big)
		=\sum_{m=0}^{k}S_m ,
	\end{align*}
	where $S_m$ collects the pairs with $m(\mvpi,\mvpi')=m$. If $m=0$ the two paths use disjoint edge sets, so the corresponding noises are conditionally independent and $S_0\le\E[N_j(t)\mid\cF_V]^2$. If $m=k$ then $\mvpi'=\mvpi$ by Lemma~\ref{lem:overlap}, so $S_k=\E[N_j(t)\mid\cF_V]$.

	Fix $1\le m\le k-1$. Since $\{W_{\mvpi}\le t,W_{\mvpi'}\le t\}\subseteq\{\sum_{e\in E(\mvpi)\cup E(\mvpi')}W_e\le2t\}$ and $\abs{E(\mvpi)\cup E(\mvpi')}=2k-m$, the upper bound of Lemma~\ref{lem:go-tail-bound}, which requires no constraint on $t$, together with the upper bound in~\eqref{eq:IE-gl-bounds}, gives
	\begin{align}\label{eq:IE-pair-ub}
		\pr\big(W_{\mvpi}\le t, W_{\mvpi'}\le t\mid\cF_V\big)
		\le C_5 \Psi_+ t^{(2k-m)(\theta-\eps)} \ell_j^{-(2k-m)\ga(\theta-\eps)},
	\end{align}
	with $\Psi_+:=\big((V_{\mvzero}\vee1)(V_{\mvx}\vee1)\big)^{2k(\theta-\eps)}$, an $\cF_V$-measurable a.s. finite quantity. By Lemma~\ref{lem:overlap} and~\eqref{eq:IE-path-count} the number of ordered pairs with $m(\mvpi,\mvpi')=m$ is at most $C\ell_j^{d(k-1)}\cdot\ell_j^{d(k-1-m)}$. Hence, dividing by the square of~\eqref{eq:IE-ENj},
	\begin{align}\label{eq:IE-ratio}
		\frac{S_m}{\E[N_j(t)\mid\cF_V]^2}
		\le C_6(t,V_{\mvzero},V_{\mvx}) \ell_j^{\gk_m(\eps)},
		\qquad
		\gk_m(\eps):=(4k-m)\ga\eps-m\big(d-\ga\theta\big),
	\end{align}
	Here $C_6(t,V_{\mvzero},V_{\mvx})<\infty$ is $\cF_V$-measurable. It also
	absorbs the finite, $t$-dependent factor
	\begin{align*}
		t^{(2k-m)(\theta-\eps)-2k(\theta+\eps)}.
	\end{align*}
	The exponent of $\ell_j$ in~\eqref{eq:IE-ratio} is
	\begin{align*}
		 & d(k-1)+d(k-1-m)-(2k-m)\ga(\theta-\eps)            \\
		 & \hspace{4em}-\big[2d(k-1)-2k\ga(\theta+\eps)\big]
		=-dm+m\ga\theta+(4k-m)\ga\eps,
	\end{align*}
	which is $\gk_m(\eps)$. By the second inequality in~\eqref{eq:IE-k-eps},
	\begin{align*}
		\gk_m(\eps)\le(4k-1)\ga\eps-m(d-\ga\theta)\le(1-m)(d-\ga\theta)\le0
		\text{ for }1\le m\le k-1 ,
	\end{align*}
	with $\gk_m(\eps)\le-(d-\ga\theta)<0$ as soon as $m\ge2$. Together with $S_0\le\E[N_j\mid\cF_V]^2$, $S_k=\E[N_j\mid\cF_V]$ and~\eqref{eq:IE-ENj-infty}, we conclude that on $\gO_0$ there is an a.s. finite $\cF_V$-measurable $C_7=C_7(t,V_{\mvzero},V_{\mvx})$ and an a.s.~finite random $J_1\ge J_0$ with
	\begin{align*}
		\E\big[N_j(t)^2\mid\cF_V\big]\le C_7 \E\big[N_j(t)\mid\cF_V\big]^2
		\text{ for all }j\ge J_1.
	\end{align*}
	The conditional Paley--Zygmund inequality then gives, on $\gO_0$,
	\begin{align}\label{eq:IE-PNj}
		\pr\big(N_j(t)\ge1\mid\cF_V\big)
		\ge\frac{\E[N_j(t)\mid\cF_V]^2}{\E[N_j(t)^2\mid\cF_V]}
		\ge\frac{1}{C_7}=:A(t,V)>0,
		\qquad j\ge J_1,
	\end{align}
	a bound which is uniform in $j\ge J_1$.

	\medskip
	\noindent\textbf{Step 4: conclusion by independence across scales.}
	By~\eqref{eq:IE-scale-disjoint} the families $\cP_j^{a_0,A_0}$, $j\ge1$, use pairwise disjoint edge sets, so conditionally on $\cF_V$ the variables $\{T_j\}_{j\ge1}$ are independent, and $\{N_j(t)\ge1\}=\{T_j\le t\}$. Since $T(\mvzero,\mvx)\le T_j$ for every $j$, and since $J_1$ is $\cF_V$-measurable, for any fixed $t\in(0,a]$ and every $N\ge1$,
	\begin{align*}
		\pr\big(T(\mvzero,\mvx)>t\big)
		 & \le\E\Big[\ind_{\gO_0} \pr\Big(\bigcap_{j=J_1}^{J_1+N-1}\{T_j>t\} \Big| \cF_V\Big)\Big]      \\
		 & =\E\Big[\ind_{\gO_0}\prod_{j=J_1}^{J_1+N-1}\Big(1-\pr\big(N_j(t)\ge1\mid\cF_V\big)\Big)\Big] \\
		 & \le\E\Big[\ind_{\gO_0}\big(1-A(t,V)\big)^{N}\Big]
		\xrightarrow[N\to\infty]{}0
	\end{align*}
	by dominated convergence, since $A(t,V)>0$ on $\gO_0$ and $\pr(\gO_0)=1$. Hence $\pr(T(\mvzero,\mvx)>t)=0$ for every $t\in(0,a]$, and letting $t\downarrow0$ along a sequence gives $T(\mvzero,\mvx)=0$ almost surely.
\end{proof}

\begin{rem}[One-sided noise assumption in Regime $\rm I_E$]
	\label{rem:IE-one-sided}
	Although the preceding proof uses both bounds of Lemma~\ref{lem:go-tail-bound}, Regime $\rm I_E$ requires only the lower tail of $\go$.
	Suppose $0<V<\infty$ a.s., $\E\go^{\gz}<\infty$, and $\pr(\go\le t)\ge ct^{\theta+\eta}$ for small $t>0$, where $\ga(\theta+\eta)<d$.
	A standard quantile coupling yields $\widetilde\go \ge \go$ with $\pr(\widetilde\go\le t) = \min\{\pr(\go\le t), ct^{\theta+\eta}\}$.
	By construction, $\widetilde\go$ inherits the upper tail of $\go$ (hence $\E\widetilde\go^{\gz}<\infty$) and satisfies the exact polynomial lower-tail.
	The resulting passage metrics satisfy $T \le \widetilde T$.
	Applying the proof to $\widetilde T$ gives $\widetilde T(\mvzero,\mvx)=0$ a.s., which implies $T(\mvzero,\mvx)=0$ a.s.
\end{rem}

\section{\texorpdfstring{Proofs for Regime $\rm I_H$}{Proofs for Regime I-H}}\label{sec:proof-IH}

Here no counting is needed: a \emph{single} sufficiently heavy vertex, found in each dyadic annulus, already makes a two-hop path arbitrarily cheap.

\begin{proof}[Proof of Theorem~\ref{thm:IH}]
	Fix $\mvx\ne\mvzero$, set $n:=\norm{\mvx}\ge1$, and recall the annuli~\eqref{eq:def-Ahub}. Since $\ga<d/\gc$ we may fix
	\begin{align}\label{eq:IH-eps}
		\eps\in\Big(0,\tfrac{d}{\gc}-\ga\Big)
		\text{ and then }
		\eps'\in\Big(0,\frac{\eps\gc^2}{2d}\Big).
	\end{align}
	For $k\ge1$ let $\mvy_k\in A_{2^kn}^{\Hub}$ attain $V_{\mvy_k}=\max_{\mvz\in A_{2^kn}^{\Hub}}V_{\mvz}$, ties broken by a fixed deterministic rule, so that $\mvy_k$ is $\cF_V$-measurable.

	\medskip
	\noindent\textbf{Step 1: a heavy hub in every annulus.}
	Write $m_k:=2^kn$, so $\abs{A_{m_k}^{\Hub}}\ge c m_k^{ d}$. By the lower half of~\eqref{def:V_x} there is $x_0<\infty$ with $\pr(V\ge x)\ge x^{-\gc-\eps'}$ for all $x\ge x_0$. Applying this with $x=m_k^{d/\gc-\eps}$, which exceeds $x_0$ for all large $k$ because $d/\gc-\eps>\ga>0$,
	\begin{align*}
		\pr\big(V\ge m_k^{d/\gc-\eps}\big)
		\ge m_k^{-(d/\gc-\eps)(\gc+\eps')}
		=m_k^{-d+\gd},
		\qquad
		\gd:=\eps\gc+\eps\eps'-{d\eps'}/{\gc}.
	\end{align*}
	The choice of $\eps'$ in~\eqref{eq:IH-eps} gives $d\eps'/\gc<\eps\gc/2$, hence $\gd\ge\eps\gc/2>0$. Consequently, by independence,
	\begin{align*}
		\pr\Big(\max_{\mvz\in A_{m_k}^{\Hub}}V_{\mvz}<m_k^{d/\gc-\eps}\Big)
		=\pr\big(V<m_k^{d/\gc-\eps}\big)^{\abs{A_{m_k}^{\Hub}}}
		\le\big(1-m_k^{-d+\gd}\big)^{c m_k^{d}}
		\le\exp\big(-c m_k^{\gd}\big),
	\end{align*}
	which is summable in $k$ since $m_k=2^kn$ grows geometrically. By Borel--Cantelli there is an a.s. finite random $k_1$ with
	\begin{align}\label{eq:IH-hub-lower}
		V_{\mvy_k}\ge(2^kn)^{d/\gc-\eps} \text{ for all }k\ge k_1.
	\end{align}

	\medskip
	\noindent\textbf{Step 2: the two-hop cost.}
	Since $\mvy_k\in A_{2^kn}^{\Hub}$ we have $2^kn\le\norm{\mvy_k}<2^{k+1}n$, and by the triangle inequality and $\norm{\mvx}=n\le2^{k-1}n$ for $k\ge1$,
	\begin{align*}
		2^{k-1}n\le\norm{\mvy_k}-n\le\norm{\mvx-\mvy_k}\le\norm{\mvy_k}+n\le3\cdot2^{k}n.
	\end{align*}
	Hence there is a deterministic $c_0\ge1$ with $c_0^{-1}2^kn\le\norm{\mvy_k}\wedge\norm{\mvx-\mvy_k}$ and $\norm{\mvy_k}\vee\norm{\mvx-\mvy_k}\le c_02^kn$ for all $k\ge1$. Writing
	\begin{align*}
		T_k:=W_{\mvzero\mvy_k}+W_{\mvy_k\mvx}
		=\frac{\norm{\mvy_k}^{\ga}\go_{\mvzero\mvy_k}}{V_{\mvzero}V_{\mvy_k}}
		+\frac{\norm{\mvx-\mvy_k}^{\ga}\go_{\mvx\mvy_k}}{V_{\mvy_k}V_{\mvx}},
	\end{align*}
	we obtain from~\eqref{eq:IH-hub-lower}, for all $k\ge k_1$,
	\begin{align}\label{eq:IH-Tk}
		T_k\le c_0^{\ga} (2^kn)^{\ga-d/\gc+\eps}
		\Big(\frac{\go_{\mvzero\mvy_k}}{V_{\mvzero}}+\frac{\go_{\mvx\mvy_k}}{V_{\mvx}}\Big).
	\end{align}

	\medskip
	\noindent\textbf{Step 3: conditional Borel--Cantelli.}
	Conditionally on $\cF_V$ the sequence $(\mvy_k)_{k\ge1}$ is deterministic, the annuli $A_{2^kn}^{\Hub}$ are pairwise disjoint so the $\mvy_k$ are distinct, and the noises $\go_{\mvzero\mvy_k},\go_{\mvx\mvy_k}$, $k\ge1$, are i.i.d.~copies of $\go$ independent of $\cF_V$. Fix $\gd_0>0$. By a union bound and Markov's inequality applied to the $\gz$-th moment,
	\begin{align*}
		\pr\big(T_k>\gd_0\mid\cF_V\big)
		 & \le\pr\Big(\go_{\mvzero\mvy_k}>\tfrac{\gd_0V_{\mvzero}}{2c_0^{\ga}}(2^kn)^{d/\gc-\ga-\eps} \Big| \cF_V\Big)
		+\pr\Big(\go_{\mvx\mvy_k}>\tfrac{\gd_0V_{\mvx}}{2c_0^{\ga}}(2^kn)^{d/\gc-\ga-\eps} \Big| \cF_V\Big)                                   \\
		 & \le\E(\go^{\gz})\Big(\frac{2c_0^{\ga}}{\gd_0}\Big)^{\gz}\big(V_{\mvzero}^{-\gz}+V_{\mvx}^{-\gz}\big) (2^kn)^{-\gz(d/\gc-\ga-\eps)}
	\end{align*}
	for $k\ge k_1$. The prefactor is a.s. finite because $\E\go^{\gz}<\infty$ and $V>0$ a.s., and $d/\gc-\ga-\eps>0$ by~\eqref{eq:IH-eps}, so the bound is summable in $k$. Hence $\sum_k\pr(T_k>\gd_0\mid\cF_V)<\infty$ almost surely. By the conditional Borel--Cantelli lemma, $\pr(T_k>\gd_0 \text{i.o.}\mid\cF_V)=0$ a.s., and intersecting over rational $\gd_0>0$ gives $T_k\to0$ almost surely. Since $T(\mvzero,\mvx)\le T_k$ for every $k$, we conclude $T(\mvzero,\mvx)=0$ a.s. Intersecting over the countably many $\mvx\in\dZ^d$ gives a single almost-sure event.
\end{proof}

\begin{proof}[Proof of Theorem~\ref{thm:IH-hop}]
	By~\eqref{def:go}, $\pr(\go\le x)\to0$ as $x\downarrow0$, so $\pr(\go>0)=1$. Also, note that $\pr(V<\infty)=1$. Hence every edge weight is strictly positive almost surely. Since there are only countably many finite self-avoiding paths from $\mvzero$ to $\mvx$ and countably many $\mvx$, almost surely $W_{\mvpi}>0$ for every such path. By Theorem~\ref{thm:IH}, $T(\mvzero,\mvx)=0$. So the infimum in~\eqref{eq:def-T} is not attained by any finite path, that is, $H_{\mvx}(0)=\infty$.

	Now fix $t\in(0,W_{\mvzero\mvx})$. The unique one-edge path has weight $W_{\mvzero\mvx}>t$, so $H_{\mvx}(t)\ge2$. On the other hand Theorem~\ref{thm:IH} produces vertices $\mvy_k$ with $W_{\mvzero\mvy_k}+W_{\mvy_k\mvx}\to0$, so for all large $k$ the two-hop path $\la\mvzero \mvy_k \mvx\ra$ has weight at most $t$. It is self-avoiding since $\mvy_k\notin\{\mvzero,\mvx\}$. As the annuli $A_{2^kn}^{\Hub}$ are disjoint, the $\mvy_k$ are distinct, giving infinitely many such $2$-hop paths. Hence $H_{\mvx}(t)=2$ on $(0,W_{\mvzero\mvx})$. Finally, for $t\ge W_{\mvzero\mvx}$ the single edge $\la\mvzero\mvx\ra$ is admissible, so $H_{\mvx}(t)=1$.
\end{proof}

\section{Proofs for Regime II}\label{sec:proof-II}

Recall the range $d/\gc<\ga<2d/\gc$. The scale map is $f_\phi(t)=t^{\phi}$ with $\phi\in(0,1)$, so that the scales $n_i=n^{\phi^i}$ decrease \emph{doubly exponentially} down to a bounded terminal scale. The chain therefore has $k(n)\asymp\log\log n$ hops on each side, matching the lower bound of Theorem~\ref{thm:II-hop}, and -- crucially -- the costs $\gl_i(n)$ decay doubly exponentially as one moves from either endpoint toward the center of the chain, which is what makes the total cost summable uniformly in $n$.

\begin{proof}[Proof of Theorem~\ref{thm:II-ub}]
	Fix $\eps\in(0,1)$. Since $\ga\gc<2d$, choose once and for all $\gc<\tilde\gc<2d/\ga.$
	Use the normalized hub weights and Lemma~\ref{lem:hub-ub} with this
	fixed $\tilde\gc$. Since $\ga\tilde\gc<2d$ and
	$d(\phi+\phi^2)\uparrow2d$ as $\phi\uparrow1$, we may fix
	$\phi\in(0,1)$ with
	\begin{align}\label{eq:II-phi}
		\ga\tilde\gc<d(\phi+\phi^2),
		\qquad
		s:=\frac{d(\phi+\phi^2)}{\tilde\gc}-\ga>0.
	\end{align}
	Fix $p:=2$, choose $\ell\in\dN$ with $\ell>2/\gz$, and choose $r>1/\tilde\gc$. For a parameter $v_\ast\in(0,\tfrac12)$ to be fixed below, set
	\begin{align}\label{eq:II-v-seq}
		v_j:=v_\ast(j+2)^{-r},\quad j\ge0.
	\end{align}

	\medskip
	\noindent\textbf{Step 0: reduction to large targets.}
	For $\mvu\in\dZ^d$ with $0<\norm{\mvu}\le N$ we may use the single edge $\la\mvzero\mvu\ra$, giving
	$T(\mvzero,\mvu)\le N^{\ga}\go_{\mvzero\mvu}/(V_{\mvzero}V_{\mvu})$,
	whose law does not depend on $\mvu$. Hence for every $N$ there is $C_N(\eps)<\infty$ with
	$\sup_{0<\norm{\mvu}\le N}\pr(T(\mvzero,\mvu)>C_N(\eps))<\eps$. It therefore suffices to prove~\eqref{eq:II-uniform} for $\norm{\mvu}\ge N$ with $N$ as large as we please, and by translation invariance and isotropy of the construction it suffices to bound $T(\mvzero,\mvm)$ for $\mvm=\mvu$, uniformly in the direction. Below, $n:=\norm{\mvu}$.

	\medskip
	\noindent\textbf{Step 1: scales and the hub-good event.}
	Take $n\ge c^{1/\phi}$ and apply the Hub-Chain Ansatz of Section~\ref{ssec:Ansatz_II_IVH} with
	\begin{align*}
		n_i:=f_\phi^{(i)}(n)=n^{\phi^i},
		\qquad
		k=k(n):=\max\{m\ge0:n_{m+1}\ge c\},
	\end{align*}
	where $c\ge1$ is a constant fixed below. Then $c\le n_{k+1}<c^{1/\phi}$ and $k\asymp\log\log n$. We shall choose $c$ large enough that
	\begin{align}\label{eq:II-c-cond}
		c\ge8^{1/(1-\phi)},
		\qquad
		c\ge(16d)^{\phi/(1-\phi)},
		\qquad
		\big|B(\mvzero,c)\big|\ge\ell.
	\end{align}
	The first condition guarantees $n_{i+1}=n_i^{\phi}\le n_i/8$ for all $i\le k$. The second guarantees $n^{1-\phi}\ge16d$ for all $n\ge c^{1/\phi}$, hence $n_1=n^{\phi}\le n/(16d)\le\tfrac1{16}\norm{\mvu}_\infty$ (as $\norm{\mvu}_\infty\ge n/d$), uniformly in the direction of $\mvu$. So the separation hypothesis~\eqref{eq:hub-scale-separation} holds and Lemma~\ref{lem:hub-geom} applies. The third guarantees that the smallest ball $B_{\pm k}$, of radius $n_{k+1}\ge c$, contains at least $\ell$ vertices.

	Define the hub-good event with the \emph{reversed} truncation sequence $\widetilde v_i:=v_{k-i}$:
	\begin{align*}
		\cG_n^{\rm II}
		:=\cG_n^{\Hub}(\vec{\widetilde v})
		=\bigcap_{i=-k}^{k}\Big\{V_i^{(\ell)}\ge v_{k-\abs{i}} n_{\abs{i}+1}^{d/\tilde\gc}\Big\}.
	\end{align*}
	The reversal is essential: the thresholds must be \emph{weakest} where the balls are largest, at the center of the chain, and \emph{strongest} at the ends where the balls have bounded radius. To verify the hypothesis~\eqref{eq:hub-xV-cond} of Lemma~\ref{lem:hub-ub}, write $j:=k-i$ and note $n_{i+1}=n_{k+1}^{\phi^{i-k}}\ge c^{\phi^{-j}}$, so
	\begin{align*}
		\widetilde v_i n_{i+1}^{d/\tilde\gc}
		\ge v_\ast(j+2)^{-r}c^{(d/\tilde\gc)\phi^{-j}}
		=:v_\ast h_c(j).
	\end{align*}
	Once $v_\ast$ is fixed, the bound $\sup_{j\ge0}\phi^j \big(\log(x_V/v_\ast)+r\log(j+2)\big)<\infty$ allows us to enlarge $c$ so that $v_\ast(j+2)^{-r}c^{(d/\tilde\gc)\phi^{-j}}\ge x_V$ for every $j\ge0$, as well as~\eqref{eq:II-c-cond}. Lemma~\ref{lem:hub-ub}~(i) then gives
	\begin{align*}
		\sup_{n\ge c^{1/\phi}}\pr\big((\cG_n^{\rm II})^{\rm c}\big)
		\le2\sum_{j=0}^{\infty}C_1\exp\big(-c_2v_j^{-\tilde\gc}\big)
		=2\sum_{j=0}^{\infty}C_1\exp\big(-c_2v_\ast^{-\tilde\gc}(j+2)^{r\tilde\gc}\big).
	\end{align*}
	Because $r\tilde\gc>1$ the series converges, and since every term decreases as $v_\ast\downarrow0$, we may fix $v_\ast$ so small that
	\begin{align}\label{eq:II-P-Gn-ub}
		\sup_{n\ge c^{1/\phi}}\pr\big((\cG_n^{\rm II})^{\rm c}\big)<\frac{\eps}{4}.
	\end{align}

	\medskip
	\noindent\textbf{Step 2: uniform control of the chain sum.}
	By Lemma~\ref{lem:hub-ub}~(ii), on $\cG_n^{\rm II}$,
	\begin{align*}
		S_k \ind_{\cG_n^{\rm II}}
		\le\sum_{i=0}^{k-1}\frac{\gl_i(n)}{v_{k-i} v_{k-i-1}} \go_{i,[\ell]},
		\qquad
		\gl_i(n)=c_0^{\ga}\frac{n_i^{\ga}}{n_{i+1}^{d/\tilde\gc}n_{i+2}^{d/\tilde\gc}}.
	\end{align*}
	Since $n_i=n^{\phi^i}$ we have $n_{i+1}=n_i^{\phi}$ and $n_{i+2}=n_i^{\phi^2}$, so
	\begin{align}\label{eq:II-lambda}
		\gl_i(n)=c_0^{\ga} n_i^{ \ga-(d/\tilde\gc)(\phi+\phi^2)}=c_0^{\ga} n_i^{-s},
	\end{align}
	with $s>0$ by~\eqref{eq:II-phi}. Writing again $j:=k-i\in\{1,\dots,k\}$ and using $n_i\ge c^{\phi^{-(j+1)}}$,
	\begin{align*}
		\gl_i(n)\le c_0^{\ga} c^{-s\phi^{-(j+1)}}.
	\end{align*}
	Set
	\begin{align*}
		b_j:=\frac{c_0^{\ga} c^{-s\phi^{-(j+1)}}}{v_jv_{j-1}},
		\qquad
		Y_j:=\go_{k-j,[\ell]},
		\qquad 1\le j\le k,
	\end{align*}
	so that $S_k\ind_{\cG_n^{\rm II}}\le\sum_{j=1}^kb_jY_j$, where by Lemma~\ref{lem:hub-indep} the $Y_j$ are i.i.d.~with the law of $\go_{[\ell]}$. Since $v_jv_{j-1}\ge v_\ast^2(j+2)^{-2r}$ decays only polynomially while $c^{-s\phi^{-(j+1)}}$ decays doubly exponentially,
	\begin{align}\label{eq:II-B12}
		B_1:=\sum_{j=1}^{\infty}b_j<\infty,
		\qquad
		B_2:=\sum_{j=1}^{\infty}b_j^2<\infty,
	\end{align}
	and both are finite \emph{independently of $n$ and $k$}. This is the analytic heart of Regime II: the cost of the chain is a convergent series whose terms do not depend on $n$.

	Put $\mu_\ell:=\E\go_{[\ell]}$ and $M_{p,\ell}:=\norm{\go_{[\ell]}-\mu_\ell}_p$, both finite by Lemma~\ref{lem:min-moments} since $\ell>p/\gz$. Applying Lemma~\ref{lem:p-conS_k} to the centered i.i.d.~family $\{Y_j-\mu_\ell\}$ with coefficients $b_j$, and setting
	\begin{align*}
		A_1:=\mu_\ell B_1,
		\qquad
		u(\eps):=C_pM_{p,\ell}B_2^{1/2}(4/\eps)^{1/p},
	\end{align*}
	we obtain
	\begin{align}\label{eq:II-Sk-bound}
		\pr\Big(S_k^+>A_1+u(\eps),\cG_n^{\rm II}\Big)
		 & \le\pr\Big(\sum_{j=1}^k b_j(Y_j-\mu_\ell)>u(\eps)\Big)
		\le\frac{\eps}{4},
		\text{ for } n\ge c^{1/\phi}.
	\end{align}
	The same bound holds for $S_k^-$. In particular, both bounds are uniform in $n$.

	\medskip
	\noindent\textbf{Step 3: the two endpoint edges.}
	Write $\go_{-k}^{\rm end}:=\go(\mvzero,\mvx_{-k})$ and $\go_{k}^{\rm end}:=\go(\mvx_k,\mvm)$. By~\eqref{eq:hub-terminal}, $\norm{\mvzero-\mvx_{-k}}\vee\norm{\mvm-\mvx_k}\le c_0n_k$, and $n_k=n_{k+1}^{1/\phi}<c^{1/\phi^2}$ is bounded. On $\cG_n^{\rm II}$ the outermost hubs satisfy $V_{\mvx_{\pm k}}\ge\widetilde v_kn_{k+1}^{d/\tilde\gc}=v_0n_{k+1}^{d/\tilde\gc}\ge v_0c^{d/\tilde\gc}$ with $v_0=v_\ast2^{-r}$. Hence
	\begin{align}\label{eq:II-end-bound}
		\big(W_{\mvzero\mvx_{-k}}+W_{\mvx_k\mvm}\big)\ind_{\cG_n^{\rm II}}
		\le b_{\rm end} v_0^{-1} Z_{\rm end},
		\qquad
		Z_{\rm end}:=\frac{\go_{-k}^{\rm end}}{V_{\mvzero}}+\frac{\go_{k}^{\rm end}}{V_{\mvm}},
	\end{align}
	for a constant $b_{\rm end}=b_{\rm end}(\ga,d,\phi,c)<\infty$. The vertices $\mvx_{\pm k}$ are measurable with respect to $\cF_k^{\Hub}$ (and its left-half analogue), the $\gs$-field generated by $\cF_V$ and the selection edges, and $\mvzero\notin B_{-k}$, $\mvm\notin B_k$ by Lemma~\ref{lem:hub-geom}. So the two edges $\la\mvzero\mvx_{-k}\ra$ and $\la\mvx_k\mvm\ra$ were never examined by the construction. Conditionally on that $\gs$-field their noises are therefore independent copies of $\go$, and since $V_{\mvzero},V_{\mvm}$ are $\cF_V$-measurable, the law of $Z_{\rm end}$ is that of $\go_1/V_{\mvzero}+\go_2/V_{\mvm}$ with $\go_1,\go_2$ i.i.d.~copies of $\go$ independent of $(V_{\mvzero},V_{\mvm})$. This law does not depend on $n$ and is a.s. finite, so there is $u_{\rm end}(\eps)<\infty$ with $\pr(Z_{\rm end}>u_{\rm end}(\eps))<\eps/4$, whence
	\begin{align}\label{eq:II-end-cost}
		\sup_{n\ge c^{1/\phi}}
		\pr\Big(W_{\mvzero\mvx_{-k}}+W_{\mvx_k\mvm}>b_{\rm end}v_0^{-1}u_{\rm end}(\eps), \cG_n^{\rm II}\Big)<\frac{\eps}{4}.
	\end{align}

	\medskip
	\noindent\textbf{Step 4: conclusion.}
	Set $C_1(\eps):=2(A_1+u(\eps))+b_{\rm end}v_0^{-1}u_{\rm end}(\eps)$. By the decomposition~\eqref{eq:Tn-Sk},
	\begin{align*}
		\{T(\mvzero,\mvm)>C_1(\eps)\}
		\subseteq(\cG_n^{\rm II})^{\rm c}
		 & \cup\big\{S_k^+>A_1+u(\eps);\cG_n^{\rm II}\big\}
		\cup\big\{S_k^->A_1+u(\eps);\cG_n^{\rm II}\big\}                                                            \\
		 & \cup\big\{W_{\mvzero\mvx_{-k}}+W_{\mvx_k\mvm}>b_{\rm end}v_0^{-1}u_{\rm end}(\eps);\cG_n^{\rm II}\big\},
	\end{align*}
	so~\eqref{eq:II-P-Gn-ub},~\eqref{eq:II-Sk-bound} and~\eqref{eq:II-end-cost} give
	$\sup_{\norm{\mvu}\ge c^{1/\phi}}\pr(T(\mvzero,\mvu)>C_1(\eps))<\eps$.
	Combining with Step 0 applied with $N=c^{1/\phi}$ and taking $C(\eps):=\max\{C_1(\eps),C_N(\eps)\}$ proves~\eqref{eq:II-uniform}.
\end{proof}

\begin{proof}[Proof of Proposition~\ref{prop:II-ball}]
	(i) Fix $\eps\in(0,1)$ and let $t_\eps:=C(\eps)$ from Theorem~\ref{thm:II-ub}, so that with $A_n:=\{T(\mvzero,\lceil n\mvx\rceil)\le t_\eps\}$ we have $\inf_{n\ge1}\pr(A_n)\ge1-\eps$. Since $\lceil n\mvx\rceil\to\infty$ in norm, on $\limsup_nA_n$ infinitely many distinct vertices lie in $\vB_{t_\eps}(\mvzero)$, so
	$\limsup_nA_n\subseteq\{\abs{\vB_{t_\eps}(\mvzero)}=\infty\}$.
	By reverse Fatou, $\pr(\limsup_nA_n)\ge\limsup_n\pr(A_n)\ge1-\eps$. An infinite subset of $\dZ^d$ is unbounded, so $\abs{\vB_t(\mvzero)}=\infty$ if and only if $\vD_t(\mvzero)=\infty$, giving (i).

	(ii) Let $\eps_m:=2^{-m}$, let $t_m:=t_{\eps_m}$ be given by (i), and set $s_m:=\max\{t_1,\dots,t_m\}$ and $E_m:=\{\abs{\vB_{s_m}(\mvzero)}=\infty\}$. Both $s_m$ and $E_m$ are nondecreasing in $m$, and since $\vB_t(\mvzero)$ increases in $t$,
	$\pr(E_m)\ge\pr(\abs{\vB_{t_m}(\mvzero)}=\infty)\ge1-2^{-m}$.
	As $\bigcup_mE_m\subseteq\{\tau_\infty<\infty\}$ and $E_m$ increases,
	$\pr(\tau_\infty<\infty)\ge\lim_m\pr(E_m)=1$.
\end{proof}

The next two proofs are first-moment arguments and use neither ansatz.

\begin{proof}[Proof of Theorem~\ref{thm:II-lb}]
	Since $\ga>d/\theta$ and $\ga>d/\gc$ we may choose $\eta$ with
	\begin{align*}
		\frac{d}{\ga}<\eta<\theta\wedge\gc.
	\end{align*}
	By~\eqref{def:go} and $\eta<\theta$ there is $C_1<\infty$ with $\pr(\go\le t)\le C_1t^{\eta}$ for all $t\in(0,1]$, and trivially for $t\ge1$ after enlarging $C_1$. By~\eqref{def:V_x} and $\eta<\gc$ we have $\E V^{\eta}<\infty$. For $\mvy\ne\mvzero$, conditioning on the two endpoint weights,
	\begin{align*}
		\pr\big(W_{\mvzero\mvy}\le t\mid V_{\mvzero},V_{\mvy}\big)
		=\pr\Big(\go_{\mvzero\mvy}\le t\frac{V_{\mvzero}V_{\mvy}}{\norm{\mvy}^{\ga}} \,\Big|\, V_{\mvzero},V_{\mvy}\Big)
		\le C_1t^{\eta}\frac{V_{\mvzero}^{\eta}V_{\mvy}^{\eta}}{\norm{\mvy}^{\ga\eta}}.
	\end{align*}
	Taking expectations and using independence,
	\begin{align}\label{eq:single-edge-tail}
		\pr\big(W_{\mvzero\mvy}\le t\big)
		\le C_1\big(\E V^{\eta}\big)^2 t^{\eta} \norm{\mvy}^{-\ga\eta}
		=:C_2 t^{\eta}\norm{\mvy}^{-\ga\eta},
		\qquad t\ge0,
	\end{align}
	where the bound for $t>1$ holds because $C_1\ge1$ may be assumed.
	Every path from $\mvzero$ to any $\mvu\ne\mvzero$ contains an edge incident to $\mvzero$, so $T(\mvzero,\mvu)\ge T_{\mvzero}:=\inf_{\mvy\ne\mvzero}W_{\mvzero\mvy}$.
	For every $h>0$, \eqref{eq:single-edge-tail} and a union bound give
	\begin{align*}
		\pr(T_{\mvzero}\le t)
		\le\sum_{\mvy\ne\mvzero}\pr(W_{\mvzero\mvy}\le t+h)
		\le C_2(t+h)^{\eta}
		\sum_{\mvy\ne\mvzero}\norm{\mvy}^{-\ga\eta}
		=C_3(t+h)^{\eta}.
	\end{align*}
	The series converges since $\ga\eta>d$. Letting $h\downarrow0$ completes the proof.
\end{proof}

\begin{proof}[Proof of Theorem~\ref{thm:II-hop}]
	Fix $K>0$. Since $\ga\gc>d$ and $\ga\theta>d$, we may choose $\eta$ with
	\begin{align*}
		\frac{d}{\ga}<\eta<\gc\wedge\theta.
	\end{align*}
	Then $\ga\eta-d>0$, so we may fix
	\begin{align*}
		a_0>\max\Big\{1,\frac{d}{\ga\eta-d}\Big\},
		\qquad
		\gd_\ast:=a_0(\ga\eta-d)-d>0.
	\end{align*}
	(In Regime II one has $\ga\eta<2d$, so $d/(\ga\eta-d)>1$ and the maximum is the second term.)
	Let $R_0\ge2$ be a large integer to be chosen, and put $R_i:=\lfloor R_0^{(a_0)^i}\rfloor$ for $i\ge0$. For $R_0$ large, $R_i\ge2R_{i-1}$ for all $i\ge1$. Define the ``long cheap edge'' events
	\begin{align*}
		\fB_i:=\Big\{\exists \mvu\in B(\mvzero,R_{i-1}), \mvv\notin B(\mvzero,R_i):W_{\mvu\mvv}\le K\Big\},
		\qquad i\ge1.
	\end{align*}
	By~\eqref{eq:single-edge-tail}, translation invariance and a union bound,
	\begin{align*}
		\pr(\fB_i)\le\sum_{\mvu\in B(\mvzero,R_{i-1})}\sum_{\mvv\notin B(\mvzero,R_i)}C_2K^{\eta}\norm{\mvu-\mvv}^{-\ga\eta}.
	\end{align*}
	For such $\mvu,\mvv$ we have $\norm{\mvu-\mvv}\ge R_i-R_{i-1}\ge R_i/2$, so, using $\ga\eta>d$,
	$\sum_{\mvv\notin B(\mvzero,R_i)}\norm{\mvu-\mvv}^{-\ga\eta}\lesssim R_i^{-(\ga\eta-d)}$
	uniformly in $\mvu$, whence
	\begin{align*}
		\pr(\fB_i)\lesssim R_{i-1}^{d}R_i^{-(\ga\eta-d)}
		\le C R_0^{\{d-a_0(\ga\eta-d)\}(a_0)^{i-1}}
		=C\exp\big\{-\gd_\ast(a_0)^{i-1}\log R_0\big\}.
	\end{align*}
	Here $C$ absorbs the harmless factors caused by the integer parts in
	$R_i=\lfloor R_0^{(a_0)^i}\rfloor$.
	The right-hand side is summable in $i$, and given $\eps>0$ we may take $R_0=R_0(\eps)$ so large that $\sum_{i\ge1}\pr(\fB_i)<\eps$.

	Work on $\gO_{R_0}:=\bigcap_{i\ge1}\fB_i^{\rm c}$ and let $\mvpi=(\mvy_0,\dots,\mvy_m)\in\cP_n(K)$. All edge weights are nonnegative, so every edge of $\mvpi$ has weight at most $K$. Hence on $\gO_{R_0}$ no edge of $\mvpi$ can leave $B(\mvzero,R_i)$ from inside $B(\mvzero,R_{i-1})$, and induction on $j$ gives $\mvy_j\in B(\mvzero,R_j)$ for $0\le j\le m$. Since $\mvy_m=\lceil n\mvx\rceil$ has norm $\asymp n$, we need $R_m\gtrsim n$, and $R_m\le R_0^{(a_0)^m}$ forces
	$m\ge\log\log n/\log a_0-C_{K,\eps}$
	for a constant $C_{K,\eps}$ depending only on $R_0(\eps)$ and $\mvx$. Therefore
	\begin{align*}
		\pr\Big(\exists \mvpi\in\cP_n(K)\text{ with }\abs{\mvpi}\le\frac{\log\log n}{\log a_0}-C_{K,\eps}\Big)
		\le\pr\big(\gO_{R_0}^{\rm c}\big)\le\sum_{i\ge1}\pr(\fB_i)<\eps
	\end{align*}
	for all large $n$, which is the first claim. For the second, if $c_K<1/\log a_0$ then $c_K\log\log n\le\log\log n/\log a_0-C_{K,\eps}$ for all large $n$, whatever $\eps$. So the probability in question is eventually at most $\eps$ for every $\eps>0$, and hence tends to $0$.
\end{proof}

\section{Proofs for Regime III}\label{sec:proof-III}

Here $d/\theta<\ga<2d/\theta$, and by Remark~\ref{rem:III-gc} the hypothesis $\ga>2d/\gc$ plays no role in the argument. The scale map is again $f_\phi(r)=r^{\phi}$ with $\phi\in(0,1)$, but now it drives the \emph{binary} ansatz, so the depth $k$ produces $2^k$ terminal segments. The optimization of $\phi$ against the entropy factor $2^k$ is what produces $\Delta_{\rm III}$.

\begin{proof}[Proof of Theorem~\ref{thm:III-ub}]
	If $\mvx=\mvzero$ then $T_n=0$ for all $n$. So, we assume that $\mvx\ne\mvzero$. Fix $\eps>0$ and recall $\Delta_{\rm III}=\log2/\log(2d/(\ga\theta))$.

	\medskip
	\noindent\textbf{Choice of parameters.}
	By continuity of
	$\eps'\mapsto\log 2/\log((2d/\theta-\eps')/\ga)$ at $0$, choose $\eps'>0$
	small enough that
	\begin{align}\label{eq:eps-prime-III-small}
		0<\eps'<\tfrac14\Big(\frac{2d}{\theta}-\ga\Big),
		\qquad
		\frac{\log2}{\log((2d/\theta-\eps')/\ga)}
		<\Delta_{\rm III}+\frac{\eps}{8},
	\end{align}
	and then choose $\phi\in(0,1)$, sufficiently close to
	$\ga/(2d/\theta-\eps')$ from above, so that
	\begin{align}\label{eq:III-phi-cond-final}
		\frac{\ga}{2d/\theta-\eps'}<\phi<1,
		\qquad
		\gb_\phi+\frac{\eps}{4}\le\Delta_{\rm III}+\frac{\eps}{2},
		\qquad
		\gb_\phi:=\frac{\log2}{\log(1/\phi)}.
	\end{align}
	The first inequality is nonempty by~\eqref{eq:eps-prime-III-small}, while the
	second follows from the displayed strict bound and the continuity and monotonicity
	of $\gb_\phi$ in $\phi$. Set
	\begin{align*}
		n_i:=f_\phi^{(i)}(n)=n^{\phi^i},
		\qquad
		k=k(n):=\Big\lceil\frac{\log\big(\log n/((\eps/4)\log\log n)\big)}{\log(1/\phi)}\Big\rceil.
	\end{align*}
	Then $\phi^k\log n\asymp(\eps/4)\log\log n$, so $k\asymp\log\log n$ and, for all large $n$,
	\begin{align}\label{eq:ord-nk-2k-III-final}
		(\log n)^{\phi\eps/4}\le n_k\le(\log n)^{\eps/4},
		\qquad
		2^k\le C(\log n)^{\gb_\phi}.
	\end{align}
	In particular $n_k\to\infty$, so the separation conditions~\eqref{eq:edge-scale-separation} and~\eqref{eq:root-separation} hold for all large $n$: indeed $n_{i+1}=n_i^{\phi}\le n_i/16$ as soon as $n_i\ge n_k\ge16^{1/(1-\phi)}$, and $n_1=n^{\phi}\le\tfrac{1}{16}\norm{\mvm}_\infty$ since $\norm{\mvm}_\infty\asymp n$. Combining the two displays of~\eqref{eq:ord-nk-2k-III-final} with~\eqref{eq:III-phi-cond-final},
	\begin{align}\label{eq:local-scale-III-final}
		2^kn_k\le C(\log n)^{\gb_\phi+\eps/4}\le C(\log n)^{\Delta_{\rm III}+\eps/2}.
	\end{align}

	\medskip
	\noindent\textbf{The bridge sum.}
	Apply Lemma~\ref{lem:bridge-trunc} with this $\eps'$ and with $R>0$ to be chosen. Since $n_i=n^{\phi^i}$ and $n_{i+1}=n^{\phi^{i+1}}$,
	\begin{align*}
		\gl_i=Cn_i^{\ga}n_{i+1}^{-2d/\theta+\eps'}
		=Cn^{\phi^i\ga-\phi^{i+1}(2d/\theta-\eps')}
		=Cn^{-c_\phi\phi^i},
		\qquad
		c_\phi:=\phi\Big(\frac{2d}{\theta}-\eps'\Big)-\ga>0
	\end{align*}
	by the first constraint in~\eqref{eq:III-phi-cond-final}. In particular $\gl_i\le C$ for every $i$, so
	\begin{align}\label{eq:Upsilon-br-III-final}
		\Upsilon_{\phi,k}^{\rm br}=\sum_{i=0}^{k-1}2^i\gl_i\le C2^k\le C(\log n)^{\gb_\phi}.
	\end{align}
	The bound $\gl_i\le C$ is crude -- the bridges are in fact much cheaper than $O(1)$ -- but it is enough, because in this regime the cost is dominated not by the individual bridges but by their sheer number.

	\medskip
	\noindent\textbf{The bad events.}
	Since $k\asymp\log\log n$, the factor $C^k$ in Lemmas~\ref{lem:Ak-prob}
	and~\ref{lem:Agd-prob} is at most $(\log n)^{c_3}$ for a constant
	$c_3=c_3(\phi,\eps)$. By Lemma~\ref{lem:Ak-prob} and
	\eqref{eq:ord-nk-2k-III-final},
	\begin{align*}
		\pr(\cA_k^{\rm c})
		\le C^k\exp\{-cn_k^d\}
		\le(\log n)^{c_3}\exp\{-c(\log n)^{d\phi\eps/4}\}.
	\end{align*}
	Consequently, for every $q>0$,
	\begin{align}\label{eq:Ak-small-III-final}
		\pr(\cA_k^{\rm c})\le(\log n)^{-q}
	\end{align}
	for all large $n$, a stretched-exponential in $\log n$ beating every power. By~\eqref{eq:bridge-trunc-prob} and $n_{i+1}\ge n_k$,
	\begin{align}\label{eq:Bk-small-III-final}
		\pr\big(\cB_k^{\rm c}\cap\cA_k^{\rm gd}\big)\le C\sum_{i=0}^{k-1}2^in_{i+1}^{-2dR}\le C2^kn_k^{-2dR}
		\le C(\log n)^{\gb_\phi-dR\phi\eps/2},
	\end{align}
	which tends to $0$ once $R$ is chosen with $\gb_\phi-dR\phi\eps/2<0$. Choose $q_{\rm int}$ so large that
	\begin{align*}
		c_3+\gb_\phi-\frac{\phi\eps}{4} q_{\rm int}<0.
	\end{align*}
	Use the corridor construction with $q_0=q_{\rm int}$ and apply Lemmas~\ref{lem:Sloc-control} and~\ref{lem:Agd-prob}. Then
	\begin{align*}
		C_{q_{\rm int}}C^kn_k^{-q_{\rm int}}+\pr\big(\cL_k^{\rm c}\cap\cA_k^{\rm gd}\big)
		\le C(\log n)^{c_3}n_k^{-q_{\rm int}}+C2^kn_k^{-q_{\rm int}}+\rho(n_k),
	\end{align*}
	whose terms vanish by~\eqref{eq:ord-nk-2k-III-final}, the choice of $q_{\rm int}$, and $n_k\to\infty$. Hence
	\begin{align}\label{eq:Lk-small-III-final}
		C_{q_{\rm int}}C^kn_k^{-q_{\rm int}}+\pr\big(\cL_k^{\rm c}\cap\cA_k^{\rm gd}\big)\longrightarrow0.
	\end{align}

	\medskip
	\noindent\textbf{Conclusion.}
	By~\eqref{eq:Upsilon-br-III-final} and~\eqref{eq:local-scale-III-final},
	\begin{align*}
		\Upsilon_{\phi,k}^{\rm br}+2^kc_1n_k
		\le C(\log n)^{\gb_\phi}+C(\log n)^{\Delta_{\rm III}+\eps/2}
		\le C(\log n)^{\Delta_{\rm III}+\eps}
	\end{align*}
	for all large $n$, using $\gb_\phi\le\Delta_{\rm III}+\eps/4$. Proposition~\ref{prop:Ansatz-bound}, together with~\eqref{eq:Ak-small-III-final},~\eqref{eq:Bk-small-III-final} and~\eqref{eq:Lk-small-III-final}, gives
	$\pr\big(T_n>C(\log n)^{\Delta_{\rm III}+\eps}\big)\to0$.
\end{proof}

\section{\texorpdfstring{Proofs for Regime $\rm IV_E$}{Proofs for Regime IV-E}}\label{sec:proof-IVE}

Now $2d/\theta<\ga<1+2d/\theta$, so by~\eqref{eq:heur-bridge} longer bridges are more expensive and the recursion must be truncated early rather than late: the scale map becomes \emph{geometric}, $f_\phi(r)=r/\phi$, the depth is $k\asymp\log n$, and the bridge sum is dominated by its first term, the single macroscopic bridge, as the concavity heuristic (E3) predicts.

\begin{proof}[Proof of Theorem~\ref{thm:IVE-ub}]
	Assume $\mvx\ne\mvzero$, the case $\mvx=\mvzero$ being trivial, and fix $\eps>0$. Recall $\Delta_{\rm IV_E}=\ga-2d/\theta\in(0,1)$ and choose
	\begin{align*}
		0<\eps'<\min\Big\{\frac{\eps}{4}, \frac12\Big(\ga-\frac{2d}{\theta}\Big), 1\Big\},
		\qquad
		\Delta_+:=\Delta_{\rm IV_E}+\eps'.
	\end{align*}
	Choose $\phi>16$ so large that, for all large $n$,
	\begin{align}\label{eq:IVE-root-sep}
		\frac{n}{\phi}\le\frac{1}{16}\norm{\mvm}_\infty,
	\end{align}
	which is possible because $\norm{\mvm}_\infty\asymp n$ (with constants depending on $\mvx$), and moreover, using $2d/\theta-\eps'-\ga<0$,
	\begin{align}\label{eq:phi-IVE-cond-final}
		2 \phi^{2d/\theta-\eps'-\ga}<\frac12,
		\qquad
		\log_\phi2\le\frac{\Delta_+}{4}.
	\end{align}
	Choose then $s_\star\in(0,1)$ so small that
	\begin{align}\label{eq:s-star-IVE}
		s_\star+(1-s_\star)\log_\phi2<\frac{\Delta_+}{2},
	\end{align}
	which is possible by the second part of~\eqref{eq:phi-IVE-cond-final}: it suffices that $s_\star<\Delta_+/4$. Set
	\begin{align*}
		f_\phi(r):=\frac r\phi,
		\qquad
		n_i=\frac{n}{\phi^i},
		\qquad
		k:=\big\lfloor(1-s_\star)\log_\phi n\big\rfloor.
	\end{align*}
	Then~\eqref{eq:edge-scale-separation} holds since $\phi>16$,~\eqref{eq:root-separation} is~\eqref{eq:IVE-root-sep}, and
	\begin{align}\label{eq:nk-2k-IVE-final}
		n_k=n^{s_\star+o(1)},
		\qquad
		2^k=n^{(1-s_\star)\log_\phi2+o(1)} ,
	\end{align}
	whence, by~\eqref{eq:s-star-IVE},
	\begin{align}\label{eq:local-IVE-scale-final}
		2^kn_k=n^{s_\star+(1-s_\star)\log_\phi2+o(1)}\le n^{\Delta_+/2}
	\end{align}
	for all large $n$.

	\medskip
	\noindent\textbf{The bridge sum.}
	Apply Lemma~\ref{lem:bridge-trunc} with this $\eps'$ and with $R$ to be chosen. Then
	\begin{align*}
		\gl_i=Cn_i^{\ga}n_{i+1}^{-2d/\theta+\eps'}
		=C\Big(\frac{n}{\phi^i}\Big)^{\ga}\Big(\frac{n}{\phi^{i+1}}\Big)^{-2d/\theta+\eps'}
		=Cn^{\Delta_+}\phi^{2d/\theta-\eps'} r_\phi^{ i},
		\qquad
		r_\phi:=\phi^{2d/\theta-\eps'-\ga}<1.
	\end{align*}
	Absorbing $\phi^{2d/\theta-\eps'}$ into $C$ and using $2r_\phi<\tfrac12$ from~\eqref{eq:phi-IVE-cond-final},
	\begin{align}\label{eq:Upsilon-br-IVE-final}
		\Upsilon_{\phi,k}^{\rm br}=\sum_{i=0}^{k-1}2^i\gl_i
		\le Cn^{\Delta_+}\sum_{i\ge0}(2r_\phi)^i\le Cn^{\Delta_+}.
	\end{align}
	In contrast with Regime III, the geometric series is now dominated by its $i=0$ term: the cost is that of one macroscopic bridge, and the entire subtree beneath it contributes only a bounded multiple.

	\medskip
	\noindent\textbf{The bad events.}
	Since $k\le\log_\phi n$, the factor $C^k$ of Lemmas~\ref{lem:Ak-prob} and~\ref{lem:Agd-prob} is at most $n^{c_3}$ with $c_3:=\log_\phi C$. By Lemma~\ref{lem:Ak-prob} and~\eqref{eq:nk-2k-IVE-final}, $\pr(\cA_k^{\rm c})\le n^{c_3}\exp\{-cn^{ds_\star+o(1)}\}\le n^{-q}$ for every $q>0$ and all large $n$. By~\eqref{eq:bridge-trunc-prob} and $n_{i+1}\ge n_k$,
	\begin{align}\label{eq:Bk-small-IVE-final}
		\pr\big(\cB_k^{\rm c}\cap\cA_k^{\rm gd}\big)\le C2^kn_k^{-2dR}
		=Cn^{(1-s_\star)\log_\phi2-2ds_\star R+o(1)}\longrightarrow0
	\end{align}
	once $R$ is chosen with $(1-s_\star)\log_\phi2-2ds_\star R<0$. Choose $q_{\rm int}$ so large that $c_3+(1-s_\star)\log_\phi2-s_\star q_{\rm int}<0$, use the corridor construction with $q_0=q_{\rm int}$, and apply Lemmas~\ref{lem:Sloc-control} and~\ref{lem:Agd-prob}. Then
	\begin{align}\label{eq:Lk-small-IVE-final}
		C_{q_{\rm int}}C^kn_k^{-q_{\rm int}}+\pr\big(\cL_k^{\rm c}\cap\cA_k^{\rm gd}\big)
		\le Cn^{c_3}n_k^{-q_{\rm int}}+C2^kn_k^{-q_{\rm int}}+\rho(n_k)\longrightarrow0,
	\end{align}
	because $n_k=n^{s_\star+o(1)}\to\infty$.

	\medskip
	\noindent\textbf{Conclusion.}
	By~\eqref{eq:Upsilon-br-IVE-final} and~\eqref{eq:local-IVE-scale-final},
	$\Upsilon_{\phi,k}^{\rm br}+2^kc_1n_k\le Cn^{\Delta_+}$,
	and $\Delta_+=\Delta_{\rm IV_E}+\eps'\le\Delta_{\rm IV_E}+\eps/4$, so $n^{\Delta_+}\le n^{\Delta_{\rm IV_E}+\eps}$ for large $n$. Proposition~\ref{prop:Ansatz-bound} together with the three bad-event estimates gives
	$\pr\big(T_n>Cn^{\Delta_{\rm IV_E}+\eps}\big)\to0$.
\end{proof}

\section{\texorpdfstring{Proofs for Regime $\rm IV_H$}{Proofs for Regime IV-H}}\label{sec:proof-IVH}

Here $2d/\gc<\ga<1+2d/\gc$, and we return to the hub chain with a geometric scale map. With the auxiliary exponent $\tilde\gc>\gc$ chosen below, the chain has $k\asymp\log n$ links and the costs $\gl_i(n)$ form a geometric sequence with ratio $\phi^{-(\ga-2d/\tilde\gc)}<1$. The total is therefore dominated by the first, macroscopic, link.

\begin{proof}[Proof of Theorem~\ref{thm:IVH-ub}]
	Fix $\eps>0$ and $\gd\in(0,1)$. Choose $\tilde\gc>\gc$, with $\Delta:=\ga-2d/\tilde\gc$, such that
	\begin{align*}
		0<\Delta-\Delta_{\rm IV_H}
		=\frac{2d}{\gc}-\frac{2d}{\tilde\gc}
		<\min\{\eps,1-\Delta_{\rm IV_H}\}.
	\end{align*}
	In particular, $0<\Delta<1$ and $\Delta<\Delta_{\rm IV_H}+\eps$.
	Use the normalized hub weights and Lemma~\ref{lem:hub-ub} with this fixed $\tilde\gc$. All constants below may depend on $\eps$ through this choice; this dependence is suppressed.
	Choose $\phi\ge\max\{8, 32/\norm{\mvx}_\infty\}$, $p:=2$, $\ell\in\dN$ with $\ell>2/\gz$, then $r>1/\tilde\gc$, and finally $A>r\tilde\gc/d$. Apply the Hub-Chain Ansatz of Section~\ref{ssec:Ansatz_II_IVH} with
	\begin{align*}
		f_\phi(t):=\frac t\phi,
		\qquad
		n_i=\frac{n}{\phi^i},
		\qquad
		k=k(n):=\max\big\{m\ge0:n_{m+1}\ge(\log n)^A\big\},
	\end{align*}
	so that $\phi\ge8$ gives the first part of~\eqref{eq:hub-scale-separation}, the root separation $n_1=n/\phi\le\tfrac1{16}\norm{\mvm}_\infty$ holds for all large $n$ because $\norm{\mvm}_\infty\ge n\norm{\mvx}_\infty-1$, and
	\begin{align}\label{eq:IVH-scales}
		(\log n)^A\le n_{k+1}<\phi(\log n)^A,
		\qquad
		n_k=\phi n_{k+1}<\phi^2(\log n)^A,
		\qquad
		k\asymp\log n.
	\end{align}
	By~\eqref{eq:Tn-Sk} it suffices to control $S_k^{\pm}$ and the two endpoint edges.

	\medskip
	\noindent\textbf{Step 1: the hub-good event and the chain sum.}
	For $v_\ast\in(0,\tfrac12)$ to be fixed, put $v_i:=v_\ast(i+2)^{-r}$ for $0\le i\le k$ -- note that, unlike in Regime II, the sequence is used in its natural order -- and define
	\begin{align*}
		\cG_n^{\rm IV_H}:=\cG_n^{\Hub}(\vec v)
		=\bigcap_{i=-k}^{k}\Big\{V_i^{(\ell)}\ge v_{\abs{i}}n_{\abs{i}+1}^{d/\tilde\gc}\Big\}.
	\end{align*}
	Since $v_i\ge v_k$ and $n_{i+1}\ge n_{k+1}$ for $0\le i\le k$, we have $v_i n_{i+1}^{d/\tilde\gc}\ge v_k n_{k+1}^{d/\tilde\gc} \asymp v_\ast(k+2)^{-r}(\log n)^{Ad/\tilde\gc},$
	and since $k\asymp\log n$ and $A>r\tilde\gc/d$ the right-hand side tends to infinity. So there is $N_0<\infty$ with $v_i n_{i+1}^{d/\tilde\gc}\ge x_V$ for all $0\le i\le k$ whenever $n\ge N_0$, which is the hypothesis~\eqref{eq:hub-xV-cond}. Lemma~\ref{lem:hub-ub}~(i) then gives, for $n\ge N_0$,
	\begin{align}\label{eq:IVH-P-Gn-ub}
		\pr\big((\cG_n^{\rm IV_H})^{\rm c}\big)
		\le2\sum_{i=0}^{\infty}C_1\exp\big(-c_2v_\ast^{-\tilde\gc}(i+2)^{r\tilde\gc}\big)<\frac{\gd}{4}
	\end{align}
	after fixing $v_\ast$ small, the series converging because $r\tilde\gc>1$.

	On $\cG_n^{\rm IV_H}$, Lemma~\ref{lem:hub-ub}(ii) gives $S_k\ind_{\cG_n^{\rm IV_H}}\le\sum_{i=0}^{k-1}\gl_i(n)(v_iv_{i+1})^{-1}\go_{i,[\ell]}$. Substituting $n_i=n/\phi^i$ into~\eqref{eq:sum-gl},
	\begin{align*}
		\gl_i(n)
		=c_0^{\ga}\frac{(n/\phi^i)^{\ga}}{(n/\phi^{i+1})^{d/\tilde\gc}(n/\phi^{i+2})^{d/\tilde\gc}}
		=c_0^{\ga} n^{\ga-2d/\tilde\gc} \phi^{-i\ga+(2i+3)d/\tilde\gc}
		=n^{\Delta} b_i,
		\qquad
		b_i:=c_0^{\ga}\phi^{3d/\tilde\gc}\phi^{-i\Delta},
	\end{align*}
	so that
	\begin{align*}
		S_k\ind_{\cG_n^{\rm IV_H}}\le n^{\Delta}\sum_{i=0}^{k-1}a_i\go_{i,[\ell]},
		\qquad
		a_i:=\frac{b_i}{v_iv_{i+1}}
		=c_0^{\ga}\phi^{3d/\tilde\gc}v_\ast^{-2} \phi^{-i\Delta}(i+2)^{r}(i+3)^{r}.
	\end{align*}
	Because $\Delta>0$ and $\phi>1$, the geometric factor $\phi^{-i\Delta}$ dominates the polynomial one, so
	$A_1:=\sum_{i\ge0}a_i<\infty$ and $A_2:=\sum_{i\ge0}a_i^2<\infty$,
	both independent of $n$ and $k$. With $\mu_\ell:=\E\go_{[\ell]}$ and $M_{p,\ell}:=\norm{\go_{[\ell]}-\mu_\ell}_p<\infty$ (Lemma~\ref{lem:min-moments}, $\ell>p/\gz$), Lemma~\ref{lem:p-conS_k} applied to the centered i.i.d.~family of Lemma~\ref{lem:hub-indep} gives, with
	$C_{\rm br}(\gd):=\mu_\ell A_1+C_pM_{p,\ell}A_2^{1/2}(4/\gd)^{1/p}$,
	\begin{align}\label{eq:IVH-bridge-ub}
		\pr\Big(S_k^{\pm}>C_{\rm br}(\gd) n^{\Delta} ; \cG_n^{\rm IV_H}\Big)\le\frac{\gd}{4},
		\quad n\ge N_0.
	\end{align}

	\medskip
	\noindent\textbf{Step 2: the endpoint edges.}
	Unlike in Regime II the terminal scale $n_k\asymp(\log n)^A$ diverges, so the endpoint edges are not $O_{\pr}(1)$. But they are polylogarithmic, hence negligible against $n^{\Delta}$. By~\eqref{eq:hub-terminal} and~\eqref{eq:IVH-scales}, $\norm{\mvzero-\mvx_{-k}}\le c_0n_k\le c_0\phi^2(\log n)^A$, and on $\cG_n^{\rm IV_H}$ the outermost hubs obey
	$V_{\mvx_{\pm k}}\ge v_kn_{k+1}^{d/\tilde\gc}\ge v_\ast(k+2)^{-r}(\log n)^{Ad/\tilde\gc}$.
	Therefore
	\begin{align}\label{eq:IVH-end-bound}
		\big(W_{\mvzero\mvx_{-k}}+W_{\mvx_k\mvm}\big)\ind_{\cG_n^{\rm IV_H}}
		\le C v_\ast^{-1}(k+2)^{r} (\log n)^{A(\ga-d/\tilde\gc)}
		\Big(\frac{\go_{-k}^{\rm end}}{V_{\mvzero}}+\frac{\go_{k}^{\rm end}}{V_{\mvm}}\Big)
		\le C(\log n)^{\gk} Z_{\rm end},
	\end{align}
	where $\gk:=A(\ga-d/\tilde\gc)+r<\infty$, using $k\asymp\log n$, and where, exactly as in Step 3 of the proof of Theorem~\ref{thm:II-ub}, $Z_{\rm end}=\go_1/V_{\mvzero}+\go_2/V_{\mvm}$ in law, with $\go_1,\go_2$ i.i.d.~copies of $\go$ independent of $(V_{\mvzero},V_{\mvm})$. In particular the law of $Z_{\rm end}$ does not depend on $n$ and $Z_{\rm end}<\infty$ a.s. Here we used $\ga>2d/\tilde\gc>d/\tilde\gc$, so the exponent $\gk$ is positive but finite. Since $\Delta>0$ we have $(\log n)^{\gk}/n^{\Delta}\to0$, hence
	\begin{align}\label{eq:IVH-end-ub}
		\pr\Big(W_{\mvzero\mvx_{-k}}+W_{\mvx_k\mvm}>n^{\Delta} ; \cG_n^{\rm IV_H}\Big)
		\le\pr\Big(Z_{\rm end}>\frac{n^{\Delta}}{C(\log n)^{\gk}}\Big)<\frac{\gd}{4}
	\end{align}
	for all $n\ge N_1$, some $N_1\ge N_0$.

	This single-edge estimate replaces, and is much simpler than, a comparison with Regime II at a reduced exponent $\widehat\ga\in(d/\gc,2d/\gc)$. The latter would also require transferring Theorem~\ref{thm:II-ub} to the random endpoint $\mvx_{-k}$, whereas here the terminal edge is a single unexamined edge whose noise is conditionally a fresh copy of $\go$.

	\medskip
	\noindent\textbf{Step 3: conclusion.}
	Set $C_1(\gd):=2C_{\rm br}(\gd)+1$. By~\eqref{eq:Tn-Sk}, ~\eqref{eq:IVH-P-Gn-ub},~\eqref{eq:IVH-bridge-ub} for both signs, and~\eqref{eq:IVH-end-ub}, $\sup_{n\ge N_1} \pr\big(T_n>C_1(\gd)n^{\Delta}\big)<\gd$. Since $\gd\in(0,1)$ was arbitrary, $T_n=O_{\pr}(n^\Delta)$.
	Since $\Delta<\Delta_{\rm IV_H}+\eps$, this completes the proof.
\end{proof}
\section{\texorpdfstring{Proofs for Regimes $\rm V_E$ and $\rm V_H$}{Proofs for Regimes V-E and V-H}}\label{sec:proof-V}

\begin{proof}[Proof of Proposition~\ref{prop:lin-ub-input}]
	By translation invariance we may take $\mvu=\mvzero$ and write $\mvv=:\mvw$, $m:=\norm{\mvw}$.

	\medskip
	\noindent\textbf{The corridor.}
	Fix $a>0$ with $p_a=\pr(V>a)>0$. For part~\textup{(i)}, choose
	$p_0>\max\{2,2q_0\}$ and then $\ell\in\dN$ so that $\ell\gz>p_0$. For
	part~\textup{(ii)}, choose instead $\ell\gz>p$. Fix $R<\infty$ so large that every
	$\ell_\infty$ ball of radius $R$ contains at least $2\ell+2$ lattice points, and
	choose an integer $L>2dR$.

	Let $(\mveta_t)_{t\ge0}$ be a deterministic nearest-neighbor ray that starts at
	$\mvzero$, reaches $\mvw$ at time $m$, and moves monotonically in every coordinate
	(after time $m$, continue in any coordinate without reversing direction). Thus
	$\norm{\mveta_s-\mveta_t}=\abs{s-t}$. Put $\mvz_i:=\mveta_{Li}$ and
	$D_i:=B(\mvz_i,R)$ for $i\ge1$. The $D_i$ are pairwise disjoint, because
	$\norm{\mvz_i-\mvz_j}_\infty\ge L\abs{i-j}/d>2R$ for $i\ne j$. For each $i$ choose
	a deterministic set $Q_i\subseteq D_i\setminus\{\mvzero,\mvw\}$ of exactly
	$2\ell$ vertices. The sets $Q_i$ are disjoint and have the same cardinality.

	Call $i$ \emph{good} if $\abs{Q_i\cap\cV^a}\ge\ell$. The indicators of goodness
	are therefore i.i.d.~Bernoulli with parameter
	$p_\square:=\pr(\mathrm{Bin}(2\ell,p_a)\ge\ell)>0$. Let
	$I_1<I_2<\cdots$ be the good indices, set $I_0:=0$, and put
	$Y_j:=I_j-I_{j-1}$ for $j\ge1$. Then the $Y_j$ are i.i.d.~
	$\mathrm{Geom}(p_\square)$. Finally, let $M:=\lceil m/L\rceil$ and put
	\begin{align*}
		N:=\min\{j\ge1:I_j\ge M\},
		\qquad
		Y_{N+1}:=I_N-M+1,
	\end{align*}
	so that $N-1\sim\mathrm{Bin}(M-1,p_\square)$ and $Y_{N+1}$ has the
	$\mathrm{Geom}(p_\square)$ law (it is the waiting time from index $M$ to the next
	good index). All these quantities are $\cF_V$-measurable. For all sufficiently
	large $m$, the deterministic inequalities $N\le M\le m$ and
	$Y_{N+1}\le Y_N$ hold: the first uses $I_j\ge j$, and the second follows from
	$I_{N-1}<M$. This defines the quantities in~\eqref{eq:corridor-event}. The
	geometric constant $C_0$ is specified in~\eqref{eq:hop-lengths} below.

	\medskip
	\noindent\textbf{Proof of~\eqref{eq:lin-int-a}.}
	The variables $Y_j$ are geometric, hence have moments of all orders, and so does $Y_j^{2\ga}$. Fix $\Lambda_0:=\max\{1, 8\E Y_1^{2\ga}\}$. The constraint $N\le\Lambda_0m$ holds deterministically. Next, by the two facts just noted, $\sum_{j\le N+1}Y_j^{2\ga}\le2\sum_{j\le N}Y_j^{2\ga}\le2\sum_{j\le m}Y_j^{2\ga}$, a sum of $m$ i.i.d.~nonnegative variables with mean $\E Y_1^{2\ga}\le\Lambda_0/8$ each. By Rosenthal's inequality applied with an exponent $p''$, we have
	\begin{align*}
		\pr\Big(\sum_{j\le N+1}Y_j^{2\ga}>\Lambda_0m\Big)
		\le\pr\Big(\sum_{j\le m}\big(Y_j^{2\ga}-\E Y_1^{2\ga}\big)>\tfrac{\Lambda_0}{4}m\Big)
		\le C_{p''}m^{-p''/2}.
	\end{align*}
	Finally, $\pr(\max_{j\le N+1}Y_j>\eps_1m/C_0)\le\pr(\max_{j\le m}Y_j>\eps_1m/C_0)\le m(1-p_\square)^{\eps_1m/C_0}\le c_1e^{-c_2m}$. As $p''$ is arbitrary,~\eqref{eq:lin-int-a} follows.

	\medskip
	\noindent\textbf{The path, and proof of~\eqref{eq:lin-int}.}
	Assume $V_{\mvzero}\wedge V_{\mvw}>a$ and let
	$\cD_j^a:=Q_{I_j}\cap\cV^a$, so $\bigl|\cD_j^a\bigr|\ge\ell$ on the corridor event.
	Set $\mvy_0:=\mvzero$ and define recursively, for $1\le j\le N-1$,
	\begin{align*}
		\mvy_j:=\argmin\big\{\go(\mvy_{j-1},\mvz):\mvz\in\cD_j^a\big\},
	\end{align*}
	and at the last step, in order to optimize the final edge as well,
	\begin{align}\label{eq:last-step}
		\mvy_N:=\argmin\big\{\go(\mvy_{N-1},\mvz)+\go(\mvz,\mvw):\mvz\in\cD_N^a\big\}.
	\end{align}
	The resulting walk $\la\mvzero \mvy_1\cdots\mvy_N \mvw\ra$ is self-avoiding.
	Since $\mvw=\mveta_m$, $M=\lceil m/L\rceil$, and the $Q_i$ are contained in
	$B(\mveta_{Li},R)$, there is $C_0=C_0(L,R,d)$ such that
	\begin{align}\label{eq:hop-lengths}
		\norm{\mvy_{j-1}-\mvz}\le C_0Y_j\quad(\mvz\in Q_{I_j}),
		\qquad
		\norm{\mvw-\mvy_N}\le C_0Y_{N+1}.
	\end{align}
	Hence
	\begin{align}\label{eq:linear-path-decomp}
		T(\mvzero,\mvw)
		\le\sum_{j=1}^{N-1}W_{\mvy_{j-1}\mvy_j}
		+\Big(W_{\mvy_{N-1}\mvy_N}+W_{\mvy_N\mvw}\Big).
	\end{align}
	All the vertices $\mvy_j$ have weight $>a$, and so do $\mvzero$ and $\mvw$ by assumption, so using~\eqref{eq:hop-lengths},
	\begin{align}\label{eq:hop-cost}
		W_{\mvy_{j-1}\mvy_j}\le\frac{(C_0Y_j)^{\ga}}{a^2}\min_{\mvz\in\cD_j^a}\go(\mvy_{j-1},\mvz),
		\quad 1\le j\le N-1,
	\end{align}
	and, by~\eqref{eq:last-step} and $\norm{\mvy_{N-1}-\mvy_N}\vee\norm{\mvy_N-\mvw}\le C_0(Y_N\vee Y_{N+1})$,
	\begin{align}\label{eq:last-cost}
		W_{\mvy_{N-1}\mvy_N}+W_{\mvy_N\mvw}
		\le\frac{\big(C_0(Y_N\vee Y_{N+1})\big)^{\ga}}{a^2}
		\min_{\mvz\in\cD_N^a}\Big(\go(\mvy_{N-1},\mvz)+\go(\mvz,\mvw)\Big).
	\end{align}
	Restrict now to paths with edges of length at most $\eps_1m$: by~\eqref{eq:hop-lengths} it suffices that $\max_{j\le N+1}Y_j\le\eps_1m/C_0$, which is the last constraint in the corridor event~\eqref{eq:corridor-event}.

	Condition now on a $\gs$-field $\cH$ with $\cF_V\subseteq\cH\subseteq\gs(\cF_V,\{\go_e:\norm{e}_\infty>\eps_1m\})$. All the edges appearing in~\eqref{eq:hop-cost}--\eqref{eq:last-cost} have length at most $\eps_1m$, they are pairwise distinct -- those in the $j$-th minimum join $\mvy_{j-1}$ to $\cD_j^a$, and the boxes $D_{I_j}$ are disjoint, while the edges in~\eqref{eq:last-step} join $\cD_N^a$ to the two distinct vertices $\mvy_{N-1}\notin D_{I_N}$ and $\mvw\notin\cD_N^a$ -- and none of them is $\cH$-measurable. Hence, conditionally on $\cH$, and using $\bigl|\cD_j^a\bigr|\ge\ell$,
	\begin{align*}
		\min_{\mvz\in\cD_j^a}\go(\mvy_{j-1},\mvz) \preceq \go_{j,[\ell]},
		\qquad
		\min_{\mvz\in\cD_N^a}\big(\go(\mvy_{N-1},\mvz)+\go(\mvz,\mvw)\big) \preceq \widehat\go_{[\ell]},
	\end{align*}
	where $\{\go_{j,[\ell]}\}_j$ are i.i.d.~copies of $\go_{[\ell]}$ and $\widehat\go_{[\ell]}$ is the minimum of $\ell$ i.i.d.~copies of $\go+\go'$, all independent of $\cH$. The domination and the independence follow, exactly as in Lemma~\ref{lem:hub-indep}, by revealing the minima one step at a time. By Lemma~\ref{lem:min-moments} both have finite moments of every order below $\ell\gz$.

	Combining~\eqref{eq:linear-path-decomp}--\eqref{eq:last-cost}, on the corridor event and conditionally on $\cH$,
	\begin{align*}
		T^{\eps_1}(\mvzero,\mvw) \preceq \frac{C_0^{\ga}}{a^2}\sum_{j=1}^{N+1}Y_j^{\ga} \xi_j,
	\end{align*}
	where $\xi_1,\dots,\xi_{N-1}$ are i.i.d.~copies of $\go_{[\ell]}$, $\xi_N=\xi_{N+1}=\widehat\go_{[\ell]}$ is independent of them (the last bracket in~\eqref{eq:linear-path-decomp} contributes the two terms $Y_N^{\ga}$ and $Y_{N+1}^{\ga}$ with the \emph{same} minimum, since $(Y_N\vee Y_{N+1})^{\ga}\le Y_N^{\ga}+Y_{N+1}^{\ga}$), so that $\{\xi_j\}_{j\le N+1}$ is a $1$-dependent family with finite moments of every order below $\ell\gz$, independent of $\cH$. The $Y_j$ are $\cH$-measurable. Writing $\mu:=\E\go_{[\ell]}\vee\E\widehat\go_{[\ell]}$ and $M_{p_0}:=\max_j\norm{\xi_j-\E\xi_j}_{p_0}$, and applying Lemma~\ref{lem:p-conS_k} with coefficients $a_j:=Y_j^{\ga}$,
		\begin{align*}
			\pr\Big(\sum_{j\le N+1}Y_j^{\ga}\xi_j>2\mu\Lambda_0m \,\Big|\, \cH\Big)
			 & \le\pr\Big(\sum_{j\le N+1}Y_j^{\ga}(\xi_j-\E\xi_j)>\mu\Lambda_0 m \,\Big|\, \cH\Big)                          \\
			 & \le C_{p_0}^{ p_0}\left(\frac{M_{p_0}\big(\sum_{j\le N+1}Y_j^{2\ga}\big)^{1/2}}{\mu\Lambda_0m}\right)^{\!p_0}
			\le C m^{-p_0/2}\le C m^{-q_0},
		\end{align*}
		where we used $\sum_{j\le N+1}Y_j^{\ga}\E\xi_j\le\mu\sum_{j\le N+1}Y_j^{2\ga}\le\mu\Lambda_0m$ on the corridor event for the centering (recall $Y_j\ge1$), and $\sum_{j\le N+1}Y_j^{2\ga}\le\Lambda_0m$ for the variance proxy. Setting $c_{\rm int}:=2\mu\Lambda_0C_0^{\ga}a^{-2}$ gives~\eqref{eq:lin-int}.

		\medskip
		\noindent\textbf{Proof of~\eqref{eq:lin-bdry}.}
		Now the endpoints are $\mvzero$ and $\mvw$ with possibly small weights, and we work on
	$\cE:=\{V_{\mvzero}\wedge V_{\mvw}\ge c_\ast m^{-\gb}\}$.
		Use the same construction with $\ell\gz>p$. The interior hops $j=2,\dots,N-1$ have both endpoints in $\cV^a$, so $W_{\mvy_{j-1}\mvy_j}\preceq Z_j:=(C_0Y_j)^{\ga}\go_{j,[\ell]}/a^2$, an i.i.d.~family with finite $p$-th moment. Writing $\mu_\ast:=\E Z_2$, $\gs_\ast^2:=\var(Z_2)$, $\nu_p:=\E\abs{Z_2-\mu_\ast}^p$ and $y:=2+\lceil2p_\square M\rceil$, a Chernoff bound gives $\pr(N>y)\le c_1e^{-c_2m}$, and Rosenthal's inequality gives, for $t>(y-1)\mu_\ast$,
		\begin{align*}
			\pr\Big(\sum_{j=2}^{N-1}W_{\mvy_{j-1}\mvy_j}>t\Big)
			\le c_1e^{-c_2m}+\frac{C_p\big(y\nu_p+(y\gs_\ast^2)^{p/2}\big)}{\big(t-(y-1)\mu_\ast\big)^p}.
		\end{align*}
		Choosing $b_0$ so large that $\tfrac{b_0}{3}m-(y-1)\mu_\ast\ge c'm$ and taking $t=\tfrac{b_0}{3}m$, and using $y\asymp m$, the second term is at most $C(m+m^{p/2})/m^p\le Cm^{-p/2}$ for $p\ge2$.

		When $N\ge2$, the first edge was selected by the one-sided minimization, and on
	$\cE$ we have $V_{\mvzero}^{-1}\le c_\ast^{-1}m^{\gb}$. Hence
	$W_{\mvzero\mvy_1}\ind_{\cE\cap\{N\ge2\}}\preceq Cm^{\gb}Y_1^{\ga}\go_{1,[\ell]}$
		and, since $\E Y_1^{\ga\gz}<\infty$ and $\E\go_{[\ell]}^{\gz}\le\E\go^{\gz}<\infty$,
		\begin{align*}
			\E\Big[\big(W_{\mvzero\mvy_1}\ind_{\cE\cap\{N\ge2\}}\big)^{\gz}\Big]\le Cm^{\gb\gz}.
		\end{align*}
		For the final two-hop bracket, neither noise is a fresh copy of $\go$, because both entered the minimization~\eqref{eq:last-step}. Nevertheless, each is bounded by the minimized sum, and, conditionally on $\cF_V$ and on the noises used to construct $\mvy_{N-1}$,
		\begin{align*}
			\go(\mvy_{N-1},\mvy_N)+\go(\mvy_N,\mvw)
			\preceq\widehat\go_{[\ell]},
		\end{align*}
		which has a finite $\gz$-th moment by Lemma~\ref{lem:min-moments}. On $\cE$, the two denominators are bounded below by $a^2$ and $ac_\ast m^{-\gb}$, respectively. Since $Y_{N+1}\le Y_N$,
		\begin{align*}
			\E\Big[\big((W_{\mvy_{N-1}\mvy_N}+W_{\mvy_N\mvw})\ind_{\cE}\big)^{\gz}\Big]
			\le C m^{\gb\gz}.
		\end{align*}
		If $N=1$, this final bracket already contains the edge from $\mvzero$ and no
		separate first-edge term is needed. By Markov's inequality with
	$t=\tfrac{b_0}{3}m$, the first-edge term and the final two-hop bracket each exceed
	$\tfrac{b_0}{3}m$ on $\cE$ with probability at most
	$Cm^{\gb\gz}m^{-\gz}=Cm^{-\gz(1-\gb)}$. Summing the three contributions,
		\begin{align*}
			\pr\Big(T(\mvzero,\mvw)\ge b_0m, \cE\Big)
			\le Cm^{-\gz(1-\gb)}+c_1e^{-c_2m}+Cm^{-p/2}
			\le Cm^{-\gd},
			\quad
			\gd=\min\{p/2,\gz(1-\gb)\},
		\end{align*}
		for all large $m$, which is~\eqref{eq:lin-bdry}.
\end{proof}

\begin{proof}[Proof of Theorem~\ref{thm:linear-ub}]
	We may assume $\mvx\ne\mvzero$. Set $\mvu_n:=\lceil n\mvx\rceil$, so $\norm{\mvu_n}\to\infty$ and $\norm{\mvu_n}\le C_{\mvx}n$ for large $n$. Apply Proposition~\ref{prop:lin-ub-input}(ii) with $p=2$ and $\gb=\tfrac12$, obtaining $b_0,c_\ast$, and put $\cE_n:=\{V_{\mvzero}\wedge V_{\mvu_n}\ge c_\ast\norm{\mvu_n}^{-1/2}\}$ and $C:=b_0C_{\mvx}$. Then for large $n$,
	\begin{align*}
		\pr(T_n>Cn)
		\le\pr\big(T(\mvzero,\mvu_n)\ge b_0\norm{\mvu_n}, \cE_n\big)+\pr(\cE_n^{\rm c}).
	\end{align*}
	The first term is at most $C\norm{\mvu_n}^{-\gd}\to0$ with $\gd=\min\{1,\gz/2\}>0$. For the second,
	$\pr(\cE_n^{\rm c})\le2\pr\big(V<c_\ast\norm{\mvu_n}^{-1/2}\big)\to0$
	since $\norm{\mvu_n}\to\infty$ and $\pr(V>0)=1$. Hence $\pr(T_n>Cn)\to0$.
\end{proof}

\section{Discussion and Open Problems}\label{sec:discussion}

The principal results proved here are upper bounds. A companion manuscript~\cite{CDK26} develops complementary lower bounds using renormalization and a differential inequality for volume growth. Theorems~\ref{thm:II-lb} and~\ref{thm:II-hop} are the cases in which a first-moment argument already provides the needed lower information here. We next discuss several open problems related to the model.

\subsection{Boundaries}
We have said nothing about the six phase boundaries
\begin{align*}
	\ga\theta=d,\quad\ga\theta=2d,\quad\ga\theta=2d+\theta,
	\qquad
	\ga\gc=d,\quad\ga\gc=2d,\quad\ga\gc=2d+\gc,
\end{align*}
nor about the transition curves along which the two mechanisms exchange dominance. At a critical value, slowly varying corrections in the cost function or in either tail can affect the answer and must be retained. This is analogous to the delicate critical case $s=2d$ in long-range percolation resolved by Ding and Sly~\cite{DingSly2013Critical}, and by B\"aumler~\cite{Baeumler2022CriticalAllD}.

Two phase boundaries are of particular interest due to the competition between distinct routing mechanisms.
First, for $\theta<\gc<2\theta$, the line $\ga\theta=d$ separates Regimes $\rm I_E$ and II. The lack of a strictly dominant mechanism here suggests a delicate structural transition.
Second, the curve $\ga=2d/\gc$ separates Regime II from Regimes III and $\rm IV_H$. On this curve, the hub one-step exponent is zero, while the edge one-step exponent is $2d(1/\gc-1/\theta)$. Because the costs of hub chains and binary trees balance at this transition, optimal paths are expected to exhibit a mixed geometry.


\subsection{Sharp asymptotics}
Beyond the estimates proved here, precise normalizations, constants, and limiting laws remain open. Three natural questions arise.
\begin{enumeratei}
	\item In Regime II, does $T_n$ converge in distribution? Proposition~\ref{prop:II-ball} shows that the metric explodes at an a.s.\ finite time $\tau_\infty$. The hub-chain representation heuristically suggests a connection to the passage time of a bi-infinite hub chain. However, a limiting description must incorporate not only the point process of large vertex weights but also the edge noises and endpoint environments. Identifying this limit law and the law of $\tau_\infty$ would be a natural next step.

	\item In Regime III, is $(\log n)^{-\Delta_{\rm III}}T_n$ tight, and does it oscillate? In long-range percolation, arithmetic oscillations arise because the depth $k$ of the relevant multiscale structure is an integer~\cite{BiskupKrieger2021Oscillations}. The binary tree construction suggests a similar mechanism, but establishing $\log\log n$-periodic oscillations for the passage metric likely requires finer tail assumptions than \eqref{A3}--\eqref{A4}.

	\item In Regimes $\rm V_E$ and $\rm V_H$, does an almost-sure shape theorem hold, \ie\ is there a norm $\nu$ on $\dR^d$ with $n^{-1}T(\mvzero,\lceil n\mvx\rceil)\to\nu(\mvx)$ a.s.? Subadditivity guarantees a deterministic finite limit along fixed rays under stronger moment conditions (e.g., on $V^{-1}$ and $\go$). However, establishing a full shape theorem requires further control to ensure uniform convergence. If a limit shape exists, the simulations of Figure~\ref{fig:sim-V} suggest it is close to a Euclidean ball in $\rm V_E$ but visibly distorted in $\rm V_H$; it would be interesting to determine whether it can fail to be strictly convex.
\end{enumeratei}

\subsection{Hop count and geodesics}
Theorem~\ref{thm:IH-hop} determines the hop-count profile in Regime $\rm I_H$. In Regime II, Theorem~\ref{thm:II-hop} gives a $\log\log n$ lower bound for bounded-cost paths, while the hub-chain construction gives paths with $O(\log\log n)$ edges and tight passage times. These bounds do not determine the hop count of near-minimizing paths.

For fixed $\eps>0$, determine the order and fluctuations of $H_{\lceil n\mvx\rceil}\big((1+\eps)T_n\big)$ in Regime II. In Regimes III--V, the corresponding question concerns existence of geodesics and $H_{\lceil n\mvx\rceil}(T_n)$, the minimum hop count among geodesics.
Further questions concern the length of the longest edge and, for $d\ge2$, the transverse fluctuations of geodesics. The paths constructed for our upper bounds need not describe the geometry of minimizing paths.

\subsection{Degenerate cases and generalizations}
A natural degenerate case arises when the edge noise is bounded away from zero: $\pr(\go\ge c_\go)=1$ for some $c_\go>0$. This class includes the pure vertex model $\go\equiv1$. Since arbitrarily small edge noises are unavailable, the small-noise edge mechanism vanishes. Formally setting $\theta=\infty$ yields $q_{\rm edge}=0$, and the phase diagram reduces to the four hub regimes $\rm I_H$, II, $\rm IV_H$, and $\rm V_H$.
In this setting, the one-edge tail behavior is entirely governed by the vertex weights. Since $W_{\mvu\mvv} \ge c_\go \norm{\mvu-\mvv}^\ga / (V_{\mvu} V_{\mvv})$, choosing $\eta<\gc$ with $\E V^{\eta}<\infty$ yields
$$ \pr\big(W_{\mvu\mvv}\le t\big) \le \pr\Big(tV_{\mvu}V_{\mvv}\ge c_\go{\norm{\mvu-\mvv}^{\ga}}\Big) \le c_\go^{-\eta}{(\E V^{\eta})^2 t^{\eta}}{\norm{\mvu-\mvv}^{-\ga\eta}}, $$
which isolates the polynomial tail estimate without relying on the edge noise. Symmetrically, $V\equiv1$ recovers long-range first-passage percolation. Several probabilistic estimates use only polynomial volume growth of exponent $d$, but the present path constructions also rely on the Euclidean placement and separation of balls, shells, and corridors. Extending the theorems to vertex-transitive graphs of polynomial growth therefore requires a geometric replacement for those constructions rather than a verbatim transfer. A random underlying point process, for instance a Poisson process in $\dR^d$, would additionally require concentration estimates for the number of points in the relevant regimes.

\subsection{Finite $d$-dimensional tori and other cost functions}
\label{ssec:finite-tori-kernels}

On the discrete torus $\dT_n^d:=(\dZ/n\dZ)^d$, with $n^d$ vertices and periodic $\ell_1$ distance $\dist_n$, define
\begin{align*}
	W^{(n)}_{\mvu\mvv}
	:=\cost_n\bigl(\dist_n(\mvu,\mvv)\bigr)
	\frac{\go_{\mvu\mvv}}{V_{\mvu}V_{\mvv}},
	\quad \mvu\ne\mvv.
\end{align*}
The choice $\cost_n(r)=r^\ga$ gives the finite-torus version of the present model. A finite-range version is obtained by taking
\begin{align*}
	\cost_n(r)=
	\begin{cases}
		1,      & 1\le r\le\ell_n, \\
		\infty, & r>\ell_n,
	\end{cases}
\end{align*}
where $\ell_n\ge1$ may grow with $n$; replacing $1$ by $r^\ga$ gives a truncated power-law cost. For $V\equiv1$, exponential edge noises and $0\le\ga<d$, the power-law case is studied in~\cite{vdHL24}.
For $d=1$, $V\equiv1$ and Weibull edge noises, the finite-range cycle model is studied in~\cite{DK26b}. The fixed-range line model is treated separately in~\cite{DK26a}.

Determine the normalizations and limiting laws of the passage time between two uniformly sampled vertices, the flooding time, and the diameter as $n\to\infty$.
How do the vertex weights affect the transition between spatial and mean-field behavior as $\ell_n$ grows?
Which additional tail assumptions are needed for the flooding time and diameter, as opposed to a typical passage time?
We plan to study finite-torus ILRFPP and its spread-out extensions in subsequent work.

\subsection{Two mechanisms, one metric}
We close by emphasizing what we regard as the conceptual point. The two exceptional mechanisms in this model -- a heavy vertex and a cheap edge -- produce thresholds by identical extreme-value computations, and one might reasonably expect them to produce identical phase diagrams with $\gc$ and $\theta$ interchanged. They do not, because a discount attached to a vertex can be used twice while a discount attached to an edge can be used once. That asymmetry is invisible at the level of the one-bridge cost and appears only when one constructs complete paths: the hub chain proves $\gTh_{\pr}(1)$ in Regime II, whereas the binary bridge construction yields the upper bound $O_{\pr}((\log n)^{\Delta_{\rm III}+\eps})$ for every $\eps>0$ in Regime III.

\medskip\noindent\textbf{Acknowledgements.}
This work originated in the Fall semester of 2023 as part of an \href{https://iml.math.illinois.edu/research/undergraduate-research-projects/}{Illinois Mathematics Lab (IML) project on \emph{Scale Free First Passage Percolation}}.
The project was recognized as a runner-up for the 2024 IML Research Award.
We thank the undergraduate scholars Haotian Ju, Hansheng Liu, Linzhe Teng, and Shuzhen Zhang for conducting simulations during the early stages of the project and the Department of Mathematics at the University of Illinois Urbana--Champaign for providing the opportunity. 

\medskip\noindent\textbf{Declaration on the use of AI.}
In the spirit of the transparency recommendations in the Leiden Declaration on Artificial Intelligence and Mathematics~\cite{LeidenDeclaration2026}, large language models were used as assistive tools for grammar and language editing, consistency checks of notation, literature searches and the identification of potentially relevant references, and assistance with the development and checking of simulation code. All mathematical statements, proofs, citations, references, and final verification were written and reviewed by the authors, who take full responsibility for the content of the manuscript.

\bibliographystyle{alphaurl}
\bibliography{sflrfpp}
\end{document}

%% file: style.tex
\usepackage{systeme}
\usepackage{mathrsfs,xfrac} 
\usepackage[colorlinks=true,linkcolor=blue,backref,citecolor=blue,urlcolor=blue]{hyperref} 
\usepackage{amsmath,amssymb,amsthm,amsfonts,amsbsy,latexsym,dsfont,color} 
\usepackage[foot]{amsaddr}
\usepackage{enumitem}

\usepackage[utf8]{inputenc}
\usepackage[english]{babel}
\usepackage{comment}

\usepackage[textsize=tiny]{todonotes}
\usepackage{regexpatch}
\makeatletter
\xpatchcmd{\@todo}{\setkeys{todonotes}{#1}}{\setkeys{todonotes}{inline,#1}}{}{}
\makeatother

\newenvironment{enumeratei}{\begin{enumerate}[label=\textup{\roman*.},leftmargin=2em,itemsep=1ex]}{\end{enumerate}}
\newenvironment{enumeratea}{\begin{enumerate}[label=\textup{\alph*)},leftmargin=2em,itemsep=1ex]}{\end{enumerate}}

\newtheorem{thm}{Theorem}[section]
\newtheorem{lem}[thm]{Lemma}

\newtheorem{prop}[thm]{Proposition}
\newtheorem{defn}[thm]{Definition}
\newtheorem{rem}[thm]{Remark}

\renewcommand{\le}{\leqslant} 
\renewcommand{\ge}{\geqslant} 
 
\renewcommand{\geq}{\geqslant} 

\newcommand{\ra}{\rangle}
\newcommand{\la}{\langle} 
\newcommand{\wt}{\widetilde}

\newcommand{\ind}{\mathds{1}}
\newcommand{\eps}{\varepsilon}

\newcommand{\norm}[1]{\left\Vert#1\right\Vert}
\newcommand{\abs}[1]{\left\vert#1\right\vert}

\newcommand{\ie}{\emph{i.e.,}}

\let\ga=\alpha \let\gb=\beta \let\gc=\gamma \let\gd=\delta 
    \let\gk=\kappa \let\gl=\lambda        \let\go=\omega   \let\gs=\sigma  
  \let\gz=\zeta
\let\gC=\Gamma \let\gD=\Delta   \let\gTh=\Theta
\let\gO=\Omega           
                                
\newcommand{\cA}{\mathcal{A}}\newcommand{\cB}{\mathcal{B}}\newcommand{\cC}{\mathcal{C}}
\newcommand{\cD}{\mathcal{D}}\newcommand{\cE}{\mathcal{E}}\newcommand{\cF}{\mathcal{F}}
\newcommand{\cG}{\mathcal{G}}\newcommand{\cH}{\mathcal{H}}\newcommand{\cI}{\mathcal{I}}
\newcommand{\cL}{\mathcal{L}}

\newcommand{\cP}{\mathcal{P}}

\newcommand{\cV}{\mathcal{V}}\newcommand{\cW}{\mathcal{W}}
  
\newcommand{\vB}{\mathbf{B}}
\newcommand{\vD}{\mathbf{D}}

\newcommand{\mvzero}{\boldsymbol{0}}

\newcommand{\mva}{\boldsymbol{a}}

\newcommand{\mvm}{\boldsymbol{m}}
\newcommand{\mvp}{\boldsymbol{p}}
\newcommand{\mvr}{\boldsymbol{r}}
\newcommand{\mvu}{\boldsymbol{u}}\newcommand{\mvv}{\boldsymbol{v}}
\newcommand{\mvw}{\boldsymbol{w}}\newcommand{\mvx}{\boldsymbol{x}}\newcommand{\mvy}{\boldsymbol{y}}
\newcommand{\mvz}{\boldsymbol{z}}

\newcommand{\mveta}{\boldsymbol{\eta}}

\newcommand{\mvpi}{\boldsymbol{\pi}}

\newcommand{\fB}{\mathfrak{B}}

\newcommand{\dA}{\mathds{A}}\newcommand{\dB}{\mathds{B}}
\newcommand{\dC}{\mathds{C}}

\newcommand{\dN}{\mathds{N}}

\newcommand{\dR}{\mathds{R}}
\newcommand{\dT}{\mathds{T}}

\newcommand{\dZ}{\mathds{Z}} 
\newcommand{\rA}{\mathrm{A}}\newcommand{\rB}{\mathrm{B}}\newcommand{\rC}{\mathrm{C}}

\newcommand{\rO}{\mathrm{O}}

\newcommand{\sE}{\mathscr{E}}

\DeclareMathOperator{\E}{\mathds{E}}
\DeclareMathOperator{\pr}{\mathds{P}}

\DeclareMathOperator{\var}{Var}

\DeclareMathOperator{\argmin}{argmin}

\newcommand{\hV}{\widehat{V}}

\newcommand{\cost}{\mathrm{cost}}